\documentclass[a4paper,12pt]{amsart}
\usepackage{amssymb,amsmath}
\let\amslrcorner\lrcorner
\usepackage{mathabx}
%%%%%%%%%%
%produces widecheck symbols
%%%%%%%%%%
\usepackage{stmaryrd}
\usepackage{amscd,amsthm,amssymb}
\usepackage{enumerate}% http://ctan.org/pkg/enumerate
\usepackage{color}
\usepackage[all,cmtip]{xy}
\usepackage{hyperref}
\usepackage{mathrsfs} 
%for super curly symbols

%\usepackage{makeidx}

\let\lrcorner\amslrcorner
% Colors

%\newcommand\magenta[1]{\textcolor{magenta}{#1}}

%\scrollmode
\usepackage{latexsym}

\usepackage{hyperref}
\hypersetup{
    colorlinks=true, %set true if you want colored links
    linktoc=all,     %set to all if you want both sections and subsections linked
   linkcolor=blue,  %choose some color if you want links to stand out
}

\addtolength{\oddsidemargin}{-2cm}
\addtolength{\evensidemargin}{-2cm}
\addtolength{\headheight}{5pt}
\addtolength{\headsep}{.5cm}
\addtolength{\textheight}{-1.8cm}
\addtolength{\textwidth}{4cm}
\addtolength{\footskip}{.5cm}
\parskip1ex

%%%%%%% macro to have sideremarks (3cm-wide box)
\def\sideremark#1{\ifvmode\leavevmode\fi\vadjust{\vbox to0pt{\vss
\hbox to 0pt{\hskip\hsize\hskip1em%                          
\vbox{\hsize2cm\tiny\raggedright\pretolerance10000%        
\noindent {\color{red}{#1}}\hfill}\hss}\vbox to8pt{\vfil}\vss}}}%

\newcommand{\edz}[1]{\sideremark{#1}}

\def \bbR{\mathbb{R}}
\def\.{\cdot}

\def\d{{\mathrm d}}

\def\vs{\vskip .6cm}

\def\la{\langle}
\def\ra{\rangle}

\def\l{\lambda}
\def\s{\sigma}
\def\t{\tilde}
\def\beq{\begin{equation}}
\def\eeq{\end{equation}}
\def\bea{\begin{eqnarray*}}
\def\eea{\end{eqnarray*}}
\def\beaa{\begin{eqnarray}}
\def\eeaa{\end{eqnarray}}
\def\ba{\begin{array}}
\def\ea{\end{array}}

\def\o{\omega}

%\def\L{\Lambda}
%%%%%%%%%%%%%%%%%%%cursives
\def \L{\mathscr{L}}

%%%%%%%%%%%%%%% blackboarded symbols
\def \bbV{\mathbb{V}}
%%%%%%%%%%%%%%%% boldface symbols
\def \bfi{\mathbf{i}}
\def \bfH{\mathbf{H}}
\def \bfs{\mathbf{s}}
%%%%%%%%%%%%%%%%%%%% supercursive symbols
\def \scrD{\mathscr{D}}

%%%%%%%%%%%%%%%%%

%%%%%%%%%%%%%%%%%%%%%%%%%%%%%%%%%%%%%%%%%%%%%%%%%%%%%%%%%%%%%%%%%%%%%%%%%%%%%%%

\def\Ric{\mathrm{Ric}}
\def\id{\mathrm{id}}
\def\be{\begin{equation}}
\def\ee{\end{equation}}
\def\tr{\mathrm{tr}}

\def\Hom{\mathrm{Hom}}
\def\Sym{\mathrm{Sym}}
%%%%%%%%%%%%%%%%%%%%%
\def\SL2{\mathfrak{sl}_2(\bbC)} %%Lie algebra sl_2(C)
%%%%%%%%%%%%%%%%%%%%%%

\def\su{\mathfrak{su}}

\def\sp{\mathfrak{sp}}

\def\hh{\mathfrak{h}}

\def\C{\mathscr{C}}

\def\G{\mathrm{G}}
\def\H{\mathcal{H}}

\def\vol{\mathrm{vol}}
\def\Ker{\mathrm{Ker}}

\def\Sym{\mathrm{Sym}}
\def\scal{\mathrm{scal}}
\def\Id{\mathrm{id}}

\def\P{\mathrm{P}}

\def\grad{\mathrm{grad}}

\def\V{\mathcal{V}}
\def\pr{\mathrm{pr}}

\def\im{\mathrm{Im}}

\def \t5{\frac{1}{\sqrt{5}}}
\def \s5{1/\sqrt{5}}
\def \hh{h_{3,4}}
%
%%%%%%%%%%%%%%%%%%%%%%%%%%%%%%%%%%%%%%%%%%%%%%%%%%%%%%%%%%%%%%%%%%%%%%%
%%%TRANSVERSAL OPERATORS 
\DeclareMathOperator{\sH}{\star_{\mathcal{H}}} %horizontal Hodge
\DeclareMathOperator{\dH}{\d_{\mathcal{H}}} %horizontal exterior derivative
\DeclareMathOperator{\p}{p} %operator p=\sum I_a \L_{\xi_a}
\DeclareMathOperator{\PP}{P}
\DeclareMathOperator{\TT}{TT}
%%%%Bar symbols

%%%%%%%%%%%%%%%%

%%%%%%%%%%%%%%%differentials and spans
\DeclareMathOperator{\di}{d} %exterior derivative
\DeclareMathOperator{\spa}{span}

\DeclareMathOperator{\bbC}{\mathbb{C}}
\DeclareMathOperator{\bbI}{\mathbb{I}}
\DeclareMathOperator{\DD}{D}
\DeclareMathOperator{\tT}{t} %%%trace operator on $\Omega^{\star}(\H,\bbR^3)$
%%%%%%%%%%%%%%%%%%%%%%%%%%%%%%%%%%%%%%%%%%%%%%%%%%%%%%%%%%%%%%%%%%%%%%%
\newtheorem{pro}{Proposition}[section]
\newtheorem{teo}[pro]{Theorem}
\newtheorem{lema}[pro]{Lemma}
\newtheorem{coro}[pro]{Corollary}
\theoremstyle{definition}

\newtheorem{rema}[pro]{Remark}

%%%%%%%%%%%%%%%%%%%%%%%%%%%%%%%%%%%%%%%%%%%%%%%%%%%%%%%%%%%%%%%%%%%%%%%%%%%

\title{The $\G_2$ geometry of $3$-Sasaki structures}
\author{Paul-Andi Nagy, Uwe Semmelmann}

\address{Paul-Andi Nagy\\
%Institut f\"ur Geometrie und Topologie \\
%Fachbereich Mathematik\\
%Universit{\"a}t Stuttgart\\
%Pfaffenwaldring 57 \\
%70569 Stuttgart, Germany\\
%Current address\\
%\email{npaulandi@gmail.com, paul-andi.nagy@mathematik.uni-stuttgart.de}
Center for Complex Geometry, Institute for Basic Science(IBS)\\
55 Expo-ro, Yuseong-gu \\
34126 Daejeon, South Korea
}
\email{paulandin@ibs.re.kr}

\address{Uwe Semmelmann\\
Institut f\"ur Geometrie und Topologie \\
Fachbereich Mathematik\\
Universit{\"a}t Stuttgart\\
Pfaffenwaldring 57 \\
70569 Stuttgart, Germany
}
\email{uwe.semmelmann@mathematik.uni-stuttgart.de}

\date{\today}

\begin{document}

\begin{abstract}
We initiate a systematic study of the deformation theory of the second Einstein metric $g_{\frac{1}{\sqrt{5}}}$ respectively the proper 
nearly $\mathrm{G}_2$ structure $\varphi_{\frac{1}{\sqrt{5}}}$ of a $3$-Sasaki manifold $(M^7,g)$. We show that infinitesimal Einstein deformations for $g_{\frac{1}{\sqrt{5}}}$ coincide with infinitesimal $\mathrm{G}_2$ deformations for $\varphi_{\frac{1}{\sqrt{5}}}$. The latter are showed to be parametrised 
by eigenfunctions of the basic Laplacian of $g$, with eigenvalue twice the Einstein constant of the $4$-dimensional base orbifold, via an explicit differential operator. In terms of this parametrisation we determine those infinitesimal $\mathrm{G}_2$ deformations which are unobstructed to second order.

\vs

\noindent
2000 {\it Mathematics Subject Classification}: Primary 53C25, 58H15, 53C10, 58J50, 57R57. 

%\medskip
\noindent{\it Keywords}: Proper nearly $\G_2$ structure, Einstein deformation, $3$-Sasaki structure, stability, obstruction to deformation

\end{abstract}
\maketitle

\tableofcontents
%
%%%%%%%%%%%%%%%%%%%%%%%%%%%%%%%%
\section{Introduction} \label{intro} 
\subsection{Background from $\G_2$ geometry} \label{mot}
A nearly $\G_2$ structure on an oriented compact manifold $(M^7,\vol)$ is given by a stable $3$-form $\varphi$ which is compatible with the orientation choice and additionally satisfies 
$\di\!\varphi=\tau_0 \star_{g_{\varphi}}\varphi$ for some non-zero $\tau_0 \in \bbR$, sometimes referred to as the torsion constant of the structure. Here $g_{\varphi}$ is the Riemannian metric induced 
by $\varphi$ which is necessarily Einstein with $\scal_{g_{\varphi}}=\tfrac{21}{8}\tau_0^2$. The focus in this paper is on instances when $\varphi$ is {\it{proper}} in the sense that $\mathfrak{aut}(M,g_{\varphi}) \subseteq \mathfrak{aut}(M,\varphi)$; equivalently $g_{\varphi}$ 
admits exactly one Killing spinor. In this situation the metric cone $(CM:= M \times \bbR_{+},r^2g_{\varphi}+\di\!r^2)$ has Riemannian holonomy equal to the subgroup $Spin(7) \subseteq SO(8)$.  
The homogeneous examples are the squashed $7$-sphere, the Berger space $SO(5)/SO(3)$ and the Aloff-Wallach spaces $N(k,l)$, see \cite{FKMS}. 
To the best of our knowledge the only known class of compact non-homogeneous examples occurs when $g_{\varphi}$ is obtained from the canonical variation of a $3$-Sasaki metric on $M$ by the following construction.

Consider a compact, oriented, manifold $M^7$ equipped with a $3$-Sasaki structure $(g,\xi)$ with triple of Reeb vector fields 
$\xi=(\xi_1,\xi_2,\xi_3)$. The distribution $\V:=\spa \{\xi_1,\xi_2, \xi_3\}$ is tangent to the leaves of a totally geodesic Riemannian foliation $\mathcal{F}$, referred to as the {\it{canonical}} foliation; the latter allows considering the canonical variation $g_s=s^2g_{\vert \V}+g_{\vert \H}, s>0$ of $g$ where $\H:=\V^{\perp}$. As it is well known the $3$-Sasaki metric $g$ is Einstein with $\Ric^{g}=6g$ and the second Einstein metric \cite{Besse} in the canonical variation is obtained 
for $s=\s5$, when $\Ric^{g_s}=54s^2g_s$. A remarkable feature of the Einstein metric $g_{\s5}$, due to working in dimension $7$, is to carry a proper nearly $\G_2$ structure 
determined by a canonically defined positive form $\varphi_{\s5} \in \Omega^3M$, with torsion constant $\tau_0=12/\sqrt{5}$.
See \cite{GS,FKMS} as well as the monograph \cite{BoGa} for more details. There is no scarcity of non-homogeneous $3$-Sasaki metrics on compact manifolds due to the construction in \cite{BGMann}. In this paper we initiate the programme of studying the Einstein and $\G_2$ deformation theory for the metric $g_{\s5}$.
\subsection{Background from deformation theory} \label{backgr-def}
Following \cite{AlS,NS} we review the deformation theory for proper nearly $\G_2$ structures $(M,\varphi,\vol)$ with torsion constant $\tau_0$. The infinitesimal deformation 
space is 
$$\mathcal{E}(\varphi):=\{\gamma \in \Omega^3_{27}(\varphi) : \star_{g_{\varphi}}\di\!\gamma=
-\tau_0\gamma\}$$
where we denote with $\Omega_{27}^3(\varphi)$ the space of sections of the $27$ dimensional, $\G_2$-irreducible, subbundle $\Lambda_{27}^3(\varphi) \subseteq \Lambda^3M$. 
The obstruction to deformation map $\mathbf{K}:\mathcal{E}(\varphi) \to \Lambda^1\mathcal{E}(\varphi)$ reads 
$$ \mathbf{K}(\gamma)\eta=\int_M \mathbf{P}(\gamma,\gamma) \wedge \star_{g_{\varphi}}\eta\,\vol,
$$
as introduced in our previous work \cite{NS}.
Here $\mathbf{P}:\Lambda^3_{27}(\varphi) \times \Lambda^3_{27}(\varphi) \to \Lambda^3_{27}(\varphi)$ is a bilinear bundle map which depends in an 
algebraically explicit way on the $\G_2$ form $\varphi$, see also section \ref{obs}. These objects describe the deformation theory for $\varphi$ to second order. Indeed, a small time curve $\varphi_t$ of nearly $\G_2$ structures with constant volume $\vol$ and $\varphi_0=\varphi$ satisfies 
\begin{equation*}
\gamma_1 \in \mathbf{K}^{-1}(0), \ \DD(\star_{g_{\varphi}}\gamma_2)=-\di\!\mathbf{P}(\gamma_1,\gamma_1)
\end{equation*}
where $\star_{g_{\varphi_t}}\varphi_t=\star_{g_{\varphi}}(\varphi+t\gamma_1+\frac{t^2}{2}\gamma_2)+O(t^3)$ and 
the operator $\DD:\Omega^4M \to \Omega^4M, D\alpha:=\di\!\widehat{\alpha}-\tau_0\alpha$ is essentially the linearisation of Hitchin's duality map $\alpha \in \Omega^4M \mapsto \widehat{\alpha} \in \Omega^3M$ defined in \cite{H}. In particular $\mathbf{K}^{-1}(0)$ describes infinitesimal deformations in $\mathcal{E}(\varphi)$ which are unobstructed to second order. We will use these results to see how deformation theory at second order behaves on large classes of non-homogeneous examples e.g. 
the class of proper nearly $\G_2$ structures $\varphi_{\s5}$ considered above. As far as the homogeneous examples are concerned, record that according to \cite{AlS} the squashed $7$-sphere and the Berger space do not admit non-trivial infinitesimal $\G_2$ deformations 
whereas for the Aloff-Wallach space $(N(1,1),\varphi_{\s5})$ we have, again by \cite{AlS}, $\mathcal{E}(\varphi_{\s5})\neq 0$ but the zero locus of $\mathbf{K}$ is trivial i.e. the nearly $\G_2$ structure is rigid, see \cite{NS}.

As any curve of nearly $\mathrm{G}_2$ structures induces a curve of Einstein metrics we briefly review the facts from Einstein deformation theory needed in this work. Given a compact manifold $M$ equipped with an Einstein metric $g$ with Einstein constant $E$ the space of essential infinitesimal Einstein deformations  of $g$ is, according to \cite{Ko1}, 
$$ \mathcal{E}_{\mathrm{ess}}(g)=\TT(g) \cap \ker(\Delta_L^g-2E).
$$
Here the space of TT-tensors $\TT(g):=\{h\in \Gamma(\Sym_0^2(M,g)): \delta^{g} h=0\}$ and the divergence operator 
$\delta^{g}$ respectively the Lichnerowicz Laplacian  $\Delta_L^{g}$ are computed w.r.t. the metric $g$.
See \cite{Besse} or section \ref{CF} of the paper for the definitions of these operators. Also recall the well-known fact, see e.g.\cite{Besse}[pages 131-32], that after applying a gauge transformation, any curve of Einstein metrics $g_t$, with Einstein constant $E$, satisfies $g^{-1}\frac{\di\!g_t}{\di\!t}_{\vert t=0} \in \mathcal{E}_{\mathrm{ess}}(g)$. Finally, if $(M,\varphi,\vol)$ is nearly $\mathrm{G}_2$ we have $g_{f^{\star}\varphi}=f^{\star}g_{\varphi}$ for any volume preserving diffeomorphism. As a consequence, the normalisation 
from \cite{NS} for curves of nearly $\mathrm{G}_2$ structures produces, at first order, essential infinitesimal Einstein deformations.

\subsection{Main results}Throughout the rest of this paper by a $\mathrm{G}_2$ deformation we mean a deformation of $\varphi_{1/\sqrt{5}}$ through a curve of nearly $\mathrm{G}_2$ structures with constant volume $\vol$; similarly, the infinitesimal deformations of the nearly $\mathrm{G}_2$ form $\varphi_{1/\sqrt{5}}$ will be referred to as infinitesimal $\mathrm{G}_2$ deformations.  

Our first main result is a purely analytic description of infinitesimal Einstein deformations of $g_{\s5}$ respectively infinitesimal $\G_2$ deformations. Furthermore we give a simple expression for the obstruction to deformation polynomial of the nearly $\G_2$ structure 
$\varphi_{\s5}$.
Infinitesimal Einstein deformations are assumed to be essential as explained above 
and are thus parametrised by the space  
$$\mathcal{E}_{\mathrm{ess}}(g_{\s5}):=\TT(g_{\s5}) \cap  \ker (\Delta_L^{g_{\s5}}-\tfrac{108}{5}).$$
We show that the deformation theory of $g_{\s5}$ strongly depends on the geometry of the canonical foliation $\mathcal{F}$ and turns out to be entirely governed by the spectrum of its scalar basic Laplacian   
$$\Delta_b:C^{\infty}_{b}M \to C^{\infty}_{b}M, \ \Delta_b:=\left. \Delta^g \right|_{C^{\infty}_{b}M}$$
where $C^{\infty}_{b}M:=\{f \in C^{\infty}M : \L_{\xi_a}f=0, a=1,2,3\}$ denotes the space of basic functions on $M$. The basic 
Laplacian can be alternatively computed from any metric in the canonical variation of $g$ or from the scalar sub-Laplacian $\Delta_{\H}$ 
introduced later on in the paper, see section \ref{HoO}.
\begin{teo} \label{main1}
Let $M^7$ be compact and equipped with a $3$-Sasaki structure $(g,\xi)$. 
\begin{itemize}
\item[(i)] the space $\mathcal{E}_{\mathrm{ess}}(g_{\s5})$ of infinitesimal Einstein deformations for $g_{\s5}$ is isomorphic to the infinitesimal nearly $\G_2$ deformation 
space $\mathcal{E}(\varphi_{\s5})$
\item[(ii)] the map $\varepsilon : \ker(\Delta_b-24) \to \mathcal{E}(\varphi_{\s5})$ 
given by 
\begin{equation*}
\varepsilon(f)=\tfrac{\sqrt{5}}{6}\L_{\grad f}\varphi_{\s5}\,+\,\tfrac{12}{\sqrt{5}}f(\varphi_{\s5}-\tfrac{2}{5\sqrt{5}}\xi^{123})-2 \grad f \lrcorner \vol_{\H}
\end{equation*}
is a linear isomorphism, where $\vol_{\H}$ is the horizontal volume form 
\item[(iii)] the set of infinitesimal $\G_2$ deformations which are unobstructed to second order is given by 
\begin{equation*}
%\varepsilon^{-1}\mathcal{E}_u(\varphi_{\tfrac{1}{\sqrt{5}}})=
\mathbf{K}^{-1}(0)=\varepsilon (\{f \in  \ker(\Delta_b-24) : f^2 \perp  \ker(\Delta_b-24) \})
\end{equation*}
where orthogonality is meant in $L^2$-sense.
\end{itemize}
\end{teo}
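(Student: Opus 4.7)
\emph{Part (i).} The plan is to use the $\G_2$-equivariant bundle isomorphism $\Sym^2_0(T^*M) \cong \Lambda^3_{27}(\varphi_{\s5})$ to translate $h \leftrightarrow \gamma_h$. Under this correspondence I expect: (a) TT-tensors map to forms $\gamma$ with $\delta\gamma = 0$ in the relevant Hodge-theoretic sense; and (b) the Lichnerowicz Laplacian on TT-sections translates into the operator $(\star_{g_{\s5}}\di)^2$ on $\Omega^3_{27}$ modulo a curvature shift. Since $\scal_{g_{\s5}} = \tfrac{21}{8}\tau_0^2 = \tfrac{378}{5}$ gives Einstein constant $\tfrac{54}{5}$, the essential Einstein equation $\Delta_L h = \tfrac{108}{5} h$ matches $\star_{g_{\s5}}\di\!\gamma = -\tau_0 \gamma$ with $\tau_0 = 12/\sqrt{5}$. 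This identification is classical in proper nearly $\G_2$ geometry and ultimately rests on the one-dimensional Killing spinor space, from which one can square a twisted Dirac operator to recover the two Laplacians on matching pieces.

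\emph{Part (ii).} For the parametrisation, I would decompose $\Lambda^3 M$ according to the bigrading induced by $TM = \V \oplus \H$ and exploit the explicit expression for $\varphi_{\s5}$ in terms of the Reeb coframe $\xi^a$ and the transverse almost-K\"ahler triple $\omega_a^{\H}$. Verifying that $\varepsilon(f) \in \Omega^3_{27}(\varphi_{\s5})$ and satisfies the nearly $\G_2$ equation when $\Delta_b f = 24 f$ is a direct computation: the Lie derivative $\L_{\grad f}\varphi_{\s5}$ carries components in $\Omega^3_1 \oplus \Omega^3_7 \oplus \Omega^3_{27}$, and the remaining two summands in $\varepsilon(f)$ are tailored to cancel the $\Omega^3_1 \oplus \Omega^3_7$ pieces while producing the correct eigenvalue under $\star_{g_{\s5}}\di$. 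Injectivity is elementary since the horizontal component of $\varepsilon(f)$ recovers $\grad f$ up to scalars. For surjectivity I would decompose an arbitrary $\gamma \in \mathcal{E}(\varphi_{\s5})$ into isotypic pieces for the Reeb $\SO(3)$-action, extract a basic function $f$ from the trivial isotype via the horizontal volume component, and show that the nearly $\G_2$ equation forces $\Delta_b f = 24 f$. The value $24$ emerges naturally as twice the Einstein constant $12$ of the $4$-dimensional quaternion-K\"ahler base orbifold of $\mathcal{F}$.

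\emph{Part (iii) and the main obstacle.} By the definition of the obstruction map,
\begin{equation*}
\mathbb{K}(\varepsilon(f))(\varepsilon(g)) \;=\; \int_M P(\varepsilon(f),\varepsilon(f)) \wedge \star_{g_{\s5}} \varepsilon(g)\, \vol
\end{equation*}
for $g \in \ker(\Delta_b - 24)$, so by part (ii) it suffices to evaluate this pairing on the basic eigenspace. I would expand $\varepsilon(f)$ into its three summands, apply the algebraic formula for $P$ term by term, and integrate by parts using $\Delta_b g = 24 g$ to collapse every resulting integral to a multiple of $\int_M f^2 g\, \vol$. Since the Reeb fields are Killing and annihilate $f$, the square $f^2$ is again basic, so the vanishing of the functional $\mathbb{K}(\varepsilon(f))$ on $\mathcal{E}(\varphi_{\s5})$ is equivalent, up to checking that the overall universal constant is nonzero, to $f^2 \perp \ker(\Delta_b - 24)$ in $L^2$. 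The main obstacle is precisely this collapse: even though $P$ is algebraic in $\varphi_{\s5}$, expanding $P(\varepsilon(f),\varepsilon(f))$ produces many coupled terms in $f$, $\grad f$ and $\nabla^2 f$ mixing vertical, horizontal and crossed types, and one must arrange the integrations by parts together with $\SO(3)$-equivariance so that every cross-term cancels and only the scalar pairing with $f^2$ survives. A useful preliminary reduction is that the Reeb $\SO(3)$ acts by automorphisms of $\varphi_{\s5}$ and trivially on basic functions, so isotypic orthogonality immediately annihilates a large portion of the a priori contributions, and the nonvanishing of the surviving constant can be pinned down on any fixed nontrivial example such as $(N(1,1),\varphi_{\s5})$, where part (iii) is already known to collapse to the trivial zero set.
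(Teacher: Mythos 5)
Your outline for parts (ii)--(iii) follows the same broad route as the paper, but part (i) contains a genuine gap that also undermines the surjectivity step of (ii). The comparison formula is $\bfi\circ\Delta_L^{g_s}\circ\bfi^{-1}=\Delta^{g_s}+6s\star_s\di+36s^2$ on $\{\gamma\in\Omega^3_{27}:(\di\gamma)_{\Lambda^4_7}=0\}$, so the eigenvalue $\tau=2E_s=108s^2$ does \emph{not} translate into the single equation $\star_s\di\gamma=-\tau_0\gamma$: the quadratic $\lambda^2+6s\lambda+36s^2=\tau$ has the two roots $\lambda^{\pm}=-3s\pm 9s$, and there is in addition the closed branch $\di\di^{\star_s}\gamma=\tfrac{72}{5}\gamma$. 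Thus the space of infinitesimal Einstein deformations is a priori $E_{6s}\oplus E_{-12s}\oplus\{\di\di^{\star_s}\gamma=\tfrac{72}{5}\gamma\}$, of which only $E_{-12s}$ is $\mathcal{E}(\varphi_{\s5})$. Your assertion that the identification ``is classical in proper nearly $\G_2$ geometry'' (via squaring a twisted Dirac operator) is exactly what fails: classically one only gets this three-summand decomposition, and the whole point of the theorem is to prove that $E_{6/\sqrt5}$ and the closed eigenspace at $\mu=72/5$ vanish. That vanishing is not formal; in the paper it requires the block structure of $\star_s\di$ with respect to $TM=\V\oplus\H$, the $\su(2)$- and $\sp(1)$-representation theory of the weighted spaces $\Omega^1_k\H$, the transversal Lichnerowicz--Obata bound $\Delta_b>16$, and sharp estimates such as $\Delta_{\H}>4(m+2)$ on $\Omega^1_{-m}\H$, $\Delta_{\H}>6m+16$ on coclosed forms there, and $\Delta_{\H}>20$ on $\Omega^1_3\H$. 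The same quantitative inputs (together with the vanishing of the equivariant harmonic spaces $\bfH^{\pm}_\lambda$ for the relevant weights) are what make your surjectivity plan in (ii) close: decomposing $\gamma\in\mathcal{E}(\varphi_{\s5})$ into Reeb isotypes and ``extracting a basic $f$'' only works once the nontrivial isotypes and the harmonic kernel of the projection have been killed, and none of your sketch supplies that.

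For part (iii) your strategy coincides with the paper's: expand $\varepsilon(f)$ into its three algebraic pieces, compute the cubic $P$ termwise, and integrate by parts on $\ker(\Delta_b-24)$ until only $c\int_M f^2h\,\vol$ survives, then use total symmetry of $P$ to polarise. You correctly identify the collapse of the mixed terms (the analogues of the identities $\int_M\sum_a g(\dH f\wedge I_a\dH f,\dH I_a\dH f)\,\vol=0$ and $\int_M f\vert\dH\bbI\dH f\vert^2\,\vol=\tfrac{(\nu-8)\nu}{2}\int_M f^3\,\vol$) as the real work, but you leave it undone; the paper carries it out and obtains the explicit constant $c=-\tfrac{33264}{\sqrt5}$, whereas your fallback of deducing $c\neq0$ from $N(1,1)$ is admissible only after the universal collapse formula has actually been established. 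As it stands, then, the proposal names the right objects but omits the spectral estimates and representation-theoretic vanishing results that constitute the proof of (i) and of surjectivity in (ii).
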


The identification between deformation spaces in (i) is given by the vector bundle isomorphism $\bfi: \Sym^2_0(M,g_{\s5}) \to \Lambda^3_{27}(\varphi_{\s5})$; see section \ref{LL} for definitions and details. To explain some of the numerics above record that the antiselfdual (ASD) Einstein orbifold  $(N:=M/\mathcal{F},g_N)$ satisfies $\Ric^{g_N}=12g_N$. 

A remarkable feature of the operator $\varepsilon$ is that it allows parametrising infinitesimal $\G_2$, hence Einstein deformations by (i) above, only in terms of Laplace eigenfunctions on $N$, for twice the Einstein constant, by using the foliated structure. Our operator $\varepsilon$ generalises to an embedding  of eigenfunctions of the Laplacian acting on $C^{\infty}_{b}M$ into trace and divergence free eigentensors for the Lichnerowicz  Laplacian. It should be compared with the operator $S$ from \cite{CaoHe} which maps eigenfunctions of the scalar Laplacian into divergence 
free -- but not necessarily trace free -- eigentensors for $\Delta_L$. A posteriori it follows from (ii) in Theorem 
\ref{main1} that infinitesimal Einstein deformations are $\su(2)$-invariant, that is 
invariant under the Reeb vector fields $\xi_1,\xi_2,\xi_3$. This indicates that $\G_2$ deformations by curves could be showed to be 
$\su(2)$-invariant, which is sometimes an a priori hypothesis in deformation theory, see \cite[Theorem 3.1]{coev2} as well as \cite{Coev1}. 
  
The operator $\varepsilon$ parametrising $\mathcal{E}(\varphi_{\s5})$ is second 
order in the derivatives of $f$. In this sense it is somewhat surprising to see that the obstruction polynomial involves integrating only polynomial expressions in $f$. By (iii) in Theorem \ref{main1} infinitesimal $\G_2$ deformations $\varepsilon(f)$ which are unobstructed to second order satisfy, in particular, 
$$\int_M f^3\vol=0.$$
Pausing for a short digression based on this fact, we indicate how 
%we relate these facts to 
the deformation theory of the nearly $\G_2$ structure $\varphi_{\s5}$ may relate to the dynamic stability, transversally understood, of the ASD Einstein orbifold $(N^4,g_N)$. 
Whilst none of the technical details of orbifold stability will be looked at in this paper we draw the picture duplicating the smooth setup.
The criterium in \cite[thm.1.7]{K1}, see also \cite{KS}, ensures that $(N^4,g_N)$ is dynamically unstable provided there exists $f\in \ker(\Delta^{g_N}-24)$ satisfying $\int_Nf^3\vol_N \neq 0$, in which case the infinitesimal $\G_2$ deformation $\varepsilon(f)$ is obstructed to second order. 

Note that on Hermitian symmetric spaces of arbitrary dimension cubic integrals for eigenfunctions of the scalar Laplacian 
with eigenvalue twice the Einstein constant, or equivalently Killing potentials, 
have been explicitly computed in \cite{HMW} by the Duistermaat-Heckmann localisation formula. Based on this we obtain a new geometric proof for the $\G_2$ rigidity of the Aloff-Wallach space, previously considered in \cite{NS,DWI}. 
\begin{rema} \label{pb1}
It is an open problem to decide if small time Einstein deformations of $g_{\s5}$ coincide with $\G_2$-deformations of $\varphi_{\s5}$. This is the case at order $1$ by part (i) in Theorem \ref{main1}. It is however unclear if even at second order the obstruction to Einstein deformation as developed in 
\cite{Ko1} is the same as the obstruction to $\G_2$ deformation given by $\mathbf{K}$. Evidence that may not be automatically true is provided by the metric $g$ which is rigid as a $3$-Sasaki metric \cite{PP}; however $g$ admits deformations through Sasaki-Einstein metrics \cite{Coev1,coev2}. This contrasts with small time Einstein deformations of K\"ahler metrics, which stay K\"ahler provided certain topological conditions are satisfied, see \cite{Koiso3}. 
In particular the Einstein rigidity of $g_{\s5}$ on the Aloff-Wallach space $N(1,1)$ remains an open problem.
\end{rema}
Recall that an Einstein metric with Einstein constant $E$ is called linearly unstable \cite{Koiso1980} if its Lichnerowicz Laplacian $\Delta_L$ acting on TT tensors admits eigenvalues smaller than $2E$. If that is the case the direct sum of the eigenspaces corresponding to such eigenvalues is called the space of destabilising directions. From general principles, see \cite{Besse}[Fig.9.72], the Einstein metric $g_{\s5}$ is linearly unstable; see \cite{WW1} for a rigorous explanation of this fact.

The techniques used to obtain part (i) in Theorem \ref{main1} generalise to precisely measure instability for the second Einstein 
metric $g_{\s5}$ built from the $3$-Sasaki structure $(g,\xi)$ on $M$ as follows.
%\edz{$\Delta_{\H}$ vs $\Delta_B$ below?}
\begin{teo} \label{main2}
%\edz{merge with thm.\ref{main1}?}
Assume that $g$ does not have constant sectional curvature. The space of destabilising directions for $g_{\s5}$ is canonically isomorphic to 
$$\bbR \oplus \bfH_4^{-} \oplus \bigoplus \limits_{16 < \nu < 24}^{}\ker(\Delta_b-\nu).$$
The corresponding eigenvalues for $\Delta_L^{g_{\s5}}$ are 
$ \ \tfrac{28}{5}, \ \ \tfrac{76}{5}, \  \ \nu-\tfrac{4}{5}\sqrt{1+5\nu}+\tfrac{32}{5}$.
\end{teo}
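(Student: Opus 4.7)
The strategy extends the analysis yielding Theorem \ref{main1}(i) to a full eigenvalue problem for $\Delta_L^{g_{\s5}}$ on $\TT(g_{\s5})$ across the destabilising range $\lambda<\tfrac{108}{5}$. Via the isomorphism $\bfi: \Sym^2_0(M,g_{\s5}) \to \Lambda^3_{27}(\varphi_{\s5})$, the Lichnerowicz Laplacian on TT-tensors is conjugated to a nearly $\G_2$ Hodge-type operator on $\Lambda^3_{27}$. The first step is to decompose both bundles along the canonical foliation $\mathcal{F}$ using $TM=\V\oplus\H$ and the $\SO(3)$ isotypical decomposition induced by the Reeb fields; for eigenvalues $\lambda<\tfrac{108}{5}$ the relevant eigentensors must lie in the $\SO(3)$-invariant sector, the non-invariant isotypical components carrying an additional vertical-Laplacian contribution of size of order $1/s^2=5$ which pushes their spectrum above the threshold.

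On the invariant sector, basic scalar data enters through two independent tensor types modelled on the two summands appearing in $\varepsilon$ from Theorem \ref{main1}(ii): a ``nearly $\G_2$'' piece built from $f\varphi_{\s5}$, $f\,\xi^{123}$ and $\L_{\grad f}\varphi_{\s5}$, and a ``horizontal'' piece $\grad f \lrcorner \vol_{\H}$. Over $\ker(\Delta_b-\nu)$, computing the matrix of $\Delta_L^{g_{\s5}}$ on this two-dimensional module reduces, via commutator identities between $\di$, $\star_{g_{\s5}}$, contractions with $\xi^{123}$ and the vertical and horizontal Laplacians, to a symmetric $2\times 2$ block whose characteristic polynomial is
\begin{equation*}
x^2-\bigl(2\nu+\tfrac{64}{5}\bigr)x+\bigl(\nu^2+\tfrac{48\nu}{5}+\tfrac{1008}{25}\bigr)=0,
\end{equation*}
with roots $x=\nu+\tfrac{32}{5}\pm\tfrac{4}{5}\sqrt{1+5\nu}$. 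The smaller root lies below $\tfrac{108}{5}$ exactly when $\nu<24$ and equals $\tfrac{108}{5}$ at $\nu=24$, consistently with Theorem \ref{main1}.

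The two remaining destabilising summands arise outside the basic-scalar family. The constant case $\nu=0$ contributes a single direction of eigenvalue $\tfrac{28}{5}$, realised by the trace-free rescaling of $g_{\V}$ against $g_{\H}$, that is the canonical variation direction itself. Pullbacks of anti-selfdual harmonic $2$-forms on the ASD Einstein base orbifold $(N,g_N)$ yield, via the transverse hyper-K\"ahler structure followed by $\bfi$, TT-eigentensors of eigenvalue $\tfrac{76}{5}$, accounting for the $\bfH_4^{-}$ summand; the eigenvalue is computed from the $N$-Weitzenb\"ock formula on $2$-forms combined with the O'Neill identities for the canonical variation on $\H$. The hypothesis that $g$ is not of constant sectional curvature is invoked precisely to exclude $\nu=16$: Lichnerowicz--Obata applied to $(N^4,g_N)$ with $\Ric^{g_N}=12g_N$ gives first positive eigenvalue $\nu_1\geq 16$, with equality iff $N$ is the round $S^4$, iff $M=S^7$ of constant sectional curvature.

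The main obstacle is the block-diagonalisation of $\Delta_L^{g_{\s5}}$ along the foliated decomposition of $\Sym^2_0$. Although $\bfi$ transports the Lichnerowicz Laplacian to a first-order intertwined operator on $\Lambda^3_{27}$, the off-diagonal coupling between the two modes in the invariant block is non-trivial, and its explicit evaluation requires careful tracking of the curvature terms peculiar to the canonical variation at $s=\s5$, as well as the commutators of vertical derivatives with the transverse almost-hypercomplex structure. Completeness, i.e.\ that every destabilising eigentensor is captured by one of the three listed sources, then follows from the spectral decomposition of $\Delta_b$ combined with the vertical-Laplacian gap cited at the outset.
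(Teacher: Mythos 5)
Your overall frame agrees with the paper's: transfer $\Delta_L^{g_{\s5}}$ to $\Lambda^3_{27}(\varphi_{\s5})$ via $\bfi$, decompose along $TM=\V\oplus\H$ and the $\su(2)$ action, and reduce to basic scalar data; and your $2\times 2$ characteristic polynomial does reproduce the correct pair $\nu+\tfrac{32}{5}\pm\tfrac45\sqrt{1+5\nu}$ attached to $\ker(\Delta_b-\nu)$ (these are exactly the Lichnerowicz eigenvalues of the two embeddings $\varepsilon^{\pm}_{\nu}$ of Proposition \ref{embed1}). The genuine gap is the step you dispatch in one sentence: that every destabilising eigentensor lies in the $\su(2)$-invariant sector because non-invariant isotypical components carry a vertical-Laplacian contribution of size $1/s^2=5$. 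Under $\bfi$ the Lichnerowicz Laplacian becomes $\Delta^{g_s}+6s\star_s\di+36s^2$, and its block structure involves the vertical operators $C$, $\p$, $\L_{\xi}$ with \emph{negative} contributions (cf.\ $\mathscr{G}^{s}=\Delta_{\H}+\tfrac1{s^2}\C-2\p-2(1+\tfrac1{s^2})(C-2)$ and $\scrD=\Delta_{\H}+5\C-2\p$), so positivity of $\tfrac1{s^2}\C$ alone does not clear the threshold. Concretely, in the sector $\p=3$ one has $5\C=15$ but must exclude horizontal eigenvalues up to $\nu-9\le 15$, which requires the sharp estimate $\Delta_{\H}>20$ on $\Omega^1_3\H$ (Proposition \ref{15+7}, proved via Obata), alongside $\Delta_{\H}>4(m+2)$ on $\Omega^1_{-m}\H$ and $\Delta_{\H}>6m+16$ on its coclosed part (Lemma \ref{estimate}, Proposition \ref{Ho-m}). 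Without these, your concluding ``completeness'' assertion is unsupported; it also silently discards other potential sources of low eigenvalues, e.g.\ basic TT tensors pulled back from $N$ with base Lichnerowicz eigenvalues other than $16$, mixed modes, and the closed modes $\di\di^{\star_s}\gamma=\mu\gamma$, whose exclusion is the content of Propositions \ref{int-l}--\ref{ker-gen} and Theorems \ref{solve1}, \ref{thmII}.

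Your description of the $\tfrac{76}{5}$-summand is also incorrect. $\bfH_4^{-}$ is not the space of pullbacks of anti-selfdual harmonic $2$-forms on $N$: it consists of \emph{equivariant} triples $\sigma\in\mathbb{H}^{-}\otimes\bbR^3$ with $C\sigma=4\sigma$ and $\L_{\xi}^{\star}\sigma=0$ (so the individual $\sigma_a$ are not basic), and only the associated tensor $\bfs(\sigma)=\sum_a\sigma_a^{\sharp}\circ I_a$ is basic; equivalently $\bfH_4^{-}$ is identified with basic TT tensors of basic Lichnerowicz eigenvalue $16$ (Propositions \ref{harm1} and \ref{car01}), not with $b_2^{-}$-type data. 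The Aloff--Wallach case makes the discrepancy concrete: the base $\overline{\bbC P}^2$ carries a nonzero ASD harmonic $2$-form (its K\"ahler form), yet $\bfH_4^{-}=0$ because $g_{FS}$ has no TT Lichnerowicz eigenvalue $16$ (Theorem \ref{N11}); moreover composing a single basic ASD form with one $I_a$, or summing over $a$, does not produce an $\su(2)$-invariant tensor, so the ``transverse hyper-K\"ahler followed by $\bfi$'' recipe does not yield eigentensors. In short, the eigenvalue list and the quadratic are right, but the two pillars of the theorem --- the reduction to invariant/basic data and the identification of the $\tfrac{76}{5}$-eigenspace --- are asserted rather than proved, and they constitute the bulk of the paper's argument (Sections \ref{specD}--\ref{inf-defE}).
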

The summand $\bbR$ is geometrically embedded 
via the tensor $\hh:=4\id_{\V}-3\id_{\H}$ which turns out to be a Killing tensor \cite{HMS}[Propn.7.2] and has been shown to provide a destabilising direction in \cite{WangW2}. In fact we show in section \ref{AW} that the whole space of unstable directions for the Aloff-Wallach space $(N(1,1),g_{\s5})$ is spanned by $\hh$. The space $\mathbf{H}_4^{-}$ consists of equivariant harmonic forms; it is equivalently described as the space of basic eigentensors, w.r.t. the canonical foliation $\mathcal{F}$, for the Lichnerowicz Laplacian of the metric 
$g_{\s5}$. At the same time $\mathbf{H}^{-}_4$ is canonically embedded in $H^{0,1}(Z,T^{0,1}Z \otimes K_Z^{-\frac{1}{2}})$, where $Z$ is the twistor space of $N=M/\mathcal{F}$ and $K_Z$ is the canonical orbibundle of the K\"ahler orbifold $Z$. The remaining function eigenspaces in Theorem \ref{main2} embed via an explicit operator, similar to $\varepsilon$, defined in Proposition \ref{embed1}. We only consider eigenvalues $\nu>16$ since $\Delta_b>16$ on non-constant basic functions by \cite{LeeR}, provided $g$ does not have constant sectional curvature. Existence of eigenvalues $\nu<24$ for the basic Laplacian on functions implies $\nu$-instability in the sense of \cite{CaoHe}[Cor.1.3] of the base orbifold $(N^4,g_N)$.
\begin{rema} \label{pb2}
It is an open problem to decide whether eigenvalues $\nu$ for the basic Laplacian satisfying $\nu<24$ do exist, with the exception of 
the Aloff-Wallach space $N(1,1)$ which has base 
$N=\overline{\bbC P}^2$. However, when the base $N$ is toric, we expect that combining techniques as those used in \cite{HM} with the local classification of toric 
selfdual Einstein metrics in \cite{CaPe} will shed light on this problem. 
\end{rema}
To conclude we observe that ordering the unstable eigenvalues in Theorem \ref{main2} yields 
\begin{coro} \label{main3}
The Lichnerowicz Laplacian of $g_{\s5}$ acting on the space $\TT(g_{\s5})$ of trace and divergence free symmetric tensors 
satisfies 
$$ \Delta_L^{g_{\s5}} \geq \tfrac{28}{5}.
$$
The eigenspace corresponding to the minimal eigenvalue $\tfrac{28}{5}$ is spanned by $\hh$.
\end{coro}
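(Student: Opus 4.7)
The plan is to read off Corollary \ref{main3} from Theorem \ref{main2}. Since the Einstein constant of $g_{\s5}$ is $\tfrac{54}{5}$, the linear stability threshold is $2E=\tfrac{108}{5}$, and any eigenvalue of $\Delta_L^{g_{\s5}}$ on $\TT(g_{\s5})$ which is not destabilising is automatically $\geq\tfrac{108}{5}>\tfrac{28}{5}$. Therefore only the three families of destabilising eigenvalues produced by Theorem \ref{main2} need to be examined, and it suffices to show that each of them is $\geq \tfrac{28}{5}$, with equality holding only for the summand $\bbR\cdot \hh$.

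The first two families are trivial: the summand $\bbR$ contributes exactly $\tfrac{28}{5}$, spanned by the Killing tensor $\hh$, and the summand $\bfH_4^{-}$ contributes $\tfrac{76}{5}>\tfrac{28}{5}$ strictly. The main verification is therefore for the third family: one has to show that for every basic eigenvalue $\nu$ with $16<\nu<24$, the corresponding Lichnerowicz eigenvalue
\[
\mu(\nu):=\nu-\tfrac{4}{5}\sqrt{1+5\nu}+\tfrac{32}{5}
\]
satisfies $\mu(\nu)>\tfrac{28}{5}$. I would rewrite this inequality as $5\nu+4\geq 4\sqrt{1+5\nu}$; setting $u:=\sqrt{1+5\nu}$ turns it into $(u-1)(u-3)\geq 0$, and since $\nu>16$ forces $u>9$ in particular $u>3$, the inequality is in fact strict. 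Hence the third family contributes only eigenvalues strictly greater than $\tfrac{28}{5}$.

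Combining these three observations gives $\Delta_L^{g_{\s5}}\geq \tfrac{28}{5}$ on $\TT(g_{\s5})$, and the only summand in the direct sum of Theorem \ref{main2} that attains this minimum is the one-dimensional space $\bbR\cdot \hh$, which identifies the minimal eigenspace. The genuinely hard content of the statement lives in Theorem \ref{main2} itself (the spectral decomposition of destabilising directions via the basic Laplacian and the embedding of $\bfH_4^{-}$); the corollary reduces to the elementary monotonicity analysis of $\mu(\nu)$ sketched above and requires no further geometric input.
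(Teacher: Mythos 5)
Your proposal is correct and matches the paper's own argument: the corollary is obtained exactly by ordering the unstable eigenvalues from Theorem \ref{main2} (everything outside the destabilising space being $\geq 2E=\tfrac{108}{5}$), and your substitution $u=\sqrt{1+5\nu}$, giving $(u-1)(u-3)>0$ with $u>9$ for $\nu>16$, simply makes explicit the elementary inequality the paper leaves implicit.
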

In particular $\Delta_L$ is positive on TT tensors with first eigenvalue $\lambda_1^L=\tfrac{28}{5}$. This result is an optimal improvement of the upper bound $\lambda_1^L \leq \tfrac{28}{5}$ which has been established in \cite{WangW2} by computing the Rayleigh-Ritz quotient of the tensor  $\hh$. In particular, Corollary \ref{main3} recovers stability for $g_{\s5}$ in the sense of the Freund-Rubin compactification as used in generalised black hole theory. See \cite{GiPope}[sectn. IV.C] as well as \cite{CaoHe,BaHM} for definitions and further related results. Note that in the last two references all Laplace type operators are defined to be negative. Indeed, stability in the aforementioned sense amounts to the lower bound $\lambda_1^L \geq \tfrac{27}{5}$ which is clearly satisfied by Corollary \ref{main3}.
\begin{rema} \label{rmk6} 
As already noted dynamic instability for the orbifold $(N^4,g_N)$ is related to the existence of non-integrable infinitesimal $\G_2$ deformations of $(M^7,g_{\s5})$. 
However, the dynamic stability of $(M^7,g_{\s5})$ itself is unrelated to the $\G_2$ deformation problem since 
$ \ker(\Delta^{g_{\s5}}-\frac{108}{5}) \cap C^{\infty}M= \ker(\Delta_b-\frac{108}{5})
$
as shown in the body of the paper, see Remark \ref{rmk-intro}. By (ii) in Theorem \ref{main1} the eigenvalue $\frac{108}{5} \in (16,24)$ for the basic Laplacian, if it exists, does not turn up in deformation theory but rather as a destabilising direction.
\end{rema}
\subsection{Outline of the paper} \label{outln}In section \ref{2E} we briefly review those facts from $3$-Sasaki geometry which will be used in this paper; following \cite{AlS} we explain how the study of infinitesimal Einstein and $\G_2$ deformations in the spaces $\mathcal{E}_{\mathrm{ess}}(g_{\s5})$ and $\mathcal{E}(\varphi_{\s5})$, together with that of unstable directions, translates into solving spectral problems for the $3$-form Laplacian of $g_{\s5}$ acting on 
 $\Omega^3_{27}(\varphi_{\s5})$. The first step in solving these spectral problems, performed in section \ref{a-id}, is spelling out the algebraic structure of $\Lambda^3_{27}(\varphi_s),s>0$ w.r.t. to the canonical decomposition $TM=\V \oplus \H$. In section \ref{specD} we work out, for arbitrary $s$, the block structure of $\star_{g_s}\di$ and of the form Laplacian of $g_s$ w.r.t to the canonical decomposition. Block structure results are well known essentially only for Sasaki and contact metrics, \cite{Ta,Ru} when the canonical foliation has $1$-dimensional leaves. In our setup 
$\mathcal{F}$ has $3$-dimensional leaves making that the decomposition of form spaces has more components. The generators of the Lie algebra $\su(2)$ produce more -- by comparaison to $\mathfrak{u}(1)$ actions -- invariant operators 
relevant for the block structure of the Laplacian; their algebraic structure is derived from $\su(2)$ representation theory. In section \ref{EDS} we essentially show that the spectral theory of $\star_{g_s}\di$ acting on $3$-forms reduces 
to the study of suitably defined spaces of harmonic forms and the spectral theory of perturbations of the horizontal Laplace operator 
$\Delta_{\H}$ acting on 
$\Omega^1(\H,\bbR^3)$. In section \ref{numerics} we prove lower bounds for the spectrum of $\Delta_{\H}$ acting on weighted $\su(2)$-invariant spaces of functions and horizontal $1$-forms. In section \ref{inf-defE} the representation theory of $\su(2)$ and the eigenvalue estimates for $\Delta_{\H}$ are put together to prove 
Theorems \ref{main1} and \ref{main2} with the exception of the obstruction part.  The latter is proved in section \ref{obs} by explicitly computing the polynomial $P$ on the subspace of $\Omega^3_{27}(\varphi_{\s5})$ spanned by $\varepsilon(f)$ with $f \in \ker(\Delta_b-24)$. Section \ref{TLL} contains the computation of the basic Licherowicz Laplacian w.r.t. the Riemannian foliation $\mathcal{F}$ which we use to apply Theorem \ref{main1} and Theorem \ref{main2} to the Aloff-Wallach space $N(1,1)$.

To conclude we list some directions for future research. In \cite[sectn.5.3]{KY} deformed Donaldson Thomas instantons 
have been used to define explicit deformations of co-calibrated $\G_2$ structures; furthermore the proper nearly $\G_2$ structure 
$(M^7,g_{\s5})$ supports many examples of such instantons \cite{LoOl}. We plan to understand how the deformation theory of $\varphi_{\s5}$ interacts with the study of instantons, possibly for more general principal bundles, as considered in \cite{BO} for the Aloff-Wallach spaces 
$N(k,l)$.\\
$\\$
{\bf{Acknowledgements:}} Paul-Andi Nagy was supported by the Institute for Basic Science (IBS-R032-D1).This research has also been supported by the Special Priority Program
SPP 2026 `Geometry at Infinity' funded by the DFG. It is a pleasure to thank Tommy Murphy for many useful conversations on stability.
%%%%%%%%%%%%%%%%%
\begin{theindex}
\item Algebraic isomorphisms
\subitem $\bfi:\Sym_0^2(M,g_s) \to \Lambda^3_{27}(\varphi_s)$, page 11
\subitem $\iota_s:V^p\H \to \Lambda^pM$ for $p=3,2$, page 14 and \eqref{iota2}
\subitem $\kappa_s:\Lambda^1(\H,\bbR^3) \oplus \Lambda^2_{sym}(\H,\bbR^3) \to \Lambda^3_{27}(\varphi_s)$, Lemma \ref{dec-3sas}
\subitem $\bfs:\Lambda^{-}(\H,\bbR^3) \to \Sym^2_0\H$, \eqref{s-iso} 

\item Bundles of horizontal forms 
\subitem $V^p\H, p=2,3$ page 21, page 14
\subitem $\Lambda^2_{sym}(\H,\bbR^3)$, Lemma \ref{dec-3sas}
\subitem $\Lambda^{+}_{sym}(\H,\bbR^3)$, page 15

\item Differential forms
\subitem $\varphi_s$ and $\star_s \varphi_s$, page 11
\subitem $\widetilde{\varphi}_s$, page 15

\item Eigenspaces 
\subitem $E_{\lambda}=\ker(\star_s \di-\lambda) \cap \Omega^3_{27}(\varphi_s)$, page 13
\subitem $F_{\lambda}=\ker(\scrD-\lambda)$, page 29
\subitem $F_{\lambda}^{\perp}$, Proposition \ref{semi2}
\subitem $\mathscr{E}(t,k)=\ker(\Delta_{\H}-t) \cap \Omega^1_{k}\H$, page 38, proof of Proposition 
\ref{ker-gen}

\item Embedding operators 
\subitem $\varepsilon^{\pm}_{\nu}$, Proposition \ref{embed1}
\subitem $\varepsilon$, \eqref{e-bis}
\item Laplacians 
\subitem basic, $\Delta_b$, page 4
\subitem horizontal, $\Delta_{\H}$, page 18
\subitem Hodge, $\Delta^g$, page 13
\subitem Lichnerowicz $\Delta_L^g$, sections \ref{LL} and \ref{CF}
\subitem basic Lichnerowicz, $\Delta_L^b$, page 46
\subitem connection Laplacian, $\overline{\Delta}$, Remark \ref{deltabar}

\item Lie algebra actions
\subitem $\su(2)$ on $\Omega^{\star}\H$, \eqref{ro}
\subitem $\sp(1)$ on $\Omega^1\H$, \eqref{spp1}

\item Obstruction to deformation map 
\subitem $\mathbf{K} :\mathcal{E}(\varphi) \to \Lambda^1\mathcal{E}(\varphi)$, page 3

\item Operators constructed from representations
%the $\su(2) \oplus \sp(1)$ action on horizontal forms 
\subitem $\tT :\Lambda^{\star}(\H,\bbR^3) \to \Lambda^{\star}\H$, page 14 and \eqref{tT-alt}
\subitem $\mathbb{I} : \Lambda^1\H \to \Lambda^1(\H,\bbR^3)$, \eqref{bbId}
\subitem $\P:\Omega^1(\H,\bbR^3) \to \Omega^1(\H,\bbR^3)$, Lemma \ref{blgen-1}
\subitem Casimir operator $\C$, page 16
\subitem $\L_{\xi} : \Omega^{\star}\H \to \Omega^{\star}(\H,\bbR^3)$, page 16
\subitem $C:\Omega^{\star}(\H,\bbR^3) \to \Omega^{\star}(\H,\bbR^3)$, page 16
\subitem $\p:\Omega^1\H \to \Omega^1\H$, page 16

\item Perturbations of $\Delta_{\H}$
\subitem $\mathscr{G}^{s}:\Omega^1(\H,\bbR^3) \to \Omega^1(\H,\bbR^3)$, Proposition \ref{comp1}
\subitem $\scrD=\Delta_{\H}+5\C-2\p:\Omega^1\H \to \Omega^1\H$, page 27

\item Polynomial $\mathrm{G}_2$ invariants 
\subitem $p :\Lambda^3_{27}(\varphi_s) \times \Lambda^3_{27}(\varphi_s) \to \Sym^2(TM,g_s) $, page 41 
\subitem $\mathbf{P} :\Lambda^3_{27}(\varphi_s) \times \Lambda^3_{27}(\varphi_s) \to \Lambda^3_{27}(\varphi_s)$, page 3 and page 41
\subitem $P \in \Sym^3\Lambda^3_{27}(\varphi_s)$, page 41

\item Spaces of horizontally harmonic forms
\subitem $\mathbb{H}, \mathbb{H}^{\pm}$, page 27
\subitem $\mathbb{H}_{\lambda}$, \eqref{h-spc}
\subitem $\mathbf{H}^{\pm}_{\lambda}, \mathbf{H}_{\lambda}$, \eqref{h-spc}

\item Spaces of horizontal differential forms
\subitem $\mathscr{S}=\iota_s^{-1}\Omega^3_{27}(\varphi_s) \subseteq \mathbb{V}^3\H$, \eqref{scrS}
\subitem $\Omega^1_{inv}\H$, page 30 
\subitem $\Omega^{1}_{\perp}\H$, page 30
\subitem $\Omega^1_o(\H,\bbR^3)$, Proposition \ref{semi2}

\item $\mathrm{TT}$-tensors
\subitem $\mathrm{TT}(g)$, page 4
\subitem $\mathrm{TT}_b(\mathcal{H})$, page 37

\item Weighted differential forms 
\subitem $\Omega^1_k(\H)$, \eqref{we}
\end{theindex}
%%%%%%%%%%%%%%%5
\section{Preliminaries} \label{2E}
\subsection{Elements of $3$-Sasaki geometry} \label{3sas}
We only recall those facts from $3$-Sasaki geometry which will be strictly needed in what follows. For general theory and equivalent formulations see \cite{BoGa}. Let $(M^7, g)$ be a compact Riemannian manifold with a 3-Sasaki structure defined by three 
Killing vector fields $\xi_1, \xi_2, \xi_3$ satisfying $g(\xi_a,\xi_b)=\delta_{ab}$ and 
\begin{equation} \label{su2}
\begin{split}
%&g(\xi_a,\xi_b)=\delta_{ab}\\
& [\xi_1,\xi_2]=2\xi_3, \ [\xi_2,\xi_3]=2\xi_1, \ [\xi_3,\xi_1]=2\xi_2. 
\end{split}
\end{equation}
The distributions $\V:=\spa \{\xi_1,\xi_2,\xi_3\}$ respectively $\H:=\V^{\perp}$ will be referred to as the vertical respectively the horizontal distributions. The vertical distribution induces a Riemannian foliation with totally geodesic leaves, denoted with 
$\mathcal{F}$ in what follows. In addition the leaf space $N:=M\slash \mathcal{F}$ has the structure of a compact $4$-dimensional 
orbifold. The differential geometric properties of $g$ are encoded in the structure equations for the coframe $\xi^a:=g(\xi_a, \cdot), a=1,2,3$ which read 
\begin{equation} \label{str-xi}
\di\!\xi^a=-2\xi^{bc}+2\omega_a
\end{equation}
with cyclic permutations on $abc$, where $\omega_1,\omega_2,\omega_3$ belong to $\Omega^2\H$. Here $\xi^{bc}=\xi^b \wedge \xi^c$ in shorthand notation. The triple of horizontal forms $\omega_1,\omega_2,\omega_3$ satisfies the additional algebraic requirements  
\begin{equation*}
\omega_1^2=\omega_2^2=\omega_3^2\neq 0 \ \mbox{and} \ \omega_i \wedge \omega_j=0 \ \mbox{for} \ 1 \leq i \neq j \leq 3.
\end{equation*}
The distribution $\H$ is thus equipped with a canonical volume form $\vol_{\H}=\frac{1}{2}\omega_1^2$ w.r.t. which we form the horizontal Hodge star operator $\sH:\Lambda^{\star}\H \to \Lambda^{\star}\H$ computed with respect to the metric $g_{\H}:=g_{\vert \H}$ on $\H$ and the volume form $\vol_{\H}$. The convention in use here is $\alpha \wedge \sH\beta=g_{\H}(\alpha,\beta)\vol_{\H}$ for $\alpha, \beta \in \Lambda^{\star}\H$. As $\H$ has rank $4$ we can further split $\Lambda^2\H=\Lambda^{-}\H \oplus \Lambda^{+}\H$ where $ \Lambda^{\pm}\H=\ker(\sH\mp 1_{\Lambda^2\H})$. Then $\Lambda^{+}\H=\spa \{\omega_1, \omega_2, \omega_3\}$. As it is well known from conformal geometry in dimension $4$, the triple $\{\omega_a, 1 \leq a \leq 3\}$ determines a quaternion structure on $\H$ via 
%\begin{equation*}
$\omega_a=\omega_b(I_c \cdot, \cdot)$
%\end{equation*} 
with cyclic permutation on $abc$. This guarantees the algebraic quaternion relations 
%\begin{equation*} \label{qKa}
$I_a \circ I_b=-I_b \circ I_a=I_c$
%\end{equation*}
on $\H$ and allows recovering the metric according to 
\begin{equation} \label{met}
-\omega_a=g_{\H}(I_a \cdot, \cdot)
\end{equation}
with $1 \leq a \leq 3$. Equivalently  $g_{\H}$ is determined from 
$$ (U_1 \lrcorner \omega_1) \wedge (U_2 \lrcorner \omega_2) \wedge \omega_3=-g_{\H}(U_1,U_2)\vol_{\H}
$$ with $U_1,U_2\in TM$. To ensure validity for the structure equations \eqref{str-xi} the Ricci curvature of $g$ reads 
$$ \Ric^g=6g.$$
The Ricci curvature of the compact, Einstein ASD-orbifold $(N:=M \slash \mathcal{F},g_N)$ is then normalised to $\Ric^{g_N}=12g_N$. This follows by O'Neill's formulas for the curvature of Riemannian foliations and can equivalently be phrased in terms of the transversal geometry of $M$.
\subsection{The second Einstein metric} \label{2ndE}
Splitting 
%\begin{equation*}
$g=g_{\V}+g_{\H}$
%\end{equation*}
according to $TM=\V \oplus \H$ enables considering the canonical variation 
\begin{equation*} 
g_s:=s^2g_{\V}+g_{\H}, s >0
\end{equation*} 
of the $3$-Sasaki metric; explicitly $g_{\V}=\sum_a\xi^a \otimes \xi_a$. In subsequent computations we will systematically use the scaled vertical vector fields  $Z_a := \frac{1}{s}\xi_a$ together with the dual forms $Z^a=g_s(Z^a, \cdot)$ which satisfy $Z^a=s\xi^a$ where $a=1,2,3$. The Hodge star operator of $g_s$ is again defined according to the convention 
$\alpha \wedge \star_s \beta=g_s(\alpha,\beta)\vol_s$ for $\alpha, \beta \in \Lambda^{\star}M$. The volume form 
$\vol_s=Z^{123} \wedge \vol_{\H}$. As $g_1=g$ we simply write $\star_1=\star$ and $\vol_1=\vol$ in what follows. With these conventions 
we have the following set of purely algebraic identities, to be used extensively in subsequent computations.
\begin{lema} \label{HoL}
Pick $\alpha \in \Lambda^{\star}\H$. We have 
\begin{equation*} 
%\label{Ho1}
\begin{split}
&\star_s \alpha=(-1)^{\deg(\alpha)}Z^{123} \wedge \star_{\H} \alpha\\
&\star_s(Z^{a} \wedge \alpha)=Z^{bc} \wedge \star_{\H}\alpha\\
&\star_s(Z^{ab} \wedge \alpha)=(-1)^{\deg(\alpha)}Z^c \wedge \star_{\H}\alpha\\
& \star_s(Z^{123} \wedge \alpha)=\sH\alpha
\end{split}
\end{equation*}
with cyclic permutations on $abc$.
\end{lema}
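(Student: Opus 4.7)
The plan is to reduce all four identities to the standard product formula for the Hodge star under an orthogonal splitting. By construction $g_s$ makes $TM = \V \oplus \H$ an orthogonal decomposition, and since $g_s(\xi_a,\xi_b) = s^2\delta_{ab}$ the rescaled triple $\{Z_a = s^{-1}\xi_a\}$ is a $g_s$-orthonormal frame for $\V$; dually $\{Z^a\}$ is a $g_s$-orthonormal coframe for $\V^*$. This gives a $g_s$-orthogonal bigrading
$$\Lambda^k M \,=\, \bigoplus_{p+q=k} \Lambda^p \V^* \otimes \Lambda^q \H^*,$$
and $\vol_s = Z^{123}\wedge\vol_{\H}$ decomposes correctly into the two factor volumes $Z^{123}$ on $\V^*$ and $\vol_{\H}$ on $\H^*$.

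The key algebraic step is to establish the pointwise identity
$$\star_s(\zeta\wedge\alpha) \,=\, (-1)^{q(3-p)}\, \star_{\V}\zeta \,\wedge\, \sH \alpha$$
for $\zeta \in \Lambda^p\V^*$ and $\alpha \in \Lambda^q\H^*$, where $\star_{\V}$ denotes the Hodge star on $(\V^*,g_s,Z^{123})$. I would prove this either by direct verification on decomposable basis elements $Z^I \wedge e^J$, with $\{e^j\}$ any $g_{\H}$-orthonormal horizontal coframe, using the defining relation $\omega\wedge\star_s\omega' = g_s(\omega,\omega')\vol_s$ and Koszul reordering; or more efficiently by invoking the classical product formula for the Hodge star on a Riemannian product, which is purely algebraic fibrewise and therefore applies verbatim to any $g_s$-orthogonal splitting with factor dimensions $3$ and $4$.

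Finally I would specialise $\zeta$ to the four vertical monomials. With $abc$ a cyclic permutation the vertical stars are
$$\star_{\V} 1 = Z^{123},\quad \star_{\V} Z^a = Z^{bc},\quad \star_{\V} Z^{ab} = Z^c,\quad \star_{\V} Z^{123} = 1,$$
while the sign factors $(-1)^{q(3-p)}$ for $p=0,1,2,3$ reduce respectively to $(-1)^q$, $+1$, $(-1)^q$, $+1$. Substituting in the master formula and writing $q=\deg\alpha$ reproduces the four identities of the lemma term by term. The only thing that can go wrong is sign bookkeeping, but since this is entirely absorbed into the product formula for $\star_s$, no real obstacle arises: the lemma is a direct unpacking of that formula at the four relevant bi-degrees.
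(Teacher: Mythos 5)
Your proposal is correct: the product formula $\star_s(\zeta\wedge\alpha)=(-1)^{q(3-p)}\star_{\V}\zeta\wedge\sH\alpha$ is exactly what the convention $\alpha\wedge\star_s\beta=g_s(\alpha,\beta)\vol_s$ with $\vol_s=Z^{123}\wedge\vol_{\H}$ gives for the $g_s$-orthogonal bigrading, and your specialisations and signs at $p=0,1,2,3$ reproduce the four identities. The paper states the lemma as a purely algebraic fact without proof, and your verification is precisely the routine check it leaves implicit.
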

The canonical variation $g_s$ 
of the $3$-Sasaki metric $g$ has the remarkable property to admit a $\G_2$ structure with torsion \cite{GS,FKMS} given by 
\begin{equation*}
\begin{split}
\varphi_s=&Z^{123} +Z^1 \wedge \omega_1+Z^2 \wedge \omega_2+Z^3 \wedge \omega_3\\
\star_s \varphi_s=&\vol_{\H}+Z^{12} \wedge \omega_3+Z^{23} \wedge \omega_1+Z^{31} \wedge \omega_2.
\end{split}
\end{equation*}
The last equation follows from Lemma \ref{HoL}. To spell out the volume convention for $\G_2$ structures in use here, record that $(U_1 \lrcorner \varphi_s) \wedge (U_2 \lrcorner \varphi_s) \wedge \varphi_s=6g_s(U_1,U_2)\vol_s$ 
with $U_1,U_2 \in TM$ as it can be checked by a direct computation, crucially relying on \eqref{met}. This convention agrees with that 
in \cite{Bryant1} but is opposite to the one in \cite{NS}. 
 
Additional background facts we shall need are as follows. The action of $\G_2$,viewed as the stabiliser of the $3$-form $\varphi_s$, allows splitting 
\begin{equation*}
\Lambda^4M=\Lambda^4_{27}M \oplus \Lambda^4_7M \oplus \Lambda^4_1M, \ \ 
\Lambda^3M=\Lambda^3_{27}M \oplus \Lambda^3_7M \oplus \Lambda^3_1M, \ \ \Lambda^2M=\Lambda^2_{14}M \oplus \Lambda^2_7M
\end{equation*}
into irreducible representations, where the subscript indicates dimension of the factor. As this is purely algebraic 
we systematically use the notation $\Lambda^4_{27}M=\Lambda^4_{27}(\varphi_s), \Lambda^3_{27}M=\Lambda^3_{27}(\varphi_s)$ to emphasize dependence on the $\G_2$ structure. In addition we have a canonical isomorphism $\bfi: \Sym^2_0(M,g_s) \rightarrow \Lambda^3_{27}(\varphi_s)$ which acts on decomposable tensors as the restriction of   
the mapping $a \otimes a \mapsto a \wedge (a \lrcorner \varphi_s)$ for $a \in \mathrm T M$. This isomorphism differs by a factor of $\tfrac12$ from the definition given in \cite{Bryant1}, to which we refer the reader for further information.

To explain the torsion type of the $\G_2$-structure $\varphi_s$ we record a few consequences of the structure equations. Firstly, the frame $Z^a$ satisfies 
\begin{equation} \label{dza}
\begin{split}
&\di\! Z^a=2s \,\omega_a \,-\, \tfrac{2}{s} \ \, Z^{bc}\\
&\di Z^{ab}=2s(\omega_a \wedge Z^b-\omega_b \wedge Z^a)\\
&  \di Z^{123}=2s \mathfrak{S}_{abc}Z^{ab} \wedge \omega_c
\end{split}
\end{equation}
where $\mathfrak{S}_{abc}$ indicates the cyclic sum on $abc$. Secondly, differentiating in \eqref{str-xi} yields 
\begin{equation} \label{str-o}
\di\!\omega_a=2(\omega_b \wedge \xi^c-\omega_c \wedge \xi^b)=\tfrac{2}{s} \,(\omega_b \wedge Z^c \,-\, \omega_c \wedge Z^b).
\end{equation}
These equations reveal that the choice $s = 1/\sqrt{5}$ plays a distinguished r\^ole; in particular this value of $s$ picks up the second Einstein metric 
in the canonical variation of the Einstein metric $g$ as the following shows.
\begin{teo}\cite{FKMS,GS} \label{recap}
The form $\varphi_s$ defines a nearly $\G_2$ structure if and only if $s = 1/\sqrt{5}$.
\end{teo}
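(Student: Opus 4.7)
The plan is a direct computation: expand $\di\varphi_s$ using the structure equations \eqref{dza} and \eqref{str-o}, collect the result into the two basic blocks already present in $\star_s\varphi_s$, and read off for which $s$ the two expressions become proportional. Since both $\di\varphi_s$ and $\star_s\varphi_s$ naturally split into a purely horizontal part (a multiple of $\vol_{\H}$) and a mixed part (supported on the cyclic sum $\mathfrak{S}_{abc} Z^{bc}\wedge\omega_a$), the nearly $\G_2$ condition $\di\varphi_s=\tau_0\star_s\varphi_s$ reduces to a system of two scalar equations in the single unknown $s$ (together with a nonzero $\tau_0$).

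The first piece is handled directly by the third line of \eqref{dza}: $\di Z^{123}=2s\,\mathfrak{S}_{abc}Z^{ab}\wedge\omega_c$, which after reindexing contributes $2s$ to the coefficient of $\mathfrak{S}_{abc}Z^{bc}\wedge\omega_a$ and nothing to the horizontal part. For the second piece $\sum_a Z^a\wedge\omega_a$ I would apply the Leibniz rule. Using the first identity $\di Z^a=2s\omega_a-\tfrac{2}{s}Z^{bc}$ in \eqref{dza} together with $\omega_a^2=2\vol_{\H}$ (which holds for every $a$ since $\vol_{\H}=\tfrac12\omega_1^2$) one obtains
\[
\sum_a \di Z^a\wedge\omega_a \;=\; 12s\,\vol_{\H} \;-\; \tfrac{2}{s}\,\mathfrak{S}_{abc}Z^{bc}\wedge\omega_a.
\]
The dual contribution $\sum_a Z^a\wedge\di\omega_a$ is computed from \eqref{str-o}; after commuting the 1-forms $Z^a$ past the 2-forms $\omega_b$ and invoking the antisymmetry $Z^{ij}=-Z^{ji}$, the three cyclic terms collapse to $-\tfrac{4}{s}\,\mathfrak{S}_{abc}Z^{bc}\wedge\omega_a$, with no horizontal component.

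Putting everything together yields $\di\varphi_s = 12s\,\vol_{\H} + \bigl(2s+\tfrac{2}{s}\bigr)\,\mathfrak{S}_{abc}Z^{bc}\wedge\omega_a$, whereas $\star_s\varphi_s=\vol_{\H}+\mathfrak{S}_{abc}Z^{bc}\wedge\omega_a$ as recorded above. Since the two blocks are everywhere linearly independent, the proportionality $\di\varphi_s=\tau_0\star_s\varphi_s$ forces simultaneously $\tau_0=12s$ and $\tau_0=2s+\tfrac{2}{s}$, hence $10s=\tfrac{2}{s}$, i.e., $s=1/\sqrt{5}$, with $\tau_0=12/\sqrt{5}\neq 0$ as required. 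Conversely, for this value of $s$ the two coefficients agree, proving the converse implication.

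The main obstacle is not conceptual but purely one of sign bookkeeping: both \eqref{dza} and \eqref{str-o} carry cyclic conventions on $abc$, and the identities $\omega_a^2=2\vol_{\H}$, $Z^{ij}=-Z^{ji}$, along with the commutation of $Z^a$ with the 2-forms $\omega_b$, must be applied consistently so that the three cyclic contributions in $\sum_a Z^a\wedge\di\omega_a$ genuinely add rather than partially cancel. Once the accounting is done the remaining step is the linear algebra of matching two coefficients, which uniquely selects $s=1/\sqrt{5}$.
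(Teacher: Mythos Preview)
Your computation is correct. The paper does not supply its own proof of this theorem (it is attributed to \cite{FKMS,GS}); your direct expansion from the structure equations \eqref{dza} and \eqref{str-o} is the standard verification, and the resulting identity $\di\varphi_s = 12s\,\vol_{\H} + (2s+\tfrac{2}{s})\,\mathfrak{S}_{abc}Z^{bc}\wedge\omega_a$ reproduces the torsion constant $\tau_0=12/\sqrt{5}$ quoted immediately after the theorem. The paper later recovers the special case $\star_s\di\varphi_s=12s\varphi_s$ at $s=1/\sqrt{5}$ as a byproduct of the block structure in Lemma~\ref{blgen-1}, but does not revisit the ``only if'' direction you establish here.
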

With $s =\s5$ we explicitly have 
$
\di\!\varphi_s=\tfrac{12}{\sqrt{5}} \,\star_s \varphi_s
$. 
As mentioned in the introduction the nearly $\G_2$ structure $\varphi_{\s5}$ has the remarkable 
property to be proper, equivalently
the Einstein metric $g_{\s5}$ does not admit a compatible Sasaki structure. See \cite{FKMS} for more details.
%%%%%%%%%%%%%%%%%%%%%%%%
%%%%%%%%%%%%%%%%%%%%%%%%
To end this section we derive further properties of the horizontal Hodge star operator.
Direct computation based on \eqref{met} leads to 
\begin{equation} \label{Ho3}
\begin{split}
&\sH \alpha=I_a \alpha \wedge \omega_a, \ \ \star_{\H}(\alpha \wedge \omega_a)=I_a\alpha
\end{split}
\end{equation}
for $1 \leq a \leq 3$ and $\alpha \in \Lambda^1\H$. Here the endomorphisms $I_a$ act on $1$-forms $\alpha \in \Lambda^1\H$ by composition, $I_a\alpha:=\alpha \circ I_a$.
In particular \eqref{Ho3} entails the comparaison formulas 
\begin{equation} \label{Ho4}
I_1 \alpha \wedge \omega_1=I_2 \alpha \wedge \omega_2=I_3 \alpha \wedge \omega_3
\end{equation}
as well as 
\begin{equation} \label{Ho44}
I_a\alpha \wedge \omega_b=-I_b\alpha \wedge \omega_a=\alpha \wedge \omega_c
\end{equation}
with $\alpha \in \Lambda^1\H$ and cyclic permutations on $abc$. These will be frequently used in the following sections.
\subsection{The Lichnerowicz Laplacian} \label{LL}
We review a few facts about the spectrum of the Lichnerowicz Laplacian $\Delta_L^{g_s}$ acting on the space $\TT(g_s)$ of TT-tensors. For the precise definition of this operator, which is not needed at this stage, see \cite{Besse} or section \ref{CF} of the paper. We let 
$s=1/\sqrt{5}$ in what follows and recall how the $\G_2$ structure $\varphi_s$ can be used to identify $\Delta_L^{g_s}$ with an operator acting 
on $\Omega^3M$. According to \cite{AlS}
\begin{equation} \label{TT-int}
\begin{split}
\bfi (\TT(g_s))=&\{\gamma \in \Omega^3_{27}(\varphi_s) : (\di\!\gamma)_{\Lambda^4_7}=0\}\\
=&\{\gamma \in \Omega^3_{27}(\varphi_s) :(\di\!^{\star}\gamma)_{\Lambda^2_7}=0\}= \{\gamma \in \Omega^3_{27}(\varphi_s) : \di\! \gamma \in \Omega^4_{27}(\varphi_s)\}
\end{split}
\end{equation}
where the last two equalities follow essentially by type considerations w.r.t. the $\G_2$ invariant splitting of $\Lambda^{\star}M$.

On the space 
$\{\gamma \in \Omega^3_{27}(\varphi_s) :(\di\!\gamma)_{\Lambda^4_7}=0 \}$ the comparaison formula relating $\Delta_L^{g_s}$ to the form Laplacian $\Delta^{g_s}:\Omega^3M \to \Omega^3M$ from \cite[Prop. 6.1]{AlS} reads 
\begin{equation}\label{DeltaL}
\bfi \circ \, \Delta_L^{g_s} \, \circ \bfi^{-1} \;=\; \Delta^{g_s}  \;+\; 6s \star_s \di  \;+\; 36 s^2.
\end{equation}
As the operator on the r.h.s. of \eqref{DeltaL} can be rewritten as $(\star_s\di+3s)^2+\di\di^{\star_s}+27s^2$ we obtain the estimate 
$$\Delta_L^{g_s} \geq 27s^2$$ 
on $\TT(g_s)$. In our setup this recovers, with a simple proof, the lower bound for the first Lichnerowicz eigenvalue for metrics with Killing spinors in \cite{GiPope} used as a criterion for generalised black hole stability in the Freund-Rubin compactification. 

Throughout this paper we are interested in eigenvalues $\tau$ for $\Delta_L^{g_s}:\TT(g_s) \to \TT(g_s)$ with $\tau \le 2E_s$, where we recall that the Einstein constant of the metric $g_s$ is explicitely given by 
$E_s=54s^2$. The eigenspace for $\tau = 2E_s$ is precisely the space of infinitesimal Einstein deformations of $g_s$, which contains 
infini\-tesimal $\G_2$ deformations as a subspace. The latter 
%. Note that the subspace of infinitesimal $\G_2$ deformations  
correspond to $E_{-12s}$ where the notation 
$$E_{\lambda}:=\ker(\star_s\di-\lambda) \cap  \Omega^3_{27}(\varphi_s)$$ 
for $\lambda \in \bbR$ will be used in the rest of the paper. Eigenvalues $\tau<2E_s$ will be called {\it{unstable}} and the corresponding eigentensors form the space of destabilising directions \cite{Koiso1980}. Arguments entirely similar to those used in the proof of Theorem 6.2 in \cite{AlS} show that 
\begin{pro} \label{lich}
The eigenspace $\ker(\Delta_L^{g_s}-\tau)$ of the Lichnerowicz Laplacian $\Delta_L^{g_s}$
%$\ker(\Delta_L-\tau) \cap \ker\delta $ 
acting on $\TT(g_s)$
%divergence free sections of $\Sym^2_0(M,g_s)$ 
is isomorphic to the direct sum 
$$ E_{\lambda^{+}} \oplus E_{\lambda^{-}} \oplus \{ \gamma \in \Omega^3_{27}(\varphi_s) : \di\!\di^{\star_s}\gamma=\mu \gamma\}
$$
where $\lambda^\pm = -3s \pm \sqrt{\tau - 27s^2}$ and $\mu=\tau - 36s^2 \neq 0$. In case $\tau\leq 2E_s=108s^2$ we must have 
$$\ \lambda^+ (\lambda^+ + 2s) \le \tfrac{48}{5}, \ \lambda^- (\lambda^- + 2s) \le 24, \, \ 0 \neq \mu \leq \tfrac{72}{5}.
$$ 
\end{pro}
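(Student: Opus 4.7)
The plan is to recast the Lichnerowicz eigenvalue equation $\Delta_L^{g_s} h = \tau h$ for $h\in\TT(g_s)$ as a spectral problem for two commuting self-adjoint operators on $3$-forms. Setting $\gamma := \bfi(h)$, I would use $\Delta^{g_s} = \di\di^{\star_s} + \di^{\star_s}\di$ together with the identity $\di^{\star_s}\di = (\star_s\di)^2$ on $\Omega^3 M$ (which follows from $\di^{\star_s} = \pm\star_s\di\star_s$ and $\star_s^2 = 1$ on $\Omega^3$, $\Omega^4$) to complete the square in \eqref{DeltaL} and obtain
\begin{equation*}
\bfi \circ \Delta_L^{g_s} \circ \bfi^{-1} \;=\; (\star_s\di + 3s)^2 \,+\, \di\di^{\star_s} \,+\, 27 s^2
\end{equation*}
on $\bfi(\TT(g_s))$. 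Writing $A := \star_s\di$ and $C := \di\di^{\star_s}$, the eigenvalue equation reads $(A+3s)^2\gamma + C\gamma = (\tau - 27s^2)\gamma$.

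The algebraic key is the identity $AC = CA = 0$ on $\Omega^3 M$: the composition $AC$ contains the factor $\di^2 = 0$, while $\di^{\star_s}\star_s\di = -\star_s\di\star_s\star_s\di = -\star_s\di^2 = 0$ on $3$-forms forces $CA = 0$. Both $A$ and $C$ are self-adjoint ($A$ by Stokes, $C$ being the non-negative operator $\di\di^{\star_s}$), so they commute and admit a simultaneous spectral decomposition. Moreover, any joint eigenvector with $C\gamma = \mu\gamma$ and $\mu \neq 0$ automatically has $A\gamma = 0$, since $\mu A\gamma = AC\gamma = 0$.

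Applying $C$ to the eigenvalue equation and using $CA = CA^2 = 0$ gives $C^2\gamma = \mu C\gamma$ with $\mu = \tau - 36s^2$. Assuming $\mu \neq 0$, set $\gamma_2 := \mu^{-1}C\gamma$; then $C\gamma_2 = \mu\gamma_2$, and $\gamma_2$ lies in the image of $\di$, so $A\gamma_2 = 0$. The residual $\gamma_1 := \gamma - \gamma_2$ lies in $\ker\di^{\star_s}$ and satisfies $(A+3s)^2\gamma_1 = (\tau - 27s^2)\gamma_1$; self-adjointness of $A$ on $\ker\di^{\star_s}$ produces the further splitting $\gamma_1 = \gamma^+ + \gamma^-$ with $A\gamma^\pm = \lambda^\pm\gamma^\pm$, where $\lambda^\pm = -3s \pm \sqrt{\tau - 27s^2}$. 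The converse, that any element of the direct sum lies in $\ker(\bfi\Delta_L^{g_s}\bfi^{-1} - \tau)$, is immediate from $AC = CA = 0$. That each of $\gamma^\pm, \gamma_2$ stays of $\G_2$-type $27$ is then verified via the equivalent descriptions of $\bfi(\TT(g_s))$ in \eqref{TT-int}.

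For the bounds when $\tau \leq 2E_s = 108s^2$: the estimate $0 \neq \mu = \tau - 36s^2 \leq 72s^2$ is immediate, while a direct expansion with $\sigma := \sqrt{\tau - 27s^2}$ gives
$$\lambda^\pm(\lambda^\pm + 2s) \;=\; (\sigma \mp 3s)(\sigma \mp s) \;=\; \tau - 24s^2 \mp 4s\sigma,$$
and elementary monotonicity analysis on $\tau \in [27s^2, 108s^2]$ with $s = 1/\sqrt{5}$ returns $\lambda^+(\lambda^+ + 2s) \leq 48/5$ and $\lambda^-(\lambda^- + 2s) \leq 24$. The main technical obstacle is the stability of $\Omega^3_{27}(\varphi_s)$ under the decomposition $\gamma = \gamma^+ + \gamma^- + \gamma_2$, which relies on the three equivalent characterizations of $\bfi(\TT(g_s))$ in \eqref{TT-int} and parallels the argument in Theorem 6.2 of \cite{AlS}.
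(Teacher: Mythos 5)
Your overall route is essentially the paper's own: the completed square $(\star_s\di+3s)^2+\di\di^{\star_s}+27s^2$ is already recorded right after \eqref{DeltaL}, and the paper likewise splits $\bfi(\ker(\Delta_L^{g_s}-\tau))$ by eigenvalues of the self-adjoint operator $\star_s\di$, using that nonzero $\star_s\di$-eigenforms are coexact and that $\di\di^{\star_s}$-eigenforms with $\mu\neq 0$ are exact (your relations $AC=CA=0$ repackage exactly this). Your derivation of the bounds $\lambda^{\pm}(\lambda^{\pm}+2s)\le \tfrac{48}{5},\,24$ and $\mu\le 72s^2$ on the interval $27s^2\le \tau\le 108s^2$ is correct, and leaving the invariance of $\bfi(\TT(g_s))\subseteq\Omega^3_{27}(\varphi_s)$ under $\star_s\di$ and $\di\di^{\star_s}$ to \eqref{TT-int} and \cite{AlS} is done at the same level of detail as in the paper, so that is not a defect.

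There is, however, one genuine gap: the clause $\mu=\tau-36s^2\neq 0$, which is part of the statement, is never proved. You write ``Assuming $\mu\neq 0$'' when constructing $\gamma_2=\mu^{-1}C\gamma$ and later declare ``$0\neq\mu\le 72s^2$'' to be immediate, but it is not: if $\tau=36s^2$ then $\mu=0$ (equivalently $\lambda^{+}=0$), your $\gamma_2$ is undefined, and the residual piece with $C\gamma_1=0$ and $A\gamma_1=0$ consists of forms that are simultaneously closed and coclosed, i.e.\ harmonic. The paper disposes of precisely this case by a topological input your argument nowhere supplies: $H^3_{dR}M=0$ for a compact $3$-Sasaki $7$-manifold (\cite{GS}), so any harmonic $3$-form vanishes; this is what forces the third summand to carry a strictly nonzero eigenvalue and what legitimises the ``$0\neq\mu$'' in the final display. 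Without this step the proposition as stated is not established — you need to add the observation that a $\gamma$ with $\star_s\di\gamma=0$ and $\di\di^{\star_s}\gamma=0$ is harmonic, hence zero by the vanishing of the third de Rham cohomology.
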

\begin{proof}
We split the finite dimensional space $\bfi(\ker ( \Delta_L^{g_s} - \tau))$ into eigenspaces for the operator $\star_s \di$. To outline how this process works, record that $\star_s \di : \Omega^3M \to \Omega^3M$ is self-adjoint, commutes with the operator on the r.h.s. of \eqref{DeltaL} and at the same time preserves the condition $(\di\!\gamma)_{\Lambda^4_{7}}= 0$. 
Hence, for the eigenspace  $\ker (\star_s \di - \lambda) $ we either have $\lambda=0$, or $\lambda$ is determined from the quadratic equation 
$ \lambda^2 + 6s\lambda +36s^2-\tau= 0$
with solutions 
$\lambda^\pm = -3s \pm \sqrt{\tau - 27s^2}$. The square root is well defined due to the lower bound for $\Delta_L^{g_s}$ given above. For $\lambda=0$ it follows  that $\gamma \in \ker (\di\!\di^{\star_s} - \mu)$ with $\mu=\tau - 36s^2$. The instance $\mu=0$ cannot occur 
since it forces $\di\!\di^{\star_s}\gamma=0$; as $\star_s\di\gamma=0$ by hypothesis it follows that $\gamma$ is harmonic. Because the de Rham 
cohomology $H^3_{dR}M=0$ for $3$-Sasaki manifolds (see \cite{GS}) it follows that $\gamma=0$. Thus, assuming $\tau\leq 2E_s$ forces $\mu \le 72s^2$ as well as $\lambda^+ \le 6s$ and $|\lambda^-| \le 12 s$. 
A simple calculation then shows $\lambda^- (\lambda^- + 2s) \le 24$ and  $\lambda^+ (\lambda^+ + 2s) \le \tfrac{48}{5}$.
\end{proof}
In all eigenvalue estimates from Proposition \ref{lich} equality corresponds precisely to having $\tau = 2E_s$, i.e. to infinitesimal Einstein deformations. 
\section{$\G_2$ and $\su(2)$-representation spaces} \label{a-id}
\subsection{$\G_2$-modules} \label{G2}
We determine, for arbitrary values of $s>0$, the algebraic structure of the $\G_2$-module $\Lambda^3_{27}(\varphi_s)\subseteq \Lambda^3M$ w.r.t. the splitting $TM=\V \oplus \H$. As the latter ensures that
\begin{equation} \label{type3d} 
\Lambda^3M=\Lambda^3\V \oplus (\Lambda^2\V \wedge \Lambda^1\H) \oplus (\Lambda^1\V \wedge \Lambda^2\H) \oplus \Lambda^3\H
\end{equation}
we obtain an isomorphism $\iota_s: V^3\H \to \Lambda^3M$ given by 
\begin{equation*}
\iota_s\left( \begin{array}{c}
F\\
\alpha \\
\sigma \\
\beta \end{array} \right):=FZ^{123}+\mathfrak{S}_{abc}Z^{ab} \wedge \alpha_c+\sum_a Z^a \wedge \sigma_a+\beta
\end{equation*}
where $ V^3\H:=\Lambda^0\H \oplus \Lambda^1(\H,\bbR^3) \oplus \Lambda^2(\H,\bbR^3) \oplus \Lambda^3\H.$

The map $\iota_s$ is an isometry 
when $\Lambda^3M$ is equipped with the metric induced by $g_s$ and the bundle $V^3\H$
%$\Omega^0\H \oplus \Omega^1(\H,\bbR^3) \oplus \Omega^2(\H,\bbR^3) \oplus \Omega^3\H$ 
is equipped with the direct product metric induced by $g_{\H}$. Unless otherwise indicated sections of the latter bundle will be systematically viewed as column vectors, in order to enable multiplication by matrix valued differential operators. Relating the isomorphism $\iota_s$ to $\Lambda^3_{27}(\varphi_s)$ turns out to hinge on the purely algebraic contraction maps 
\begin{equation*}
\begin{split}
& \tT:\Lambda^{\star}(\H,\mathbb{R}^3) \to \Lambda^{\star}\H, \ \ \tT(\sigma):=\sH \sum_a \sigma_a \wedge \omega_a \\
& L_{\omega}:\Lambda^{\star}(\H,\bbR^3) \to \Lambda^{\star+2}(\H,\bbR^3), \ \ (L_{\omega}\sigma)_a:=\sigma_b \wedge \omega_c-\sigma_c \wedge \omega_b
\end{split}
\end{equation*}
with cyclic permutations on the indices $abc$. Indeed
\begin{lema} \label{dec-3sas}
The map $\kappa_s:\Lambda^1(\H,\bbR^3) \oplus \Lambda^2_{sym}(\H,\bbR^3) \to \Lambda^3_{27}(\varphi_s)$ given by 
\begin{equation*} 
%\label{split-g2s}
\kappa_s(\alpha,\sigma):=\iota_s(-\tT(\sigma),\alpha,\sigma,\sH \tT(\alpha))
\end{equation*} 
where 
%\begin{equation*}
$\Lambda^2_{sym}(\H,\bbR^3):=\ker(L_{\omega}:\Lambda^2(\H,\bbR^3)\to \Lambda^4(\H,\bbR^3))$
%\end{equation*}
is a bundle isomorphism.
\end{lema}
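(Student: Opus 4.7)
My plan is to establish the isomorphism in three steps: a rank count, injectivity of $\kappa_s$, and showing the image lies in $\Lambda^3_{27}(\varphi_s)$. First I would verify that $L_\omega : \Lambda^2(\H,\bbR^3) \to \Lambda^4(\H,\bbR^3)$ is fibrewise surjective; using the identities $\omega_a\wedge\omega_a = 2\vol_\H$ and $\omega_a\wedge\omega_b = 0$ for $a\ne b$ recorded in section \ref{3sas}, any element of $\Lambda^4(\H,\bbR^3)\cong\bbR^3$ is realized by a $\sigma$ with components proportional to the $\omega_a$. Hence $\Lambda^2_{sym}(\H,\bbR^3) = \ker L_\omega$ has fibrewise rank $18-3 = 15$, so the source of $\kappa_s$ has rank $12+15 = 27 = \mathrm{rk}\,\Lambda^3_{27}(\varphi_s)$. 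Injectivity of $\kappa_s$ is then immediate from the type decomposition \eqref{type3d}: the $\Lambda^2\V\wedge\Lambda^1\H$-component of $\kappa_s(\alpha,\sigma)$ is $Z^{12}\wedge\alpha_3 + Z^{23}\wedge\alpha_1 + Z^{31}\wedge\alpha_2$, determining $\alpha$, while the $\Lambda^1\V\wedge\Lambda^2\H$-component is $\sum_a Z^a\wedge\sigma_a$, determining $\sigma$.

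The heart of the proof is showing $\kappa_s(\alpha,\sigma) \in \Lambda^3_{27}(\varphi_s)$, for which I would use the $\G_2$-invariant orthogonal decomposition $\Lambda^3M = \bbR\varphi_s \oplus \Lambda^3_7(\varphi_s) \oplus \Lambda^3_{27}(\varphi_s)$ with $\Lambda^3_7(\varphi_s) = \{Y\lrcorner\star_s\varphi_s : Y\in TM\}$, and check the orthogonality of $\kappa_s(\alpha,\sigma)$ to $\varphi_s$ and to every $Y\lrcorner\star_s\varphi_s$ in the $g_s$-inner product. Since the four summands of \eqref{type3d} are $g_s$-orthogonal, the pairing with $\varphi_s = Z^{123}+\sum_a Z^a\wedge\omega_a$ collapses to $\langle -\tT(\sigma)Z^{123},Z^{123}\rangle+\sum_a\langle\sigma_a,\omega_a\rangle = -\tT(\sigma)+\tT(\sigma) = 0$, using the self-duality of the $\omega_a$ and the identity $\tT(\sigma) = \sum_a\langle\sigma_a,\omega_a\rangle$. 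For $Y=Z_a$ vertical, Lemma \ref{HoL} yields $Z_a\lrcorner\star_s\varphi_s = Z^b\wedge\omega_c - Z^c\wedge\omega_b$ cyclically, and pairing against $\kappa_s(\alpha,\sigma)$ returns $\langle\sigma_b,\omega_c\rangle-\langle\sigma_c,\omega_b\rangle$, which vanishes as the $\vol_\H$-coefficient of the $a$-th component of $L_\omega(\sigma) = 0$. For horizontal $Y = X$, expanding $X\lrcorner\star_s\varphi_s = X\lrcorner\vol_\H - \mathfrak{S}_{abc}Z^{ab}\wedge(X\lrcorner\omega_c)$ produces one contribution from the $\Lambda^3\H$-piece $\sH\tT(\alpha)$ of $\kappa_s$ against $X\lrcorner\vol_\H = \sH\,g_s(X,\cdot)$, and one from the $\Lambda^2\V\wedge\Lambda^1\H$-piece against the mixed block; using $X\lrcorner\omega_c = -I_c\,g_s(X,\cdot)$, the skew-adjointness of $I_c$ on $\Lambda^1\H$, and $\tT(\alpha) = \sum_c I_c\alpha_c$, both contributions reduce to $\pm\langle\tT(\alpha),g_s(X,\cdot)\rangle$ and cancel.

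Combining these facts, $\kappa_s$ is an injective bundle morphism from a rank-$27$ bundle into $\Lambda^3_{27}(\varphi_s)$, which also has rank $27$, so $\kappa_s$ is a bundle isomorphism.

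The subtlest step is the horizontal orthogonality check: it requires careful bookkeeping with the sign conventions $I_a\alpha = \alpha\circ I_a$ and $-\omega_a = g_\H(I_a\cdot,\cdot)$, and observing that the cancellation relies on precisely the coupling $\beta = \sH\tT(\alpha)$ built into $\kappa_s$. In fact both conditions $F = -\tT(\sigma)$ and $\beta = \sH\tT(\alpha)$ in the definition of $\kappa_s$ are pinned down by these two orthogonality requirements.
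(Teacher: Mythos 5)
Your proposal is correct, but it is organised differently from the paper's argument. The paper uses the algebraic characterisation $\Lambda^3_{27}(\varphi_s)=\{\gamma:\gamma\wedge\varphi_s=0,\ \gamma\wedge\star_s\varphi_s=0\}$ and computes, for a completely general $\gamma=\iota_s(F,\alpha,\sigma,\beta)$, the two wedge products
$\gamma\wedge\varphi_s=Z^{123}\wedge(\sH\tT(\alpha)-\beta)+\mathfrak{S}_{abc}Z^{ab}\wedge(L_{\omega}\sigma)_c$ and $\gamma\wedge\star_s\varphi_s=(F+\tT(\sigma))\vol_s$; projecting onto the summands of \eqref{type3d} this shows in one stroke that $\gamma\in\Lambda^3_{27}(\varphi_s)$ \emph{if and only if} $F=-\tT(\sigma)$, $\beta=\sH\tT(\alpha)$ and $L_{\omega}\sigma=0$, so bijectivity of $\kappa_s$ falls out with no rank count. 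You instead prove only the inclusion of the image into $\Lambda^3_{27}(\varphi_s)$ -- by pointwise orthogonality to $\bbR\varphi_s\oplus\Lambda^3_7(\varphi_s)$ with $\Lambda^3_7(\varphi_s)=\{Y\lrcorner\star_s\varphi_s\}$, which is essentially the metric dual of the paper's wedge computation -- and then conclude from injectivity together with a fibrewise dimension count; that route buys nothing extra but costs you the additional verification that $L_{\omega}$ is fibrewise surjective (so that $\ker L_{\omega}$ has rank $15$), which you sketch correctly. Two small sign remarks on your horizontal check: with the paper's conventions $I_a\alpha=\alpha\circ I_a$ and $-\omega_a=g_{\H}(I_a\cdot,\cdot)$ one has $X\lrcorner\omega_c=I_c(X^{\flat})$ (not $-I_c(X^{\flat})$) and $X\lrcorner\star_s\varphi_s=X\lrcorner\vol_{\H}+\mathfrak{S}_{abc}Z^{ab}\wedge(X\lrcorner\omega_c)$ with a plus sign; your two sign slips cancel each other, so the cancellation you assert -- and hence the proof -- is still correct, but you should fix the intermediate formulas if you write this up.
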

\begin{proof} 
Pick $\gamma=\iota_s(F,\alpha,\sigma,\beta)^T \in \Lambda^3M$ where $(F,\alpha,\sigma,\beta) \in V^3\H$. Direct algebraic computation, only using the vanishing of $\Lambda^q\H=0$ for 
$q \geq 5$, that of $\Lambda^4\V$  as well as the identity $\sH^2=(-1)^p$ on $\Lambda^p\H$ shows that 
\begin{equation*}
\begin{split}
\gamma \wedge \varphi_s&=Z^{123}\wedge (\sH\tT(\alpha)-\beta)+\mathfrak{S}_{abc}Z^{ab}\wedge (L_{\omega}\sigma)_c\\
\gamma \wedge \star_s \varphi_s&=(F+\tT(\sigma))\vol_s.
\end{split}
\end{equation*}
Recalling that $\Lambda_{27}^3(\varphi_s)=\{\gamma \in \Lambda^3M : \gamma \wedge \varphi_s=0, \gamma \wedge \star_s\varphi_s=0\}$ the claim follows by projection onto the component factors of \eqref{type3d}.
\end{proof}
The splitting of $\Lambda^3_{27}(\varphi_s)$ provided by the isomorphism above can be further refined by taking into account the following observations. As $L_{\omega}$ vanishes on $\Lambda^{-}(\H,\bbR^3)$ we have 
$$\Lambda^2_{sym}(\H,\bbR^3)=\Lambda^{+}_{sym}(\H,\bbR^3) \oplus \Lambda^{-}(\H,\bbR^3)$$
where $\Lambda^{+}_{sym}(\H,\bbR^3):=\Lambda^2_{sym}(\H,\bbR^3) \cap \Lambda^{+}(\H,\bbR^3)$.  Consider the element 
$\omega:=(\omega_1,\omega_2,\omega_3)^T$
%\left (\begin{array}{lr}
%\omega_1 \\
%\omega_2 \\
%\omega_3
%\end{array} \right )$ 
in $\Lambda^{+}_{sym}(\H,\bbR^3)$. Since 
the map 
$\Lambda^0(\H,\Sym^2(\mathbb{R}^3)) \to \Lambda^{+}_{sym}(\H, \mathbb{R}^3)$ given by matrix multiplication, $a\mapsto a \omega$, is a bundle isomorphism we can split 
\begin{equation*}\label{G22}
\Lambda^{+}_{sym}(\H,\bbR^3)=\ker \tT\oplus \,\bbR\omega
\end{equation*}
according to $\Sym^2\mathbb{R}^3=\Sym^2_0\mathbb{R}^3\oplus \mathbb{R}$. Consequently we obtain a distinguished line in $\Lambda^3_{27}(\varphi_s)$ spanned by 
\begin{equation*}
%\tilde{\varphi}_s:=\kappa_s(0,\omega)=\varphi_s-7Z^{123}
%\varphi^{3,4}_s:
\widetilde{\varphi}_s:=\kappa_s(0,\omega)=\varphi_s-7Z^{123}
\end{equation*}
where the last equality follows from $\tT(\omega)=6$. As already mentioned in the introduction this plays a significant r\^ole when looking at unstable eigenvalues.
\begin{rema} \label{dual-rmk}
Having the forms $\omega_a$ self-dual makes that 
\begin{equation*}
\begin{split}
& (L_{\omega}^{\star}\sigma)_a=g_{\H}(\omega_b,\sigma_c)-g_{\H}(\omega_c,\sigma_b)
\end{split}
\end{equation*}
whenever $\sigma \in \Lambda^2(\H,\bbR^3)$. In particular 
$\Lambda^2_{sym}(\H,\bbR^3)=\ker(L_{\omega}^{\star}:\Lambda^2(\H,\bbR^3)\to \Lambda^0(\H,\bbR^3))$.
\end{rema}
We conclude by describing alternative algebraic expressions for the operator $\tT$ acting on $\Lambda^1(\H,\bbR^3)$.
Indeed \eqref{Ho3} makes that 
\begin{equation} \label{tT-alt}
\tT(\alpha)=\sum_a I_a\alpha_a 
\end{equation}
 when $\alpha \in \Lambda^1(\H,\bbR^3)$. Equivalently, 
\begin{equation*}
\tT=-\bbI^{\star} \ \mbox{on} \ \Lambda^1(\H,\bbR^3)
\end{equation*}
where the operator 
\begin{equation} \label{bbId}
\bbI:\Lambda^1\H \to 
\Lambda^1(\H,\bbR^3)
\ \mathrm{is \ defined \ according \ to} \  (\mathbb{I}\alpha)_a:=I_a\alpha.
 \end{equation}

\subsection{Geometry of the $\su(2)$-action} \label{sl2C}
Consider the representation of $\su(2)$ on $\Omega^{\star}M$ given by $A_a \mapsto \L_{\xi_a}$
for the basis choice 
$$A_1=\left( \begin{array}{ccc} 0& 0&0\\
0&0&-2\\
0&2&0
\end{array} \right), \ A_2=\left( \begin{array}{ccc} 0& 0&2\\
0&0&0\\
-2&0&0
\end{array} \right), \ A_3=\left( \begin{array}{ccc} 0& -2&0\\
2&0&0\\
0&0&0
\end{array} \right)
$$
in $\su(2)$. Since $\xi_a$ are Killing vector fields preserving $\H$ we have $\L_{\xi_a}^{\star}=-\L_{\xi_a}$ on $\Omega^{\star}M$ respectively $\Omega^{\star}\H$. Therefore the $\su(2)$-representation on $\Omega^{\star}M$ is orthogonal w.r.t the $L^2$-inner product induced by $g_s$ and preserves $\Omega^{\star}\H$ as well as the $\G_2$-invariant spaces 
$\Omega^3_{27}(\varphi_s)$ due to 
$$\spa \{\xi_1,\xi_2,\xi_3\} \subseteq \mathfrak{aut}(M,\varphi_s).$$ 
The last inclusion is a direct consequence of the structure equations \eqref{str-xi} and \eqref{str-o}. We indicate with 
\begin{equation} \label{ro}
\rho:\su(2) \times \Omega^{\star}\H \to \Omega^{\star}\H
\end{equation} the induced representation and let $\pi^1$ be the representation of $\su(2)$ on $\bbR^3$ by matrix multiplication. The Casimir operator of $\rho$ (or vertical Laplacian) thus reads 
$$\C:=-\sum_a \L^2_{\xi_a} :\Omega^{\star}\H \to \Omega^{\star}\H.$$ 
This differs by a factor of $\tfrac{1}{8}$ from the usual Lie theoretic definition involving the Killing form of $\su(2)$. The operator $\C$ is self-adjoint, non-negative and $\su(2)$-invariant. 

From the structure equations of the frame $\omega_a, a=1,2,3$ in \eqref{str-o} together with Cartan's formula we obtain
\begin{equation} \label{Lie-os}
\L_{\xi_a}\omega_b=-\L_{\xi_b}\omega_a=2\omega_c
\end{equation}  
which clearly entail
\begin{equation} \label{Lie-osI}
\L_{\xi_a}I_b=-\L_{\xi_b}I_a=2I_c
\end{equation}
on $\Omega^1\H$.
Direct computation based on these facts shows that the action of $\su(2)$ on $\Omega^3M$ by Lie derivatives breaks down via the isomorphism 
$\iota_s$ into 
\begin{itemize}
\item the direct sum representation $\rho \oplus \rho$ on $\Omega^1\H \oplus \Omega^3\H$
\item the tensor product representation $\rho \otimes \pi^1$ on $\Omega^{1}(\H,\bbR^3)$ respectively $\Omega^{2}(\H,\bbR^3)$.
\end{itemize}
The representation $\rho \otimes \pi^1$ acts according to $A_a\mapsto \L_{\xi_a}+A_a$ where the Lie derivative $\L_{\xi_a}$ is extended to act on each component of  
elements in $\Omega^{\star}(\H,\bbR^3)$. To determine the main invariants of the tensor product representation $\rho \otimes \pi^1$ we let 
$$\L_{\xi}:\Omega^{\star}\H \to \Omega^{\star}(\H,\bbR^3), \ \ \L_{\xi}:=(\L_{\xi_1}, \L_{\xi_2}, \L_{\xi_3})^T.$$ 
Its formal adjoint reads  
$ \L^{\star}_{\xi}\sigma=-\sum_a \L_{\xi_a}\sigma_a$
for $\sigma \in \Omega^{\star}(\H,\bbR^3)$. In addition consider 
\begin{equation*}
\begin{split}
&C:=\left( \begin{array}{ccc}
0 & -\L_{\xi_3} & \L_{\xi_2}\\
\L_{\xi_3}  & 0  & -\L_{\xi_1} \\
-\L_{\xi_2}  & \L_{\xi_1}  & 0
\end{array} \right  ):\Omega^{\star}(\H, \mathbb{R}^3) \rightarrow \Omega^{\star}(\H,\mathbb{R}^3)\\
& \p:=\tT \circ \L_{\xi}:\Omega^1\H \to \Omega^1\H.
\end{split}
\end{equation*}
An equivalent way of computing $\p$, derived from \eqref{tT-alt}, is according to $\p=\sum_a I_a \circ \mathscr{L}_{\xi_a}$. 

The operators $\tT,\L_{\xi},\p$ and $C$ feature in the block structure, w.r.t. to the splitting \eqref{type3d}, 
of various differential operators of interest in this paper, as we will see in the next section. Therefore it is useful to record here those of their properties which follow directly from basic representation theory.
\begin{lema} \label{inv-tL}
The operators
$\tT:\Omega^1(\H,\bbR^3) \to \Omega^1\H$ and $\L_{\xi}:\Omega^1\H \to \Omega^1(\H,\bbR^3)$, 
as well as $\p:\Omega^1\H \to \Omega^1\H$, 
are $\su(2)$ invariant.
\end{lema}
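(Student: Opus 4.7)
The plan is to verify each intertwining relation directly from the defining formulas, using only the commutation relations \eqref{su2} together with \eqref{Lie-osI}.

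For $\L_{\xi}:\Omega^1\H \to \Omega^1(\H,\bbR^3)$, intertwining between $\rho$ and $\rho \otimes \pi^1$ amounts to the identity
\begin{equation*}
(\L_{\xi_a}+A_a)(\L_{\xi}\alpha) \;=\; \L_{\xi}(\L_{\xi_a}\alpha), \qquad \alpha \in \Omega^1\H.
\end{equation*}
Taking the $b$-th component, this reduces to $(A_a \L_{\xi}\alpha)_b = [\L_{\xi_b},\L_{\xi_a}]\alpha = \L_{[\xi_b,\xi_a]}\alpha$. By \eqref{su2} the right hand side is $-2\L_{\xi_c}\alpha$ when $abc$ is cyclic and $+2\L_{\xi_c}\alpha$ in the opposite case; for $a=b$ both sides vanish. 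Direct inspection of the explicit matrices $A_1,A_2,A_3$ shows that this is precisely the $b$-th entry of $A_a \L_{\xi}\alpha$, which settles the first claim.

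For $\tT:\Omega^1(\H,\bbR^3)\to \Omega^1\H$ I would use the pointwise formula \eqref{tT-alt}, namely $\tT(\sigma)=\sum_b I_b\sigma_b$. Since $\L_{\xi_a}$ commutes with the pullback operation defining the $I_b$ only up to a correction governed by \eqref{Lie-osI}, the Leibniz rule yields
\begin{equation*}
\L_{\xi_a}\bigl(\tT(\sigma)\bigr) \;=\; \sum_b (\L_{\xi_a}I_b)\sigma_b \;+\; \sum_b I_b\,\L_{\xi_a}\sigma_b \;=\; \sum_b (\L_{\xi_a}I_b)\sigma_b \;+\; \tT(\L_{\xi_a}\sigma).
\end{equation*}
Hence intertwining $\L_{\xi_a}\tT=\tT\circ(\L_{\xi_a}+A_a)$ reduces to the purely algebraic identity
\begin{equation*}
\sum_b (\L_{\xi_a}I_b)\sigma_b \;=\; \tT(A_a\sigma)\;=\;\sum_b I_b(A_a\sigma)_b.
\end{equation*}
Plugging in $\L_{\xi_a}I_a=0$ and $\L_{\xi_a}I_b=2I_c$, $\L_{\xi_a}I_c=-2I_b$ (cyclic $abc$) from \eqref{Lie-osI}, the left side is $2I_c\sigma_b-2I_b\sigma_c$, which coincides with $I_b(A_a\sigma)_b+I_c(A_a\sigma)_c$ read off from the matrix $A_a$ (the diagonal entry gives no contribution since $\L_{\xi_a}I_a=0$ and $(A_a\sigma)_a=0$).

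Finally, $\p=\tT\circ \L_{\xi}$ is $\su(2)$-invariant as the composition of two $\su(2)$-invariant maps. The main obstacle is simply bookkeeping of signs and cyclic indices; conceptually the whole statement is a reflection of the fact that $\{\xi_1,\xi_2,\xi_3\}$ and $\{\omega_1,\omega_2,\omega_3\}$ (hence $\{I_1,I_2,I_3\}$) transform as the same $\su(2)$-triple under $\L_{\xi}$.
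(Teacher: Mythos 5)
Your proposal is correct and follows essentially the same route as the paper: a direct check of the intertwining identities using the bracket relations \eqref{su2} for $\L_{\xi}$, the Leibniz rule together with \eqref{Lie-osI} (including the implicit fact $\L_{\xi_a}I_a=0$, which the paper also uses tacitly) for $\tT$, and then invariance of $\p=\tT\circ\L_{\xi}$ by composition. The only cosmetic difference is that the paper carries out the computation for $a=1$ and appeals to symmetry, whereas you verify the general component identity against the matrices $A_a$.
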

\begin{proof}
Letting $\alpha \in \Omega^1(\H,\bbR^3)$ we get 
\begin{equation*}
\L_{\xi_1}\tT(\alpha)=\L_{\xi_1}\sum_a I_a\alpha_a=I_a( \sum_a \L_{\xi_1}\alpha_a)+2(I_3\alpha_2-I_2\alpha_3)=\tT(\L_{\xi_1}\alpha)+
2(I_3\alpha_2-I_2\alpha_3)
\end{equation*}
after using \eqref{Lie-osI}. At the same $\tT(A_1\alpha)=2(-I_2\alpha_3+I_3\alpha_2)$ and invariance for $\tT$ is proved. Similarly, with 
$\alpha \in \Omega^1\H$
\begin{equation*}
\begin{split}
\L_{\xi_1}\L_{\xi}\alpha&=(\L_{\xi_1}^2\alpha, \L_{\xi_1}\L_{\xi_2}\alpha,\L_{\xi_1}\L_{\xi_3}\alpha)^T=\L_{\xi}(\L_{\xi_1}\alpha)+
2(0,\L_{\xi_3}\alpha,-\L_{\xi_2}\alpha)^T\\
&=\L_{\xi}(\L_{\xi_1}\alpha)-A_1\L_{\xi}\alpha
\end{split}
\end{equation*}
from the $\su(2)$-bracket relations in \eqref{su2}. This proves invariance for $\L_{\xi}$ and thus also for $\p=\tT \circ \L_{\xi}$.
\end{proof}
Additionally, we consider the Casimir operator of the tensor product representation of 
$\su(2)$ on $\Omega^{\star}(\H,\bbR^3)$, which is defined according to $\C_{\rho \otimes \pi^1}=-\sum_a (\L_{\xi_a}+A_a)^2$. A short matrix multiplication calculation shows that 
$\sum_a A_a\L_{\xi_a}=\sum_a \L_{\xi_a}A_a=2C$ and $\sum_aA_a^2=-8$ on $\Omega^{\star}(\H,\bbR^3)$, hence after expanding the squares in the definition of $\C_{\rho \otimes \pi^1}$ we obtain 
\begin{equation} \label{cas01}
\C_{\rho \otimes \pi^1}=\C-4C+8.
\end{equation}
In particular $C:\Omega^{\star}(\H,\bbR^3) \to \Omega^{\star}(\H,\bbR^3)$ is $\su(2)$-invariant and self-adjoint, $C^{\star}=C$. Below we also compute its characteristic polynomial. 
\begin{lema} \label{sq} 
The following hold on  $\Omega^{\star}(\H,\bbR^3)$
\begin{eqnarray}
&&C^2=2C+\C-\L_{\xi}\L_{\xi}^{\star} \label{sqC}\\
&&\L^{\star}_{\xi} \circ (C-2)=0. \label{sqC00}
\end{eqnarray}
\end{lema}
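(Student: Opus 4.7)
The plan is to reduce both identities to matrix computations in the algebra generated by $L_a := \L_{\xi_a}$, relying only on the $\su(2)$-bracket relations \eqref{su2}, which in infinitesimal form read $[L_a,L_b] = 2L_c$ for cyclic permutations of $abc$. The entries of the relevant matrices are worked out and the commutation relations are fed in at the very last step.

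For \eqref{sqC} I would expand $C^2$ as an explicit $3\times 3$ matrix of operators and compare it entry by entry with $2C + \C - \L_\xi \L_\xi^\star$. Two observations simplify the right-hand side. First, $\C$ acts on $\Omega^\star(\H,\bbR^3)$ as the scalar operator $-(L_1^2+L_2^2+L_3^2)$ times the $3\times 3$ identity matrix. Second, $\L_\xi \L_\xi^\star$ has $(b,a)$-entry $-L_b L_a$, since $\L_\xi^\star \sigma = -\sum_a L_a\sigma_a$ and $(\L_\xi f)_b = L_b f$. The diagonal entries of $C^2$ are $-L_b^2 - L_c^2$, and these already match the diagonal of $\C - \L_\xi \L_\xi^\star$; the $2C$-term has zero diagonal, consistent with this match. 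For an off-diagonal position $(i,j)$ the entry of $C^2$ is $L_j L_i$, while the right-hand side contributes $L_i L_j \pm 2 L_k$, with the sign dictated by the skew pattern of $C$. The discrepancy $L_j L_i - L_i L_j = [L_j,L_i] = \pm 2 L_k$ by \eqref{su2} matches precisely, so the two sides agree.

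For \eqref{sqC00} I would expand $\L_\xi^\star (C\sigma) = -\sum_a L_a (C\sigma)_a$ componentwise and regroup by the indices of $\sigma$. The coefficient of each $\sigma_a$ collapses to a commutator $[L_b,L_c]$ for a suitable cyclic permutation, which equals $2L_a$ by \eqref{su2}. This produces $\L_\xi^\star(C\sigma) = -2\sum_a L_a \sigma_a = 2\,\L_\xi^\star \sigma$, which is the claim.

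I do not anticipate any genuine obstacle: the only delicate point is tracking the cyclic signs appearing in the definition of $C$ and in the bracket relations consistently. Both identities are Jacobi-type consequences of the $\su(2)$-commutation relations packaged in matrix form.
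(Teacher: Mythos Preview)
Your proposal is correct and matches the paper's approach for \eqref{sqC}: the paper simply says ``an elementary computation using only the $\su(2)$-bracket relations in \eqref{su2}'' and your entry-by-entry matrix check is exactly that computation made explicit. For \eqref{sqC00} there is a cosmetic difference: the paper instead verifies $(C-2)\circ \L_{\xi}=0$ by the same kind of direct expansion and then obtains $\L_{\xi}^{\star}\circ(C-2)=0$ by duality, using that $C$ is self-adjoint; you compute $\L_{\xi}^{\star}\circ C=2\L_{\xi}^{\star}$ directly, which is the adjoint of the same calculation and no harder.
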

\begin{proof}
Recall that $\C=-\sum_a \L_{\xi_a}^2$. After multiplying the operator valued matrices which occur in the definitions of $C,\L_{\xi}$ and $\L_{\xi}^{\star}$ it follows that $C^2$ respectively $\L_{\xi}\L_{\xi}^{\star}$ are given by 
\begin{equation*}
C^2_{ij}=\delta_{ij}\C+\L_{\xi_j}\L_{\xi_i} \ \mathrm{respectively} \ -(\L_{\xi}\L_{\xi}^{\star})_{ij}=\L_{\xi_i}\L_{\xi_j}.
\end{equation*}
The claim in \eqref{sqC} follows now from the $\su(2)$-bracket relations in \eqref{su2}. Similarly, an elementary computation only based on \eqref{su2} shows that $(C-2)\circ \L_{\xi}=0$. As $C$ is self-adjoint the claim in \eqref{sqC00} follows from the latter relation by duality.
%$$********$$
%
%An elementary computation using only the $\su(2)$-bracket relations in \eqref{su2} proves \eqref{sqC} as well as 
%$(C-2)\circ \L_{\xi}=0$. As $C$ is self-adjoint the claim in \eqref{sqC00} follows by duality.
\end{proof}
%%%%%%%%%%%%%%%%
%%%%%%%%%%%%%%%%
%%%%%%%%%%%%%%%%
\section{Operator block structure} 
\label{specD}
The primary aim is to determine the block structure of the operators $\star_s\di$ and $\Delta^{g_s}$ w.r.t the splitting induced by the isomorphism $\iota_s:
\mathbb{V}^3\H \to \Omega^3M$. Here $\bbV^3\H$ denotes the space of sections of the vector bundle $V^3\H$, explicitely 
$$\mathbb{V}^3\H=\Omega^0\H \oplus \Omega^1(\H,\bbR^3) \oplus \Omega^2(\H,\bbR^3) \oplus \Omega^3\H.$$
This is one the main technical steps in this paper, needed to determined the structure of various eigenspaces of Laplace type operators. Throughout this section the parameter $s$ will be arbitrary.  
\subsection{Horizontal operators} \label{HoO}
The first of these operators is the horizontal exterior derivative 
$ \dH:\Omega^{\star}\H \to \Omega^{\star+1}\H, \ \alpha \mapsto (\di\alpha)_{\H}$ 
where the subscript indicates projection onto 
%is the structure of the restriction of the ordinary exterior derivative $\di:\Omega^{\star}\H \to \Omega^{\star}M$. Whenever $\alpha \in %\Omega^{\star}\H$ indicate with $\dH\! \alpha$ the component of 
%$\di\!\alpha$ on 
$\Omega^{\star}\H$ w.r.t. the splitting $\Omega^{\star}M=\oplus_{i=0}^3 \Omega^{3-i}\V \wedge \Omega^{\star+i-3}\H$. Cartan's formula shows that $\dH$ is related to the ordinary exterior differential via 
\begin{equation} \label{Cart}
\di=\dH+\sum_{a} \xi^a \wedge \L_{\xi_a}.
\end{equation}
Note that the operators $\L_{\xi_a}$ preserve $\Omega^{\star}\H$ as $\V$ is totally geodesic. Further properties of the horizontal exterior derivative include its $\su(2)$-invariance 
\begin{equation} \label{com1}
[\dH,\L_{\xi_a}]=0.
\end{equation}
This is a consequence of \eqref{Cart} and is checked by using that $[\di,\L_{\xi_a}]=0$ together with having
$\L_{\xi_b}\xi^a \in \Omega^1\V$ as granted by the structure equations of the frame ${\xi^1,\xi^2,\xi^3}$. Secondly, with the aid of \eqref{Cart} and $(\di \xi^a)_{\H}=2\omega_a$ we see that the projection of the identity $\di^2=0$ onto $\Omega^{\star}\H$ reads 
\begin{equation} \label{sqH}
\di_{\H}^2+2\sum_a \omega_a \wedge \L_{\xi_a}=0.
\end{equation}
In particular the $\su(2)$-invariant operator $\p$ acting on $\Omega^1\H$ can be recovered from 
\begin{equation} \label{def-p}
\sH \di^2_{\H}=-2\!\p.
\end{equation}

The formal adjoint $\di_{\H}^{\star}:\Omega^{\star}\H \to \Omega^{\star-1}\H$ of $\dH$, computed w.r.t. the metric induced by $g_{\H}$, 
is also $\su(2)$-invariant i.e.
%\begin{equation*}
$[\di_{\H}^{\star},\L_{\xi_a}]=0$.
%\end{equation*}
%since $\L_{\xi_a}^{\star}=-\L_{\xi_a}$. 
It allows building the horizontal Laplacian 
$$\Delta_{\H}:=\dH\di_{\H}^{\star}+\dH \di_{\H}^{\star} : \Omega^{\star}\H \to \Omega^{\star}\H$$
which together with the Casimir operator of the representation $\rho$ 
%$$\C:=-\sum_a \L^2_{\xi_a} :\Omega^{\star}\H \to \Omega^{\star}\H$$ 
enters the following set of comparaison formulas involving the codifferential $\di^{\star_s}$ respectively the Laplacian 
$\Delta^{g_s}$ of the canonical variation $g_s,s>0$. 
\begin{lema} \label{comp-cd}
We have 
\begin{itemize}
\item[(i)] $\di_{\H}^{\star}=-\star_{\H} \dH \star_{\H}$ on $\Omega^{\star}\H$ as well as 
 $\di^{\star_s}=\di_{\H}^{\star}$ on $\Omega^0\H \oplus \Omega^1\H$
\item[(ii)] the horizontal component of $\Delta^{g_s}\alpha$ with $\alpha \in \Omega^1 \H$ satisfies
\begin{equation*}
(\Delta^{g_s}\alpha)_{\H}=(\Delta_{\H}+\frac{1}{s^2}\C)\alpha.
\end{equation*}
\end{itemize}
\end{lema}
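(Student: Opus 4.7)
The plan for both parts is a direct computation resting on three ingredients: the algebraic star identities in Lemma \ref{HoL}, Cartan's decomposition \eqref{Cart} of $\di$ into horizontal and vertical pieces, and the differentials \eqref{dza} of the scaled coframe $Z^a$.

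For part (i), the formula $\di_{\H}^{\star} = -\star_{\H}\dH\star_{\H}$ on $\Omega^{\star}\H$ is the standard expression of the codifferential on the oriented rank-$4$ inner product bundle $(\H, g_{\H}, \vol_{\H})$; the sign reflects that $\dim\H = 4$ is even. For the second claim, I would fix $\alpha \in \Omega^1\H$ and expand $\di^{\star_s}\alpha = -\star_s\di\star_s\alpha$. Lemma \ref{HoL} gives $\star_s\alpha = -Z^{123}\wedge\star_{\H}\alpha$; differentiating by Leibniz and applying \eqref{dza} and \eqref{Cart}, the term containing $\di Z^{123}$ produces a wedge involving $\omega_c\wedge\star_{\H}\alpha$, which vanishes as a $5$-form on the rank-$4$ bundle $\H$. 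The vertical Cartan correction $\sum_a\xi^a\wedge\L_{\xi_a}\star_{\H}\alpha$ is annihilated by the prefactor $Z^{123}$. What survives is $Z^{123}\wedge\dH\star_{\H}\alpha$, whose $\star_s$ equals $\star_{\H}\dH\star_{\H}\alpha$ via the last identity of Lemma \ref{HoL}, yielding $\di^{\star_s}\alpha = -\star_{\H}\dH\star_{\H}\alpha = \di_{\H}^{\star}\alpha$. The case $\Omega^0\H$ is trivial since both sides vanish by degree.

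For part (ii), expand $\Delta^{g_s}\alpha = \di\di^{\star_s}\alpha + \di^{\star_s}\di\alpha$ with $\alpha \in \Omega^1\H$. By (i) the first summand equals $\di(\di_{\H}^{\star}\alpha)$, whose horizontal component is $\dH\di_{\H}^{\star}\alpha$ by \eqref{Cart}. For the second summand, use Cartan to split $\di\alpha = \dH\alpha + \sum_a\xi^a\wedge\L_{\xi_a}\alpha$. The contribution $\di^{\star_s}\dH\alpha$ is handled by the same $\star_s$-manipulation as in (i): Lemma \ref{HoL} gives $\star_s\dH\alpha = Z^{123}\wedge\star_{\H}\dH\alpha$, and the computation produces $\di_{\H}^{\star}\dH\alpha$ as horizontal component, the $\di Z^{123}$ piece now surviving as a purely vertical contribution since $\omega_c\wedge\star_{\H}\dH\alpha$ is top degree on $\H$ rather than zero.

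The decisive term is $\di^{\star_s}(\sum_a\xi^a\wedge\L_{\xi_a}\alpha)$, which produces the Casimir. Rewriting $\xi^a = Z^a/s$, Lemma \ref{HoL} gives $\star_s(Z^a\wedge\L_{\xi_a}\alpha) = Z^{bc}\wedge\star_{\H}\L_{\xi_a}\alpha$; differentiating, the $\di Z^{bc}$ contribution again vanishes by rank (the wedge of $\omega$ with a $3$-form on $\H$), while the vertical Cartan correction $\xi^d\wedge\L_{\xi_d}\star_{\H}\L_{\xi_a}\alpha$ survives only for $d=a$ and contributes $\tfrac{1}{s}Z^{123}\wedge\L_{\xi_a}\star_{\H}\L_{\xi_a}\alpha$. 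A final application of $\star_s$, combined with $[\star_{\H},\L_{\xi_a}]=0$ (valid since $\xi_a$ preserves $g_{\H}$ and $\vol_{\H}$) and $\star_{\H}^2 = -1$ on $\Omega^1\H$, converts this into $-\tfrac{1}{s^2}\L_{\xi_a}^2\alpha$ per index. Summing over $a$ yields the horizontal contribution $\tfrac{1}{s^2}\C\alpha$, while all other contributions are vertical. Collecting the three horizontal pieces gives $(\Delta^{g_s}\alpha)_{\H} = \Delta_{\H}\alpha + \tfrac{1}{s^2}\C\alpha$. The main obstacle will be the careful bookkeeping of horizontal versus vertical contributions at each application of $\di$ and $\star_s$; the crucial simplifications are the repeated vanishing of $\omega_c\wedge(\text{horizontal form of excess degree})$ by rank, and the factor $1/s^2$ emerges cleanly from the two rescalings $\xi^a = Z^a/s$ needed for the vertical Cartan correction to produce a second Lie derivative.
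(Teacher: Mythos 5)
Your computations for the second identity in (i) and for part (ii) are correct: the mechanisms you isolate (the $\di Z^{123}$ and $\di Z^{bc}$ terms dying against horizontal forms of excess degree, the prefactor $Z^{123}$ killing the vertical Cartan corrections, the two factors of $1/s$ from $\xi^a=\tfrac1s Z^a$ producing $\tfrac{1}{s^2}\C$, and $[\sH,\L_{\xi_a}]=0$ with $\sH^2=-1$ on $\Omega^1\H$) all check out, and your degree-1 computation of $\di^{\star_s}$ reproduces the relevant case of the paper's formula. Note that for (ii) you take a genuinely different route: the paper avoids all of this bookkeeping by a quadratic-form argument, computing $(\di\alpha,\di\beta)_s$ with Cartan's formula and using $L^2$-orthogonality of the type decomposition, so that only $(\dH\alpha,\dH\beta)+\sum_a(\L_{Z_a}\alpha,\L_{Z_a}\beta)$ survives; your pointwise computation is heavier but yields the same horizontal component (and in principle the vertical one too).

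The genuine gap is the first identity of (i), which is half of the statement and which you dispose of by calling $\di_{\H}^{\star}=-\sH\dH\sH$ ``the standard expression of the codifferential on the oriented rank-4 bundle''. That appeal is not valid as stated: $\di_{\H}^{\star}$ is by definition the formal adjoint of $\dH$ with respect to integration over the seven-manifold $M$, not the codifferential of a 4-manifold, and for a general foliation the naive formula fails (it acquires a mean-curvature correction). Its validity here uses the specific structure equations -- concretely, that $\di Z^{123}=2s\,\mathfrak{S}_{abc}Z^{ab}\wedge\omega_c$ contains no $Z^{123}\wedge(\mbox{horizontal})$ term, i.e.\ that the leaves are totally geodesic -- so it must be proved, not quoted. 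This matters because your proof of the second claim of (i) and your identification of $\Delta_{\H}$ in (ii) (via $\di_{\H}^{\star}\dH\alpha$, which invokes the identity on $\Omega^2\H$) both lean on it. The repair is cheap and already contained in your own argument: run your Leibniz/Lemma \ref{HoL} computation of $\di^{\star_s}$ on $\Omega^p\H$ for arbitrary $p$ (as the paper does), obtaining $\di^{\star_s}\alpha=-\sH\dH\sH\alpha+2s(-1)^p\sum_a Z^a\wedge\sH(\omega_a\wedge\sH\alpha)$, and then observe that by Cartan's formula and $L^2$-orthogonality of $\Omega^{\star}\V\wedge\Omega^{\star}\H$ the horizontal projection of $\di^{\star_s}$ on horizontal forms is precisely the formal adjoint of $\dH$; this gives $\di_{\H}^{\star}=-\sH\dH\sH$ on all degrees and the second claim follows since the extra term vanishes in degrees $0$ and $1$.
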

\begin{proof}
The claims in (i) are proved at the same time. Since $M$ has dimension $7$ we have $\di^{\star_s}=(-1)^p \star_s \di \star_s$ on $\Omega^pM$. Pick $\alpha \in \Omega^p\H$; using successively Lemma \ref{HoL} and \eqref{Cart} we obtain 
\begin{equation*}
\begin{split}
(-1)^p\di\star_s \alpha=&\di(Z^{123} \wedge \sH\alpha)=\di\!Z^{123} \wedge \sH \alpha-Z^{123} \wedge \dH \sH \alpha.
\end{split}
\end{equation*} 
As $\di\!Z^{123}=2s\mathfrak{S}_{abc}Z^{ab}\wedge \omega_c$ we find 
\begin{equation*}
\di^{\star_s}\alpha=(-1)^p\star_s\di(\star_s \alpha)=-\sH\dH\sH\alpha+2s(-1)^p\sum_a Z^a \wedge \sH(\omega_a \wedge \sH \alpha)
\end{equation*}
by taking once again into account the structure of $\star_s$ in Lemma \ref{HoL}. In particular the projection of $\di^{\star_s}$ onto $\Omega^{\star}\H$ equals $-\sH\dH\sH$ thus $\di_{\H}^{\star}=-\star_{\H} \dH \star_{\H}$ by $L^2$-orthogonality. To finish the proof it is enough to notice that $\omega_a \wedge \sH \alpha=0$ when $\alpha \in \Omega^0\H\oplus\Omega^1\H$.\\
(ii) follows by an $L^2$-orthogonality argument. First, we compute with the aid of \eqref{Cart} the $L^2$-product 
\begin{equation*}
\begin{split}
(\di\! \alpha, \di\! \beta)_s=&(\dH\alpha+\sum_a Z^a \wedge \L_{Z_a}\alpha, \dH\beta+\sum_b Z^b \wedge \L_{Z_b}\beta)_s\\
=&(\dH\alpha,\dH \beta)+\sum_a (\L_{Z_a}\alpha, \L_{Z_a}\beta)=((\di_{\H}^{\star}\!\dH+\frac{1}{s^2}\C)\alpha, \beta).
\end{split}
\end{equation*}
Here the round bracket denotes the $L^2$-product w.r.t. $g_s$ respectively $g$. The claim follows from having $\di^{\star_s}=\di_{\H}^{\star}$ on $\Omega^1\H$, as granted by (i).
\end{proof}
An entirely similar argument also shows that the scalar Laplacian 
$$\Delta^{g_s}=\Delta_{\H}+\frac{1}{s^2}\C \ \mathrm{on} \ C^{\infty}M.$$
Yet another action which is relevant for our purposes is 
\begin{equation} \label{spp1}
\sp(1) \times \Omega^1\H \to \Omega^1\H, \ i_a \mapsto I_a
\end{equation} 
where $\sp(1)$ is generated by $\{i_1,i_2,i_3\}$ with Lie bracket determined from $[i_a,i_b]=-2i_c$.
To finish this section we identify, for later use, the piece in the horizontal Laplacian $\Delta_{\H}$ which is $\sp(1)$-invariant, that is invariant under the complex structures $\{I_1,I_2,I_3\}$. 
The most computationally 
efficient way towards this end is to use the Riemannian cone $(CM:=M \times \bbR_{+},g_c:=r^2g+(\di\!r)^2)$ of $M$. This is hyperk\"ahler w.r.t. 
the triple of complex structures determined from 
\begin{equation*}
\begin{split}
&J_a\partial_r=-r^{-1}\xi_a, \ J_a\xi_b=\xi_c, \ J_a=I_a \ \mbox{on} \ \H 
%&J_2 \partial_r=-r^{-1}\xi_2, J_2 \xi_3=\xi_1, \ J_2=I_2 \ \mbox{on} \ \H\\\
%&J_3 \partial_r=-r^{-1}\xi_3, J_3 \xi_1=\xi_2, \ J_3=I_3 \ \mbox{on} \ \H. 
\end{split}
\end{equation*}
with cyclic permutations on $abc$. The corresponding symplectic forms are $\omega_{J_a}=-\frac{1}{2}\di(r^2\xi^a)$ and satisfy $g_c^{-1}\omega_{J_a}=J_a$. In fact an equivalent definition of a $3$-Sasaki metric is to require its metric cone be hyperk\"ahler.
\begin{lema} \label{comI} 
We have $[\Delta_{\H}+\C,I_a]=0$ on $\Omega^1\H$.
\end{lema}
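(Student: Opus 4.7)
The plan is to exploit the hyperk\"ahler structure on the Riemannian cone $(CM,g_c)$ of $M$: since $(g_c,J_a)$ is K\"ahler for each $a$, the classical identity $[\Delta^{g_c},J_a]=0$ holds on $\Omega^{\star}CM$, where $J_a$ acts on $p$-forms by $(J_a\eta)(X_1,\ldots,X_p)=\eta(J_aX_1,\ldots,J_aX_p)$. For $\alpha \in \Omega^1\H$ I would lift to $\pi^{\ast}\alpha \in \Omega^1 CM$ via the projection $\pi:CM \to M$ and read off $[\Delta_{\H}+\C,I_a]=0$ by comparing horizontal $M$-form components on the two sides of this identity applied to $\pi^{\ast}\alpha$.

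The first technical ingredient is the scaling of the cone Hodge star: for $\eta \in \Omega^p M$ lifted to $CM$, one checks $\star_c\eta=(-1)^p r^{7-2p}\,dr\wedge \star_M\eta$, which yields $\di^{\star_{g_c}}\pi^{\ast}=r^{-2}\pi^{\ast}\di^{\star_g}$. Combined with $\di\pi^{\ast}=\pi^{\ast}\di$ this gives
\begin{equation*}
\Delta^{g_c}\pi^{\ast}\alpha \;=\; r^{-2}\pi^{\ast}\Delta^g\alpha \;-\; 2r^{-3}(\pi^{\ast}\di^{\star_g}\alpha)\,dr
\end{equation*}
for any $\alpha \in \Omega^1 M$ lifted. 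The second ingredient is the table of $J_a$-actions on relevant $1$-forms, obtained by dualising $J_a\partial_r=-r^{-1}\xi_a$, $J_a\xi_a=r\partial_r$, $J_a\xi_b=\xi_c$, $J_a\xi_c=-\xi_b$ and $J_a|_{\H}=I_a$ (with cyclic $abc$): explicitly $J_a\pi^{\ast}\alpha=\pi^{\ast}(I_a\alpha)$ for $\alpha \in \Omega^1\H$, $J_a(dr)=r\pi^{\ast}\xi^a$, $J_a\pi^{\ast}\xi^a=-r^{-1}dr$, $J_a\pi^{\ast}\xi^b=-\pi^{\ast}\xi^c$ and $J_a\pi^{\ast}\xi^c=\pi^{\ast}\xi^b$.

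Using Lemma \ref{comp-cd}(ii) I would then decompose $\Delta^g\alpha=(\Delta_{\H}+\C)\alpha+\sum_b\beta_b\xi^b$ for $\alpha \in \Omega^1\H$ and apply $J_a$ termwise to both sides of $J_a\Delta^{g_c}\pi^{\ast}\alpha=\Delta^{g_c}\pi^{\ast}(I_a\alpha)$. The horizontal $M$-form component of the left-hand side reduces to $r^{-2}\pi^{\ast}(I_a(\Delta_{\H}+\C)\alpha)$, since $J_a(\pi^{\ast}(\beta_a\xi^a))$ contributes only a $dr$-term, each $J_a(\pi^{\ast}(\beta_b\xi^b))$ with $b\neq a$ contributes only a vertical $\xi^{b'}$-term, and the correction $J_a(dr)=r\pi^{\ast}\xi^a$ is likewise vertical. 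On the right-hand side, the horizontal $M$-form component is $r^{-2}\pi^{\ast}(\Delta_{\H}+\C)(I_a\alpha)$ by a second application of Lemma \ref{comp-cd}(ii) to $I_a\alpha \in \Omega^1\H$. Equating the two projections proves the lemma. The main obstacle is the bookkeeping of $J_a$-images, since $J_a$ interchanges the $dr$-direction with the $\xi^a$-direction on $CM$; however the horizontal $M$-form projection annihilates both $dr$-terms and vertical pieces, so the identity collapses cleanly onto the desired commutator relation.
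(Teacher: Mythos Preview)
Your proof is correct and follows essentially the same route as the paper: both arguments lift to the hyperk\"ahler cone, use the K\"ahler identity $[\Delta^{g_c},J_a]=0$ on $1$-forms, compare $\Delta^{g_c}$ on pulled-back $1$-forms to $\Delta^g$ via the formula $\Delta^{g_c}=r^{-2}\Delta^g+\di r^{-2}\wedge\di^{\star}$, and then project onto the horizontal component of $\Omega^1M$ using Lemma~\ref{comp-cd}(ii). Your version is more explicit about the $J_a$-table and the bookkeeping of $dr$- and $\xi^b$-components, whereas the paper compresses this into a single line; the substance is identical.
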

\begin{proof}
Indicating with $\Delta^c$ the Laplacian of the cone metric we derive 
%; from the computations in \cite[section...]{Uwehab} \edz{explain} we derive  
\begin{equation*} 
%\label{dcone}
\Delta^c=r^{-2}\Delta^g+\di r^{-2} \wedge \di^{\star}
\end{equation*}
on $\Omega^{\star}M \subseteq \Omega^{\star}CM$, after a short computation. Pick $\alpha \in \Omega^1\H$, so that $J_1\alpha=I_1\alpha$. As $(g_c,J_1)$ is K\"ahler $\Delta^c J_1=J_1 \Delta^c$ hence the comparaison  formula for the Laplacians above makes that 
\begin{equation*} 
%\label{comJ1}
\begin{split}
r^{-2}\Delta^g(I_1 \alpha)+\di r^{-2} \wedge \di^{\star}(I_1\alpha)=&J_1(r^{-2}\Delta^g\alpha+f\di r^{-2})
=r^{-2}J_1(\Delta^g \alpha)-2r^{-2}f\xi^1
\end{split}
\end{equation*}
where $f=\di^{\star}\!\alpha$.
Projecting onto $\Omega^1\H$ we find $(\Delta^g(I_1 \alpha))_{\H}=I_1(\Delta^g \alpha)_{\H}$ and the claim follows from Lemma \ref{comp-cd},(ii).
\end{proof}
\begin{coro} \label{dhp}
We have $[\Delta_{\H},\p]=0$ on $\Omega^1\H$.
\end{coro}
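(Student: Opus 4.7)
The approach exploits two ingredients already in place: Lemma \ref{comI}, which states that $\Delta_{\H}+\C$ commutes with each $I_a$, and Lemma \ref{inv-tL}, which states that $\p$ is $\su(2)$-invariant.

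First I would verify that $\Delta_{\H}$ commutes with every $\L_{\xi_a}$. This is immediate by pairing \eqref{com1} with the analogous relation $[\di_{\H}^{\star},\L_{\xi_a}]=0$ recorded just before the definition of $\Delta_{\H}$. Since $\C=-\sum_a\L_{\xi_a}^2$ is the Casimir, and hence central in the enveloping algebra of the $\su(2)$-action, it too commutes with each $\L_{\xi_a}$. Combined with Lemma \ref{comI}, this shows that $\Delta_{\H}+\C$ commutes with both $I_a$ and $\L_{\xi_a}$ for every $a$, and therefore with any operator assembled from them. In particular, using the alternative expression $\p=\sum_a I_a\circ \L_{\xi_a}$ given just after \eqref{def-p}, we obtain $[\Delta_{\H}+\C,\p]=0$.

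To conclude, I would invoke Lemma \ref{inv-tL}: the $\su(2)$-invariance $[\L_{\xi_a},\p]=0$ forces $\p$ to commute with every element of the enveloping algebra of the $\su(2)$-action, in particular with $\C$. Subtracting then gives
$$
[\Delta_{\H},\p]=[\Delta_{\H}+\C,\p]-[\C,\p]=0,
$$
which is the desired identity on $\Omega^1\H$.

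I do not anticipate any substantive technical obstacle: the corollary is a purely formal consequence of the preceding two lemmas, relying only on commutator manipulation and the centrality of the Casimir $\C$ in the $\su(2)$ enveloping algebra. The only mild bookkeeping point is to observe, at the outset, that the $\su(2)$-invariance of $\di_{\H}^{\star}$ (and hence of $\Delta_{\H}$) is already recorded in the paper, so no new computation involving the horizontal Hodge star operator is required.
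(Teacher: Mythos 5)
Your argument is correct and coincides with the paper's own proof: both use Lemma \ref{comI} together with the $\su(2)$-invariance of $\Delta_{\H}+\C$ to get $[\Delta_{\H}+\C,\p]=0$, and then the $\su(2)$-invariance of $\p$ from Lemma \ref{inv-tL} to get $[\C,\p]=0$, subtracting to conclude. The extra bookkeeping you supply (that $\Delta_{\H}$ commutes with each $\L_{\xi_a}$ via \eqref{com1} and its adjoint) is exactly what the paper leaves implicit.
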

\begin{proof}
As $\Delta_{\H}+\C$ is $\su(2)$ invariant and $\p=\sum_a \L_{\xi_a}I_a$ we get $[\Delta_{\H}+\C,\p]=0$ by Lemma \ref{comI}. At the same time 
$\p$ is $\su(2)$ invariant by Lemma \ref{inv-tL}, hence $[\C,\p]=0$ and the claim follows.
\end{proof}
\subsection{Block structure for $\star_s \di$} \label{dsys}
We make this explicit with the aid of the vertical operators $C,\L_{\xi},\p$ and their algebraic structure as described in Section \ref{a-id}. For notational convenience, we also consider the operator $\alpha \in \Lambda^{\star}\H \mapsto \alpha \wedge \omega \in 
\Lambda^{\star+2}(\H,\bbR^3)$ which acts according to $(\alpha \wedge \omega)_a:=\alpha \wedge \omega_a$. Thus prepared we first establish the following
\begin{lema} \label{blgen-1} The operator $\star_s \di:\Omega^3M \to \Omega^3M$ satisfies 
\begin{equation*}
\iota_s^{-1}\star_s \di \iota_s \left( \begin{array}{c}
F\\
\alpha \\
\sigma \\
\beta \end{array} \right)=\left( \begin{array}{ll}
2s\tT(\sigma)+\sH\dH\!\beta\\
-2s\PP\!\alpha-\sH\dH\!\sigma+\frac{1}{s}\sH\L_{\xi}\beta\\
2sF\omega+\sH\dH\!\alpha+\frac{1}{s}\sH(C-2)\sigma\\
-\sH(\frac{1}{s} \L_{\xi}^{\star}\alpha+\dH\!F)
\end{array} \right )
\end{equation*}
where $\PP:\Omega^1(\H,\bbR^3) \to \Omega^1(\H,\bbR^3)$ is given by $\PP=1+\bbI \circ \tT$ and $\bbI$ is defined according to \eqref{bbId}. 
\end{lema}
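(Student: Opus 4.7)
My plan is to compute $\iota_s^{-1}\star_s\di\,\iota_s(F,\alpha,\sigma,\beta)^T$ by a direct termwise expansion. The three ingredients I would use are Cartan's formula \eqref{Cart} to split $\di$ into horizontal and vertical pieces, the structure equations \eqref{dza} for $\di Z^a$, $\di Z^{ab}$ and $\di Z^{123}$, and Lemma \ref{HoL} to convert $\star_s$ into the horizontal Hodge star $\sH$. The intermediate results are then organised along the type splitting \eqref{type3d} and re-read as the four slots of $\iota_s$.

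Working through the four inputs, I would obtain the following contributions. From $\di(FZ^{123})$ only $\dH F\wedge Z^{123}$ and $2sF\,\mathfrak{S}_{abc}Z^{ab}\wedge\omega_c$ survive, since $Z^a\wedge Z^{123}=0$ kills the vertical Cartan correction. For $\di\bigl(\mathfrak{S}_{abc}Z^{ab}\wedge\alpha_c\bigr)$, Leibniz combined with \eqref{dza} produces a $Z\wedge\alpha\wedge\omega$--piece, the horizontal part $\mathfrak{S}_{abc}Z^{ab}\wedge\dH\alpha_c$, and the single surviving vertical contribution $-\tfrac{1}{s}Z^{123}\wedge\L_\xi^{\star}\alpha$. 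The term $\di(\sum_a Z^a\wedge\sigma_a)$ splits analogously into $2s\sum_a\omega_a\wedge\sigma_a$, a curvature piece $-\tfrac{2}{s}\sum_a Z^{bc}\wedge\sigma_a$ (with cyclic $abc$), the horizontal $-\sum_a Z^a\wedge\dH\sigma_a$, and the Cartan mixing $-\tfrac{1}{s}\sum_{a\neq b}Z^{ab}\wedge\L_{\xi_b}\sigma_a$. Finally $\di\beta=\dH\beta+\tfrac{1}{s}\sum_a Z^a\wedge\L_{\xi_a}\beta$. Applying $\star_s$ termwise via Lemma \ref{HoL}, together with the identity $\sH(\alpha\wedge\omega_a)=I_a\alpha$ from \eqref{Ho3}, sends each piece into its $\iota_s$-slot and, for instance, routes $2s\sum_a\omega_a\wedge\sigma_a$ directly to the $2s\tT(\sigma)$ in the top slot.

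The main obstacle is the algebraic collapse of the $Z\wedge\alpha\wedge\omega$--piece into the operator $\PP=1+\bbI\circ\tT$. After expanding the cyclic sum and grouping by $Z^a$ one obtains, via Lemma \ref{HoL}, contributions of the form $Z^{bc}\wedge\bigl(I_b\alpha_a-I_a\alpha_b\bigr)$ which must be recognised, slot by slot, as the components of $\PP\alpha$; this uses the quaternion relations $I_aI_b=I_c$ with $I_a^2=-\id$ together with the comparison identities \eqref{Ho4}--\eqref{Ho44}. A second subtle point is the appearance of the shift in $C-2$: the curvature contribution yields after $\star_s$ the constant $-\tfrac{2}{s}\sH\sigma$, while the Cartan mixing $-\tfrac{1}{s}\sum_{a\neq b}Z^{ab}\wedge\L_{\xi_b}\sigma_a$ is precisely the matrix encoding of $\tfrac{1}{s}\sH C\sigma$ once $\star_s$ is applied and the result is read off through $\iota_s^{-1}$ according to the sign rule of Lemma \ref{HoL}. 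Once these two identifications are in place, the remaining slots follow by direct inspection: $\sH\dH\beta$ and $\sH\L_\xi\beta$ come from $\di\beta$, $-\sH\dH\sigma$ from $-Z^a\wedge\dH\sigma_a$, $\sH\dH\alpha$ from $\mathfrak{S}_{abc}Z^{ab}\wedge\dH\alpha_c$, and $-\sH\dH F$ from $\dH F\wedge Z^{123}$. The entire argument is algebraic; no analytic input is needed beyond the bookkeeping of degrees and signs in Lemma \ref{HoL}.
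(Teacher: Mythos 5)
Your proposal is correct and follows essentially the same route as the paper's proof: expand $\di$ via Cartan's formula \eqref{Cart}, use the structure equations \eqref{dza} termwise on the four types of the splitting \eqref{type3d}, convert $\star_s$ through Lemma \ref{HoL}, and then recognise the $\alpha$-slot contribution as $-2s\PP\alpha$ (the paper packages this last step as the algebraic identity $-\sH\circ L_{\omega}=\PP$ on $\Lambda^1(\H,\bbR^3)$, which is exactly the quaternion-relation computation you describe). The only caveat is to keep the index pairing straight when reading off the $Z^{ab}$-slot, but this is bookkeeping, not a gap.
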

\begin{proof}
In the following computations we systematically take into account the structure equations of the frame $Z^a$ and their 
direct consequences, as listed in \eqref{dza}. A short computation based on the expansion of the exterior derivative $\di$ according to \eqref{Cart} and on 
the structure of the Hodge star operator $\star_s$ as described in Lemma \ref{HoL} thus leads to 
\begin{equation*}
\begin{split}
\star_s \di(FZ^{123})=&2sF \sum_a Z^a \wedge \omega_a-\star_{\H}\dH\!F\\
%%%%%%%%%%%%%%%%%%%%%%%%%
%\di (\sigma_{abc}Z^{ab} \wedge \alpha_c)=&\frac{1}{s}Z^{123} \wedge \underline{\alpha}+\sigma_{abc}Z^{ab} \wedge \dH \alpha_c-2s \sum_{a}Z^a %\wedge (\alpha \underline{\wedge} \omega)_a\\
%%%%%%%%%%%%%%%%%%%%%%%%%%
\star_s \di (\mathfrak{S}_{abc}Z^{ab} \wedge \alpha_c)=&2s \mathfrak{S}_{abc}Z^{ab} \wedge \sH(L_{\omega}\alpha)_c+
\sum_{a}Z^{a} \wedge \sH \dH \alpha_a-\tfrac{1}{s}\sH \L_{\xi}^{\star}\alpha \\
\star_s\di\!\sum_a Z^a \wedge \sigma_a=&2s \tT(\sigma) Z^{123}-\mathfrak{S}_{abc} Z^{ab} \wedge \sH \dH\sigma_a+\tfrac{1}{s} \sum_{a} Z^{a} \wedge \sH (C \sigma-2\sigma)_a\\
\star_s\di\beta=&Z^{123} \wedge \sH \dH \beta+\tfrac{1}{s}\mathfrak{S}_{abc} Z^{ab} \wedge \sH \L_{\xi_c}\beta
\end{split}
\end{equation*}
for $(F,\alpha,\sigma,\beta)\in \mathbb{V}^3\H$.
%\Omega^0\H \oplus \Omega^1(\H,\bbR^3) \oplus \Omega^2(\H,\bbR^3) \oplus \Omega^3\H $. 
The claim follows now 
by gathering terms and using the purely algebraic identity 
$-\sH \circ L_{\omega}=\PP \ \mbox{on} \ \Lambda^1(\H,\bbR^3).$
\end{proof}

As a direct consequence the forms $\varphi_s=\iota_s(1,0,\omega,0)$ and $\widetilde{\varphi}_s=\iota_s(-6,0,\omega,0)$ satisfy 
\begin{equation*}
\begin{split}
&\tfrac{7}{2s}\star_s\di\!\varphi_s=6(2+\tfrac{1}{s^2})\varphi_s+(\tfrac{1}{s^2}-5)\tilde{\varphi}_s, \ \ \ 
\tfrac{7}{2s}\star_s\di\!\tilde{\varphi}_s=6(\tfrac{1}{s^2}-5)\varphi_s+(\tfrac{1}{s^2}-12)\tilde{\varphi}_s
\end{split}
\end{equation*}
by taking into account that $C\omega=4\omega$ and $\tT(\omega)=6$. In particular when $s=\t5$ it follows that 
\begin{equation} \label{str5}
\begin{split}
&\star_s\di\!\varphi_s=12s\varphi_s\\
&\star_s\di\!\widetilde{\varphi}_s=-2s\tilde{\varphi}_s
\end{split}
\end{equation}
as previously claimed in section \ref{2ndE}.

To deal with the block structure of $\di$ acting on two forms we consider, in analogy with section \ref{G2}, the isometry 
$\iota_s : \mathbb{V}^2\H:=\Omega^0(\H,\bbR^3) \oplus \Omega^1(\H,\bbR^3) \oplus \Omega^2\H \to \Omega^2M$ given by 
\begin{equation} \label{iota2}
\iota_s \left(\begin{array}{c}
f\\
\alpha\\
\sigma \end{array}\right):=\mathfrak{S}_{abc}f_cZ^{ab}+\sum_a Z^a \wedge \alpha_a+\sigma.
\end{equation}
A calculation entirely similar to that in the proof of Lemma \ref{blgen-1} shows that the exterior differential $\iota_s^{-1}\di
\iota_s:\mathbb{V}^2\H \to \mathbb{V}^3\H$ reads  
\begin{equation} \label{blgen-2}
\iota_s^{-1}\di \iota_s \left(\begin{array}{c}
f\\
\alpha\\
\sigma \end{array}\right)=\left(\begin{array}{ll}
&-\frac{1}{s}\L_{\xi}^{\star}f\\ 
&\frac{1}{s}(C-2)\alpha+\dH\!f\\  
&-\dH\!\alpha-2s L_{\omega}f+\frac{1}{s}\L_{\xi} \sigma\\ 
&\dH\!\sigma-2s\sH \tT(\alpha)
\end{array} \right).
\end{equation}%
This allows proving the following 
%Writing $L_{\omega}^{\star}$ for the adjoint of $L_{\omega}$ this entails the following
\begin{lema} \label{cod-gen}
The codifferential $\di^{\star_s}:\Omega^3M \to \Omega^2M$ reads 
\begin{equation*}
\iota_s^{-1}\di^{\star_s} \iota_s \left (\begin{array}{c}
F\\
\alpha\\
\sigma \\
\beta
\end{array} \right)=
\left( \begin{array}{ll}
-\tfrac{1}{s}\L_{\xi}F+\di_{\H}^{\star}\!\alpha-2sL_{\omega}^{\star}\sigma\\
\tfrac{1}{s}(C-2)\alpha-\di_{\H}^{\star}\!\sigma-2s\bbI\sH\beta \\[.4ex]
\tfrac{1}{s}\L_{\xi}^{\star}\sigma+\di_{\H}^{\star}\!\beta 
\end{array} \right).
\end{equation*}
\end{lema}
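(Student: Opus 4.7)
The approach I would take is pure duality: the operator $\di^{\star_s}:\Omega^3M\to\Omega^2M$ is, by definition, the formal $L^2$-adjoint of $\di:\Omega^2M\to\Omega^3M$ with respect to the metric $g_s$. Since the isomorphism $\iota_s:\mathbb{V}^3\H\to\Omega^3M$ has already been noted to be an isometry, and since the analogous isomorphism $\iota_s:\mathbb{V}^2\H\to\Omega^2M$ from \eqref{iota2} is also an isometry---a routine check using that $\{Z^a\}$ is a $g_s$-orthonormal frame of $\V^*$ and that $\vol_s=Z^{123}\wedge\vol_{\H}$, so each of the three summands $\Omega^0(\H,\bbR^3), \Omega^1(\H,\bbR^3), \Omega^2\H$ is sent to a $g_s$-orthogonal summand of $\Omega^2M$ in a norm-preserving way---the block matrix of $\iota_s^{-1}\di^{\star_s}\iota_s$ is obtained from the matrix \eqref{blgen-2} for $\iota_s^{-1}\di\iota_s$ by transposing and replacing each entry by its formal adjoint.

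The next step is to collect the adjoints of the individual building blocks. These are all either stated earlier or follow by a short argument: $(\dH)^{\star}=\di_{\H}^{\star}$ by Lemma \ref{comp-cd}(i); $C^{\star}=C$ as recorded after \eqref{cas01}; $(L_{\omega})^{\star}=L_{\omega}^{\star}$ with the explicit formula of Remark \ref{dual-rmk}; and $(\L_{\xi})^{\star}=\L_{\xi}^{\star}=-\sum_a\L_{\xi_a}(\cdot)_a$ by integration by parts against the Killing field $\xi_a$. The two less obvious ingredients are $\tT^{\star}=-\bbI$, which is just the statement $\tT=-\bbI^{\star}$ recorded after \eqref{tT-alt}, and the sign $\sH^{\star}=-\sH$ for the horizontal Hodge star viewed as a map $\Omega^{3}\H\to\Omega^{1}\H$, which follows from $\sH^{2}=-\id$ on $\Omega^{1}\H$ together with the standard identity $\sH^{\star}=\sH^{-1}$. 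Applying these componentwise to the transpose of \eqref{blgen-2} produces exactly the three rows displayed in the statement; in particular the entry $-2s\sH\tT$ in the last row of \eqref{blgen-2} contributes $(-2s\sH\tT)^{\star}=-2s\,\tT^{\star}\sH^{\star}=-2s(-\bbI)(-\sH)=-2s\bbI\sH$ to the $\alpha$-component of the codifferential.

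The expected main difficulty is essentially bookkeeping: keeping track of the correct sign in $\sH^{\star}=-\sH$ (a dimension-$4$ artefact of $\sH^{2}=(-1)^{p}$) and making sure that the various rows and columns are indexed consistently, since $\mathbb{V}^{2}\H$ and $\mathbb{V}^{3}\H$ involve the subspaces $\Omega^{\bullet}\H$ and $\Omega^{\bullet}(\H,\bbR^{3})$ in different degrees. There is no genuine analytic obstacle. As a safety check, one could alternatively prove the lemma by direct computation in the spirit of Lemma \ref{blgen-1}, expanding $\di^{\star_s}=(-1)^{p}\star_s\di\star_s$ on each of the four summands $FZ^{123}$, $\mathfrak{S}_{abc}Z^{ab}\wedge\alpha_c$, $\sum_a Z^a\wedge\sigma_a$, $\beta$ using Lemma \ref{HoL} and the structure equations \eqref{dza}--\eqref{str-o}; the two calculations must agree, and the duality approach has the advantage of reducing the work to the algebraic adjoint identifications already established in Sections \ref{a-id} and \ref{HoO}.
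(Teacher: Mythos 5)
Your proposal is correct and follows essentially the same route as the paper: the paper also obtains the lemma by $L^2$-duality from the block matrix \eqref{blgen-2}, using that the maps $\iota_s$ are isometric and that $C$ is self-adjoint. Your extra bookkeeping (in particular $\tT^{\star}=-\bbI$ and $\sH^{\star}=-\sH$ on $\Omega^3\H$, giving the entry $-2s\bbI\sH$) is accurate and simply makes explicit what the paper leaves to the reader.
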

\begin{proof}
%Having $\xi_a$ Killing vector fields w.r.t. $g$ ensures that the operators  $\L_{\xi_a}:\Omega^{\star}\H\to \Omega^{\star}\H$ are skew-%symmetric w.r.t. $g_{\H}$. After a short computation based on the definitions it follows that the operator  
Since the operator 
$C:\Omega^{\star}(\H,\bbR^3) \to \Omega^{\star}(\H,\bbR^3)$ is self adjoint (see section \ref{sl2C})
and the maps $\iota_s$ are isometric the claim follows from \eqref{blgen-2} by $L^2$-orthogonality.
\end{proof}
To prepare the ground for the computations in the next section we list below those identities pertaining to the operators $C,\tT$ and $\p$ which are needed to determine the block structure of the half Laplacians $\di\!\di^{\star_s}$ and 
$\di^{\star_s}\!\di$. 
\begin{lema} \label{Id13H}The following hold on $\Omega^1(\H,\bbR^3)$
\begin{eqnarray}
(C-2)\circ \PP+\p & = & -\mathbb{I} \circ \L_{\xi}^{\star} \label{CP}\\
-L_{\omega}^{\star}\circ \dH & = & \di_{\H}^{\star}\circ \PP  \label{dHsym}\\
\tT \circ \sH \dH & = & \di_{\H}^{\star}\circ \tT. \label{idnew0}
\end{eqnarray}
\end{lema}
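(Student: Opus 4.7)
My plan is to verify each of the three identities by a direct componentwise computation in the index $a=1,2,3$, using throughout: the quaternion relations $I_a^2=-\mathrm{id}$, $I_aI_b=I_c$ for cyclic $(a,b,c)$; the intertwining $\L_{\xi_a}I_b=2I_c$ for cyclic $(a,b,c)$ read off from \eqref{Lie-osI}; the pointwise identity $\sH\beta=I_a\beta\wedge\omega_a$ on $\Lambda^1\H$ from \eqref{Ho3}; the formula $\di_{\H}^{\star}=-\sH\dH\sH$ of Lemma \ref{comp-cd}; and the crucial vanishing $\dH\omega_a=0$, obtained by horizontal projection of \eqref{str-o} since every term on the right hand side of $\di\omega_a$ has strictly positive vertical degree.

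I would tackle \eqref{dHsym} and \eqref{idnew0} first, since they run in parallel. The proof of Lemma \ref{blgen-1} already records $(\PP\alpha)_a=I_c\alpha_b-I_b\alpha_c$ for cyclic $abc$, so everything reduces to computing $\di_{\H}^{\star}(I_a\gamma)$ for a generic $\gamma\in\Omega^1\H$. Using $\sH(I_a\gamma)=I_a(I_a\gamma)\wedge\omega_a=-\gamma\wedge\omega_a$, then $\dH\omega_a=0$, and finally the self-duality $\sH\omega_a=\omega_a$, I expect
\begin{equation*}
\di_{\H}^{\star}(I_a\gamma)=\sH\dH(\gamma\wedge\omega_a)=\sH(\dH\gamma\wedge\omega_a)=g_\H(\dH\gamma,\omega_a).
\end{equation*}
For \eqref{dHsym} summing this against $(\PP\alpha)_a$ matches $-(L_\omega^{\star}\dH\alpha)_a=g_\H(\dH\alpha_b,\omega_c)-g_\H(\dH\alpha_c,\omega_b)$ provided by Remark \ref{dual-rmk}. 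For \eqref{idnew0} the same computation, combined with $\tT(\alpha)=\sum_a I_a\alpha_a$ from \eqref{tT-alt}, gives $\di_{\H}^{\star}\tT\alpha=\sum_a g_\H(\dH\alpha_a,\omega_a)$, whereas expanding $\tT\sH\dH\alpha=\sH\sum_a(\sH\dH\alpha_a)\wedge\omega_a$ and invoking self-duality of $\omega_a$ once more produces the same quantity.

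The remaining identity \eqref{CP} I expect to be the principal obstacle: it is purely vertical and couples the matrix operator $C$, the quaternion contraction $\PP=1+\bbI\tT$, and the transverse derivatives $\L_{\xi_a}$. My plan is to expand $(C\PP\alpha)_a=\L_{\xi_b}(\PP\alpha)_c-\L_{\xi_c}(\PP\alpha)_b$ for cyclic $abc$, apply the Leibniz rule $\L_{\xi_a}(I_b\beta)=(\L_{\xi_a}I_b)\beta+I_b\L_{\xi_a}\beta$ to each of the four resulting summands, and use $\L_{\xi_a}I_b=\pm 2I_c$ to turn the connection terms into purely algebraic contributions. After gathering, the algebraic pieces should cancel in pairs, leaving derivative terms that reassemble into $-\bbI\L_{\xi}^{\star}\alpha-\p\alpha$, with $\p$ extended to $\Omega^1(\H,\bbR^3)$ in the natural way compatible with \eqref{tT-alt}. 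The main difficulty is the combinatorial bookkeeping of cyclic indices and signs through many terms; a useful consistency check I would run is that applying $\L_{\xi}^{\star}$ on the left must annihilate the identity, which is automatic for the $(C-2)\PP$ piece by the relation $\L_{\xi}^{\star}\circ(C-2)=0$ of Lemma \ref{sq}.
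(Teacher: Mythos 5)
Your verification of \eqref{idnew0} is essentially the paper's own argument (self-duality of the $\omega_a$, $\dH\omega_a=0$ and $\di_{\H}^{\star}=-\sH\dH\sH$), so that part is fine. For \eqref{CP} and \eqref{dHsym} your route is genuinely different from the paper's: there, both identities drop out simultaneously, with no Lie-derivative bookkeeping, by evaluating $\di^{\star_s}\star_s\di=0$ on $(0,\alpha,0,0)^T$ and projecting onto the $\Omega^0(\H,\bbR^3)$- and $\Omega^2(\H,\bbR^3)$-slots of the block formulas of Lemma \ref{blgen-1} and Lemma \ref{cod-gen}, the $\p$-term in \eqref{CP} coming from \eqref{def-p} rather than from expanding Lie derivatives. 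A direct componentwise check as you propose is viable in principle, but as written it has a concrete flaw.

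The flaw is the sign of your component formula for $\PP$. With the paper's convention $I_a\alpha:=\alpha\circ I_a$ the quaternion relations reverse on $1$-forms: $I_a(I_b\alpha)=-I_c\alpha$ (only $I_a(I_a\alpha)=-\alpha$, which you use correctly, is unaffected), so expanding $\PP=1+\bbI\circ\tT$ via \eqref{tT-alt} gives $(\PP\alpha)_a=I_b\alpha_c-I_c\alpha_b$; this is also what the identity $\PP=-\sH\circ L_{\omega}$ recorded in the proof of Lemma \ref{blgen-1} yields, and it is the negative of the formula you quote. This is not cosmetic. For \eqref{CP} --- by your own account the principal obstacle, and the only one of the three identities used later (in Proposition \ref{comp1} and in deriving \eqref{up-C-temp}) --- your expansion is left unexecuted, and with your sign it cannot close: with the correct components the computation does work in a few lines (besides \eqref{Lie-osI} one also needs $\L_{\xi_a}I_a=0$) and assembles exactly into $-\bbI\L_{\xi}^{\star}-\p$, whereas with the flipped sign one obtains $\bbI\L_{\xi}^{\star}+2\p$ instead; note that your proposed consistency check with $\L_{\xi}^{\star}$ is blind to this, since $\L_{\xi}^{\star}\circ(C-2)=0$ kills the $(C-2)\PP$ term for either sign. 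For \eqref{dHsym} the situation is inverted: your formula $\di_{\H}^{\star}(I_a\gamma)=g_{\H}(\dH\gamma,\omega_a)$ is correct, but the apparent match with the stated identity is produced precisely by the sign slip in $\PP$; inserting the correct components, your computation yields $\di_{\H}^{\star}\circ\PP=+L_{\omega}^{\star}\circ\dH$, i.e. the claimed identity only up to an overall sign, and reconciling this with the statement requires tracking the sign of the $2sL_{\omega}f$-term through \eqref{blgen-2} and Lemma \ref{cod-gen} (the route the paper actually takes), which your proposal does not address. Until the $\PP$-sign is fixed, \eqref{CP} is actually expanded, and the overall sign in \eqref{dHsym} is settled, the proof of these two identities is not complete.
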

\begin{proof}
Pick $\alpha \in \Omega^1(\H,\bbR^3)$ and observe that the first two identities can be proved at the same time as follows. Evaluate the 
identity $\di^{\star_s}\star_s\di=0$ on $(0,\alpha,0,0)^T$ and project onto $\Omega^0(\H,\bbR^3) \oplus \Omega^2(\H,\bbR^3)$. After a short computation using the block form for $\di^{\star_s}$ respectively 
$\star_s\di$ in Lemma \ref{cod-gen} respectively Lemma \ref{blgen-1} we obtain 
\begin{equation*}
\begin{split}
&\di_{\H}^{\star}(\P\alpha)+L_{\omega}^{\star}(\sH \dH\!\alpha)=0\\
&2(C-2)\PP\alpha+\d_{\H}^{\star}\sH\dH\!\alpha+2\bbI \L^{\star}_{\xi}\alpha=0.
\end{split}
\end{equation*}
Equation \eqref{dHsym} is thus proved since $L^{\star}_{\omega}\sH=L_{\omega}^{\star}$. Using that $\di_{\H}^{\star}\sH=-\sH\dH$ on $\Omega^2\H$ together with \eqref{def-p} in the second displayed equation above proves \eqref{CP}. To prove \eqref{idnew0}, observe that direct computation based on the definition of the map $\tT$ ensures that 
\begin{equation*}
\tT(\star \dH\! \alpha)=\sH \sum_a \star \dH\! \alpha \wedge \omega_a=\sH \sum_a \dH\! \alpha \wedge \omega_a=\sH \sum_a \dH (\alpha \wedge \omega_a)=-\sH\dH\sH \tT(\alpha)=\di_{\H}^{\star}\tT(\alpha).
\end{equation*}
\end{proof}
\subsection{The components of the Laplacian $\Delta^{g_s}$} \label{blockLap}
The aim in this section is to investigate the block structure of the Laplacian $\iota_s^{-1}\Delta^{g_s}\iota_s$ acting on
$$\mathbb{V}^3\H=\Omega^0\H \oplus \Omega^1(\H,\bbR^3) \oplus \Omega^2(\H,\bbR^3) \oplus \Omega^3\H.
$$
The projections from the latter space onto each summand will be denoted with $\pr_k$ where $0 \leq k \leq 3$ indicates form degree.
By a slight abuse of notation we identify in what follows the operators $\iota_s^{-1}\Delta^{g_s}\iota_s$ and $\Delta^{g_s}$ as well as  $\iota_s^{-1}(\star_s\di)\iota_s$ and 
$\star_s\di$. We indicate now a quick way of computing the component $\pr_1 \Delta^{g_s}$ which essentially relies on formally multiplying the operator matrices for $\di$ and $\di^{\star}$ found in section \ref{dsys}.
Explicitly we first use the matrix form for $\di^{\star_s}$ in Lemma \ref{cod-gen} and the matrix form for $\di$ in \eqref{blgen-2} to arrive, after composition, at 
\begin{equation*}
\di\!\di^{\star_s}\!\left( \begin{array}{ll}
0\\
\alpha\\
0\\
0
\end{array} \right)
%(0,\alpha,0,0)^T
=\left( \begin{array}{ll}
& -\frac{1}{s}\L^{\star}_{\xi}\di_{\H}^{\star}\alpha\\
& \frac{1}{s^2}(C-2)^2\alpha+\dH\!\di_{\H}^{\star}\alpha\\
&-\frac{1}{s}\dH(C-2)\alpha-2sL_{\omega}(\di_{\H}^{\star}\alpha)\\
&-2 \sH\tT(C-2)\alpha
\end{array} \right)
\end{equation*} 
where $\alpha \in \Omega^1(\H,\bbR^3)$. Similarly 
\begin{equation*} \di^{\star_s}\!\di\!\left( \begin{array}{ll}
0\\
\alpha\\
0\\
0
\end{array} \right)
%(0,\alpha,0,0)^T
=\left( \begin{array}{ll}
&2s \tT(\sH \dH\!\alpha)+\frac{1}{s}\di_{\H}^{\star}\L_{\xi}^{\star}\alpha\\
& 4s^2\PP^2\!\alpha+\di_{\H}^{\star}\dH\!\alpha+\frac{1}{s^2}\L_{\xi}\L^{\star}_{\xi}\alpha\\
&\frac{1}{s}(C-2)\dH\!\alpha-2s\sH\dH\!\PP\!\alpha\\
&2\sH\L^{\star}_{\xi}\PP\!\alpha
\end{array} \right)
\end{equation*} 
by Lemma \ref{cod-gen} after taking into account that $\di^{\star_s}\di=(\star_s\di)^2$ on $\Omega^3M$. At the same time, using again the block form for $\star_s\di$ 
%as well as the identity \eqref{idnew0} 
shows that 
\begin{equation*}
\star_s\di \left( \begin{array}{ll}
0\\
\PP\!\alpha\\
0\\
\sH \tT(\alpha)
\end{array} \right)
%(0,\PP\!\alpha,0,\sH \tT(\alpha))^T
=\left( \begin{array}{ll}
-2s\di_{\H}^{\star} \tT(\alpha) \vspace{1mm}\\
-2s\PP^2\!\alpha-\frac{1}{s}\L_{\xi}\tT(\alpha)\\
\sH\dH\!\PP\!\alpha\\
-\frac{1}{s}\sH\L_{\xi}^{\star}\alpha
\end{array} \right)
\end{equation*}
since $\di_{\H}^{\star}=-\sH\dH\sH$ and $\sH\L_{\xi}\sH=-\L_{\xi}$ on $\Omega^1\H$. 
%with $P:\Omega^1(\H,\bbR^3) \to \Omega^1(\H,\bbR^3)$. 
At this stage, in order to simplify these expressions, we start using the identities from the previous sections.
%Now we show how the identities from the previous sections can be used to bring these to final form. 
As the operators $\dH$ and $\di_{\H}^{\star}$ are both $\su(2)$-invariant we have $[\dH,C]=[\di_{\H}^{\star},\L^{\star}_{\xi}]=0$.
Thus putting the two half Laplacians above together whilst using \eqref{idnew0} shows that 
\begin{equation} \label{id-nr}
\Delta^{g_s} \left(
\begin{array}{ll}
0\\
\alpha\\
0\\
0
\end{array} \right )
%(0,\alpha,0,0)^T
+2s\star_s\di
\left( \begin{array}{ll}
0\\
\PP\!\alpha\\
0\\
\sH\tT(\alpha)
\end{array} \right )
%(0,P\alpha,0,\sH\tT(\alpha))^T
=\left( \begin{array}{ll}
0\\
\Delta_{\H}\alpha+\frac{1}{s^2}((C-2)^2+\L_{\xi}\L^{\star}_{\xi})\alpha-2\L_{\xi}\tT(\alpha)\\
-2sL_{\omega}\di_{\H}^{\star}\alpha\\
-2\sH\tT(C-2)\alpha
\end{array} \right).
\end{equation}
This observation allows computing $\pr_1\Delta^{g_s}$ on 
the subspace 
\begin{equation} \label{scrS}
 \mathscr{S}:=\{(F,\alpha,\sigma, \beta)^T: \sigma \in \Omega^2_{sym}(\H,\bbR^3), \ F=-\tT(\sigma), \ \beta=\sH \tT(\alpha)\}
\end{equation}
of $\mathbb{V}^3\H$ which corresponds to $\Omega^{3}_{27}(\varphi_s)$ via $\iota_s$.
\begin{pro} \label{comp1}
We have 
\begin{equation*} 
\pr_1 \Delta^{g_s}=-2s\PP\pr_1(\star_s \di)-2s \bbI\sH \pr_3(\star_s \di)+\mathscr{G}^s\pr_1
\end{equation*}
on $\mathscr{S}$ where the second order differential operator 
$\mathscr{G}^s:\Omega^1(\H,\bbR^3) \to \Omega^1(\H,\bbR^3)$ is given by $$\mathscr{G}^s:=\Delta_{\H}+\frac{1}{s^2}\C-2\!\p-2(1+\frac{1}{s^2})(C-2).$$
\end{pro}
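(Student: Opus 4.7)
\medskip

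My proof plan is to compute $\pr_1\Delta^{g_s}$ component-by-component and then to impose the constraints defining $\mathscr{S}$ to collect everything in the desired form. The starting point is the identity
\begin{equation*}
\Delta^{g_s} \;=\; \di\,\di^{\star_s} \;+\; \di^{\star_s}\!\di \;=\; \di\,\di^{\star_s} \;+\; (\star_s\di)^2
\end{equation*}
on $\Omega^3M$, together with the block formulas of Lemma \ref{blgen-1}, Lemma \ref{cod-gen} and \eqref{blgen-2}. I split an arbitrary $(F,\alpha,\sigma,\beta)^T \in \mathscr{S}$ as $(0,\alpha,0,0)^T + (F,0,0,0)^T + (0,0,\sigma,0)^T + (0,0,0,\beta)^T$ and compute $\pr_1 \Delta^{g_s}$ on each piece. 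The contribution from $(0,\alpha,0,0)^T$ is already essentially available from \eqref{id-nr}; a direct evaluation yields
\begin{equation*}
\pr_1 \Delta^{g_s} (0,\alpha,0,0)^T \;=\; \Delta_{\H}\alpha \,+\, 4 s^2 \PP^2\alpha \,+\, \tfrac{1}{s^2}\bigl( (C-2)^2 + \L_{\xi}\L_{\xi}^{\star} \bigr)\alpha .
\end{equation*}
For the three remaining pieces, analogous manipulations using Lemmas \ref{cod-gen} and \ref{blgen-1} together with $[\sH,\L_{\xi_a}]=0$ and $\sH^2 = (-1)^p$ on $\Omega^p\H$, as well as $\dH\omega_a=0$, give
\begin{equation*}
\pr_1 \Delta^{g_s} (F,0,0,0)^T = -2s\,\bbI\dH\!F, \quad \pr_1\Delta^{g_s}(0,0,\sigma,0)^T = 2s\PP\sH\dH\!\sigma - 2s\dH L_{\omega}^{\star}\sigma,
\end{equation*}
\begin{equation*}
\pr_1 \Delta^{g_s} (0,0,0,\beta)^T \;=\; -2 (C-2)\,\bbI\sH\beta \,-\, 2\PP\sH\L_{\xi}\beta .
\end{equation*}

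At this point I invoke the defining constraints of $\mathscr{S}$. Since $\sigma\in\Omega^2_{sym}(\H,\bbR^3)$ we have $L_{\omega}^{\star}\sigma=0$ by Remark \ref{dual-rmk}, so the term $-2s\dH L_{\omega}^{\star}\sigma$ drops out. Substituting $F=-\tT(\sigma)$ and $\beta=\sH\tT(\alpha)$, and using $\sH^2=-1$ on $\Omega^1\H$ together with $\sH\L_{\xi_a}\sH=-\L_{\xi_a}$ on $\Omega^1\H$, collects the sum into
\begin{equation*}
\pr_1\Delta^{g_s}(F,\alpha,\sigma,\beta)^T = 2s\bbI\dH\tT(\sigma)+\Delta_{\H}\alpha+\tfrac{1}{s^2}(\C-2(C-2))\alpha+4s^2\PP^2\alpha
\end{equation*}
\begin{equation*}
\hphantom{\pr_1\Delta^{g_s}(F,\alpha,\sigma,\beta)^T = } +2s\PP\sH\dH\sigma+2(C-2)\bbI\tT(\alpha)+2\PP\L_{\xi}\tT(\alpha),
\end{equation*}
where the rewriting $(C-2)^2+\L_{\xi}\L_{\xi}^{\star}=\C-2(C-2)$ uses \eqref{sqC} of Lemma \ref{sq}.

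On the other side, I expand the claimed right-hand side. From Lemma \ref{blgen-1},
\begin{equation*}
\pr_1(\star_s\di)(F,\alpha,\sigma,\beta)^T = -2s\PP\alpha-\sH\dH\sigma-\tfrac{1}{s}\L_{\xi}\tT(\alpha), \quad \pr_3(\star_s\di)(F,\alpha,\sigma,\beta)^T = -\sH\bigl(\tfrac{1}{s}\L_{\xi}^{\star}\alpha-\dH\tT(\sigma)\bigr)
\end{equation*}
on $\mathscr{S}$. Multiplying by the prefactors produces the exact same expression as the computed $\pr_1\Delta^{g_s}$ on $\mathscr{S}$, \emph{except} for the discrepancy
\begin{equation*}
2(C-2)\PP\alpha \,+\, 2\p\alpha \,+\, 2\bbI\L_{\xi}^{\star}\alpha,
\end{equation*}
which vanishes identically thanks to \eqref{CP} of Lemma \ref{Id13H}. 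That closes the proof.

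The hard part is not conceptual but organisational: four bookkeeping calculations must be carried out carefully, the projections onto $\Omega^1(\H,\bbR^3)$ extracted correctly, and the identities $\sH^2=(-1)^p$, $[\sH,\L_{\xi_a}]=0$, $\dH\omega_a=0$, $\di_{\H}^{\star}=-\sH\dH\sH$ applied in the right places so that everything collapses onto the single algebraic identity \eqref{CP}. The rewriting of $(C-2)^2+\L_{\xi}\L_{\xi}^{\star}$ via \eqref{sqC} is the one non-trivial step that turns the $\frac{1}{s^2}$-contributions into the shape dictated by $\mathscr{G}^s$.
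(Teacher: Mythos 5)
Your proposal is correct; I checked the three extra block computations ($\pr_1\Delta^{g_s}$ applied to $(F,0,0,0)^T$, $(0,0,\sigma,0)^T$, $(0,0,0,\beta)^T$) and the final bookkeeping, and the discrepancy you isolate is exactly $2\bigl((C-2)\PP+\p+\bbI\L_{\xi}^{\star}\bigr)\alpha$, which indeed vanishes by \eqref{CP}. However, your route differs from the paper's. The paper never computes the contributions of the $F$, $\sigma$ and $\beta$ blocks: instead it dualises the single identity \eqref{id-nr}. Since $\Delta^{g_s}$ and $\star_s\di$ are self-adjoint and the correction term in \eqref{id-nr} is built precisely from $\PP\pr_1$ and $\sH\tT\pr_1$ (whose adjoints reproduce the operators $-2s\PP\pr_1(\star_s\di)-2s\bbI\sH\pr_3(\star_s\di)$ in the statement), pairing \eqref{id-nr} against a test element of $\mathscr{S}$ and taking adjoints reduces everything at once to simplifying the algebraic operator $\tfrac{1}{s^2}\bigl((C-2)^2+\L_{\xi}\L_{\xi}^{\star}\bigr)+2\bbI\L_{\xi}^{\star}+2(C-2)\bbI\tT$ on $\Omega^1(\H,\bbR^3)$, which collapses to $\mathscr{G}^s-\Delta_{\H}-4s^2\PP^2$-type terms via \eqref{sqC} and \eqref{CP} — the same two identities you use. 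So the two arguments hinge on identical algebra; the paper's adjoint trick buys economy (one block computation plus duality, with the constraints $L_{\omega}^{\star}\sigma_1=0$ and $\sH\beta_1=-\tT(\alpha_1)$ entering only through the test element), while your direct block-by-block evaluation is more elementary and mechanical but costs three additional applications of Lemmas \ref{blgen-1}, \ref{cod-gen} and \eqref{blgen-2}, all of which you carried out correctly.
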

\begin{proof}First we list the adjoints for all the operators appearing in \eqref{id-nr}. The Laplacian $\Delta_{\H}$ together with 
$\L_{\xi}\L^{\star}_{\xi},\P$ and $C$ are self-dual. The duals of $\tT:\Omega^1(\H,\bbR^3) \to \Omega^1\H$ respectively 
$\sH\tT:\Omega^1(\H,\bbR^3) \to \Omega^3\H$ are given by $-\bbI$ respectively $\bbI \sH$. Now we consider the adjoint 
of the identity \eqref{id-nr}, as follows. Take the $L^2$ scalar product of \eqref{id-nr}
with an arbitrary element $(F_1,\alpha_1,\sigma_1,\beta_1)^T \in \bbV^3\H$ and take the adjoints for all the operators involved. In this way 
we see that the adjoint of the l.h.s of \eqref{id-nr} is $\pr_1 \Delta^{g_s}+2s\PP\pr_1(\star_s \di)+2s \bbI\sH \pr_3(\star_s \di)$ while
that of its r.h.s. acts on  $(F_1,\alpha_1,\sigma_1,\beta_1)^T$ according to
$$ \Delta_{\H}\alpha_1+\frac{1}{s^2}((C-2)^2+\L_{\xi}\L_{\xi}^{\star})\alpha_1+2\bbI \L_{\xi}^{\star}\alpha_1-2s\dH L^{\star}_{\omega}\sigma_1-2(C-2)\bbI \sH\beta_1.
$$
Assuming now that $(F_1,\alpha_1,\sigma_1,\beta_1)^T \in \mathscr{S}$, so that $ L^{\star}_{\omega}\sigma_1=0$ since $\sigma_1 \in 
\Omega^2_{sym}(\H,\bbR^3)$ (see Remark \ref{dual-rmk}) and $\sH\beta_1=-\tT(\alpha_1)=
\bbI^{\star}\alpha_1$ it follows that proving the claim amounts to computing the operator 
\begin{equation*}
\tfrac{1}{s^2}((C-2)^2+\L_{\xi}\L_{\xi}^{\star})+2\bbI \L_{\xi}^{\star}+2(C-2)\bbI \tT
\end{equation*}
acting on $\Omega^1(\H,\bbR^3)$. With the aid of the characteristic polynomial for $C$ in \eqref{sqC} this reads 
\begin{equation*}
\begin{split}
& \tfrac{1}{s^2}(\C-2(C-2))+2\bbI \L_{\xi}^{\star}+2(C-2)(\PP-1)
=\tfrac{1}{s^2}\C-2\!\p-2(1+\tfrac{1}{s^2})(C-2)
\end{split}
\end{equation*}
after re-arranging terms and using \eqref{CP}. The proof of the claim is thus complete.
\end{proof}
\section{Spectral theory for $\star_s\di$ and embedding operators} \label{EDS}
The aim in this section is two-folded. The first objective is to determine in an explicit way the dependence of the eigenspaces $E_{\lambda}$ on the parameter $s$ as these do not relate in a direct way to eigenspaces for $\Delta^g$. 
The second is to examine how $E_{\lambda}$ relates to a subspace of $\Omega^1(\H,\bbR^3) \oplus \Omega^2_{sym}(\H,\bbR^3)$ via the isomophism $\kappa_s$. To carry out this programme several technical ingredients and clarifications are needed as follows.
\subsection{Properties of the $\su(2)\oplus \sp(1)$ action on $\Omega^{1}\H$} \label{vops}
Key to understanding the structure of the eigenspaces of $\Delta^{g_s}$ is producing the full set of algebraic relations satisfied by the operators $\C,\p,\L_{\xi},\bbI$ acting on $\Omega^1\H$ or on $\Omega^1(\H,\bbR^3)$. In more abstract terms we look at the action of $\su(2) \oplus \sp(1)$ on $\Omega^1\H$ induced by \eqref{ro} and \eqref{sp1}. Furthermore we need a good description of the action 
of $C$ on the latter space. Firstly, we observe that 
\begin{lema} \label{pvsC}
The operators $\p$ and $\C$ satisfy 
\begin{eqnarray}
(\p-2) \circ I_a+I_a \circ (\p-2) & = & -2\mathscr{L}_{\xi_a} \label{pI}\\
\p^2-2\!\p & = & \C \label{PI2} 
%[\C-2\p,I_a] & = & 0. \label{PI3}
\end{eqnarray}
as well as  $[\C-2\p,I_a]= 0$ on $\Omega^1\H$.
\end{lema}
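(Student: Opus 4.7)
The plan is to prove identities \eqref{pI}, \eqref{PI2}, and $[\C - 2\p, I_a] = 0$ in order, using \eqref{pI} together with the $\su(2)$-invariance of $\p$ as the main input for the third identity, and using \eqref{PI2} to rephrase it as a polynomial relation in $\p$ alone. All three are algebraic and follow from three ingredients: (a) the quaternion relations on $\Omega^1\H$, (b) the derivation formula combined with $(\L_{\xi_a} I_b) = 2 I_c$ for cyclic $(a,b,c)$ from \eqref{Lie-osI}, and (c) the bracket relations $[\L_{\xi_a}, \L_{\xi_b}] = 2 \L_{\xi_c}$ from \eqref{su2}. The one subtlety is that since $I_a$ acts on $1$-forms by composition, quaternion multiplication reverses: on $\Omega^1\H$ one has $I_a \circ I_b = -I_c$ for cyclic $(a,b,c)$, while $I_a^2 = -1$. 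A quick verification shows that the Lie derivative identity $\L_{\xi_a} I_b = 2 I_c$ is nevertheless still valid on $\Omega^1\H$.

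For \eqref{pI}, I would expand $\p I_a = \sum_b I_b \L_{\xi_b} I_a$, commute $I_a$ past each $\L_{\xi_b}$ via $\L_{\xi_b} \circ I_a = I_a \circ \L_{\xi_b} + (\L_{\xi_b} I_a)$, and simplify the six off-diagonal terms using the quaternion products $I_b I_c$ on $\Omega^1\H$. The diagonal $b = a$ piece collapses, using $I_a^2 = -1$ and $(\L_{\xi_a} I_a) = 0$, to $-\L_{\xi_a}$. After adding $I_a \p = \sum_b I_a I_b \L_{\xi_b}$ and invoking anti-commutation $I_a I_b = -I_b I_a$, the first-order pieces of the form $I_b \L_{\xi_c}$ cancel in pairs, leaving $\p I_a + I_a \p = -2\L_{\xi_a} + 4 I_a$. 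Subtracting $4 I_a$ from both sides gives $(\p - 2)I_a + I_a(\p - 2) = -2\L_{\xi_a}$.

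For \eqref{PI2}, expand $\p^2 = \sum_{a,b} I_a \L_{\xi_a} I_b \L_{\xi_b}$ and write $\L_{\xi_a} I_b = (\L_{\xi_a} I_b) + I_b \L_{\xi_a}$, obtaining $\p^2 = \sum_{a,b} I_a (\L_{\xi_a} I_b) \L_{\xi_b} + \sum_{a,b} I_a I_b \L_{\xi_a} \L_{\xi_b}$. The first sum reduces, via $(\L_{\xi_a} I_b) = 2 I_c$ and the quaternion products $I_a I_c$, to $4 \p$. In the second sum, the diagonal $b = a$ part gives $\sum_a I_a^2 \L_{\xi_a}^2 = -\sum_a \L_{\xi_a}^2 = \C$, while the off-diagonal pieces pair up as
\begin{equation*}
I_a I_b \L_{\xi_a} \L_{\xi_b} + I_b I_a \L_{\xi_b} \L_{\xi_a} = I_a I_b \,[\L_{\xi_a}, \L_{\xi_b}] = -2 I_c \L_{\xi_c}
\end{equation*}
for cyclic $(a,b,c)$, contributing $-2\p$ overall. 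Adding all the pieces yields $\p^2 = 4\p + \C - 2\p$, which is \eqref{PI2}.

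For the commutation identity, I would set $q := \p - 2$ so that \eqref{pI} reads $\{q, I_a\} = -2 \L_{\xi_a}$. Using this relation twice, and the $\su(2)$-invariance of $\p$ (hence of $q$), which by Lemma \ref{inv-tL} gives $[q, \L_{\xi_a}] = 0$, a short calculation shows
\begin{equation*}
q^2 I_a = q(-2\L_{\xi_a} - I_a q) = -2 q \L_{\xi_a} + 2 \L_{\xi_a} q + I_a q^2 = I_a q^2,
\end{equation*}
i.e. $[q^2, I_a] = 0$. Finally, \eqref{PI2} gives $(\p - 2)^2 = \p^2 - 4\p + 4 = (\C + 2\p) - 4\p + 4 = \C - 2\p + 4$, so $[\C - 2\p, I_a] = [(\p-2)^2 - 4, I_a] = 0$. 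The principal obstacle throughout is the sign bookkeeping in the proof of \eqref{pI}: overlooking the fact that composition on $\Omega^1\H$ reverses quaternion multiplication produces the wrong coefficient for $I_a$ in the anti-commutator and spoils the subsequent reduction.
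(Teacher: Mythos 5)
Your proposal is correct, and for \eqref{pI} and for the commutation identity it follows essentially the same path as the paper: \eqref{pI} by expanding $\p\circ I_a$ and $I_a\circ\p$ with the derivation rule and \eqref{Lie-osI} (including the correct observation that composition on $\Omega^1\H$ reverses the quaternion products, which is exactly the sign convention the paper's computation relies on), and $[\C-2\p,I_a]=0$ by noting that $\p-2$ commutes with the anticommutator $\{\p-2,I_a\}=-2\L_{\xi_a}$, hence with nothing more than $(\p-2)^2=\C-2\p+4$. The one place where you genuinely diverge is \eqref{PI2}: you compute $\p^2=\sum_{a,b}I_a\L_{\xi_a}I_b\L_{\xi_b}$ from scratch, splitting off the derivative terms (giving $4\p$), the diagonal (giving $\C$) and the off-diagonal pairs via $[\L_{\xi_a},\L_{\xi_b}]=2\L_{\xi_c}$ (giving $-2\p$), whereas the paper obtains \eqref{PI2} more quickly by composing \eqref{pI} on the right with $\L_{\xi_a}$, using the $\su(2)$-invariance of $\p$ from Lemma \ref{inv-tL}, and summing over $a$. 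Your version is a little longer but self-contained and makes the role of the bracket relations \eqref{su2} explicit; the paper's is shorter because it recycles \eqref{pI}. One small point, common to both arguments: the identity $\L_{\xi_a}I_a=0$ is used (you state it, the paper uses it tacitly in the term $I_1\L_{\xi_1}I_1$); it is not literally contained in \eqref{Lie-osI} but follows immediately from the structure equations, so this is not a gap.
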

\begin{proof}
Pick $\alpha \in \Omega^1\H$; we compute 
\begin{equation*}
\begin{split}
\p(I_1\alpha)=&\sum_a (I_a \L_{\xi_a})I_1\alpha=-\L_{\xi_1}\alpha+I_2((\L_{\xi_2}I_1)\alpha+I_1 \L_{\xi_2}\alpha)+I_3((\L_{\xi_3}I_1)\alpha+I_1 \L_{\xi_3}\alpha)\\
=&-\L_{\xi_1}\alpha-I_1(I_2 \L_{\xi_2}+I_3 \L_{\xi_3})\alpha+(I_2 (\L_{\xi_2}I_1)+I_3(\L_{\xi_3}I_1))\alpha.
\end{split}
\end{equation*}
As $I_2 \L_{\xi_2}+I_3 \L_{\xi_3}=\p-I_1 \L_{\xi_1}$ and 
\begin{equation*}
(I_2 (\L_{\xi_2}I_1)+I_3(\L_{\xi_3}I_1))\alpha=-2I_2I_3\alpha+2I_3I_2 \alpha=4I_1\alpha
\end{equation*}
by \eqref{Lie-osI}, the claim in \eqref{pI} is proved for $a=1$. The relation between $\mathscr{C}$ and $\p$ in \eqref{PI2} follows from 
\eqref{pI} by taking into account that $\p$ is $\su(2)$-invariant; indeed this leads to 
$(\p-2)\L_{\xi_a}I_a+\L_{\xi_a}I_a(\p-2)=-2\L^2_{\xi_a}$ which grants the desired relation after summation over $a$. Finally, and again by using \eqref{pI}, the operators 
$\p-2$ and $(\p-2)I_a+I_a(\p-2)$ commute thus so do $(\p-2)^2=\mathscr{C}-2\!\p+4$ and $I_a$.
\end{proof}
Secondly, and as a direct consequence of Lemma \ref{pvsC}, we prove that 
\begin{coro}The following hold on $\Omega^1(\H,\bbR^3)$
\begin{eqnarray} 
C &=& 2-\p+\bbI \circ (-\L_{\xi}^{\star}+\p\circ \tT-2\tT)+\L_{\xi}\circ \tT \label{up-C-temp} \\
\tT \circ \p & = & (4-\p) \circ \tT+2\L_{\xi}^{\star}  \label{TT1} \\
\tT \circ \C &=& (\C+8-4\p) \circ \tT+4\L_{\xi}^{\star} \label{TT2}\\
\tT \circ C &=& (4-\p)\circ \tT+\L_{\xi}^{\star}. \label{TT3}
\end{eqnarray}
\end{coro}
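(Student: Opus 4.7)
These are purely algebraic identities; the essential inputs are Lemma \ref{pvsC}, the relation \eqref{PI2}, the formula \eqref{CP}, the adjoint pairing $\tT=-\bbI^{\star}$, and the elementary fact $\tT\circ\bbI=-3\,\id$ on $\Omega^{1}\H$ (immediate from $I_a^2=-1$). My plan is to establish them in the order \eqref{TT1}, \eqref{TT2}, \eqref{TT3}, \eqref{up-C-temp}, since each identity uses the previous ones.

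To prove \eqref{TT1}, I would start from the identity $(\p-2)I_a+I_a(\p-2)=-2\L_{\xi_a}$ of Lemma \ref{pvsC}, which holds on $\Omega^{1}\H$ for each $a$. Applying it to the component $\alpha_a$ of an arbitrary $\alpha\in\Omega^{1}(\H,\bbR^3)$ and summing over $a$, the left-hand side assembles as $\p\tT\alpha+\tT\p\alpha-4\tT\alpha$ while the right-hand side becomes $2\L_{\xi}^{\star}\alpha$, using $\tT\alpha=\sum_a I_a\alpha_a$ and $\L_{\xi}^{\star}\alpha=-\sum_a\L_{\xi_a}\alpha_a$. Isolating $\tT\p$ yields \eqref{TT1}. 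Then \eqref{TT2} follows by iterating \eqref{TT1}: writing $\C=\p^{2}-2\p$ via \eqref{PI2}, one expands $\tT\p^{2}=(4-\p)\tT\p+2\L_{\xi}^{\star}\p=(4-\p)^{2}\tT+2(4-\p)\L_{\xi}^{\star}+2\L_{\xi}^{\star}\p$. The commutator $[\p,\L_{\xi}^{\star}]=0$, which follows from the $\su(2)$-invariance of $\p$ stated in Lemma \ref{inv-tL}, combines the last two terms into $8\L_{\xi}^{\star}$, and $(4-\p)^{2}-2(4-\p)=\C+8-4\p$ produces the claim after subtracting $2\tT\p=2(4-\p)\tT+4\L_{\xi}^{\star}$.

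For \eqref{TT3}, my plan is to compute $\tT(C\alpha)=\sum_a I_a(C\alpha)_a$ directly from the explicit matrix representation of $C$, grouping terms by $\L_{\xi_b}\alpha_c$-contributions and applying the quaternion identities $I_aI_b=-\delta_{ab}+\sum_c\epsilon_{abc}I_c$. The resulting expression rearranges into $\tT\p$ and $\L_{\xi}^{\star}$ pieces, producing the linear relation between $\tT C$, $\tT\p$, and $\L_{\xi}^{\star}$, which combined with \eqref{TT1} gives \eqref{TT3}. Finally, \eqref{up-C-temp} is obtained by taking the adjoint of \eqref{TT3}: using $\tT^{\star}=-\bbI$, self-adjointness of $C$ (noted after \eqref{cas01}), and $\p^{\star}=\p$ (since each $\L_{\xi_a}$ and $I_a$ is skew on $\Omega^{1}\H$ and the commutator $\L_{\xi_a}I_a$ vanishes by \eqref{Lie-osI}), the dual takes the form $C\bbI=\bbI(4-\p)-\L_{\xi}$. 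Substituting this into \eqref{CP} rearranged as $C\P=2\P-\p-\bbI\L_{\xi}^{\star}$, expanding $C\P=C+C\bbI\tT$, and collecting terms using $\P=1+\bbI\tT$ produces \eqref{up-C-temp}.

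I expect the direct computation in the proof of \eqref{TT3} to be the main obstacle: although purely algebraic, it requires careful bookkeeping of the nine quaternion products $I_aI_b$ against the off-diagonal entries of the matrix representing $C$, with several sign conventions to track simultaneously. Once \eqref{TT3} is in hand, \eqref{up-C-temp} follows by formal manipulation from \eqref{CP}.
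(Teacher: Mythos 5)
Your identities \eqref{TT1} and \eqref{TT2} are fine: \eqref{TT1} is exactly the paper's argument (sum \eqref{pI} against the components of $\alpha$), and your route to \eqref{TT2} --- iterating \eqref{TT1} with $\C=\p^2-2\p$ from \eqref{PI2} and the commutation $\L_{\xi}^{\star}\circ\p=\p\circ\L_{\xi}^{\star}$ coming from the $\su(2)$-invariance of $\p$ (Lemma \ref{inv-tL}) --- is a valid alternative to the paper's, which instead commutes $\tT$ with the $\{\L_{\xi_a},I_a\}$-invariant operator $\C-2\p$. Your logical order is the reverse of the paper's: the paper first proves \eqref{up-C-temp} from \eqref{CP} (inverting $\P$, using $-\L_{\xi}^{\star}\circ\bbI=\p$ and then \eqref{pI} to evaluate $\tfrac12(\bbI\circ\p-\p\circ\bbI)$), and only then deduces \eqref{TT3} by applying $\tT$ on the left with $\tT\circ\bbI=-3$, $\tT\circ\L_{\xi}=\p$ and \eqref{TT1}; you instead prove \eqref{TT3} independently and recover \eqref{up-C-temp} by dualising it and feeding $C\circ\bbI=\bbI\circ(4-\p)-\L_{\xi}$ into \eqref{CP}. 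That last manipulation is correct and does reproduce \eqref{up-C-temp}, so everything hinges on your independent proof of \eqref{TT3}.

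There is one concrete trap in that step. The quaternion relation you plan to use, $I_aI_b=-\delta_{ab}+\sum_c\epsilon_{abc}I_c$, is the relation satisfied by the endomorphisms of $\H$; but in this paper $I_a$ acts on $1$-forms by composition, $I_a\alpha=\alpha\circ I_a$, which reverses products, so as operators on $\Omega^1\H$ one has $I_1I_2=-I_3$, etc. (this is exactly the convention the paper uses in the proof of Lemma \ref{pvsC}, where $-2I_2I_3\alpha+2I_3I_2\alpha=4I_1\alpha$). With the reversed relation your matrix computation closes cleanly: one finds $\tT\circ C=\tT\circ\p-\L_{\xi}^{\star}$ on $\Omega^1(\H,\bbR^3)$, which together with \eqref{TT1} gives \eqref{TT3}. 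With the relation as you wrote it, the same bookkeeping yields the negated identity $\tT\circ C=-\tT\circ\p+\L_{\xi}^{\star}$, i.e.\ $\tT\circ C=(\p-4)\circ\tT-\L_{\xi}^{\star}$, and since your \eqref{up-C-temp} is derived from \eqref{TT3} the sign error would propagate there as well. So fix the convention before carrying out the computation, or sidestep it entirely by following the paper's order: prove \eqref{up-C-temp} first from \eqref{CP} and \eqref{pI}, and then \eqref{TT3} follows by a one-line application of $\tT$.
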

\begin{proof}
Since the operator $\PP=1+\bbI \circ \tT$ is invertible with $\PP^{-1}=\frac{1}{2}(\PP+1)$ we derive that $C-2+\frac{1}{2}\p\circ(\PP+1)=-\frac{1}{2}\bbI \circ \L_{\xi}^{\star}\circ (\PP+1)$ with the aid of \eqref{CP}. Because $-\L_{\xi}^{\star}\circ \bbI=\p$ we get 
$\L_{\xi}^{\star}\circ \PP=-\L_{\xi}^{\star}+\p\circ \tT$, fact which leads to 
$$ C+\p-2=-\bbI \circ \L_{\xi}^{\star}+\frac{1}{2}(\bbI \circ \p-\p\circ \bbI)\circ \tT.
$$
The first displayed identity follows now from \eqref{pI}.

Identity \eqref{TT1} follows directly from \eqref{pI}. As the operator $\C-2\p$ is invariant under $\{L_{\xi_1},L_{\xi_2},
L_{\xi_3},I_1,I_2,I_3\}$
%$\mathfrak{sl}_2(\bbC)$-invariant 
we have $\tT \circ (\C-2\p)=
(\C-2\p)\circ \tT$ thus \eqref{TT2} follows from \eqref{TT1}. Finally, acting with $\tT$ on the left hand side of \eqref{up-C-temp} shows that 
$$ \tT \circ C=2\tT-\tT \circ \p-3(-\L_{\xi}^{\star}+\p \circ \tT-2\tT)+\p\circ \tT=2(4-\p)\circ \tT-\tT \circ P+3\L_{\xi}^{\star}
$$
after taking into account that $\tT \circ \bbI=-3$ and $\tT \circ \L_{\xi}=\p$. The last identity in the claim follows now from 
\eqref{TT1}.
\end{proof}
Therefore the operator 
$C$ acting on $\Omega^1(\H,\bbR^3)$ is entirely determined by $\p,\mathbb{I}$ together with the contracted Lie derivative 
and the algebraic trace map $\tT$. In the next section we will crucially rely on this observation to determine eigenspaces of type
$\ker(\star_s\di-\lambda) \cap \Omega^3_{27}M$.
%\begin{rema} \label{sl2c-setup}
%A slightly more conceptual way of organising the calculations above is to observe that 
%the representation of $\su(2)$ on $\Omega^1\H$ extends to $\SL2$. Let $\sp(1):=\spa\{i_1,i_2,i_3\}$ with 
%Lie bracket determined from $[i_a,i_b]=-2i_c$.
%%$[i_1,i_2]=-2i_3, [i_2,i_3]=-2i_1, \ [i_3,i_1]=-2i_2$. 
%Consider 
%the semidirect product Lie algebra $\SL2=\su(2) \oplus \sp(1)$ in which $\su(2)$ is a subalgebra, $\sp(1)$ is an ideal and $[A_a,i_b]=-[A_b,i_a]=2i_c$ with cyclic 
%permutations on $abc$. Letting $\sp(1)$ act on $\Omega^1\H$ via $i_a \mapsto I_a$ the relations in \eqref{Lie-osI} ensure that $\rho$ extends 
%to a representation $ \SL2 \times \Omega^1\H \to \Omega^1\H.$
%Such representations are in fact entirely determined by one invariant, which is of trace type and given by the operator $\p$.
%%\footnote{move Lemma \ref{pvsC} here} 
%\end{rema}
Next we establish additional commutation relations for the differential operators $\dH$ and $\Delta_{\H}$. These will be needed to determine how the Hodge decomposition 
of $\Omega^1\H$ behaves w.r.t. the $\sp(1)$-action.
%\edz{laplacian also here}
\begin{lema} \label{van1}
We have $\Delta_{\H} \circ \dH=\dH \circ \Delta_{\H}+2\p \circ \dH \ \mbox{on} \ C^{\infty}M.$ 
%$$******************$$
\end{lema}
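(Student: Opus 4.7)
The plan is to reduce the commutator $[\Delta_{\H},\dH]$ acting on a function $f$ to the failure of $\dH$ to be a differential, namely to $\di_{\H}^{\star}\dH^2 f$, and then to recognize that the resulting expression is precisely $2\p(\dH f)$.

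First I would use that on $C^{\infty}M$ the codifferential vanishes for degree reasons, so $\Delta_{\H} f = \di_{\H}^{\star}\dH f$. Applying $\Delta_{\H}$ to $\dH f \in \Omega^1\H$ gives
\begin{equation*}
\Delta_{\H}(\dH f)=\dH \di_{\H}^{\star}\dH f+\di_{\H}^{\star}\dH^2 f=\dH \Delta_{\H} f+\di_{\H}^{\star}\dH^2 f,
\end{equation*}
so the claim reduces to showing $\di_{\H}^{\star}\dH^2 f = 2\p(\dH f)$.

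Next I would substitute the non-commutativity identity \eqref{sqH}, which on functions reads $\dH^2 f=-2\sum_a (\L_{\xi_a}f)\,\omega_a$, and evaluate $\di_{\H}^{\star}$ term-by-term via the standard Leibniz rule $\di_{\H}^{\star}(g\,\omega_a)=g\,\di_{\H}^{\star}\omega_a-\grad_{\H} g \lrcorner \omega_a$ with $g=\L_{\xi_a}f$. The forms $\omega_a$ are horizontally closed (the non-horizontal part of $\di\omega_a$ from \eqref{str-o} is vertical, so $\dH \omega_a = 0$) and horizontally self-dual, hence $\di_{\H}^{\star}\omega_a=-\sH \dH \sH \omega_a=0$. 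The remaining piece is handled by the defining relation \eqref{met}, which yields the identity $\grad_{\H} g \lrcorner \omega_a = I_a \dH g$. Combining this with $[\dH,\L_{\xi_a}]=0$ from \eqref{com1} gives
\begin{equation*}
\di_{\H}^{\star}\dH^2 f=2\sum_a I_a\,\L_{\xi_a}\dH f=2\p(\dH f),
\end{equation*}
which is exactly what we need.

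None of the steps should present a real obstacle: the only mild subtlety is getting the sign and ordering right in $\grad_{\H} g \lrcorner \omega_a = I_a \dH g$, which follows by a direct check using that $I_a$ is $g_{\H}$-skew and $I_a\alpha = \alpha\circ I_a$ on $1$-forms. All other ingredients (the degree-based identification $\Delta_{\H}=\di_{\H}^{\star}\dH$ on functions, equations \eqref{sqH} and \eqref{com1}, and the self-duality of $\omega_a$) are already spelled out in the preceding subsections.
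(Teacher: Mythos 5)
Your proof is correct and follows essentially the same route as the paper: both reduce the commutator to $\di_{\H}^{\star}\di_{\H}^2 f$, insert \eqref{sqH}, use $\dH\omega_a=0$ and self-duality, and conclude via $[\dH,\L_{\xi_a}]=0$. Your contraction identity $\grad_{\H} g \lrcorner \omega_a = I_a \dH g$ is just the form of \eqref{Ho3} that the paper invokes directly through $\di_{\H}^{\star}=-\sH\dH\sH$, so the two arguments coincide up to this cosmetic difference.
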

\begin{proof}Pick $F \in C^{\infty}M$ and observe that $(\Delta_{\H} \circ \dH-\dH \circ \Delta_{\H})F=\di_{\H}^{\star}\di_{\H}^2F$ from the definitions. Then $\di_{\H}^2F=-2\sum_a (\mathscr{L}_{\xi_a}F) \omega_a$, according to \eqref{sqH}. Since the forms $\omega_a$ are selfdual with $\dH\omega_a=0$ we get, by also using \eqref{Ho3}
\begin{equation*}
\di_{\H}^{\star}\di_{\H}^2F=2\sH \sum_a (\dH \mathscr{L}_{\xi_a}F) \wedge \omega_a=2\sum_a I_a \dH  \mathscr{L}_{\xi_a}F=2
\sum_a I_a \mathscr{L}_{\xi_a}\dH\! F=2\!\p(\dH\!F).
\end{equation*}
\end{proof}
Since the operator $\p$ is symmetric we also have the dual identity 
\begin{equation} \label{com22}
\Delta_{\H} \circ \di_{\H}^{\star}= \di_{\H}^{\star} \circ \Delta_{\H}-2 \di_{\H}^{\star} \circ \p
\end{equation}
on $\Omega^1\H$. The following set of identities will be systematically used in this paper.
\begin{lema} \label{batch1}The following hold for  $f \in C^{\infty}M$
\begin{itemize}
\item[(i)] $\di_{\H}^{\star}\mathbb{I}\dH\!f=-4\L_{\xi}f$
\item[(ii)] $\di_{\H}^{+}\mathbb{I}\dH\!f=-\frac{1}{2}(\Delta_{\H}f+16f)\omega+2C(f\omega)$
\item[(iii)] \label{codi-p} $\di_{\H}^{\star} \p \di_{\H}f=4\C f$.
\end{itemize}
\end{lema}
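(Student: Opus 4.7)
All three identities reduce to a handful of structural facts: the expression $\di_{\H}^{\star}=-\sH\dH\sH$ on $\Omega^1\H$ from Lemma \ref{comp-cd}(i); the vanishing $\dH\omega_a=0$, which follows by projecting \eqref{str-o} onto $\Omega^{\star}\H$; the pointwise form of \eqref{sqH} for a function, namely $\dH^2 f=-2\sum_a(\L_{\xi_a}f)\omega_a$; the orthogonality $\omega_a\wedge\omega_b=2\delta_{ab}\vol_{\H}$; and the identities \eqref{Ho3}, \eqref{Ho44} relating $I_a\alpha\wedge\omega_b$ to $\alpha\wedge\omega_c$. The strategy is to establish (i) directly, deduce (iii) as a one-line consequence, and treat (ii) by pairing with the basis $\{\omega_1,\omega_2,\omega_3\}$ of $\Lambda^{+}\H$.

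For (i), I would start from \eqref{Ho3} in the form $\sH(I_a\dH f)=-\dH f\wedge\omega_a$ (using $\sH^2=-1$ on $\Omega^1\H$). Applying $\di_{\H}^{\star}=-\sH\dH\sH$ and the Leibniz rule, and using $\dH\omega_a=0$, yields
\[
\di_{\H}^{\star}(I_a\dH f)=\sH(\dH^2 f\wedge\omega_a)=-2\sum_b(\L_{\xi_b}f)\sH(\omega_b\wedge\omega_a)=-4\L_{\xi_a}f,
\]
which is the $a$-th component of the claim. For (iii), I would use $[\dH,\L_{\xi_a}]=0$ from \eqref{com1} together with the defining expression $\p=\sum_a I_a\L_{\xi_a}$ to write $\p\dH f=\sum_a I_a\dH(\L_{\xi_a}f)$; then (i) applied to $\L_{\xi_a}f$ in place of $f$ gives $\di_{\H}^{\star}\p\dH f=-4\sum_a\L_{\xi_a}^2 f=4\C f$.

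For (ii), the plan is to compute the self-dual component of $\di_{\H}(I_a\dH f)$ by extracting the coefficients against the basis $\{\omega_b\}$: for any self-dual 2-form $\mu=\sum_b\mu^{(b)}\omega_b$, one has $\mu\wedge\omega_b=2\mu^{(b)}\vol_{\H}$. Since $\dH\omega_b=0$, I can replace $\di_{\H}(I_a\dH f)\wedge\omega_b$ by $\dH(I_a\dH f\wedge\omega_b)$. In the diagonal case $a=b$ I would use \eqref{Ho3} to identify $I_a\dH f\wedge\omega_a=\sH\dH f$, and then $\dH\sH\dH f=-(\Delta_{\H}f)\vol_{\H}$ gives the coefficient $-\tfrac12\Delta_{\H}f$ on $\omega_a$. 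In the off-diagonal case $a\neq b$, \eqref{Ho44} rewrites $I_a\dH f\wedge\omega_b=\pm\dH f\wedge\omega_c$ according to cyclic order, whose differential $\pm\dH^2 f\wedge\omega_c=\mp 4(\L_{\xi_c}f)\vol_{\H}$ contributes a coefficient $\pm 2\L_{\xi_c}f$ on $\omega_b$.

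The last step is to check that the right-hand side of (ii) reproduces exactly these coefficients. Expanding $(C(f\omega))_a$ via the matrix definition of $C$ and the Leibniz rule produces, apart from the desired derivative terms, a purely algebraic summand of the form $4f\omega_a$ coming from $\L_{\xi_b}\omega_c=2\omega_a$ in \eqref{Lie-os}; multiplied by $2$ this is $8f\omega_a$, which cancels precisely the $-\tfrac12\cdot 16 f\cdot\omega_a$ contribution. The remaining terms then match the coefficients derived above. The main obstacle is bookkeeping: keeping signs and cyclic permutations consistent in the off-diagonal computation and cross-checking the cancellation of the algebraic $16f$ term, both of which require a careful application of \eqref{Ho44} and \eqref{Lie-os}. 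Otherwise the proof is essentially mechanical.
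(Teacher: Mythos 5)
Your proposal is correct and follows essentially the same route as the paper: part (i) via $\di_{\H}^{\star}=-\sH\dH\sH$ together with \eqref{Ho3} and \eqref{sqH}, part (ii) by extracting the coefficients of $\dH(I_a\dH f)$ against the basis $\{\omega_b\}$ using $\dH\omega_b=0$, \eqref{Ho44} and \eqref{Lie-os}, and part (iii) as an immediate consequence of (i) and $\su(2)$-invariance. The only cosmetic difference is in (iii), where you commute the Lie derivatives through $\dH$ and apply (i) to $\L_{\xi_a}f$, while the paper commutes them through $\di_{\H}^{\star}$ via $\p=-\L_{\xi}^{\star}\circ\bbI$; the two computations are identical in substance (just watch the $\pm$ signs in your off-diagonal step of (ii), which as written are flipped relative to the correct coefficients $-2\L_{\xi_3}f$ on $\omega_2$ and $+2\L_{\xi_2}f$ on $\omega_3$ for $a=1$).
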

\begin{proof}
(i) with the aid of \eqref{Ho3} and \eqref{sqH} we see that 
\begin{equation*}
\di_{\H}^{\star}I_a\dH\!f=\sH \dH(\dH\!f \wedge \omega_a)=\sH(\di_{\H}^2f \wedge \omega_a)=-2\L_{\xi_a}f \sH \omega_a^2=-4\L_{\xi_a}f
\end{equation*}
which proves the claim.\\
(ii) the diagonal terms in $\di_{\H}^{+}\bbI\dH\!f$, w.r.t. the basis $\{\omega_a, 1 \leq a \leq 3\}$ in $\Lambda^{+}\H$, are determined from 
$\dH I_a \dH\!f \wedge \omega_a=\dH (I_a \dH\!f \wedge \omega_a)=\dH \sH \dH\!f=-\Delta_{\H}f\vol_{\H}$. To compute the remaining terms we 
start from the identity $I_a\dH\!f \wedge \omega_b=-I_b\dH\!f \wedge \omega_a=\dH\!f \wedge \omega_c$, as entailed by \eqref{Ho44}, with cyclic permutation on $abc$. Since $\dH\omega_a=\dH\omega_b=0$ it follows that $ \dH I_a \dH\!f \wedge \omega_b=-
\dH I_b \dH\!f \wedge \omega_a$. At the same time, by also using \eqref{sqH}
$$ \dH I_a \dH\!f \wedge \omega_b=\dH (I_a \dH\!f \wedge \omega_b)=\di_{\H}^2f \wedge \omega_c=-2(\L_{\xi_c}f) \omega_c^2.
$$
The claimed expression for $\di_{\H}^{+}\bbI\dH\!f$ follows from $\omega_1^2=\omega_2^2=\omega_3^2=2\vol_{\H}$ and \eqref{Lie-os}.\\
(iii) follows from (i) and $\p=-\L_{\xi}^{\star} \circ \bbI$ on $\Omega^1\H$ since $\di_{\H}$ is $\su(2)$-invariant.
\end{proof}
To end this section we consider the operator $C^{\infty}_{b}M \to \Omega^{-}(\H,\bbR^3), f \mapsto \di_{\H}^{-}\bbI\dH\!f$ which 
will be needed for the embedding result in section \ref{alg} and to establish eigenvalue estimates 
in section \ref{HoEe}. Note that $\di=\dH$ on invariant functions. We prove that 
\begin{coro} \label{co-r1}
Whenever $f\in \ker(\Delta_{b}-\nu)$ we have 
\begin{itemize}
\item[(i)] $\sH\dH(\di_{\H}^{-}\bbI\dH\!f)=\frac{\nu-16}{2}\bbI\dH\!f$
\item[(ii)] $\int_M\vert \di_{\H}^{-}\bbI\dH\!f\vert^2\vol=\frac{3(\nu-16)}{2}\int_M \vert \dH\!f\vert^2\vol$.
\end{itemize}
 
\end{coro}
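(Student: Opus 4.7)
The plan is to reduce both assertions to Lemma \ref{batch1}(ii) combined with the squaring identity \eqref{sqH} and the quaternionic identities \eqref{Ho3}--\eqref{Ho44}. For $f \in C^\infty_b M$ the vertical Casimir vanishes, $\C f = 0$, so $\Delta_{\H} f = \Delta_b f = \nu f$ and $\di_{\H}\!f = \di\!f$.

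First I evaluate $\di_{\H}^{+} \bbI \di_{\H}\!f$ via Lemma \ref{batch1}(ii). For basic $f$ a direct check gives $C(f\omega) = 4f\omega$, exactly as $C\omega = 4\omega$, so the lemma produces $\di_{\H}^{+} \bbI \di_{\H}\!f = -\tfrac{1}{2}(\nu+16)f\omega + 8f\omega = -\tfrac{\nu}{2}\, f\omega$. Since $\di_{\H} = \di_{\H}^{+} + \di_{\H}^{-}$ on $\Omega^1\H$, this yields
\begin{equation*}
\di_{\H}^{-} \bbI \di_{\H}\!f \;=\; \di_{\H}\bbI\di_{\H}\!f \;+\; \tfrac{\nu}{2}\, f\omega.
\end{equation*}

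Next I apply $\sH \di_{\H}$ to both sides. For the term $\tfrac{\nu}{2}f\omega$, since $\di_{\H}\omega_a = 0$ and $\sH(\di_{\H} f \wedge \omega_a) = I_a \di_{\H}\!f$ by \eqref{Ho3}, the contribution is $\tfrac{\nu}{2}\bbI\di_{\H}\!f$. For the remaining part $\sH\di_{\H}^{2}\bbI\di_{\H}\!f$, the squaring identity \eqref{sqH} gives componentwise $(\di_{\H}^{2}\bbI \di_{\H}\!f)_b = -2\sum_a \omega_a \wedge \L_{\xi_a}(I_b\,\di_{\H}\!f)$. The $\su(2)$-invariance of $\di_{\H}$ together with $f$ being basic kills $\L_{\xi_a}\di_{\H}\!f$, leaving only $(\L_{\xi_a} I_b)\di_{\H}\!f$; by \eqref{Lie-osI} these vanish for $a=b$ and equal $\pm 2 I_c \di_{\H}\!f$ otherwise. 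Collapsing the two surviving summands via \eqref{Ho44} produces $-8\,\di_{\H} f \wedge \omega_b$, so $(\sH \di_{\H}^{2}\bbI\di_{\H}\!f)_b = -8 I_b \di_{\H}\!f$. Adding the two contributions proves (i).

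For (ii), on anti-selfdual 2-forms $\sH$ acts as $-1$, hence $\di_{\H}^{\star} = \sH\,\di_{\H}$ on $\Omega^{-}(\H,\bbR^3)$ by Lemma \ref{comp-cd}(i); combined with orthogonality of $\di_{\H}^{+}$ and $\di_{\H}^{-}$ this gives
\begin{equation*}
\int_M \lvert \di_{\H}^{-}\bbI\di_{\H} f\rvert^2\,\vol = (\di_{\H}\bbI\di_{\H} f,\,\di_{\H}^{-}\bbI\di_{\H} f) = (\bbI\di_{\H} f,\,\sH\di_{\H}\,\di_{\H}^{-}\bbI\di_{\H} f) = \tfrac{\nu-16}{2}\int_M \lvert\bbI\di_{\H} f\rvert^2\,\vol
\end{equation*}
by part (i). Since each $I_a$ is a $g_{\H}$-isometry, $\lvert\bbI\di_{\H} f\rvert^2 = 3\lvert\di_{\H} f\rvert^2$ pointwise, proving (ii). The main obstacle is the sign-tracking in computing $\di_{\H}^{2}\bbI\di_{\H} f$, where \eqref{Lie-osI} and \eqref{Ho44} must be combined carefully; pleasantly, the threshold $\nu=16$ that emerges matches the lower bound from \cite{LeeR} recalled after Theorem \ref{main2}.
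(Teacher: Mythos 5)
Your proof is correct and follows essentially the same route as the paper: Lemma \ref{batch1}(ii) with $C(f\omega)=4f\omega$ to get $\di_{\H}^{-}\bbI\dH\!f=\dH\bbI\dH\!f+\tfrac{\nu}{2}f\omega$, then applying $\sH\dH$ and integrating by parts with $\di_{\H}^{\star}=\sH\dH$ on anti-selfdual forms. The only cosmetic difference is that you recompute $\sH\di_{\H}^{2}(\bbI\dH\!f)=-8\,\bbI\dH\!f$ by hand from \eqref{sqH}, \eqref{Lie-osI} and \eqref{Ho44}, where the paper simply cites \eqref{def-p} together with $\p(\bbI\dH\!f)=4\,\bbI\dH\!f$ from \eqref{pI} -- the same computation in packaged form.
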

\begin{proof}
According to part (ii) in Lemma \ref{batch1} we have $\di_{\H}^{-}\bbI\dH\!f=\dH\bbI\dH\!f+\frac{\nu}{2}f\omega$ thus 
\begin{equation*}
\sH\dH(\di_{\H}^{-}\bbI\dH\!f)=\sH\di_{\H}^2(\bbI\dH\! f)+\tfrac{\nu}{2}\sH(\dH\!f \wedge \omega)=-2\p(\bbI\dH\! f)+\tfrac{\nu}{2}\bbI\dH\! f
\end{equation*}
by using \eqref{def-p} and \eqref{Ho3}. Since $f$ is $\su(2)$-invariant we have $\p(\bbI \di\!f)=4\bbI \di\!f$ by \eqref{pI} and the claim 
in (i) follows. Part (ii) follows from (i) by integration using that $\sH\dH=\di_{\H}^{\star}$ on $\Omega^{-}(\H,\bbR^3)$.
\end{proof}
\subsection{Eigenspace properties} \label{sectn52}
%These facts enable writing down the system of equations satisfied by the components in \eqref{split-g2s} of elements in $E_{\lambda}$. 
In this section we work exclusively with the value $s=\t5$. The aim is to combine the $\su(2)$ splitting of $\Omega^3_{27}(\varphi_s)$ from section \ref{G2} and the block structure of the Laplacian $\Delta^{g_s}$ in Proposition \ref{comp1} to study pairs $(\alpha,\sigma) \in 
\Omega^1(\H,\bbR^3) \oplus \Omega^2_{sym}(\H,\bbR^3)$ such that $\kappa_s(\alpha,\sigma) \in E_{\lambda}$ with 
$\lambda \in \mathbb{R}$. It will be sometimes useful to record that this requirement on $(\alpha,\sigma)$ corresponds to the first order exterior differential system 
\begin{equation} \label{sys1}
\begin{split}
&\di_{\H}^{\star}\!\tT(\alpha)=(\lambda+2s)\tT(\sigma)\\
&\star_{\H}\dH\!\sigma+\tfrac{1}{s}\L_{\xi}\tT(\alpha)+2s\mathbb{I}\tT(\alpha)=-(\lambda+2s) \alpha\\
&(C-2)\sigma+s\dH\!\alpha-2s^2\tT(\sigma)\omega=s\lambda \star_{\H} \sigma\\
&\L^{\star}_{\xi}\alpha=-s\lambda \tT(\alpha)+s\dH\!\tT(\sigma).
\end{split}
\end{equation}
This follows from the block structure of $\star_s\di$ in Lemma \ref{blgen-1}, with $F=-\tT(\sigma)$ and $\beta=\sH\tT(\alpha)$. 

We will derive differential constraints pertaining only on $\alpha$ and on its scalar valued invariants 
$\L_{\xi}^{\star}\alpha$ and $\tT(\alpha)$. To carry out this programme consider the second order 
differential operator $\scrD:\Omega^1\H \to \Omega^1\H$ given by 
\begin{equation*}
\scrD:=\Delta_{\H}+5\C-2\p
\end{equation*} 
which enters the following preliminary 
\begin{lema} \label{tG}
We have 
%\begin{equation*}
$\tT \circ\, \mathscr{G}^{\t5}=\scrD \circ \tT$
%\end{equation*}
on $\Omega^1(\H,\bbR^3)$.
\end{lema}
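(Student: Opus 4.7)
The plan is to verify the identity by reducing everything to the commutator relations \eqref{TT1}--\eqref{TT3} together with a single new commutation formula between $\tT$ and the horizontal Laplacian $\Delta_{\H}$. The choice $s=\t5$ enters through the coefficient $1/s^2=5$, so explicitly
\[
\mathscr{G}^{\t5}=\Delta_{\H}+5\C-2\p-12(C-2).
\]
Applying $\tT$ and using \eqref{TT1}--\eqref{TT3} to push $\tT$ past each of $\C$, $\p$, $C$, the $\L_{\xi}^{\star}$-contributions accumulate with coefficients $20-4-12=4$, and the resulting $\tT$-multipliers collect to
\[
5\tT\C-2\tT\p-12\tT C+24\tT=(5\C-6\p+8)\tT+4\L_{\xi}^{\star}.
\]
So the claim reduces to establishing
\[
\tT\circ\Delta_{\H}-\Delta_{\H}\circ\tT=(4\p-8)\tT-4\L_{\xi}^{\star}\qquad\text{on }\Omega^1(\H,\bbR^3),
\]
after which one reads off $\tT\mathscr{G}^{\t5}=(\Delta_{\H}+5\C-2\p)\tT=\scrD\tT$ immediately.

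For this last identity I would argue componentwise. Since $\tT(\sigma)=\sum_a I_a\sigma_a$ and $\Delta_{\H}$ acts componentwise on $\Omega^1(\H,\bbR^3)$,
\[
(\tT\Delta_{\H}-\Delta_{\H}\tT)\sigma=\sum_a[I_a,\Delta_{\H}]\sigma_a.
\]
Lemma \ref{comI} states $[\Delta_{\H}+\C,I_a]=0$ on $\Omega^1\H$, and Lemma \ref{pvsC} gives $[\C-2\p,I_a]=0$, so $[I_a,\Delta_{\H}]=[\C,I_a]=2[\p,I_a]$. The relation \eqref{pI} rearranges to $[\p,I_a]=4I_a-2I_a\p-2\L_{\xi_a}$. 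Summing over $a$ and recognising $\sum_a I_a\p\sigma_a=\tT(\p\sigma)$ (with $\p$ componentwise) and $\sum_a\L_{\xi_a}\sigma_a=-\L_{\xi}^{\star}\sigma$, one obtains
\[
\sum_a[\C,I_a]\sigma_a=8\tT\sigma-4\tT(\p\sigma)+4\L_{\xi}^{\star}\sigma.
\]
Finally, reusing \eqref{TT1} to eliminate $\tT(\p\sigma)=(4-\p)\tT\sigma+2\L_{\xi}^{\star}\sigma$ produces exactly $(4\p-8)\tT\sigma-4\L_{\xi}^{\star}\sigma$, closing the argument.

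The only delicate point is maintaining a consistent convention for $\p$: on $\Omega^1\H$ it is the operator defined in Section \ref{sl2C}, while on $\Omega^1(\H,\bbR^3)$ it acts componentwise (not via the tensor product representation $\rho\otimes\pi^1$). With that understood, \eqref{TT1}--\eqref{TT3} apply verbatim and the computation above is entirely mechanical. The main obstacle, therefore, is not conceptual but bookkeeping-level: one has to track the $\L_{\xi}^{\star}$-coefficients carefully and verify that the constant $-12$ in front of $(C-2)$ in $\mathscr{G}^{\t5}$ -- coming from the value $s=\t5$ -- is precisely the value that makes both the $\tT$-multipliers and the $\L_{\xi}^{\star}$-terms align with the commutator correction $(4\p-8)\tT-4\L_{\xi}^{\star}$ produced by $[\tT,\Delta_{\H}]$. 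This is the rigidity that singles out the second Einstein metric.
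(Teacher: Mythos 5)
Your proof is correct and rests on essentially the same ingredients as the paper's: the commutation identities \eqref{TT1}--\eqref{TT3} together with the $\sp(1)$-invariance of $\Delta_{\H}+\C$ from Lemma \ref{comI}. The only difference is bookkeeping: the paper groups $\Delta_{\H}+\C$ and commutes this pair with $\tT$ in one stroke, while you push $\tT$ past the full $5\C-2\p-12(C-2)$ and then evaluate $[\tT,\Delta_{\H}]$ explicitly via $[I_a,\Delta_{\H}]=[\C,I_a]=2[\p,I_a]$ and \eqref{pI}, with the correction terms cancelling exactly as you check.
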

\begin{proof}
Since $\Delta_{\H}+\C$ is $\sp(1)$-invariant by Lemma \ref{comI}, it commutes with the trace map $\tT$. We compute, by succesively using \eqref{TT2},\eqref{TT1} as well as \eqref{TT3}
\begin{equation*}
\begin{split}
\tT\circ \mathscr{G}^{\t5}=&(\Delta_{\H}+\C)\circ \tT+\ 4\tT\circ \C-2\tT\circ \p-12\tT \circ (C-2)\\
=&(\Delta_{\H}+\C)\tT+4((\C+8-4\p)\circ \tT+4\L_{\xi}^{\star})-2((4-\p)\circ \tT+2\L_{\xi}^{\star})-12((2-\p)\circ \tT+\L_{\xi}^{\star}).
\end{split}
\end{equation*}
The claim follows by gathering terms.
\end{proof}
\begin{rema}\label{deltabar}
Perhaps not accidentally the operator $\scrD$ acting on $\Omega^1 \H $ can be viewed as a Laplace-type operator defined with the aid 
of the canonical connection $\overline{\nabla}$ of the nearly $\G_2$ structure $\varphi_{\s5}$. This connection can be characterised as the unique metric connection with torsion proportional to $\varphi_s$. The associated Laplace-type operator $\bar \Delta$ acting on 
$\Omega^{\star}M$ is defined 
according to $\bar \Delta = \overline{\nabla}^{\star} \overline{\nabla} + q(\bar R)$, where $q(\bar R)$ is a curvature term, linear in the curvature $\bar R$ of $\overline{\nabla}$
(see \cite{AlS} for details). Then the comparaison formula from \cite[Prop. 5.1]{AlS} yields after a short calculation 
$
\bar \Delta \alpha  = \Delta^{g_{s}} \alpha    +   \tfrac{2}{\sqrt{5}} \mathrm{pr}_{\Lambda^1} (\di\! \alpha)
$
for $\alpha \in \Omega^1\H$. Here  $ \mathrm{pr}_{\Lambda^1}$ denotes the projection given by $ \mathrm{pr}_{\Lambda^1}(A \wedge B)  = B \lrcorner\, A\lrcorner \,\varphi_s$
for tangent vectors $A,B \in TM$. Since $\H$ is a co-associative $4$-plane we have $ \mathrm{pr}_{\Lambda^1} (\Lambda^2 \H) \in \V$ as well as $ (\mathrm{pr}_{\Lambda^1} (\di\!\alpha))_\H = -\sqrt{5} \p(\alpha)$, making that 
\bea
(\bar \Delta \alpha)_\H &=& (\Delta^{g_s} \alpha)_\H  \; +\;  \tfrac{2}{\sqrt{5}} (\mathrm{pr}_{\Lambda^1} (\di\! \alpha))_\H
\;=\;
(\Delta_\H + 5 \C ) \alpha \;-\; 2 \p(\alpha)  \;  = \; \scrD \alpha \ .
\eea
Since $\bar \Delta$ preserves the distribution $\H$ it follows that $\bar \Delta=\scrD$ on $\Omega^1\H$.
\end{rema}

To be able to state our first structure results we introduce several spaces of harmonic forms starting with 
$$\mathbb{H}:=\{\sigma \in \Omega^2\H : \dH\sigma=\di_{\H}^{\star}\sigma=0\}$$
which splits as $\mathbb{H}=\mathbb{H}^{-} \oplus \mathbb{H}^{+}$ according to 
$\Lambda^2\H=\Lambda^{-}\H \oplus \Lambda^{+}\H$. In addition, let 
\begin{equation} \label{h-spc}
\begin{split}
&\bfH_{\lambda}^{-}:=(\mathbb{H}^{-} \otimes \bbR^3) \cap \ker(C-\lambda) \cap 
\ker \L_{\xi}^{\star}\\
&\mathbf{H}_{\lambda}^{+}:=(\mathbb{H}^{+} \otimes \bbR^3) \cap \ker(C-\lambda) \cap 
\ker (\L_{\xi}^{\star} \oplus \tT) \cap \Omega^{+}_{sym}(\H,\bbR^3)\edz{find phrasing!!!}\\
& \mathbb{H}_{\lambda}:=\mathbb{H} \cap \ker(\C-\lambda)
\end{split}
\end{equation}
for $\lambda \in \bbR$, where we recall that $\L_{\xi}^{\star} \oplus \tT:\Omega^{+}(\H,\bbR^3) \to \Omega^1\H \oplus \Omega^1\H$ is the direct sum map. Spaces of type $\bfH^{\pm}_{\lambda}$ are, as \eqref{sqC} shows, contained in $(\mathbb{H} \otimes \bbR^3) \cap \ker(\C-\lambda(\lambda-2))$ thus they are finite dimensional and $\su(2)$-invariant. As the Casimir operator 
of a finite dimensional irreducible, possibly with multiplicity, $\su(2)$-representation is an integer, of the form $m(m+2), m \in \mathbb{N}$ we conclude that   
\begin{equation} \label{van11}
(\mathbb{H}^{\pm} \otimes \bbR^3) \cap \ker(C-\lambda)=0 \ \mbox{for} \ \lambda \in \bbR \backslash \mathbb{Z}.
\end{equation}
In what follows we call a sequence $0 \to V_1 \stackrel{f_1}{\to}V_2 \stackrel{f_2}{\to}V_3$ of vector spaces and linear maps semi-exact provided that $\ker(f_1)=0$ and $\ker(f_2)=\im(f_1)$. When $V_1$ occurs as a subspace in $V_2$ and hence $f_1$ is the inclusion map  we simply use the notation $0 \to V_1 \hookrightarrow V_2 \stackrel{f_2}{\to} V_3$, when semi-exact amounts to $\ker(f_2)=V_1$.

These preparations allow relating the eigenspaces of the Laplacian on co-closed forms in $\Omega^3_{27}(\varphi_s)$, in other words 
spaces of type $E_{\lambda}$, to eigenspaces of 
the operator $\mathscr{G}^{\t5}$. Based on the identification $\Omega^3_{27}(\varphi_s)$ with the subspace $\mathscr{S} \subseteq \bbV^3\H$ (see \eqref{scrS}) we prove the following
\begin{pro} \label{int-l}
Assume that $\lambda(\lambda+2s) \neq 0$. We have a semi-exact sequence 
\begin{equation*}
0 \to \bfH_{2-s\lambda}^{-} \oplus \bfH^{+}_{2+s\lambda} \stackrel{\kappa_s(0,\cdot)}{\to} \ker(\star_s\di-\lambda) \cap \Omega^3_{27}(\varphi_s) \stackrel{\pr_1 }{\to} \ker(\mathscr{G}^{\t5}-\lambda(\lambda+2s))
\end{equation*}
with $\pr_1:\Omega^3_{27}(\varphi_s) \to \Omega^1(\H,\bbR^3)$ as defined in section \ref{blockLap}. If $\lambda=-2s$ we have a semi-exact sequence
$$0 \to \bbR\widetilde{\varphi}_s \hookrightarrow \ker(\star_s\di+2s) \cap \Omega^3_{27}(\varphi_s) \stackrel{\pr_1 }{\to} \ker \mathscr{G}^{\t5}.$$
%then $\ker\pr_1=\bbR\widetilde{\varphi}_s$.
\end{pro}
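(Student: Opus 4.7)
The plan addresses the three assertions in turn: first that the leftmost map injects into $E_\lambda\cap\ker\pr_1$, next that $\pr_1$ sends $E_\lambda$ into the indicated eigenspace of $\mathscr{G}^{\t5}$, and finally that when $\lambda=-2s$ the kernel of $\pr_1$ is exactly $\bbR\widetilde{\varphi}_s$. For $\sigma \in \bfH^-_{2-s\lambda}\oplus\bfH^+_{2+s\lambda}$, the map $\sigma \mapsto \kappa_s(0,\sigma)$ has image $\iota_s(0,0,\sigma,0)$ because $\tT(\sigma)=0$ on both summands (anti-self-dual forms pair trivially with the self-dual $\omega_a$ on $\sigma^-$; built into the definition of $\bfH^+$ on $\sigma^+$), and is thus manifestly in $\ker\pr_1$. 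Applying the block form of $\star_s\di$ from Lemma \ref{blgen-1} with $F=\alpha=\beta=0$ kills the components of degrees $0$, $1$ and $3$ (via $\tT(\sigma)=0$ and $\dH\sigma=0$), and reduces the remaining $\Omega^2(\H,\bbR^3)$-component to $\frac{1}{s}\sH(C-2)\sigma$, which on $\sigma^\pm$ evaluates to $\lambda\sigma^\pm$ by combining $C\sigma^\pm = (2\pm s\lambda)\sigma^\pm$ with $\sH\sigma^\pm = \pm\sigma^\pm$.

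For the main arrow, given $\gamma=\kappa_s(\alpha,\sigma) \in E_\lambda$ I aim to derive $\mathscr{G}^{\t5}\alpha = \lambda(\lambda+2s)\alpha$. Proposition \ref{comp1} applied on $\mathscr{S}$, together with $\pr_1(\star_s\di\gamma)=\lambda\alpha$, $\pr_3(\star_s\di\gamma)=\lambda\sH\tT(\alpha)$, and the identities $\sH^2=-1$ on $\Omega^1\H$ and $\PP-\bbI\tT=1$, yields $\pr_1\Delta^{g_s}\gamma = \mathscr{G}^{\t5}\alpha - 2s\lambda\alpha$. On the other hand $(\star_s\di)^2 = \di^{\star_s}\di$ on $\Omega^3 M$ gives $\di^{\star_s}\di\gamma = \lambda^2\gamma$, and from $\di\gamma = \lambda\star_s\gamma \in \Omega^4_{27}(\varphi_s)$ (Hodge star preserves $\G_2$-types) the characterisation \eqref{TT-int} places $\gamma \in \bfi(\TT(g_s))$. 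Proposition \ref{lich} then identifies $\gamma$ as a $\Delta_L^{g_s}$-eigenvector with $\tau = \lambda^2 + 6s\lambda + 36s^2$, and the comparison \eqref{DeltaL} forces $\Delta^{g_s}\gamma = \lambda^2\gamma$, so that $\di\di^{\star_s}\gamma=0$. Matching $\pr_1\Delta^{g_s}\gamma = \lambda^2\alpha$ against the previous expression yields the claim.

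For the $\lambda=-2s$ case, any $\gamma \in E_{-2s}\cap\ker\pr_1$ has the form $\iota_s(-\tT(\sigma),0,\sigma,0)$, and Lemma \ref{blgen-1} reduces $\star_s\di\gamma = -2s\gamma$ to the system $\dH\sigma=0$, $\dH\tT(\sigma)=0$, and $\sH(C-2)\sigma = -2s^2\sigma + 2s^2\tT(\sigma)\omega$. Splitting $\sigma=\sigma^++\sigma^-$ and inserting $\sH\sigma^\pm=\pm\sigma^\pm$ yields $C\sigma^- = \tfrac{12}{5}\sigma^-$ on the anti-self-dual part; Lemma \ref{sq} forces $\L^\star_\xi\sigma^-=0$, but $C$-eigenvalues on $\ker\L^\star_\xi$ are integers by $\su(2)$-representation theory, so $\sigma^-=0$. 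On the self-dual part, decompose $\sigma^+=\sigma_0^++f\omega$ according to $\Omega^+_{sym}(\H,\bbR^3) = \ker\tT\oplus\bbR\omega$; an analogous $\su(2)$-eigenvalue argument on the trace-free summand pins $\sigma_0^+=0$, and the remaining equations combined with $\dH\tT(\sigma) = 6\dH\!f = 0$ force $f$ to be both $\su(2)$-invariant and horizontally constant, hence constant, yielding $\gamma \in \bbR\widetilde{\varphi}_s$.

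The principal technical hurdle is the bookkeeping in this last step, where the action of $C$ on the $f\omega$-component produces mixed terms of the form $(\L_{\xi_b}f)\omega_c - (\L_{\xi_c}f)\omega_b$ that must be peeled off using representation-theoretic identities for 2-forms analogous to those of \eqref{TT3}. By contrast, the semi-exactness step rests cleanly on the combination of Propositions \ref{comp1} and \ref{lich}, the key non-routine observation being the identity $\di\di^{\star_s}\gamma=0$ for $\gamma \in E_\lambda$ with $\lambda \neq 0$.
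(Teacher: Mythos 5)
Your first two steps follow the paper's strategy (Lemma \ref{blgen-1} for the first arrow, Proposition \ref{comp1} for the second) and the computations there are correct, but the way you justify $\Delta^{g_s}\gamma=\lambda^2\gamma$ is circular as written: Proposition \ref{lich} decomposes a \emph{given} $\Delta_L^{g_s}$-eigenspace and does not assert that an arbitrary element of $E_\lambda\cap\bfi(\TT(g_s))$ is a $\Delta_L^{g_s}$-eigenvector, and assigning it the eigenvalue $\tau=\lambda^2+6s\lambda+36s^2$ already presupposes $\di\di^{\star_s}\gamma=0$, which is exactly what you then "deduce" from it. The entire detour through \eqref{TT-int}, Proposition \ref{lich} and \eqref{DeltaL} is unnecessary: from your own identity $\di\gamma=\lambda\star_s\gamma$ with $\lambda\neq 0$ the form $\star_s\gamma=\lambda^{-1}\di\gamma$ is closed, so $\di^{\star_s}\gamma=-\star_s\di\star_s\gamma=0$ and hence $\Delta^{g_s}\gamma=\di^{\star_s}\di\gamma=(\star_s\di)^2\gamma=\lambda^2\gamma$; this one-line argument is the paper's route, after which your use of Proposition \ref{comp1} gives $\mathscr{G}^{\t5}\alpha=\lambda(\lambda+2s)\alpha$ exactly as in the paper.

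The genuine gap is in the $\lambda=-2s$ case. Your plan is to first kill $\sigma_0^{+}$ by an "analogous $\su(2)$-eigenvalue argument" and only afterwards force invariance of $f$, but this order does not work: when $f$ is not $\su(2)$-invariant one has $C(f\omega)=4f\omega+\tau$ with $\tau_a=(\L_{\xi_b}f)\omega_c-(\L_{\xi_c}f)\omega_b$, and $(L_{\omega}\tau)_a=-4(\L_{\xi_a}f)\vol_{\H}\neq 0$, so $\tau\notin\Omega^{+}_{sym}(\H,\bbR^3)$ and $C$ does not act diagonally on the splitting $\ker\tT\oplus\bbR\omega$; the eigenvalue argument for the trace-free summand is blocked precisely by the mixed terms you flag as a "hurdle" and never peel off. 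The missing idea, which is how the paper avoids all this, is that the degree-$3$ component of the system (your $\dH\tT(\sigma)=0$) together with the fact that $\H$ is bracket generating (the $\omega_a$ are nondegenerate) forces $\tT(\sigma)$, hence $f$, to be \emph{constant} at the outset; then $\rho:=\sigma^{+}-\tfrac{\tT(\sigma)}{6}\omega$ is an honest $C$-eigenvector with eigenvalue $2+s\lambda=\tfrac{8}{5}\notin\mathbb{Z}$ and $\sigma^{-}$ one with eigenvalue $\tfrac{12}{5}\notin\mathbb{Z}$, so both vanish by \eqref{van11}, leaving $\sigma\in\bbR\omega$, i.e. $\gamma\in\bbR\widetilde{\varphi}_s$ (your treatment of $\sigma^{-}$ is fine, since $C$ preserves the ASD part and the $\omega$-term is self-dual). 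Note finally that the paper's proof also establishes, for $\lambda(\lambda+2s)\neq 0$, the reverse containment $\ker\pr_1\subseteq\bfH^{-}_{2-s\lambda}\oplus\bfH^{+}_{2+s\lambda}$, which is used later in Proposition \ref{ker-gen}; your write-up only proves the containment of the harmonic spaces in the kernel.
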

\begin{proof}
Let $\gamma=\kappa_s(\alpha,\sigma) \in \ker(\star_s\di-\lambda) \cap \Omega^3_{27}(\varphi_s)$. Since $\lambda \neq 0$ it follows that 
$\di^{\star_s}\gamma=0$ hence $\Delta^{g_s}\gamma=\lambda^2\gamma$. As $\pr_1(\gamma)=\alpha$ and $\pr_3(\gamma)=\sH\tT(\alpha)$ the projections 
of $\star_s\di$ satisfy 
$ \pr_1(\star_s\di)=\lambda \pr_1$ and $\pr_3(\star_s\di)=\lambda \sH\tT \circ \,\pr_1
$
on $\gamma$. Proposition \ref{comp1} thus yields 
$$\mathscr{G}^{\t5}\alpha=\lambda^2\alpha+2s\lambda\PP\!\alpha-2s\lambda\bbI\tT(\alpha)=\lambda(\lambda+2s)\alpha $$
since $\sH^2=-1$ on $\Omega^1\H$ and $\P=1+\bbI \circ \tT$. In other words the last arrow in the statement is well defined.\\
Now assume, in addition, that $\alpha=0$, that is $\gamma \in \ker \pr_1$. By \eqref{sys1} the requirement $\star_s\di\gamma=\lambda\gamma$ then reduces to 
\begin{equation*}
\begin{split}
&(\lambda+2s)\tT(\sigma)=0, \ \dH\tT(\sigma)=0, \ \dH \sigma=0, \ (C-2)\sigma=s\lambda\sH \sigma+2s^2\tT(\sigma)\omega.
\end{split}
\end{equation*}
There are two cases to distinguish as follows.
\begin{itemize}
\item[(i)]$\lambda(\lambda+2s)\neq 0$.\\
Here we must have $\tT(\sigma)=0$ which makes that $(C-2)\sigma=s\lambda \sH\sigma$ after updating the last equation above. This forces $\dH\sH \sigma=0$ since $[\dH,C]=0$ as well as $\L_{\xi}^{\star}\sigma=0$ after taking into the identity \eqref{sqC00}. Furthermore, projection onto $\Lambda^2\H=\Lambda^{-}\H \oplus \Lambda^{+}\H$ leads to $C\sigma^{\pm}=(2\pm s\lambda)\sigma^{\pm}$ which shows that 
$\sigma^{-} \in \bfH_{2-s\lambda}^{-}$. Since $\tT$ vanishes on $\Omega^{-}(\H,\bbR^3)$ we see that $\sigma^{+}$ satisfies $\tT(\sigma^{+})=0$. As $\sigma^{+} \in \Omega^{+}_{sym}(\H,\bbR^3)$ by assumption we have showed that $\sigma^{+} \in \bfH_{2+s\lambda}^{+}$. Therefore 
the statement on $\ker \pr_1$ is proved.
\item[(ii)] $\lambda+2s=0$.\\
Having the function $\tT(\sigma) \in \ker\dH$ entails that $\tT(\sigma)$ is constant, since the distribution $\H$ is bracket generating. As before 
$\sigma^{-} \in \bfH^{-}_{2-s\lambda}$. In addition, $\rho:=\sigma^{+}-\frac{\tT(\sigma)}{6}\omega$ satisfies $\tT(\rho)=0$ and 
$(C-2)\rho=s\lambda \rho$, hence $\rho \in \bfH_{2+s\lambda}^{+}$. 
As $s\lambda=-\frac{2}{5} \in \mathbb{Q}\backslash \mathbb{Z}$ both $\rho$ and $\sigma^{-}$ vanish by \eqref{van1}, hence 
$\sigma \in \spa\{\omega\}$. The claim of having the second sequence in the statement semi-exact follows since $\widetilde{\varphi}_s=\kappa_s(0,\omega)$. 
\end{itemize}
\end{proof}
For closed eigenforms of the Laplacian an analogous, though slightly different, argument shows that
\begin{pro} \label{int-2n}
If $\mu \neq 0$ we have a semi-exact sequence  
\begin{equation*}
0 \to \mathbb{H}_{s^2\mu} \stackrel{\L_{\xi}}{\to} \ker(\di\di^{\star_s}-\mu) \cap \Omega^3_{27}(\varphi_{\t5}) \stackrel{\pr_1 }{\to} \ker(\mathscr{G}^{\t5}-\mu).
\end{equation*}
\end{pro}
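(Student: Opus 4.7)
The plan is to parallel the proof of Proposition \ref{int-l}, exchanging the roles of $\di$ and $\di^{\star_s}$ since we now deal with closed rather than coclosed eigenforms in $\Omega^3_{27}(\varphi_{\t5})$.

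For the right arrow, I start with $\gamma\in\Omega^3_{27}(\varphi_{\t5})\cap\ker(\di\di^{\star_s}-\mu)$ and $\mu\neq 0$. The identity $\gamma=\mu^{-1}\di(\di^{\star_s}\gamma)$ exhibits $\gamma$ as exact, so $\di\gamma=0$; hence $\star_s\di\gamma=0$ and $\Delta^{g_s}\gamma=\di\di^{\star_s}\gamma=\mu\gamma$. Feeding these facts into Proposition \ref{comp1} kills both $\star_s\di$-contributions on its right-hand side, leaving $\mathscr{G}^{\t5}(\pr_1\gamma)=\pr_1\Delta^{g_s}\gamma=\mu\pr_1\gamma$, so $\pr_1\gamma\in\ker(\mathscr{G}^{\t5}-\mu)$ as required.

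For the left arrow, given $\beta\in\mathbb{H}_{s^2\mu}$, the candidate map is $\beta\mapsto\di\beta$. Cartan's formula \eqref{Cart} together with $\dH\beta=0$ yields $\di\beta=\tfrac{1}{s}\iota_s(0,0,\L_\xi\beta,0)$, so that $\pr_1(\di\beta)=0$ automatically, giving the semi-exactness $\pr_1\circ\L_\xi=0$. Since $\di^2=0$, the Laplacian $\Delta^{g_s}$ commutes with $\di$, so membership in $\ker(\di\di^{\star_s}-\mu)$ reduces, via $\di(\di\beta)=0$, to the identity $\di\Delta^{g_s}\beta=\mu\di\beta$. I plan to establish this by deriving the $\Omega^2\H$-analogue of Lemma \ref{comp-cd}(ii): an $L^2$-orthogonality computation using $\di\beta=\tfrac{1}{s}\sum_a Z^a\wedge\L_{\xi_a}\beta$ together with an expression for $\di^{\star_s}\beta$ of the form $2s\sum_a g_{\H}(\omega_a,\beta)Z^a$ expresses the horizontal part of $\Delta^{g_s}\beta$ on harmonic $\beta$ as $\tfrac{1}{s^2}\C\beta$ plus self-dual-valued corrections from $\di\di^{\star_s}\beta$; differentiating and using $\C\beta=s^2\mu\beta$ should give $\di(\Delta^{g_s}\beta-\mu\beta)=0$.

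The main obstacle is verifying $\di\beta\in\Omega^3_{27}(\varphi_{\t5})$, which by Lemma \ref{dec-3sas} amounts to $\tT(\L_\xi\beta)=0$ and $L_\omega(\L_\xi\beta)=0$. For $\beta\in\mathbb{H}^-$ both are immediate: $\L_{\xi_a}$ preserves the splitting $\Lambda^2\H=\Lambda^+\H\oplus\Lambda^-\H$ (being metric and orientation preserving on $\H$) and both $\tT$ and $L_\omega$ annihilate $\Omega^-(\H,\bbR^3)$. For $\beta\in\mathbb{H}^+$, writing $\beta=\sum_a h_a\omega_a$ and unfolding $\L_{\xi_a}\beta$ via \eqref{Lie-os} shows $\L_\xi\beta=M\omega$ for some $C^\infty(\Sym^2\bbR^3)$-valued function $M$; the required vanishings then translate into symmetry and tracelessness of $M$, which must be extracted from $\dH\beta=0$ and $\di_{\H}^{\star}\beta=0$, equivalently from $\sum_a I_a\dH h_a=0$. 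This extraction is the computationally hardest part of the argument.
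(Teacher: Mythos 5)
Your treatment of the right-hand arrow is correct and is exactly the paper's argument: from $\gamma=\mu^{-1}\di\di^{\star_s}\gamma$ one gets $\di\gamma=0$, so both $\star_s\di$-contributions in Proposition \ref{comp1} drop out and $\pr_1\gamma\in\ker(\mathscr{G}^{1/\sqrt{5}}-\mu)$. The left-hand part, however, contains a genuine gap, and the step you flag as ``the computationally hardest part'' is not merely hard but false in general: the symmetry of $M$ is \emph{not} a consequence of $\dH\beta=\di_{\H}^{\star}\beta=0$. Take $\beta=\omega_1\in\mathbb{H}_8$, which is admissible since $s^2\mu=8$, i.e. $\mu=40\neq 0$. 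Then $\L_{\xi}\beta=(0,-2\omega_3,2\omega_2)^T$, so $M$ is antisymmetric and nonzero, $(L_{\omega}\L_{\xi}\beta)_1=-4\omega_1^2\neq 0$, and indeed $\di\omega_1\wedge\varphi_s=-\tfrac{4}{s}\,\omega_1^2\wedge Z^{23}\neq 0$, so $\di\beta\notin\Omega^3_{27}(\varphi_s)$. For the same $\beta$ the eigenvalue property also fails: by the block formulas, $\di\di^{\star_s}\di\beta-\mu\,\di\beta$ is governed by $f:=L_{\omega}^{\star}\L_{\xi}\beta$, which here is a nonzero constant with $L_{\omega}f\neq 0$; and in any case your sketch ``differentiating \dots should give $\di(\Delta^{g_s}\beta-\mu\beta)=0$'' is circular, since $\di\Delta^{g_s}\beta=\di\di^{\star_s}\di\beta$ is precisely the quantity to be computed. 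So the candidate map $\beta\mapsto\di\beta$ is only well behaved on $\mathbb{H}^{-}$ (where your checks are right), not on all of $\mathbb{H}_{s^2\mu}$.

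More importantly, even if well-definedness held, you would only obtain $\pr_1\circ\L_{\xi}=0$, i.e. image contained in kernel, which is the direction that is never used. The substance of the proposition, and the paper's actual proof, is the reverse containment: assuming $\gamma=\kappa_s(\alpha,\sigma)$ lies in the eigenspace with $\alpha=\pr_1\gamma=0$, the block formulas (Lemma \ref{blgen-1}, \eqref{blgen-2}, Lemma \ref{cod-gen}) first give $\tT(\sigma)=\dH\sigma=(C-2)\sigma=0$, then $\di_{\H}^{\star}\sigma=0$ and $\C\sigma=s^2\mu\,\sigma$, whence $\sigma=(s^2\mu)^{-1}\L_{\xi}(\L_{\xi}^{\star}\sigma)$ with $\L_{\xi}^{\star}\sigma\in\mathbb{H}_{s^2\mu}$. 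This kernel characterisation is exactly what is invoked later (in Theorem \ref{thmII}, where $\sigma=\L_{\xi}\sigma_0$ with $\sigma_0\in\mathbb{H}_{\mu/5}$), and your proposal does not address it at all. In this light the arrow labelled $\L_{\xi}$ in the statement must be read loosely: the example above shows it is not defined on all of $\mathbb{H}_{s^2\mu}$ for every $\mu\neq 0$, and the paper's proof never claims (nor needs) that it is.
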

\begin{proof}
Let $\gamma=\kappa_s(\alpha,\sigma)$ belong to $\ker(\di\di^{\star_s}-\mu)$. As $\di\gamma=0$ the projected operators $ \pr_1(\star_s\di)$ and  $\pr_3(\star_s\di)$ both vanish on $\gamma$. Hence $\alpha$ belongs to $\ker(\mathscr{G}^{\t5}-\mu)$ by using again Proposition \ref{comp1}. To determine the kernel of the projection map $\pr_1$ assume now that $\alpha=0$. Closure for $\gamma=\iota_s(-\tT(\sigma),0,\sigma,0)$ is then equivalent to 
\begin{equation} \label{dds}
\tT(\sigma)=\dH\!\sigma=(C-2)\sigma=0
\end{equation} 
by Lemma \ref{blgen-1}. At the same time, the eigenvalue equation $\di\di^{\star_s}\gamma=72s^2\gamma$ becomes 
\begin{equation} \label{dds1}
\begin{split}
&(C-2)\di_{\H}^{\star}\sigma=0\\
&\dH\di_{\H}^{\star}\sigma+\tfrac{1}{s^2}\L_{\xi}\L^{\star}_{\xi}\sigma=\mu\sigma\\
&-\dH\L^{\star}_{\xi}\sigma=2s^2\sH \tT(\di_{\H}^{\star}\sigma)
\end{split}
\end{equation}
after a short computation based on \eqref{blgen-2} and Lemma \ref{cod-gen}. As $\dH\!\L^{\star}_{\xi}\sigma=0$ by using \eqref{dds} it follows that 
$$\di_{\H}^{\star}\sigma \in 
\{\alpha \in \Omega^1(\H,\bbR^3) : (C-2)\alpha=0, \tT(\alpha)=0\}.$$
Applying $\sH \dH$ in the second equation of \eqref{dds1} further yields $\p(\di_{\H}^{\star}\sigma)=0$ by means of \eqref{def-p}. It follows that $\di_{\H}^{\star}\sigma=0$ by using \eqref{up-C-temp}. Due to $(C-2)\sigma=0$ we get $\C\sigma=\L_{\xi}\L_{\xi}^{\star}\sigma$ by \eqref{sqC}, thus the second 
equation in \eqref{dds1} makes that $\C\sigma=\mu s^2\sigma$. In other words $\L_{\xi}^{\star}\sigma \in \mathbb{H}_{s^2\mu}$ whence the claim. 
\end{proof} 
To gain further insight into the structure of both types of form eigenspaces which occur in Proposition \ref{int-l} and Proposition 
\ref{int-2n} additional information on the eigenspaces of the operator $\mathscr{G}^{\t5}$ is needed. To that aim record that the operator $\scrD$ is elliptic and self-adjoint hences its eigenspaces 
$$F_{\lambda}:=\ker(\scrD-\lambda) \subseteq \Omega^1\H$$
where $ \lambda \in \bbR$ are finite dimensional. Moreover 
$\p(F_{\lambda}) \subseteq F_{\lambda}$ since $[\Delta_{\H},\p]=0$ by Lemma \ref{comI}.
Indicating the space of basic one forms on $M$ with 
$$\Omega^1_{inv}\H:=\{\alpha \in \Omega^1\H : \L_{\xi_a}\alpha=0\}$$ we let $\Omega^1_{\perp}\H$ be its $L^2$-orthogonal complement within 
$\Omega^1\H$ and observe that 
\begin{pro} \label{semi2} We have a semi-exact sequence 
\begin{equation*}
0 \to \ker(\Delta_{\H}+5\!\p^2-\lambda) \cap \Omega^1_{o}(\H,\bbR^3) \hookrightarrow \ker(\mathscr{G}^{\frac{1}{\sqrt{5}}}-\lambda) \stackrel{\L_{\xi}^{\star}\oplus \tT}{\to} F_{\lambda}^{\perp} \oplus F_{\lambda}
\end{equation*}
where $\Omega^1_{o}(\H,\bbR^3):=\Omega^1(\H,\bbR^3) \cap \ker(\L_{\xi}^{\star} \oplus \tT)$ and $F^{\perp}_{\lambda}:=F_{\lambda} \cap \Omega^1_{\perp}\H$. 
\end{pro} 
\begin{proof}
According to Lemma \ref{inv-tL} the operator $\p$ is $\su(2)$-invariant. As this is also the case for $\Delta_{\H}$ and the Casimir operator $\C$, all these $3$ operators commute with $\L_{\xi}^{\star}$, in particular 
$\L_{\xi}^{\star} \circ \scrD=\scrD \circ \L_{\xi}^{\star}$. As $\mathscr{G}^{\frac{1}{\sqrt{5}}}=\scrD-12(C-2)$ and $\L_{\xi}^{\star} \circ (C-2)=0$ by \eqref{sqC00} we obtain the identity 
$$\L_{\xi}^{\star} \circ \mathscr{G}^{\t5}=\scrD \circ \L_{\xi}^{\star}.$$ 
Now pick $\alpha \in \ker(\mathscr{G}^{\frac{1}{\sqrt{5}}}-\lambda)$ and observe that the above identity 
forces  
%granted by \eqref{sqC00} and the $\su(2)$-invariance of $\scrD$, implies that 
$\L_{\xi}^{\star}\alpha \in F_{\lambda}$. Since $\L_{\xi}^{\star}\alpha$ is $L^2$-orthogonal to $\Omega^1_{inv}\H$ we thus have  $\L_{\xi}^{\star}\alpha \in F_{\lambda}^{\perp}$. That $\tT(\alpha)$ belongs to $F_{\l}$ follows from Lemma \ref{tG} so the last arrow in the sequence in the statement is well defined. To prove semi-exactness for that sequence, assume, in addition that 
$\alpha \in \ker(\L_{\xi}^{\star} \oplus \tT)$, so that $\alpha$ belongs to $\Omega^1_{o}(\H,\bbR^3)$. It is now enough to observe that $C-2=-\p$ on $\Omega^1_{o}(\H,\bbR^3)$ by \eqref{up-C-temp} and hence $\mathscr{G}^{\frac{1}{\sqrt{5}}}=\Delta_{\H}+5\!\p^2$ on the latter space.
\end{proof}
%%%%%%%%%%%%%%%
%%%%%%%%%%%%%%%
\subsection{The embedding of $C^{\infty}_{b}M$ into $\Omega^3_{27}(\varphi_{\frac{1}{\sqrt5}})$} \label{alg}
The aim here is to give an explicit embedding of eigenspaces for the scalar basic Laplacian $\Delta_b$ into eigenspaces of type $E_{\lambda}$. For convenience we write $s=\s5$ throughout this section instead of using explicit numerics. We also assume that $g$ does not have constant 
sectional curvature; accordingly $\Delta_{b}>16$ on non-constant invariant functions as we shall see in Proposition \ref{est-inv} in the next section. In particular the embedding operators below are well defined.
\begin{pro} \label{embed1}
The map given by 
$$f \mapsto \varepsilon_{\nu}^{\pm}(f):=-\frac{1}{3}\kappa_{s}(-\bbI\di\!f,\frac{s}{2+s\lambda_{\pm}}\di_{\H}^{-}\bbI\di\!f+\frac{\lambda_{\pm}}{2}f\omega)$$
where $\lambda_{\pm}=-s\pm\sqrt{\nu+s^2}$ defines an embedding of  $\ker(\Delta_b-\nu)$ into $E_{\lambda_{\pm}}$.
\end{pro}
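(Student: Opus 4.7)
The plan is to verify directly that $\varepsilon^{\pm}_\nu(f)$ satisfies the first-order exterior differential system \eqref{sys1} with $\lambda=\lambda_{\pm}$, and then to check injectivity by unraveling the definition of $\kappa_s$. Write $\alpha=-\bbI df$ and $\sigma=\sigma^-+\tfrac{\lambda}{2}f\omega$ with $\sigma^-:=\tfrac{s}{2+s\lambda}\di_{\H}^{-}\bbI df\in\Omega^{-}(\H,\bbR^3)$. Note that both $\sigma^-$ and $f\omega$ lie in $\ker L_\omega$, so $\sigma\in\Omega^2_{sym}(\H,\bbR^3)$ and $\kappa_s(\alpha,\sigma)$ is well defined. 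The basic identity to keep in mind throughout is $\lambda(\lambda+2s)=(-s\pm\sqrt{\nu+s^2})(s\pm\sqrt{\nu+s^2})=\nu$.

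First I would simplify the scalar traces. Using \eqref{tT-alt} and $I_a^2=-\id$ gives $\tT(\alpha)=-\sum_a I_a^2 df=3df$. Since $\tT$ vanishes on $\Omega^-(\H,\bbR^3)$ and $\tT(\omega)=6$, one obtains $\tT(\sigma)=3\lambda f$. With these two observations, equation (1) of \eqref{sys1} reads $3\Delta_{\H}f=3\lambda(\lambda+2s)f$, which holds because $f$ is basic (so $\Delta_{\H}f=\Delta_b f=\nu f$) and $\lambda(\lambda+2s)=\nu$. For equation (4), $\L_\xi^\star\alpha=\sum_a\L_{\xi_a}I_a df=0$ since $f$ is basic and $\L_{\xi_a}I_a=0$ (from \eqref{Lie-osI} with $a=b$), while the right-hand side is $-3s\lambda\,df+3s\lambda\,df=0$.

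For equation (2), I would compute $\star_\H d_\H\sigma$ in two pieces. The horizontal differential of $f\omega$ has purely horizontal part $df\wedge\omega$ (as $(d\omega_a)_{\H}=0$ by \eqref{str-o}), so by \eqref{Ho3} $\star_\H d_\H(f\omega)=\bbI df$. For the other piece, Corollary \ref{co-r1}(i) gives $\sH\dH(\dH^{-}\bbI df)=\tfrac{\nu-16}{2}\bbI df$. Combining and reducing modulo $\lambda^2+2s\lambda=\nu$ (using $s^2=1/5$) yields $\star_\H d_\H\sigma=(\lambda-4s)\bbI df$, which is precisely $-(\lambda+2s)\alpha-2s\bbI\tT(\alpha)$ since $\L_\xi\tT(\alpha)=3\L_\xi df=0$.

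Equation (3) is the only place where the $\Omega^-(\H,\bbR^3)$ and $\Omega^+_{sym}(\H,\bbR^3)$ parts interact nontrivially, and is where the main work lies. Decomposing along $\Lambda^2\H=\Lambda^-\H\oplus\Lambda^+\H$, the $f\omega$-component collapses (using $C\omega=4\omega$, Lemma \ref{batch1}(ii) and $\tT(\sigma)=3\lambda f$) to the identity $\nu-\lambda^2=2s\lambda$, which holds. The $\Omega^-$-component requires showing $(C-2)\sigma^-=2\sigma^-$, equivalently $C(\dH^{-}\bbI df)=4\,\dH^{-}\bbI df$. Here I would compute directly from the matrix form of $C$ and \eqref{Lie-osI}: for $f$ basic, $\L_{\xi_a}(I_b df)=(\L_{\xi_a}I_b)df=2I_c df$ for $abc$ cyclic with $a\ne b$, which gives $C(\bbI df)=4\bbI df$; then $C$ commutes with $\dH$ (by \eqref{com1}) and with the self-dual/anti-self-dual projection, so the identity propagates to $\dH^{-}\bbI df$.

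Finally, injectivity is immediate: if $\varepsilon^{\pm}_\nu(f)=0$ then the $\Omega^1(\H,\bbR^3)$-component $\alpha=-\bbI df$ vanishes, so $I_1 df=0$ and hence $df=0$; since $\nu>0$ (by the assumed bound $\Delta_b>16$ on non-constant basic functions) this forces $f=0$. The main obstacle I foresee is keeping the bookkeeping clean in equation (3); every other step is either algebraic or a direct appeal to Lemma \ref{batch1} and Corollary \ref{co-r1}.
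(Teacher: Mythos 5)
Your proposal is correct and takes essentially the same route as the paper: the paper's proof makes exactly this Ansatz, imposes the componentwise eigenvalue equations coming from the block structure (the system \eqref{sys1}), and uses the same ingredients (Lemma \ref{batch1}(ii), Corollary \ref{co-r1}(i), $C(f\omega)=4f\omega$, and commutation of $C$ with $\dH$ and $\di_{\H}^{\star}$) to determine the coefficients that you verify directly. The only cosmetic differences are that the paper gets $C(\bbI\dH f)=4\bbI\dH f$ from $\di_{\H}^{\star}(f\omega)=-\bbI\dH f$ and $[C,\di_{\H}^{\star}]=0$ rather than from \eqref{Lie-osI} (your appeal to \eqref{Lie-osI} with $a=b$ should really be the standard fact $\L_{\xi_a}I_a=0$, which follows from $\L_{\xi_a}\omega_a=0$), and that well-definedness of the coefficient (i.e. $\nu\neq 16$) is guaranteed by the section's standing assumption that $g$ has non-constant curvature, exactly as you use for injectivity.
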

\begin{proof}
To explain how the embedding above has been found we make the following Ansatz. Consider the forms $\alpha=\frac{1}{3}\bbI\di\!f \in \Omega^1(\H,\bbR^3)$ and $\sigma=t_1\di_{\H}^{-}\bbI\di\!f+t_2f\omega \in \Omega^2_{sym}(\H,\bbR^3)$ where $t_1,t_2 \in \bbR$. We search for $\lambda \in \bbR$ 
such that $\gamma:=\kappa_s(\alpha,\sigma) \in \ker(\star_s\di-\lambda)$. In the process this requirement will also determine $t_1$ and $t_2$.

Since $f$ is invariant $C(f\omega)=4f\omega$. As $\di_{\H}^{\star}(f\omega)=-\bbI\d f$ and $C$ commutes with the operators $\di_{\H}^{\star}$ respectively $\dH$ it follows 
that $\bbI\d f$ and hence $\dH\bbI\d f$ as well as $\sigma$ belong to $\ker(C-4)$. Further on we have $\tT(\sigma)=6t_2f$ from the definition of $\sigma$ and $\dH \alpha=\frac{1}{3}(\di_{\H}^{-}\bbI\di f-\frac{\nu}{2}f\omega)$ by part (ii) in Lemma \ref{batch1}.

Based on Lemma \ref{blgen-1} with $F=-\tT(\sigma), \beta=\sH\tT(\alpha)$ these facts allow computing directly the components of the eigenvalue equation $(\star_s\di-\lambda)
\gamma=0$, starting with 
\begin{equation*}
\begin{split}
\pr_2(\star_s \di-\lambda)\gamma=&\frac{1}{s}\sH(C-2)\sigma+\sH\dH\alpha-2s\tT(\sigma)\omega-\lambda\sigma\\
=&-\frac{1}{s}(t_1(2+s\lambda)+\frac{s}{3})\di_{\H}^{-}\bbI\di f-((\lambda+2s)t_2+\frac{\nu}{6})f\omega.
\end{split}
\end{equation*}
The eigenvalue equation is thus satisfied when $t_1,t_2$ are determined from 
\begin{equation} \label{t1t2}
t_1(2+s\lambda)+\frac{s}{3}=(\lambda+2s)t_2+\frac{\nu}{6}=0.
\end{equation}
Since $\L^{\star}_{\xi}\alpha=0$ we have 
%\begin{equation*}
$\pr_3(\star_s\di-\lambda)\gamma=\sH(\dH\!\tT(\sigma)-\lambda \tT(\alpha))=(6t_2+\lambda)\sH\di\!f$
%\end{equation*}
by taking into account that $\tT(\alpha)=-\di\!f$. Thus $6t_2+\lambda=0$, which plugged into the second equation of \eqref{t1t2} reveals 
that 
\begin{equation}\label{ln}
\lambda(\lambda+2s)=\nu.
\end{equation}
Record that \eqref{t1t2} can be solved for $t_1$ only if $\lambda \neq -\frac{2}{s}$; equivalently $\nu \neq 16$ which is granted by 
the general assumption in this section. To compute the projection of the eigenvalue equation on $\Omega^1(\H,\bbR^3)$ we first observe that using 
part (i) in Corollary \ref{co-r1} yields 
\begin{equation*}
\sH \dH\!\sigma=\frac{1}{2}(t_1(\nu-16)+2t_2)\bbI\di\!f.
\end{equation*}
Thus, after taking into account that $\PP\!\alpha=-\frac{2}{3}\bbI\di\!f$ and again $\tT(\alpha)=-\di\!f$ we get 
\begin{equation*}
\begin{split}
\pr_1(\star_s \di-\lambda)\gamma=&-2s\PP\alpha-\sH\dH\sigma-\tfrac{1}{s}\L_{\xi}\tT(\alpha)-\lambda\alpha
=-(\tfrac{\lambda-4s}{3}+\tfrac{1}{2}t_1(\nu-16)+t_2)\bbI\di\!f.
\end{split}
\end{equation*}
A short computation shows this vanishes 
when $\lambda(\lambda+2s)=\nu$ and $t_1,t_2$ satisfy \eqref{t1t2}. Finally the vanishing of 
\begin{equation*}
\pr_0(\star_s\di-\lambda)\gamma=(2s+\lambda)\tT(\sigma)-\di_{\H}^{\star}\tT(\alpha)=6t_2(2s+\lambda)f+\Delta_{\H}f=
(6t_2(2s+\lambda)+\nu)f
\end{equation*}
does not provide new information, as it coincides with the second equation in \eqref{t1t2}. Solving \eqref{ln} for $\lambda$, then 
expressing $t_1,t_2$ according to \eqref{t1t2} thus proves the claim. 
\end{proof}
For the pair $(\nu,\lambda)=(24,-12s)$ we obtain a linear injective map 
\begin{equation} \label{e-bis}
\begin{split}
&\varepsilon: \ker(\Delta_{b}-24) \to \Omega^3_{27}(\varphi_{s}), \ \ f \mapsto \tfrac{1}{3}\kappa_{s}(\bbI\dH\!f,\tfrac{1}{2s}\di_{\H}^{-}\bbI\dH\!f+6sf\omega) .
\end{split}
\end{equation}
Next we show that the operator $\varepsilon$ just defined can be alternatively described as stated in part (ii) of Theorem \ref{main1}.
%together with its counterpart  at symmetric tensor level. 
\begin{pro} \label{alg-str} For any  $f\in \ker(\Delta_{b}-24)$ we have 
\begin{equation*}
\begin{split}
& \varepsilon(f)=\tfrac{\sqrt{5}}{6}\L_{\grad f}\varphi_s+\tfrac{12}{\sqrt{5}}f(\varphi_s  - 2 Z^{123}) - 2 \grad f \lrcorner \vol_{\H}  \ .\\
%& (\mathbf{i}_s^{-1} \circ \varepsilon)f= 
\end{split}
\end{equation*}
\end{pro}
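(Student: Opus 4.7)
The proof is a direct verification, comparing both expressions component-wise via the isomorphism $\iota_s:\bbV^3\H\to\Omega^3 M$ of section~\ref{G2}. Throughout $s=1/\sqrt{5}$, so $\sqrt{5}/6=1/(6s)$ and $12/\sqrt{5}=12s$; since $f$ is basic, $X:=\grad f$ is horizontal with $\dH f=\di f$. Unfolding the definition of $\kappa_s$, and using $\tT(\bbI\dH f)=\sum_a I_a^2\dH f=-3\dH f$, $\tT(\omega)=6$, together with $\tT(\di_\H^-\bbI\dH f)=0$ (as $\Lambda^-\H\wedge\Lambda^+\H=0$ in dimension four), yields
\begin{equation*}
\varepsilon(f)=\iota_s\bigl(-12sf,\ \tfrac13\bbI\dH f,\ \tfrac1{6s}\di_\H^-\bbI\dH f+2sf\omega,\ -\sH\dH f\bigr).
\end{equation*}

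Next I compute the right-hand side in the same form. Since $X$ is horizontal and $X\lrcorner\omega_a=I_a\dH f$, one has $X\lrcorner\varphi_s=-\sum_a Z^a\wedge I_a\dH f$ and $X\lrcorner\star_s\varphi_s=X\lrcorner\vol_\H+\mathfrak{S}_{abc}Z^{ab}\wedge I_c\dH f$. Cartan's formula combined with $\di\varphi_s=12s\star_s\varphi_s$, the structure equations \eqref{dza}, and the splitting \eqref{Cart} applied to $\di(I_a\dH f)$---noting $\L_{\xi_b}(I_a\dH f)=(\L_{\xi_b}I_a)\dH f$ because $f$ is basic---gives, after using $\sum_a\omega_a\wedge I_a\dH f=3\sH\dH f$ from \eqref{Ho3} and the \eqref{Lie-osI}-based evaluation $\sum_a\sum_{b\neq a}Z^{ab}\wedge(\L_{\xi_b}I_a)\dH f=-4\mathfrak{S}_{abc}Z^{ab}\wedge I_c\dH f$,
\begin{equation*}
\L_X\varphi_s=-6s\sH\dH f+2s\,\mathfrak{S}_{abc}Z^{ab}\wedge I_c\dH f+\sum_a Z^a\wedge\dH(I_a\dH f)+12s\,X\lrcorner\vol_\H .
\end{equation*}

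Multiplying by $1/(6s)$, adding $12sf(\varphi_s-2Z^{123})=-12sfZ^{123}+12sf\sum_a Z^a\wedge\omega_a$ and $-2\,X\lrcorner\vol_\H$, the $X\lrcorner\vol_\H$ contributions cancel and the $F$-, $\alpha$-, and $\beta$-components agree with those of $\varepsilon(f)$ immediately. Matching $\sigma$-components reduces to the identity $\tfrac1{6s}\dH\bbI\dH f+12sf\omega=\tfrac1{6s}\di_\H^-\bbI\dH f+2sf\omega$, equivalently
\begin{equation*}
\di_\H^+\bbI\dH f=-12f\omega ,
\end{equation*}
which is Lemma~\ref{batch1}(ii) specialised to $\Delta_\H f=\Delta_b f=24f$, using $C(f\omega)=4f\omega$ (a consequence of $f$ basic together with \eqref{Lie-os}). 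The main obstacle is the sign and cyclic-index bookkeeping in the Cartan-formula expansion of $\L_X\varphi_s$, in particular the double-sum evaluation via \eqref{Lie-osI}; beyond that nothing deeper than the structure equations, Lemma~\ref{batch1}(ii), and the normalisation $s=1/\sqrt{5}$ is required.
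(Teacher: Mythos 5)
Your proof is correct and follows essentially the same route as the paper: compute $\L_{\grad f}\varphi_s$ via Cartan's formula and the structure equations, compare components under $\iota_s$, and close the $\sigma$-component gap with Lemma \ref{batch1}(ii) at $\nu=24$ (equivalently the relation between $\dH\bbI\dH f$ and $\di_{\H}^{-}\bbI\dH f$ used in Proposition \ref{embed1}). The only cosmetic difference is that the paper evaluates $\di(\grad f\lrcorner\varphi_s)$ by plugging into the block formula \eqref{blgen-2} together with $(C-2)\bbI\dH f=2\bbI\dH f$, whereas you expand it by hand from \eqref{dza}, \eqref{Cart} and \eqref{Lie-osI}; both yield the same expression $\iota_s(0,2s\bbI\dH f,\dH\bbI\dH f,6s\,\grad f\lrcorner\vol_{\H})^T$.
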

\begin{proof}
This essentially amounts to the computation of $\iota_s^{-1}\L_{\grad f}\varphi_s$ which is outlined below, since the rest of terms 
in the r.h.s.of the statement are algebraic in $f$ and $\grad f$. 
Since $\grad f $ is horizontal and $\grad f \lrcorner \omega_a=I_a \dH f$ we have 
$\grad f \lrcorner \varphi_s=\iota_s
(0,-\bbI\dH\!f,0)^T \in \Omega^2M$ according to \eqref{iota2}. As seen before $f$ satisfies $(C-2)\bbI\dH\!f=2\bbI\dH\!f$ and  $\tT(\bbI\dH\!f)=-3\dH\!f$ thus with the aid of \eqref{blgen-2} we obtain
$
\di(\grad f \lrcorner \varphi_s)= \iota_s(0,-\frac{2}{s}\bbI\dH\!f,\dH\bbI\dH\!f,-6s\sH\dH\!f)^T.$
At the same time 
$\L_{\grad f} \varphi_s =\di(\grad f \lrcorner \varphi_s)+\grad f\lrcorner \di\varphi_s=
\di(\grad f \lrcorner \varphi_s)+12s\, \grad f \lrcorner \ast_s \varphi_s$, by Cartan's formula. As 
$ \grad f \lrcorner \star_s \varphi_s=\iota_s(0,\bbI\dH\!f,0,\grad f \lrcorner \vol_{\H})^T
$
and $\sH\di\!f=\grad f\lrcorner \vol_{\H}$ we find 
$$\L_{\grad f} \varphi_s=\iota_s(0,2s\bbI\dH\!f,\dH\bbI\dH\!f,6s\,\grad f\lrcorner \vol_{\H})^T.$$
Taking into account that $f(\varphi_s-2Z^{123})=\iota_s(-f,0,f\omega,0)^T$ the claim follows now easily. Notice that the final step 
here uses $\tT(\frac{\sqrt{5}}{6} \dH\! \mathbb{I}\dH\!f+\frac{12}{\sqrt{5}}f\omega)=12sf$, as established during the proof of Proposition \ref{embed1}.
\end{proof}
\section{Numerical eigenvalues} \label{numerics}
Recall that to determine infinitesimal Einstein deformations we need to describe eigenspaces of the type 
$\ker(\star_s\di-\lambda) \cap 
\Omega^3_{27}(\varphi_{\t5})$ for the numerical eigenvalues $\lambda=-\frac{12}{\sqrt{5}}$ and $\frac{6}{\sqrt{5}}$ as well as 
$\ker(\di\di^{\star_s}-\mu) \cap \Omega^3_{27}(\varphi_{\t5})$ for $\mu=\frac{72}{5}$. In addition, such eigenspaces with $\lambda(\lambda+2s) \leq 24$ respectively $\mu \leq 16$ turn up when looking at unstable directions for $g_{\t5}$. As we have seen in Proposition \ref{int-l} and Proposition \ref{int-2n} these problems reduce to the study of eigenspaces of perturbations of $\Delta_{\H}$ acting on subspaces of $\Omega^1(\H,\bbR^3)$. In this section we will develop eigenvalue estimates which will eventually lead to a complete description of the $\su(2)$ representation on spaces of this type 
and will also provide vanishing results.
\subsection{Weighted invariant spaces} \label{sp1}
Whenever $k \in \mathbb{Z}$ we consider the $\su(2)$ invariant spaces 
\begin{equation} \label{we}
\Omega^1_k{\H}:=\Omega^1\H \cap \ker(\p-k).
\end{equation}
According to Corollary \ref{dhp} these weighted spaces are preserved by the horizontal Laplacian $\Delta_{\H}$. A positivity argument based on \eqref{PI2} shows that $\Omega^1_0\H$ coincides with the space of invariant horizontal $1$-forms $\Omega^1_{inv}\H$. With respect to the foliation $\mathcal{F}$ those correspond to basic differential $1$-forms. The weighted spaces $\Omega^1_k\H$ are acted on by the Lie algebra $\sp(1)$ 
in the following way.
\begin{lema} \label{fourier}Assuming that $m \in \mathbb{N}$ the following hold 
\begin{itemize}
\item[(i)] the direct sum 
$ \Omega^1_{-m}\H \oplus \Omega_{m+4}^1\H $
is $\sp(1)$ invariant, that is invariant under the complex structures $I_a$
\item[(ii)] for $\alpha \in \Omega^1_{-m}\H $ the projection of $I_a\alpha$ onto $\Omega_{m+4}^1\H $ reads 
$(I_a\alpha)_{m+4}=I_a \alpha-\frac{1}{m+2}\L_{\xi_a}\alpha$ 
\item[(iii)] the map 
$ \Omega_{-m}^1\H \to \Omega^1_{m+4}\H, \ \alpha \mapsto (I_a \alpha)_{m+4}
$
is injective for each $a \in \{1,2,3\}$
\item[(iv)] we have $\L_{\xi}=-\bbI$ on $\Omega^1_3\H$.
\end{itemize}
\end{lema}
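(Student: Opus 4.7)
The plan is to derive all four parts from the anti-commutation relation $\p \circ I_a + I_a \circ \p = 4I_a - 2\L_{\xi_a}$, which is \eqref{pI} rearranged, together with $\C = \p^2 - 2\p$ from \eqref{PI2} and the $\su(2)$-invariance of $\p$ granted by Lemma \ref{inv-tL}, so that in particular $[\p, \L_{\xi_a}]=0$ and hence $\L_{\xi_a}$ preserves each weighted space $\Omega^1_k\H$.

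For (i) and (ii) I fix $\alpha \in \Omega^1_{-m}\H$. The anti-commutator above yields
\[
(\p - (m+4))\,I_a\alpha \;=\; -2\L_{\xi_a}\alpha,
\]
and applying $(\p+m)$ on both sides gives $(\p+m)(\p-(m+4))I_a\alpha = 0$. As $-m$ and $m+4$ are distinct for $m \in \mathbb{N}$ and $\p$ is self-adjoint (since $\L_{\xi_a}I_a=I_a\L_{\xi_a}$ by \eqref{Lie-osI}), the form $I_a\alpha$ lies in $\Omega^1_{-m}\H \oplus \Omega^1_{m+4}\H$. The spectral projector onto $\Omega^1_{m+4}\H$ is $(\p+m)/(2m+4)$, which applied to $I_a\alpha$ immediately produces the formula $I_a\alpha - \tfrac{1}{m+2}\L_{\xi_a}\alpha$ of (ii). A fully analogous computation starting from $\beta \in \Omega^1_{m+4}\H$ shows $I_a\beta$ also lies in the same direct sum, establishing $\sp(1)$-invariance in (i).

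For (iii), vanishing of $(I_a\alpha)_{m+4}$ amounts to $(m+2)I_a\alpha = \L_{\xi_a}\alpha$. Since each $I_a$ is an isometry, squaring and integrating gives $(m+2)^2\|\alpha\|^2 = \|\L_{\xi_a}\alpha\|^2$ in $L^2$. On the other hand, \eqref{PI2} gives $\C\alpha = m(m+2)\alpha$ on $\Omega^1_{-m}\H$, so integration by parts against $\alpha$ yields
\[
\sum_{b=1}^3 \|\L_{\xi_b}\alpha\|^2 \;=\; \langle \C\alpha, \alpha\rangle \;=\; m(m+2)\|\alpha\|^2.
\]
Dropping the positive terms with $b\neq a$ would then force $(m+2)^2 \leq m(m+2)$, an inequality that fails for every $m \in \mathbb{N}$ unless $\alpha = 0$.

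Part (iv) falls outside the family (i)--(iii) since $\Omega^1_3\H$ corresponds formally to $m=-1$; I would argue directly by a squared-norm identity. For $\alpha \in \Omega^1_3\H$ the relation $\C=\p^2-2\p$ gives $\C\alpha = 3\alpha$, hence $\sum_a \|\L_{\xi_a}\alpha\|^2 = 3\|\alpha\|^2$, while $\sum_a \|I_a\alpha\|^2 = 3\|\alpha\|^2$ by isometry. Using $I_a^{\star}=-I_a$ and $\L_{\xi_a}^{\star}=-\L_{\xi_a}$ together with $I_a\L_{\xi_a}=\L_{\xi_a}I_a$ one obtains
\[
\sum_a \langle I_a\alpha, \L_{\xi_a}\alpha\rangle \;=\; -\langle \p\alpha, \alpha\rangle \;=\; -3\|\alpha\|^2.
\]
Expanding the cross-terms then gives $\sum_a \|I_a\alpha + \L_{\xi_a}\alpha\|^2 = 3\|\alpha\|^2 + 3\|\alpha\|^2 - 2\cdot 3\|\alpha\|^2 = 0$, which forces $\L_{\xi_a}\alpha = -I_a\alpha$ for each $a$. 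The main delicate point is the exact cancellation of these three contributions, which is particular to the eigenvalue $\p = 3$ and would fail at any other integer eigenvalue.
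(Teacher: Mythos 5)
Your proposal is correct and follows essentially the same route as the paper: parts (i)--(ii) are exactly the paper's manipulation of \eqref{pI} combined with the $\su(2)$-invariance of $\p$, and your projector $(\p+m)/(2m+4)$ reproduces the paper's projection formula $(m+2)(I_a\alpha)_{-m}=\L_{\xi_a}\alpha$. For (iii) and (iv) you merely replace the paper's pointwise positivity arguments (non-negativity of $-(\L_{\xi_2}^2+\L_{\xi_3}^2)$, respectively vanishing of $\ker(\p-1)$ via $\C\geq 0$) by equivalent integrated $L^2$-norm identities built from the same ingredients $\C=\p^2-2\!\p$ and the skew-adjointness of $\L_{\xi_a}$ and $I_a$, which is a harmless rephrasing.
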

\begin{proof}
(i)\&(ii) are proved at the same time. Let $\alpha \in \Omega^1_{-m}\H$; from \eqref{pI} we get 
$(\p-(m+4))I_a\alpha=-2\L_{\xi_a}\alpha.$
As $\p$ is $\su(2)$-invariant we have $\L_{\xi_a}\alpha \in \Omega^1_{-m}\H$ thus $(\p+m)(\p-(m+4))I_a\alpha=0$. It follows that 
$I_a\alpha \in  \Omega^1_{-m}\H \oplus \Omega_{m+4}^1\H $ and moreover 
$(m+2)(I_a\alpha)_{-m}=\L_{\xi_a}\alpha$ by projection onto $\Omega^1_{-m}\H$.
%hence (i) is proved as well. 
Similarly, if $\alpha \in \Omega^1_{m+4}\H$ we have $(\p+m)I_a\alpha=-2\L_{\xi_a}\alpha$ hence 
$I_a\alpha \in  \Omega^1_{-m}\H \oplus \Omega_{m+4}^1\H $ and $(m+2)(I_a\alpha)_{m+4}=-\L_{\xi_a}\alpha$.\\
(iii) having $\alpha \in \Omega^1_{-m}\H$ satisfy $(I_1\alpha)_{m+4}=0$ is equivalent to 
$\L_{\xi_1}\alpha=(m+2)I_1\alpha$. It follows that $-\L_{\xi_1}^2 \alpha=(m+2)^2\alpha$. As $\C\alpha=m(m+2)\alpha$ this leads to  
$-(\L_{\xi_2}^2+\L_{\xi_3}^2)\alpha=-2(m+2)\alpha$. Hence $\alpha=0$ since the operator $-(\L_{\xi_2}^2+\L_{\xi_3}^2)$ is non-negative.\\ 
(iv) pick $\alpha \in \Omega_3^1\H$; since $\p\!\alpha=3\alpha$ we get $(\p-1)I_a\alpha=-2\L_{\xi_a}\alpha$, with the aid of \eqref{pI}. As 
$\p$ is $\su(2)$-invariant, it follows that $(\p-3)(\p-1)I_a \alpha=0$. Since $\C=\p^2-2\p=-1$ on $\ker(\p-1)$ and the operator $\C$ is non-negative 
it follows that $\ker(\p-1)=0$. Thus $(\p-3)I_a\alpha=0$ and the claim is proved by comparaison with $(\p-1)I_a\alpha=-2\L_{\xi_a}\alpha$.
\end{proof}

\subsection{Eigenvalue estimates for the horizontal Laplacian} \label{HoEe}
%%%%%%%%%%%%%%%%%%%%%%%%%%%%%%%%%%%%%%
%
%
Based on the previous material we obtain  eigenvalue estimates for $\Delta_{\H}$ acting on 
$\Omega^1\H$ and $C^\infty M$. 
These estimates will play a crucial  r\^ole in describing infinitesimal Einstein deformations in the next section. We first record the available estimates in the invariant case where $\Delta_{\H}$ acting 
on $\Omega^{\star}_{inv}\H$ coincides with the basic Laplacian of the foliation $\mathcal{F}$.
If $(N^4,g_N)$ is an Einstein manifold with $\Ric^{g_N}=12g_N$ the classical results 
of Lichnerowicz and Obata provide that the first non-zero eigenvalue $\lambda_1$ of the Laplacian acting on functions respectively co-closed $1$-forms satisfies $\lambda_1 \geq 16$ respectively $\lambda_1 \geq 24$. Equality holds if $g_N$ has constant sectional curvature, respectively on the space of Killing vector fields. Clearly these estimates lift into estimates for the basic Laplacian on the total space of a Riemannian submersion with base $N$. On $C^{\infty}_{b}M$ this is sharper than the Lichnerowicz-Obata estimate for $g$ which asserts  that $\Delta^g\geq 7$ on $C^{\infty}M$; this is also sharper than the restriction to  $C^{\infty}_{b}M$ of the estimate $\Delta_{\H} \geq 4$ on $C^{\infty}M$
proved in \cite{I}. In our case $N=M\slash \mathcal{F}$ is in general not smooth, however the estimates carry through for Riemannian 
foliations, by work in \cite{LeeR}, which adapts to our situation as follows.
\begin{pro} \label{est-inv}
The scalar basic Laplacian acting on $C^{\infty}_{b} M \cap \{f : \int_M f\vol=0\}$ satisfies 
\begin{equation*}
\Delta_b \geq 16 \ \mathrm{and} \ \Delta_b >16 \ \mathrm{if} \ g \ \mathrm{does \ not \ have \ constant \ sectional \ curvature.}
%\Delta_{b}>16 \ \mbox{ on} \  C^{\infty}_{b} M \cap \{f : \int_M f\vol=0\}.
\end{equation*}
\end{pro}
\begin{proof}
Viewing $\H$ as the normal bundle of the Riemannian foliation $\V$ the normal connection $\nabla^{\perp}$ in $\H$ is given by 
$\nabla^{\perp}_XY:=(\nabla^g_XY)_{\H}$ for $X,Y \in \Gamma(\H)$. Its curvature tensor $R^{\perp}$ is defined (see e.g. \cite{BoGa}) according to  
$(X,Y) \mapsto \nabla^{\perp}_{[X,Y]_{\H}}-[\nabla^{\perp}_X,\nabla^{\perp}_Y]$ and has Ricci contraction denoted by $\Ric^{\perp}$. In our case by using O'Neil's formulas we see that $\Ric^{\perp}=12g_{\H}$. Since $\V$ has codimension $4$ \cite[Theorem 4.4]{LeeR} ensures that the first non-zero eigenvalue of the basic Laplacian $\Delta_{b}$ is $\geq 16$. Note that this estimate also follows directly from Corollary \ref{co-r1},(ii). If equality holds $M$ is transversally isometric to 
$S^4\slash G$ by \cite[Theorem 5.1]{LeeR}, for some discrete subgroup $G\subseteq O(4)$. At tensorial level this entails 
$R^{\perp}(X,Y)=4X \wedge Y$; taking into account the O'Neill's formulas for $3$-Sasaki structures in dimension $7$ (see e.g.\cite{BoGa}) leads easily to having $g$ of constant sectional curvature. 
\end{proof}
In a very similar way the estimate 
\begin{equation} \label{basic-vf}
\Delta_{\H} \geq 24 \ \mbox{on} \ \Omega^1_{inv}\H \cap \ker\di_{\H}^{\star}
\end{equation}
follows from the Bochner formula on basic $1$-forms on $M$, see e.g. \cite[Theorem 2.2]{DJRi}. The limiting eigenspace 
consists of (basic) transversal Killing fields, again according to \cite{DJRi}. 
Using the extra input coming from the $3$-Sasaki structure this can be improved to 
\begin{equation*}
\ker(\Delta_{\H}-24) \cap \Omega^1_{inv}\H=\dH \{f \in C^{\infty}_{b}M : \Delta_{\H}f=24f \} \oplus \{X_{\H} : 
X \in \mathfrak{g}\}
\end{equation*}
where $\mathfrak{g}:=\{X \in \Gamma(TM): \L_{X}\xi^a=0 \}$ is the Lie algebra of automorphisms of the $3$-Sasaki structure.
However the second component space above does not embed in $E_{-\frac{12}{\sqrt{5}}}$ as we shall see during the proof of (i) in Theorem \ref{main1}, so this point will not be further developed. 

Combining the estimates in Proposition \ref{est-inv} and \eqref{basic-vf} shows 
\begin{equation} \label{forms-e}
\Delta_{\H}>16 \ \mbox{on} \ \Omega^1_{inv}\H
\end{equation} 
as $\Delta_{\H}$ and $\di_{\H}^{\star}$ commute on $\Omega^1_{inv}\H$.
Next we derive  lower bounds for the spectrum of $\Delta_\H$ restricted to the subspaces $\Omega^1_{-m} \H$ of $\Omega^1\H$ where $m \in  \mathbb{N}$, which generalise \eqref{basic-vf}. 
\begin{lema}\label{estimate}
We have 
$\Delta_{\H} >4(m+2) \ \mbox{on} \ \Omega^1_{-m} \H$
for $m \in  \mathbb{N}^{\times}$.
\end{lema}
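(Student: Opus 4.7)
The plan is to exploit the $\sp(1)$-bigrading of $\Omega^1\H$ from Lemma \ref{fourier}, combined with the commutation $[\Delta_{\H}+\C,I_a]=0$ from Lemma \ref{comI} and the identity $\sH\di_{\H}^2=-2\p$ coming from \eqref{def-p}. The strategy is to transport any $\Delta_\H$-eigenform $\alpha\in\Omega^1_{-m}\H$ into the higher weight space $\Omega^1_{m+4}\H$ using $I_a$, read off the new eigenvalue from the Casimir shift, and close the argument by positivity and the horizontal structure equation.

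First, fix $\alpha\in\Omega^1_{-m}\H$ with $\Delta_{\H}\alpha=\lambda\alpha$. By Lemma \ref{fourier}(i)--(ii) the components $\beta_a:=(I_a\alpha)_{m+4}=I_a\alpha-\tfrac{1}{m+2}\L_{\xi_a}\alpha$ lie in $\Omega^1_{m+4}\H$. Since $\Delta_{\H}$ preserves each weight space (Corollary \ref{dhp}) and since $\C$ acts by $k(k+2)$ on $\Omega^1_{-k}\H$ and on $\Omega^1_{k+4}\H$ by \eqref{PI2}, applying Lemma \ref{comI} to $\alpha$ and projecting $(\Delta_{\H}+\C)I_a\alpha=I_a(\Delta_{\H}+\C)\alpha$ onto the two $\p$-eigenspaces yields
\begin{equation*}
\Delta_{\H}\beta_a=(\lambda-4(m+2))\beta_a.
\end{equation*}

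Next I quantify how much of $\alpha$ survives under the $\sp(1)$-shift. Using that $I_a$ is an isometry on $\Omega^1\H$, the $L^2$-orthogonality of $\Omega^1_{-m}\H$ and $\Omega^1_{m+4}\H$ (distinct eigenvalues of the self-adjoint $\p$), and $\langle\C\alpha,\alpha\rangle=m(m+2)\|\alpha\|^2$, summing $\|I_a\alpha\|^2=\|\alpha\|^2$ over $a$ produces
\begin{equation*}
\sum_{a=1}^{3}\|\beta_a\|^2=\Bigl(3-\tfrac{m}{m+2}\Bigr)\|\alpha\|^2=\tfrac{2(m+3)}{m+2}\|\alpha\|^2,
\end{equation*}
which is strictly positive whenever $\alpha\neq 0$. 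Pairing $\Delta_{\H}\beta_a$ with $\beta_a$ and summing, together with the elementary non-negativity $\|\di_{\H}\beta_a\|^2+\|\di_{\H}^{\star}\beta_a\|^2\geq 0$, already gives the non-strict lower bound $\lambda\geq 4(m+2)$.

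To upgrade to strict inequality, assume for contradiction that $\lambda=4(m+2)$. Then each $\beta_a$ is $\Delta_{\H}$-harmonic on compact $M$, hence $\di_{\H}\beta_a=0$. Applying $\sH\di_{\H}$ once more and using \eqref{def-p} gives
\begin{equation*}
0=\sH\di_{\H}^2\beta_a=-2\p\beta_a=-2(m+4)\beta_a,
\end{equation*}
so $\beta_a=0$ for each $a$, contradicting the positivity of $\sum_a\|\beta_a\|^2$ above. Hence $\alpha=0$, establishing the strict inequality. The only subtle point I anticipate is the bookkeeping coordinating the actions of $\p$, $\C$ and $I_a$ across the two weight summands, but all the needed structural identities are already packaged in Lemmas \ref{fourier}, \ref{comI} and Corollary \ref{dhp}, so the estimate should fall out cleanly.
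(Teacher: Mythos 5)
Your argument is correct and takes essentially the same route as the paper: transport an eigenform of $\Delta_{\H}$ on $\Omega^1_{-m}\H$ into $\Omega^1_{m+4}\H$ via $I_a$, use $[\Delta_{\H}+\C,I_a]=0$ together with the Casimir shift to obtain the eigenvalue $\lambda-4(m+2)$, and conclude from $\Delta_{\H}\geq 0$ and the vanishing of $\ker\Delta_{\H}$ on $\Omega^1_{m+4}\H$ via \eqref{def-p}. The only cosmetic difference is that you prove the non-vanishing of the projections $(I_a\alpha)_{m+4}$ by the explicit norm identity $\sum_a\Vert\beta_a\Vert^2=\tfrac{2(m+3)}{m+2}\Vert\alpha\Vert^2$ instead of invoking the injectivity statement of Lemma \ref{fourier},(iii), which is an equally valid (and self-contained) way to close the argument.
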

\begin{proof}
For  $\alpha \in \Omega^1_{-m} \H \cap \ker (\Delta_\H - \lambda) $ we have $\C \alpha = (\p^2-2\!\p)\alpha = m(m+2) \alpha$ and thus
$(\Delta_\H + \C)\alpha = (\lambda +m(m+2))\alpha$. By the $\sp(1)$-invariance of  $\Delta_\H + \C$  the same equation holds with
$\alpha$ replaced by $\bbI \alpha \in \Omega^1_{-m} (\H,\bbR^3) \oplus \Omega^1_{m+4}(\H,\bbR^3)$.  Moreover, since $\p$ commutes with $\Delta_\H$ and
$\C$ we can project this eigenvalue equation onto $\Omega^1_{m+4}(\H,\bbR^3)$ where $\C$ acts by multiplication with $(m+2)(m+4)$.
Note that $(\bbI \alpha)_{m+4}\neq 0$ for $\alpha \neq 0$ due to part (iii) in Lemma \ref{fourier}. Then 
\begin{equation} \label{Ee2}
\Delta_{\H}(\bbI \alpha)_{m+4}=(\lambda-4(m+2))(\bbI \alpha)_{m+4}.
\end{equation}
The desired estimate follows from $\Delta_\H \ge 0$ and $\ker \Delta_{\H} \cap \Omega^1_{m+4}\H=0$, which is a consequence of e.g.  \eqref{def-p}.
\end{proof}

As this estimate is not sufficiently sharp for some of the numerical eigenvalues in the next section, we provide below a refinement of the estimate
in Lemma \ref{estimate} for $\Delta_{\H}$ acting on $\Omega_{-m}^1\H \cap \ker\di_{\H}^{\star}$.
Writing $C^{\infty}_m M:=C^{\infty}M \cap \ker(\C-m(m+2))$ for $m \in \mathbb{N}$, so that $C^{\infty}_0 M = C^{\infty}_{b} M$, we observe that
\begin{pro}\label{Ho-m} The following hold for $m \in \mathbb{N}$
\begin{itemize}
\item[(i)] the map $\Omega_{-m}^1\H \cap \ker\di_{\H}^{\star} \to C^{\infty}_{m+2}(M,\bbR^{3})$ given by 
$\alpha \mapsto \di_{\H}^{\star}(\bbI\alpha)$ is injective
\item[(ii)] we have 
$\Delta_{\H}>6m+16 \ \mbox{on} \ \Omega_{-m}^1\H \cap \ker\di_{\H}^{\star}.$
\end{itemize}
\end{pro}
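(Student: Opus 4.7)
The plan is to derive the $L^2$-identity
\begin{equation*}
\|\di_{\H}^{\star}\bbI\alpha\|^2 \;=\; \la \Delta_{\H}\alpha,\alpha\ra \,-\, 2m\|\alpha\|^2, \qquad \alpha \in \Omega^1_{-m}\H \cap \ker\di_{\H}^{\star},
\end{equation*}
and then combine it with Lemma \ref{estimate} and \eqref{forms-e}. The pointwise ingredient is $\di_{\H}^{\star}(I_a\alpha) = g_{\H}(\dH\alpha,\omega_a)$: using $\di_{\H}^{\star} = -\sH\dH\sH$ from Lemma \ref{comp-cd}(i), the identity $\sH(I_a\alpha) = I_a(I_a\alpha)\wedge\omega_a = -\alpha\wedge\omega_a$ from \eqref{Ho3}, and $\dH\omega_a = 0$ (a consequence of \eqref{str-o}), one obtains $\di_{\H}^{\star}(I_a\alpha) = \sH(\dH\alpha\wedge\omega_a) = g_{\H}(\dH\alpha,\omega_a)$. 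Since $\{\omega_1,\omega_2,\omega_3\}$ is an orthogonal basis of $\Lambda^{+}\H$ with $|\omega_a|^2 = 2$, summing over $a$ yields $\|\di_{\H}^{\star}\bbI\alpha\|^2 = 2\|(\dH\alpha)^{+}\|^2$.

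For the $L^2$-identity I apply Stokes to the $6$-form $\alpha\wedge\dH\alpha\wedge\xi^{123}$. Expanding $\di$ via \eqref{Cart}, every vertical $\xi^a$-contribution is killed against $\xi^{123}$, and \eqref{dza} eliminates $\alpha\wedge\dH\alpha\wedge\di\xi^{123}$ since the relevant piece is a horizontal $5$-form. What remains is $\dH\alpha\wedge\dH\alpha\wedge\xi^{123} - \alpha\wedge\dH^2\alpha\wedge\xi^{123}$. The first piece integrates to $\|(\dH\alpha)^{+}\|^2 - \|(\dH\alpha)^{-}\|^2$. For the second, \eqref{sqH} gives $\dH^2\alpha = -2\sum_a\omega_a\wedge\L_{\xi_a}\alpha$; using $\alpha\wedge\omega_a = -\sH(I_a\alpha)$ one computes $\alpha\wedge\omega_a\wedge\L_{\xi_a}\alpha = g_{\H}(\L_{\xi_a}\alpha, I_a\alpha)\vol_{\H}$, so after summation and adjunction (with $\L_{\xi_a}I_a = 0$ and $\p\alpha = -m\alpha$) one finds $\int\alpha\wedge\dH^2\alpha\wedge\xi^{123} = -2\la\L_{\xi}\alpha,\bbI\alpha\ra = 2\la\alpha,\p\alpha\ra = -2m\|\alpha\|^2$. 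Stokes therefore yields $\|(\dH\alpha)^{+}\|^2 - \|(\dH\alpha)^{-}\|^2 = -2m\|\alpha\|^2$, and together with $\|(\dH\alpha)^{+}\|^2 + \|(\dH\alpha)^{-}\|^2 = \|\dH\alpha\|^2 = \la\Delta_{\H}\alpha,\alpha\ra$ (coclosure) the displayed identity follows.

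For (i) I first verify $\di_{\H}^{\star}\bbI\alpha \in C^{\infty}_{m+2}(M,\bbR^3)$: by Lemma \ref{fourier}(ii), $(I_a\alpha)_{-m} = \tfrac{1}{m+2}\L_{\xi_a}\alpha$, and since $\di_{\H}^{\star}$ commutes with $\L_{\xi_a}$ and $\di_{\H}^{\star}\alpha = 0$, the weight $-m$ part contributes nothing, leaving $\di_{\H}^{\star}\bbI\alpha = \di_{\H}^{\star}(\bbI\alpha)_{m+4}$ with $\C$-eigenvalue $(m+2)(m+4)$. Injectivity: if $\di_{\H}^{\star}\bbI\alpha = 0$ the identity forces $\la\Delta_{\H}\alpha,\alpha\ra = 2m\|\alpha\|^2$, which contradicts $\Delta_{\H} > 4(m+2)$ from Lemma \ref{estimate} when $m \geq 1$ and $\Delta_{\H} > 16$ from \eqref{forms-e} when $m = 0$, so $\alpha = 0$. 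For (ii), for an eigenvector $\Delta_{\H}\alpha = \lambda\alpha$ the proof of Lemma \ref{estimate} gives $\Delta_{\H}(\bbI\alpha)_{m+4} = (\lambda - 4(m+2))(\bbI\alpha)_{m+4}$; applying $\di_{\H}^{\star}$ and using \eqref{com22} with $\p$-weight $m+4$ yields $\Delta_{\H}\di_{\H}^{\star}\bbI\alpha = (\lambda - 6m - 16)\di_{\H}^{\star}\bbI\alpha$. By (i), $\di_{\H}^{\star}\bbI\alpha \neq 0$ for $\alpha \neq 0$, and it is non-constant since its $\C$-eigenvalue $(m+2)(m+4)$ is strictly positive for $m \geq 0$, so $\Delta_{\H}$ is strictly positive on it and $\lambda > 6m + 16$. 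The hard part is the Stokes bookkeeping, especially extracting the decisive $-2m\|\alpha\|^2$ contribution from the non-vanishing $\dH^2$ via \eqref{sqH}; once the identity is in hand the spectral conclusions are routine.
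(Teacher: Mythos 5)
Your argument is correct and, in substance, the same as the paper's: part (ii) coincides with the paper's proof (apply $\di_{\H}^{\star}$ to \eqref{Ee2} and use \eqref{com22} together with the well-definedness coming from Lemma \ref{fourier}), while the decisive input for part (i) --- that $\di_{\H}^{\star}(I_a\alpha)=g_{\H}(\dH\alpha,\omega_a)$, so its vanishing kills the self-dual part of $\dH\alpha$ --- is exactly the paper's, the only difference being that you integrate this via Stokes into an $L^2$-identity, whereas the paper concludes pointwise that $\Delta_{\H}\alpha=\sH\di_{\H}^2\alpha=2m\alpha$ and then invokes the eigenvalue estimate. Both routes work; your separate treatment of $m=0$ via \eqref{forms-e} (one could equally quote \eqref{basic-vf}, since $\alpha$ is coclosed) is if anything slightly more careful than the paper's blanket citation of Lemma \ref{estimate}.
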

\begin{proof}
(i) letting $\alpha \in \Omega^1_{-m}\H \cap \ker \di_{\H}^{\star}$ we have $\di_{\H}^{\star}(\bbI\alpha)_{m+4}=\di_{\H}^{\star}(\bbI\alpha)$
by part (ii) in Lemma \ref{fourier}, since $\di^{\star}_\H$ commutes with $\L_{\xi}$ and $\di^{\star}_\H \alpha = 0$. As $\di^{\star}_\H$ commutes with  $\C$ it follows that  $\di_{\H}^{\star}(\bbI \alpha)_{m+4} \in C^{\infty}_{m+2}(M,\bbR^3)$
showing that the map under consideration is well defined. Assuming that $\di_{\H}^{\star}(\bbI\alpha)=0$ yields 
$\dH \alpha \wedge \omega=\dH(\alpha \wedge \omega)=-\dH \sH \bbI \alpha=\di_{\H}^{\star}(\bbI\alpha) \vol_{\H}=0.$
Equivalently $\star_{\H}\dH\alpha=-\dH\alpha$ which by  \eqref{def-p}  implies that 
\begin{equation*}
\Delta_{\H}\alpha=\di_{\H}^{\star}\dH\!\alpha=\star_{\H}\di_{\H}^2\alpha=-2\!\p(\alpha)=2m\alpha.
\end{equation*}
Hence $\alpha$ has to vanish due to the estimate in Lemma \ref{estimate}.\\
(ii) if $\alpha \in \Omega^1_{-m}\H \cap \ker \di_{\H}^{\star}$ satisfies $\Delta_{\H}\alpha=\lambda \alpha$ we apply $\di_{\H}^{\star}$ in \eqref{Ee2} 
to obtain, after using the commutation formula \eqref{com22}, that $\di_{\H}^{\star}(\bbI\alpha)_{m+4}=\di_{\H}^{\star}(\bbI\alpha) \in 
\ker(\Delta_{\H}-(\lambda-6m-16))$. The claim follows from  $\Delta_{\H}>0$ on $\{f \in C^{\infty}_{m+2}(M,\bbR^3) : \int_Mf\vol=0$\},  
by also taking into account that the map in (i) is injective.
\end{proof}
Arguments within the same circle also show that 
\begin{pro} \label{15+7}
If $g$ does not have constant sectional curvature we have 
$\Delta_{\H}>20$ on $\Omega^1_3\H $.
\end{pro}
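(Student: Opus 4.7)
\textit{Plan of proof.} Let $\alpha \in \Omega^1_3\H$ with $\Delta_\H\alpha = \lambda\alpha$ and $\alpha \neq 0$, and set $g_0 := \di_\H^\star\alpha$. Since $\di_\H^\star$ commutes with each $\L_{\xi_a}$, \eqref{PI2} applied to $\p\alpha=3\alpha$ gives $\C\alpha = 3\alpha$ and hence $\C g_0 = 3 g_0$. Combining \eqref{com22} with $\p\alpha = 3\alpha$ yields $\Delta_\H g_0 = (\lambda - 6) g_0$, so that $\Delta^g g_0 = (\lambda - 3) g_0$. Set further $g_a := \di_\H^\star(I_a\alpha)$ for $a=1,2,3$ and $\tilde g := (g_1,g_2,g_3)^T$. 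Lemma \ref{fourier}(iv) gives $\L_{\xi_a}\alpha = -I_a\alpha$, so $g_a = -\L_{\xi_a} g_0$ and $\tT(\dH \tilde g) = -\p\, \dH g_0$.

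The key structural identity to be established is
\begin{equation*}
(1 + \p)\,\dH g_0 \;=\; (\lambda + 6)\,\alpha.
\end{equation*}
To prove it I write $\Delta_\H\alpha = \di_\H^\star\dH^+\alpha + \di_\H^\star\dH^-\alpha + \dH g_0$ and compute the first two summands. A direct calculation using \eqref{Ho3}--\eqref{Ho44} shows $\dH^+\alpha = \tfrac{1}{2}\sum_a g_a\,\omega_a$, while \eqref{sqH} combined with $\L_{\xi_a}\alpha = -I_a\alpha$ yields $\di_\H^2\alpha = 6\,\sH\alpha$. Using $\di_\H^\star = -\sH\dH\sH$ and $\sH^2 = -1$ on $\Omega^1\H$ then gives $\di_\H^\star\dH^+\alpha = -\tfrac{1}{2}\tT(\dH\tilde g)$ and $\di_\H^\star\dH^-\alpha = -6\alpha -\tfrac{1}{2}\tT(\dH\tilde g)$; substituting and using the relation $\tT(\dH\tilde g) = -\p\,\dH g_0$ produces the displayed identity.

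Since $\C\,\dH g_0 = 3\,\dH g_0$, equation \eqref{PI2} forces $\dH g_0 \in \Omega^1_3\H \oplus \Omega^1_{-1}\H$. The operator $1 + \p$ acts by multiplication by $4$ on the weight $3$ factor and vanishes on the weight $-1$ factor, so projecting the key identity gives $(\dH g_0)_3 = \tfrac{\lambda+6}{4}\alpha$. A short computation using Lemma \ref{van1} and Corollary \ref{dhp} shows further that $\Delta_\H(\dH g_0)_{-1} = (\lambda - 8)(\dH g_0)_{-1}$.

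To conclude, two cases arise. If $(\dH g_0)_{-1} \neq 0$, Lemma \ref{estimate} applied with $m=1$ forces $\lambda - 8 > 12$ and hence $\lambda > 20$. Otherwise $\dH g_0 = \tfrac{\lambda+6}{4}\alpha$; applying $\di_\H^\star$ and using $\di_\H^\star\alpha = g_0$ together with $\di_\H^\star\dH g_0 = \Delta_\H g_0 = (\lambda - 6)g_0$ yields $(3\lambda - 30) g_0 = 0$, so either $g_0 = 0$ --- in which case $\alpha = \tfrac{4}{\lambda+6}\dH g_0 = 0$, contradicting $\alpha \neq 0$ --- or $\lambda = 10$. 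In the latter case $g_0$ is a non-trivial eigenfunction of $\Delta^g$ realising the Lichnerowicz bound $7$ on the Einstein manifold $(M^7, g)$ with $\Ric^g = 6g$, and Obata's theorem then forces $g$ to have constant sectional curvature, contradicting the standing hypothesis. The principal difficulty is the derivation of the structural identity $(1 + \p)\dH g_0 = (\lambda + 6)\alpha$, which requires a careful match-up of the self-dual and anti-self-dual pieces of $\dH\alpha$ against the quaternionic frame $\{I_a, \omega_a\}$.
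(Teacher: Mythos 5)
Your proof is correct, and its skeleton is the same as the paper's: pass to divergences of $\alpha$, use \eqref{com22} and Lemma \ref{van1} to see these are $\Delta_{\H}$-eigenfunctions with eigenvalue $\lambda-6$, split their differential into the weight $-1$ and weight $3$ pieces of $\ker(\C-3)\cap\Omega^1\H$, apply Lemma \ref{estimate} to the weight $-1$ piece and Obata's theorem in the remaining case. Where you genuinely differ is in how the degenerate case $(\dH g_0)_{-1}=0$ is closed. The paper works with the full $\bbR^4$-valued function $f=(\di_{\H}^{\star}\alpha,\di_{\H}^{\star}\bbI\alpha)$ and there uses Lemma \ref{batch1},(iii) to force $\Delta_{\H}f=4f$, Obata to get $f=0$, and finally Proposition \ref{Ho-m},(i) to conclude $\alpha=0$. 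You work with $g_0=\di_{\H}^{\star}\alpha$ alone, but compensate with the structural identity $(1+\p)\dH g_0=(\lambda+6)\alpha$, obtained from the self-dual/anti-self-dual decomposition of $\dH\alpha$ together with $\L_{\xi}=-\bbI$ on $\Omega^1_3\H$; I checked this identity (via $(\dH\alpha)^{+}=\tfrac12\sum_a g_a\omega_a$ with $g_a=\di_{\H}^{\star}I_a\alpha$, $\sH\di_{\H}^2\alpha=-6\alpha$ and $\tT(\dH\tilde g)=-\p\dH g_0$) and it is correct. It lets you recover $\alpha$ from $\dH g_0$, so no appeal to Proposition \ref{Ho-m} or to Lemma \ref{batch1},(iii) is needed, and it pins down the borderline value $\lambda=10$ before Obata is applied to $g_0$. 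The paper's route is shorter because it reuses lemmas already established; yours is slightly more self-contained and makes the excluded eigenvalue explicit. One cosmetic point: in the subcase $g_0=0$ you divide by $\lambda+6$, which is harmless since $\Delta_{\H}\geq 0$ forces $\lambda\geq 0$, but this should be said.
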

\begin{proof} Let $\alpha \in \Omega^1_3\H$ satisfy $\Delta_{\H}\alpha=\lambda \alpha$. Since $\bbI=-\L_{\xi}$ on $\Omega^1_3\H$ using 
\eqref{com22} ensures that 
\begin{equation*}
f:=(\di_{\H}^{\star}\!\alpha, \di_{\H}^{\star}\bbI\alpha) \in C^{\infty}_{1}(M,\bbR^4) \cap \ker(\Delta_{\H}-(\lambda-6)).
\end{equation*}
We now differentiate this eigenvalue equation, w.r.t. $\dH$ and with the aid of the commutator identity in Lemma \ref{van1}.  As $\ker(\C-3) \cap \Omega^1\H=\Omega^1_{-1}\H\oplus \Omega^1_{3}\H$ splitting $\dH\!f=(\dH\!f)_{-1}+(\dH\!f)_3$ thus leads to 
$\Delta_{\H}(\dH\!f)_{-1}=(\lambda-8)(\dH\!f)_{-1}$. If $(\dH\!f)_{-1} \neq 0$ we get $\lambda-8>12$ by Lemma \ref{estimate} hence the claim is proved. If $(\dH\!f)_{-1}=0$, or equivalently $\p(\dH\!f)=3\dH\!f$ applying $\d_{\H}^{\star}$ and taking into account \eqref{codi-p} yields 
$\Delta_{\H}f=4f$. It follows that $\Delta^g f=\Delta_{\H}f+\C f=7f$ which forces $f=0$, by Obata's theorem. Hence $\alpha$ vanishes as well,
by Proposition \ref{Ho-m}, (i) and the claim is proved.
\end{proof}
As we believe some of these results may be of independent interest we have worked here in slightly more generality than strictly needed 
in the next section where only estimates on the weighted spaces $\Omega^1_k \H$ for the weights $k= -1,-2,3$ will be used. 
%%%%%%%%%%%%%%%%%%%%%%%%%%%%%%%%%%%%
\section{Infinitesimal Einstein and $\G_2$ deformations} \label{inf-defE}
In this section we will refine the structure results on the spaces $E_{\lambda}$ obtained so far. 
The numerical pairs of relevance in this section  are $(s,\lambda)=(\frac{1}{\sqrt{5}},-\frac{12}{\sqrt{5}})$ respectively $(s,\lambda)=(\frac{1}{\sqrt{5}},\frac{6}{\sqrt{5}})$; recall that the first corresponds to infinitesimal $\G_2$ deformations. According to Propositions \ref{int-2n} and \ref{semi2} pairs $(\alpha,\sigma) \in E_{\lambda}$ then satisfy
$$\tT(\alpha)\in F_{\nu}$$ 
where $\nu=\lambda(\lambda+2s)$. Thus the first priority is to study the spaces $F_{\nu}$ with $\nu$ bounded from above as directed by deformation theory, see section \ref{LL}.

The breakdown of our future strategy is as follows. The Lie derivatives $\mathscr{L}_{\xi_a}$ make the spaces $F_{\nu}$ into $\su(2)$-representations. Decomposing those into irreducible pieces makes it possible to understand in a geometric way the action of $\su(2)$ on $\Omega^{\star}\H$. 
%This is a standard approach outlined for instance in 
%\cite{Knapp}. 
For numerically explicit eigenvalues $\lambda$ we can effectively count which irreducible $\su(2)$-representations (with multiplicities) can occur in $F_{\lambda}$. This is due to the estimate 
\begin{equation} \label{est}
5\C-2\!\p\leq \nu
\end{equation}
on $F_{\nu}=\ker(\mathscr{D}-\nu)$ with $\mathscr{D}=\Delta_{\H}+5\C-2\!\p$, which descends from having $\Delta_{\H} \geq 0$. This observation makes it possible to prove the following key result.
\begin{pro} \label{num1}
Assuming that $\nu \leq 24$ we have 
\begin{equation*}
F_{\nu}=\ker(\Delta_{\H}-\nu) \cap  \Omega^1_{inv}\H.
%F_{24}=\{\alpha \in \Omega^1_{inv}\H : \Delta_{\H}\alpha=24 \alpha\} \oplus \{\alpha \in \Omega^1_3\H : \Delta_{\H}\alpha=15\alpha \}.
%, \ p(\alpha)=3\alpha \}.
\end{equation*}
In addition, if $g$ does not have constant sectional curvature $F_{\nu}=0$ for $\nu\leq 16$.
\end{pro}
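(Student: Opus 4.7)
The plan is to decompose $F_\nu$ along the simultaneous eigenspaces of $\p$ and $\C$ and then eliminate every nonzero $\p$-weight using the estimates of Section~\ref{HoEe}. Since $\p$ commutes with $\Delta_{\H}$ by Corollary~\ref{dhp} and with $\C$ by its $\su(2)$-invariance (Lemma~\ref{inv-tL}), the operator $\scrD=\Delta_{\H}+5\C-2\p$ commutes with $\p$, so that $F_\nu$ splits as $\bigoplus_{k}(F_\nu\cap\Omega^1_k\H)$. The key algebraic input is identity \eqref{PI2}, which forces $\C$ to act on $\Omega^1_k\H$ by the scalar $k(k-2)$; the eigenvalue equation therefore reduces to
\[
\Delta_{\H}\alpha=\bigl(\nu-5k^2+12k\bigr)\alpha,\qquad\alpha\in F_\nu\cap\Omega^1_k\H.
\]

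Next I would enumerate the admissible weights. On an $\su(2)$-irreducible summand carrying $\C=m(m+2)$ with $m\in\mathbb{N}$, the identity $\C=\p(\p-2)$ forces $\p\in\{-m,m+2\}$; combined with the fact that $\p$ vanishes on $\Omega^1_{inv}\H$, this restricts the possible weights to $k\in\{0\}\cup\{-m:m\geq 1\}\cup\{m+2:m\geq 1\}$. The weight $k=0$ contributes exactly $\ker(\Delta_{\H}-\nu)\cap\Omega^1_{inv}\H$, matching the claimed right-hand side.

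It remains to show that every nonzero weight is excluded when $\nu\leq 24$. A direct computation shows $5k^2-12k>24$ for all such $k$ with $|k|\geq 2$ and $k\neq 3$, which together with $\Delta_{\H}\geq 0$ immediately contradicts $\nu\leq 24$. The borderline cases $k=-1$ and $k=3$ are precisely what the refined estimates of Section~\ref{HoEe} are designed for: Lemma~\ref{estimate} with $m=1$ gives $\Delta_{\H}>12$ on $\Omega^1_{-1}\H$, forcing $\nu-17>12$, a contradiction; while Proposition~\ref{15+7} gives $\Delta_{\H}>20$ on $\Omega^1_3\H$, forcing $\nu-9>20$, again a contradiction. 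This proves the first assertion.

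For the second statement I would invoke \eqref{forms-e}, which under the non-constant curvature hypothesis yields $\Delta_{\H}>16$ on $\Omega^1_{inv}\H$; together with the identification just established, this rules out any nonzero element in $F_\nu$ for $\nu\leq 16$. The main obstacle of the argument is the case $k=3$, where the elementary estimate $5k^2-12k=9$ is perfectly compatible with $\nu\leq 24$; defeating it requires the non-trivial spectral refinement of Proposition~\ref{15+7}, which is where the non-constant curvature assumption effectively enters.
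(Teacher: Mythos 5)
Your proof is correct and follows essentially the same route as the paper's: split $F_{\nu}$ according to the $\su(2)$/weight structure, use $\C=\p(\p-2)$ together with $\Delta_{\H}\geq 0$ (equivalently $5\C-2\p\leq\nu$) to reduce to the weights $k\in\{0,-1,3\}$, eliminate $k=-1$ and $k=3$ via Lemma \ref{estimate} and Proposition \ref{15+7}, and deduce the vanishing for $\nu\leq 16$ from \eqref{forms-e}. Your closing remark that the weight $k=3$ is the borderline case requiring Proposition \ref{15+7} (and hence the non-constant-curvature hypothesis) matches exactly where the paper invokes that proposition as well.
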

\begin{proof}
Recall that the (real) irreducible finite dimensional representations of the Lie algebra $\su(2)$ are entirely determined by their 
dimension and come in two series 
\begin{itemize}
\item $U_n$ with $\dim_{\bbR}U_n=2n+1$ where $n \in \mathbb{N}, n \geq 1$
\item $V_{n}$ with $\dim_{\bbR}V_n=4n+4$ where $n \in \mathbb{N}$.
\end{itemize}
Their explicit realisation is not needed here, we only record that the Casimir operator $\C$ acts on 
$U_n$ respectively $V_n$ as $m(m+2)$ with $m=2n$ respectively $m=2n+1$.

Split $F_{\nu}=W_0 \oplus W_1 \oplus \ldots \oplus  W_d$ into isotypical components w.r.t. the $\su(2)$ action, where $W_0$ is the trivial representation. As 
$\p$ preserves $F_{\nu}$ and is $\su(2)$ invariant it follows that $\p(W_i) \subseteq W_i$. Here we haven taken into account that 
$\Hom_{\su(2)}(W_i,W_j)=0$ for $1 \leq i \neq j \leq d$. 
Consequently we need only examine the constraint 
\eqref{est} on $W_i, i \neq 0$ where $\C=m(m+2)$ for some $m \in \mathbb{N}, m \geq 1$. Thus $(\p+m)(\p-m-2)=0$ on $W_i$ by Lemma \ref{pvsC}. Assume that $m \geq 2$; if $-m$ is an eigenvalue for $\p$ the estimate \eqref{est} reads $5m^2+12m \leq \nu \leq 24$ which has no solution. Similarly, assuming $m+2$ is an eigenvalue for $\p$ we get 
$5m^2+8m-4 \leq \nu \leq 24$, a contradiction. We have showed that $m=1$, which allows splitting $W_i=\ker(\p+1) \oplus \ker(\p-3)$.
From the construction 
of $F_{\nu}$ these pieces correspond to the eigenspaces  
$$\ker(\Delta_{\H}-(\nu-17)) \cap \Omega_{-1}^1\H \ \mbox{respectively} \ \ker(\Delta_{\H}-(\nu-9)) \cap \Omega_{3}^1\H$$ which both vanish 
by Lemma \ref{estimate} respectively Proposition \ref{15+7} since $\nu-17\leq 12$ respectively $\nu-9 < 20$. Summarising, 
$\su(2)$ acts trivially on $F_{\nu}$, so $\mathscr{D}=\Delta_{\H}$ on that space. The vanishing of $F_{\nu}$ for $ \nu \leq 16$ is hence granted by the estimate in \eqref{forms-e}.
\end{proof}
This yields a full description of the eigenspace $E_{\lambda}$ for the unstable eigenvalue $\lambda=-2s$.
\begin{coro} \label{-2s}
We have $\ker(\star_s\di+2s)\cap \Omega^3_{27}(\varphi_s)=\bbR\widetilde{\varphi}$. 
\end{coro}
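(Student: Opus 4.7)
The inclusion $\bbR\widetilde{\varphi}_s\subseteq \ker(\star_s\di+2s)\cap \Omega^3_{27}(\varphi_s)$ is already recorded in the second identity of \eqref{str5}. For the reverse inclusion, the plan is to pick an arbitrary $\gamma=\kappa_s(\alpha,\sigma)$ in this eigenspace, set $\lambda=-2s$, and show $\alpha=0$; the last sentence of Proposition \ref{int-l} will then identify any $\gamma \in \ker \pr_1$ with an element of $\bbR\widetilde{\varphi}_s$. Since $\lambda\neq 0$, $\gamma$ is automatically co-closed, so combining Proposition \ref{comp1} with $\pr_1(\star_s\di)\gamma=\lambda\alpha$ and $\pr_3(\star_s\di)\gamma=\lambda\sH\tT(\alpha)$ yields $\mathscr{G}^{\t5}\alpha=\lambda(\lambda+2s)\alpha=0$, exactly as in the case $\lambda(\lambda+2s)\neq 0$ of the proof of Proposition \ref{int-l}.

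Proposition \ref{semi2} applied at eigenvalue $0$ then places $\tT(\alpha)\in F_0$ and $\L_{\xi}^{\star}\alpha\in F_0^{\perp}$, so the next step is to verify $F_0=0$. By Proposition \ref{num1} (applicable since $0\leq 24$), $F_0$ coincides with $\ker\Delta_{\H}\cap\Omega^1_{inv}\H$; any $\beta$ in this set is necessarily $\di_\H^{\star}$-closed by integrating $(\Delta_{\H}\beta,\beta)=0$, and thus vanishes by the strict inequality in \eqref{basic-vf}. This places $\alpha\in\Omega^1_{o}(\H,\bbR^3)$, and the reduction $\mathscr{G}^{\t5}=\Delta_{\H}+5\p^2$ on this subspace, carried out in the proof of Proposition \ref{semi2}, gives $(\Delta_{\H}+5\p^2)\alpha=0$. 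Both summands are non-negative and self-adjoint when $\p=\sum_a I_a\L_{\xi_a}$ is applied componentwise---self-adjointness being a consequence of $I_a^{\star}=-I_a$, $\L_{\xi_a}^{\star}=-\L_{\xi_a}$ and the commutation $\L_{\xi_a}I_a=I_a\L_{\xi_a}$ that descends from $\xi_a$ being Killing and preserving $\omega_a$. Hence $\Delta_{\H}\alpha=0$ and $\p\alpha=0$; the identity $\C=\p^2-2\p$ from \eqref{PI2}, applied componentwise, then forces $\C\alpha=0$, so each $\alpha_i\in\Omega^1_{inv}\H$, and the argument used above for $\beta$ forces $\alpha_i=0$. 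Thus $\alpha=0$.

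The main obstacle I expect is the bookkeeping needed to conclude $F_0=0$ unconditionally, rather than under the constant sectional curvature exclusion featured in the vanishing half of Proposition \ref{num1}. This is bypassed by combining the \emph{identification} part of Proposition \ref{num1} with \eqref{basic-vf}, which holds without any curvature hypothesis on $g$. Beyond this small twist, everything is a linear chain of invocations of results already established in the paper.
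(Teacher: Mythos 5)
Your proposal is correct and follows essentially the same route as the paper: reduce to $\ker\mathscr{G}^{\t5}=0$ via Proposition \ref{comp1}/\ref{semi2} (using $F_0=0$ and positivity of $\Delta_{\H}+5\p^2$ on the relevant subspace), then invoke the $\lambda=-2s$ kernel statement of Proposition \ref{int-l}; you merely spell out the ``positivity argument'' and the vanishing of $F_0$ that the paper leaves terse, and your observation that \eqref{basic-vf} settles these without the constant-curvature exclusion is consistent with the paper. (Only a cosmetic slip: \eqref{basic-vf} is the bound $\Delta_{\H}\geq 24$, not a strict inequality, but since $0<24$ your conclusion is unaffected.)
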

\begin{proof}
A positivity argument shows that $\ker(\Delta_{\H}+5\!\p^2)=0$ on $\Omega^1(\H,\bbR^3)=0$. As $F_0$ vanishes, Proposition \ref{semi2} allows 
concluding that $\ker \mathscr{G}^{\t5}=0$. The claim follows now from the second semi-exact sequence in  Proposition \ref{int-l}.
\end{proof}
\begin{rema} \label{rmk-intro}
%Considering the action of the Casimir operator $\C$ on functions and using 
Techniques similar to the proof of Proposition \ref{num1} also allow proving the statement from
Remark \ref{rmk6} in the introduction, i.e. showing that any $\Delta^{g_{\s5}}$-eigenfunction for the eigenvalue $2E= 108/5$ is automatically basic.
Indeed, any such eigenfunction $f$ satisfies $\Delta_\H f = (108/5 - 5 \C)f$, and in particular the estimate $\C f \le  108/25 f < 5  f$. 
By $\su(2)$ representation theory we get $f \in C^\infty_{b} M \oplus C^\infty_1 M$ and there remains to exclude the second summand. Assuming $f \in C^\infty_1 M$ we must have $\Delta_\H f = 33/5 f $. 
However, arguments similar to those in section \ref{HoEe} show that $\Delta_\H > 14 > 33/5$ on $C^\infty_1 M \cap \Ker(\Delta_\H - 4)^\perp$. Consequently any eigenfunction for the eigenvalue $2E$ has to be $\su(2)$ invariant, i.e. basic as stated. Note that 
$\ker(\Delta_\H - 4)=0$ if $g$ does not have constant sectional curvature, see \cite{I}.
\end{rema}
At this stage additional insight into the structure of the harmonic form spaces defined in \eqref{h-spc} is required. We consider the bundle map
\begin{equation} \label{s-iso}
\bfs:\Lambda^{-}(\H,\bbR^3) \to \Sym_0^2\H, \ \ \bfs(\sigma):=\sum_a \sigma_a^{\sharp} \circ I_a
\end{equation}
where the skew endomorphisms $\sigma_a^{\sharp}$ acting on $\H$ satisfy $g_{\H}(\sigma_a^{\sharp} \cdot, \cdot)=\sigma_a$. This is an isomorphism since it is an injective map between spaces of the same dimension. Furthermore, let $\Gamma_{b}(\Sym_0^2\H)$ be the space of basic trace free symmetric tensors on $M$, in other words the space of 
$\su(2)$-invariant sections of 
$\Sym_0^2\H$. Basic TT tensors are then defined according to 
\begin{equation*}
\TT_b(\H):=\Gamma_{b}(\Sym_0^2\H) \cap \ker \delta^{g_s}.
\end{equation*}
As $\V$ is totally geodesic w.r.t. any of the metrics $g_s,s>0$ this definition does not depend on the choice of the parameter $s$. 
\begin{pro} \label{harm1}
The spaces $\bfH_0^{+},\bfH^{\pm}_1$ and $\bfH^{\pm}_3$ vanish. In addition the map 
\begin{equation*}
\bfs: \bfH_4^{-} \to \ker (\Delta_L^{g_s}-\tfrac{76}{5}) \cap \TT_b(\H)
\end{equation*}
is injective.
\end{pro}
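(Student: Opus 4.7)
The plan is to treat the four vanishings and the injectivity separately. The vanishings of $\bfH^{\pm}_1$ and $\bfH^{\pm}_3$ reduce to purely representation-theoretic obstructions on Casimir operators, while $\bfH_0^+$ is handled by a Bochner argument on the orbifold $N=M/\mathcal{F}$. For the injectivity, the idea is to identify $\bfi\circ\bfs$ with a scalar multiple of the canonical embedding $\kappa_s(0,\cdot)$ from Proposition~\ref{int-l}, after which Proposition~\ref{lich} identifies the Lichnerowicz eigenvalue and the TT property.

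For $\sigma \in \bfH^{\pm}_\lambda$, the conditions $C\sigma=\lambda\sigma$ and $\L_\xi^\star\sigma=0$ combined with \eqref{sqC} give $\C\sigma = \lambda(\lambda-2)\sigma$; the comparaison formula \eqref{cas01} then yields $\C_{\rho\otimes\pi^1}\sigma = (\lambda-2)(\lambda-4)\sigma$. Both $\C$ and $\C_{\rho\otimes\pi^1}$ act as non-negative operators, the latter as the Casimir of a unitary representation of $\su(2)$. Consequently $\lambda=1$ is ruled out by $\C=-1<0$ and $\lambda=3$ is ruled out by $\C_{\rho\otimes\pi^1}=-1<0$, giving $\bfH^{\pm}_1 = \bfH^{\pm}_3 = 0$.

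For $\bfH^{+}_0$ the same reasoning gives $\C\sigma=0$, and the pointwise non-negativity of each $-\L_{\xi_a}^2$ then forces $\L_{\xi_a}\sigma_b=0$ for all $a,b$. Each component $\sigma_a \in \mathbb{H}^+$ is therefore basic with respect to $\mathcal{F}$ and descends to a harmonic self-dual $2$-form on the compact, positive-Einstein, ASD orbifold $N^4$. The Weitzenb\"ock identity $\Delta = \nabla^\star\nabla - 2W^+ + \tfrac{R}{3}$ on $\Lambda^+T^\star N$, together with $W^+=0$ and $R = 48$, then forces each $\sigma_a=0$, giving $\bfH^{+}_0 = 0$.

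For the injectivity the key step is to establish the purely algebraic identity
\begin{equation*}
\bfi \circ \bfs \;=\; 2\,\kappa_s(0,\cdot) \qquad \text{on} \qquad \Lambda^-(\H,\bbR^3).
\end{equation*}
Both sides are $\bbR$-linear and take values in the $\V\wedge \Lambda^2\H$ summand of $\Lambda^3_{27}(\varphi_s)$, so pointwise equality can be verified by polarisation in an adapted hyperk\"ahler frame of $\H$, using the explicit expressions for $\varphi_s$ and the $\omega_a$. Granted this identity, Proposition~\ref{int-l} applied with $\lambda=-2\sqrt{5}$, for which $2-s\lambda=4$, embeds $\bfH^{-}_4$ injectively into $E_{-2\sqrt{5}}$ via $\kappa_s(0,\cdot)$, and Proposition~\ref{lich} with $\tau=\tfrac{76}{5}$ places $E_{-2\sqrt{5}} \subseteq \bfi(\ker(\Delta_L^{g_s}-\tfrac{76}{5}))$. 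Coclosedness of $\kappa_s(0,\sigma)$ is automatic since $-2\sqrt{5}\neq 0$, hence $\bfs(\sigma) \in \TT(g_s)$ by \eqref{TT-int}. Basicness of $\bfs(\sigma)$ follows from the $\rho\otimes\pi^1$-invariance of $\sigma \in \bfH^{-}_4$, equivalent to $\C_{\rho\otimes\pi^1}\sigma = (4-2)(4-4)\sigma = 0$, which makes $\kappa_s(0,\sigma)$ invariant under the $\su(2)$-action on $\Omega^3 M$ and hence $\bfs(\sigma)$ basic via the $\su(2)$-equivariance of $\bfi$. The main technical step in this outline is the explicit verification of $\bfi\circ\bfs = 2\kappa_s(0,\cdot)$.
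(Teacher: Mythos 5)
Your argument is correct, and for the most part it follows the same route as the paper: the vanishing of $\bfH^{\pm}_1$ and $\bfH^{\pm}_3$ via non-negativity of $\C$ and of $\C_{\rho\otimes\pi^1}$ computed from \eqref{sqC} and \eqref{cas01}, and the injectivity via the algebraic identity $\bfs=2\,\bfi^{-1}\kappa_s(0,\cdot)$ on $\Lambda^-(\H,\bbR^3)$ combined with Proposition~\ref{int-l} (giving $\kappa_s(0,\sigma)\in E_{-2/s}$), \eqref{TT-int} for the TT property, the eigenvalue $76s^2=\tfrac{76}{5}$ from \eqref{DeltaL} (equivalently your use of Proposition~\ref{lich} with $\lambda^-=-10s$), and $\su(2)$-invariance from $\C_{\rho\otimes\pi^1}\sigma=0$ to get basicness. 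The identity you flag as the main technical step, with the correct constant $2$, is exactly Lemma~\ref{L80}\,(i), which the paper proves by a two-line frame computation from $\bfi(S)=\sum_i Se^i\wedge(e_i\lrcorner\varphi_s)$ and $SI_a+I_aS=-2\sigma_a^{\sharp}$; your proposed verification by polarisation in an adapted frame is the same routine computation, so deferring it is harmless. The one place you genuinely diverge is $\bfH^+_0$: the paper disposes of it by a purely algebraic argument using \eqref{Lie-os} and the normal form $\sigma=f\omega$ with $f\in\Sym^2\bbR^3$, whereas you observe that $\C\sigma=0$ makes each component a basic harmonic self-dual $2$-form, descend to the ASD Einstein orbifold $N$ and invoke the Weitzenb\"ock formula on $\Lambda^+$ with $W^+=0$ and $\scal=48>0$ (i.e.\ $b^+(N)=0$). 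This is valid (the identification of basic harmonic forms with orbifold harmonic forms is standard for Riemannian foliations with totally geodesic compact leaves and is used freely elsewhere in the paper), it is arguably more geometric, and it even proves a bit more since you never use the $\Omega^+_{sym}$ or trace-free conditions; the trade-off is that it imports the transversal/orbifold Bochner machinery where the paper stays at the level of $\su(2)$ algebra on $M$. One small wording slip: $-\L_{\xi_a}^2$ is non-negative in the $L^2$ sense, not pointwise, so the conclusion $\L_{\xi_a}\sigma_b=0$ comes from integrating $\la\C\sigma_b,\sigma_b\ra$ over the compact $M$.
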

\begin{proof}
Let $\sigma \in \Omega^2(\H,\bbR^3)$ satisfy $C\sigma=\lambda \sigma$ and $\L^{\star}_{\xi}\sigma=0$. Thus $\C\sigma=\lambda(\lambda-2)\sigma$  by \eqref{sqC}. When $\lambda=0$ it follows that $\sigma$ is $\su(2)$-invariant. Under the additional requirement 
that $\sigma \in \Omega^{+}_{sym}(\H,\bbR^3)$ this leads to $\sigma=0$ after a short argument based on \eqref{Lie-os}, 
hence $\bfH^{+}_0=0$. \\
Since the operator $\C$ is non-negative we get that $\sigma=0$ for $\lambda=1$, so $\bfH^{\pm}_1=0$. Further on the algebraic constraints on $\sigma$ lead to $\C_{\rho \otimes \pi^1}\sigma=(\lambda-4)(\lambda-2)\sigma$ according to \eqref{cas01}. Since $\C_{\rho \otimes \pi^1}$ is non-negative $\sigma=0$ for $\lambda=3$ and $\sigma$ is $\su(2)$-invariant w.r.t. the tensor product representation 
when $\lambda=4$. In expanded form this reads 
%\begin{equation*}
$\L_{\xi_a}\sigma_a=0, \ \L_{\xi_a}\sigma_b=-\L_{\xi_b}\sigma_a=2\sigma_c$
%\end{equation*}
with cyclic permutations on $abc$. Equivalently, the tensor $\bfs(\sigma)$ is $\su(2)$-invariant by \eqref{Lie-osI}, hence basic.

In order to finish the argument we argue as follows. A glance at the proof of Proposition \ref{int-l} reveals that $\kappa_s(0,\sigma)$ belongs to $E_{-\frac{2}{s}}$.  Next, according 
to part (i) in the purely algebraic Lemma \ref{L80} proved in the next section we have $\bfs(\sigma)=2\bfi^{-1}\kappa_s(0,\sigma)$. Now use Proposition \ref{lich} with $\tau=\frac{76}{5}$ (and hence $\lambda^{-}=-\frac{2}{s}$) to see that $\bfi^{-1}(E_{-\frac{2}{s}}) \subseteq \ker (\Delta_L^{g_s}-\frac{76}{5}) \cap \TT(g_s)$ hence
$\bfs(\sigma)$ lives in the latter space. As we have already checked that $\bfs(\sigma)$ is basic, it follows that it actually belongs to $\TT_b(\H)$ and the claim is proved.
\end{proof}
The proof of Theorem \ref{main2} given in section \ref{proofs1} will show that $\bfs$ is actually an isomorphism between the spaces featuring in the proposition above; thus we obtain a characterisation of the space of equivariant harmonic forms $\mathbf{H}_4^{-}$ as the unique basic eigenspace of the 
Lichnerowicz Laplacian acting on TT tensors.
\begin{rema} \label{tw}
Denoting with $\mathcal{F}_1$ the foliation tangent to $\spa\{\xi_1\}$ consider the twistor space $Z:=M\slash \mathcal{F}_1$ which is a compact K\"ahler orbifold (see \cite{BoGa}). Its complex structure is the projection of $J\xi_2:=\xi_3, J_{\vert \H}:=I_1$ onto $Z$. We have a natural embedding $\bfH^{-}_4 \to H^{1,0}(Z,T^{0,1}Z\otimes \mathbf{L})$ 
coming from the projection of $\sigma \mapsto \sigma_2(I_2 \cdot, \cdot)+i\sigma_3(I_3 \cdot, \cdot)$ onto 
$\Omega^{1,0}(Z,T^{01}Z \otimes \mathbf{L})$, where $\mathbf{L}=\mathbf{K}_Z^{-\frac{1}{2}}$. This suggests that the algebraic geometry 
of $(Z,J)$, rather than the spectral theory of $\Delta_L^{g_s}$, could alternatively be used to describe $\mathbf{H}_4^{-}$.
\end{rema}
Combining the representation theory arguments used in the proof of Proposition \ref{num1} with the eigenvalue estimates in section \ref{HoEe} leads to the following structure result.
\begin{pro} \label{ker-gen}
Assume that $0 \neq \nu:=\lambda(\lambda+2s) \leq 24$ and that $g$ does not have constant sectional curvature. The map 
$$E_{\lambda}=\ker(\star_{s}\di-\lambda) \cap \Omega^3_{27}(\varphi_{s}) \stackrel{\tT \circ \pr_1}{\to}\ F_{\nu}=\ker(\Delta_{\H}-\nu) \cap \Omega^1_{inv}\H
$$
is injective for $\nu>16$. 
If $\nu=16$ we have  $s\lambda=-2$ and $E_{\lambda}
%when $\ker(\star_s\di+\frac{2}{s}) \cap \Omega^3_{27}(\varphi_{s})
=\kappa_s(0,\bfH^{-}_{4})$. In addition the space $E_{\lambda}$ vanishes when $\nu<16$.
\end{pro}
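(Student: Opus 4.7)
The plan is to combine the two semi-exact sequences of Propositions \ref{int-l} and \ref{semi2} with the structural input of Proposition \ref{num1}, which gives $F_\nu \subseteq \Omega^1_{inv}\H$ for $\nu \leq 24$. The latter inclusion makes $F_\nu^\perp$ vanish, so by Proposition \ref{semi2} one has $\L_\xi^\star \alpha = 0$ automatically for every $\alpha \in \ker(\mathscr{G}^{\frac{1}{\sqrt{5}}} - \nu)$. Together with the intertwining $\tT \circ \mathscr{G}^{\frac{1}{\sqrt{5}}} = \scrD \circ \tT$ from Lemma \ref{tG}, the map $\tT \circ \pr_1\colon E_\lambda \to F_\nu$ is well-defined. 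Its kernel is controlled by two pieces: the subspace $\ker \pr_1 = \kappa_s(0, \bfH^-_{2-s\lambda} \oplus \bfH^+_{2+s\lambda})$ from Proposition \ref{int-l}, and the preimage in $E_\lambda$ of $V_\nu := \Omega^1_{o}(\H, \bbR^3) \cap \ker(\Delta_\H + 5\p^2 - \nu)$.

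The crux will be to prove $V_\nu = 0$ for every $\nu \leq 24$. Since $\p$ preserves $\Omega^1_{o}$ (via \eqref{TT1} and the $\su(2)$-invariance of $\L_\xi^\star$), I would decompose $V_\nu$ by $\p$-eigenvalues $k$; the constraint $\Delta_\H = \nu - 5k^2 \geq 0$ forces $|k| \leq 2$. On $\Omega^1_{o}$, identity \eqref{up-C-temp} collapses to $C = 2 - \p$, making the tensor-product Casimir from \eqref{cas01} explicit on each $\p$-eigenspace. A case-by-case analysis then gives: for $k = 0$, the requirement $C = 2$ contradicts the vanishing $C = 0$ forced by componentwise invariance; for $k = 2$, $\Omega^1_2\H = 0$ by \eqref{PI2}; for $k = -1$, the value $\C_{\rho \otimes \pi^1} = -1$ is impossible since the tensor-product Casimir is non-negative; for $k = -2$, Lemma \ref{estimate} forces $\Delta_\H > 16$ on $\Omega^1_{-2}\H$, contradicting $\nu - 20 \leq 4$. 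Hence $V_\nu = 0$ and the kernel of $\tT \circ \pr_1$ reduces to $\ker \pr_1$.

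It remains to analyse $\ker \pr_1$. By \eqref{van11}, the spaces $\bfH^-_{2-s\lambda}$ and $\bfH^+_{2+s\lambda}$ vanish whenever $s\lambda \notin \mathbb{Z}$. Writing $\nu = 5(s\lambda)^2 + 2(s\lambda)$ and requiring $\nu \leq 24$ restricts integer values of $s\lambda$ to $\{-2,-1,1,2\}$, corresponding to $\nu \in \{16, 3, 7, 24\}$. Proposition \ref{harm1} kills $\bfH^\pm_1$ and $\bfH^\pm_3$, handling $\nu \in \{3, 7\}$; at $\nu = 16$ with $s\lambda = -2$ only $\bfH^-_4$ survives since $\bfH^+_0 = 0$; at $\nu = 24$, the Einstein-deformation root $s\lambda = -12/5$ is non-integer. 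Combining with $F_\nu = 0$ for $\nu \leq 16$ from Proposition \ref{num1}, one concludes: $E_\lambda = 0$ for $\nu < 16$; $E_\lambda = \kappa_s(0, \bfH^-_4)$ for $\nu = 16$; and $\tT \circ \pr_1$ is injective for $\nu > 16$.

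The main obstacle is the $k = -1$ case in the proof that $V_\nu = 0$: Lemma \ref{estimate} yields only $\Delta_\H > 12$ on $\Omega^1_{-1}\H$, which is too weak to exclude non-trivial solutions when $\nu - 5$ may be as large as $19$, so one must invoke the representation-theoretic impossibility of a negative tensor-product Casimir.
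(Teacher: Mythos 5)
Your reduction to the vanishing of $V_\nu=\Omega^1_{o}(\H,\bbR^3)\cap\ker(\Delta_\H+5\p^2-\nu)$ follows the paper's skeleton (Propositions \ref{int-l}, \ref{semi2}, \ref{num1}), but your treatment of $V_\nu$ is genuinely different and it works. The paper does not dispose of the componentwise $\p=-1$ part algebraically: it first derives $\di_\H^{\star}\alpha=0$ from $\di^{\star_s}\gamma=0$ (using $\lambda\neq0$, $\lambda+2s\neq 0$, $\tT(\sigma)=0$ and a Vandermonde separation of the Casimir eigenvalues $0,3,8$) and then invokes the refined estimate of Proposition \ref{Ho-m},(ii) on coclosed forms in $\Omega^1_{-1}\H$, while the $\p=-2$ part is killed by an eigenvalue estimate and the invariant part by \eqref{up-C-temp}. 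Your observation that on $\Omega^1_{o}(\H,\bbR^3)$ one has $C=2-\p$, so that on the $\p=-1$ component the tensor-product Casimir \eqref{cas01} would take the value $\C-4C+8=3-12+8=-1<0$, is correct (the spectral projections stay in $\Omega^1_{o}$ because $\p$ preserves it, by \eqref{TT1} and the invariance of $\L_\xi^{\star}$), and it buys a real simplification: neither the coclosedness step nor Proposition \ref{Ho-m} is needed. Your $k=0$ and $k=-2$ cases agree with the paper's, and for $k=-2$ the crude Lemma \ref{estimate} indeed suffices since $\nu-20\le 4$; you should also list and discard $k=1$ (empty, since $\C=\p^2-2\p=-1<0$ there), a one-line addition.

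There is, however, a gap in your analysis of $\ker\pr_1$. Your own list of admissible integers is $s\lambda\in\{-2,-1,1,2\}$, but at $\nu=24$ you only dismiss the root $s\lambda=-12/5$ and never return to $s\lambda=2$, i.e. $\lambda=2\sqrt5$, which also satisfies $\nu=5\cdot 4+2\cdot 2=24$. For that root $\ker\pr_1=\kappa_s(0,\bfH^-_{0}\oplus\bfH^+_{4})$, and neither space is covered by Proposition \ref{harm1}; worse, by \eqref{sqC} the space $\bfH^-_0$ consists of triples of basic anti-self-dual harmonic $2$-forms, which is nonzero whenever the base orbifold has $b_2^->0$ (e.g. $N(1,1)$ over $\overline{\bbC P}^2$), and a direct check with Lemma \ref{blgen-1} shows $\kappa_s(0,\sigma)\in E_{2\sqrt5}$ for such $\sigma$, so injectivity genuinely fails at this boundary value. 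To be fair, the paper's proof silently drops the same case (it asserts $n\in\{-2,-1,1\}$ although $n=2$ saturates the bound $n(5n+2)\le 24$), and the omission is harmless for the applications, since by Proposition \ref{lich} only the negative root $\lambda^-$ reaches $\nu=24$ while $\lambda^+(\lambda^++2s)\le \tfrac{48}{5}<16$; but as a proof of the proposition as stated, your write-up, like the paper's, leaves $(\lambda,\nu)=(2\sqrt5,24)$ untreated, and you should either exclude this root explicitly from the statement or restrict to $\lambda<0$.
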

\begin{proof}
Pick $\gamma=\kappa_s(\alpha,\sigma) \in \ker(\star_s\di-\lambda) \cap \Omega^3_{27}M$ such that $\tT(\alpha)=0$.
Combining Proposition \ref{int-l} and \ref{semi2} shows that $\L_{\xi}^{\star}\alpha \in F_{\nu}$; as this is contained 
in $\Omega^1_{inv}\H$ by Proposition \ref{num1} and $\L_{\xi}^{\star}\alpha$ is $L^2$-orthogonal to $\Omega^1_{inv}\H$ it follows that 
 $\L_{\xi}^{\star}\alpha=0$. Thus $\alpha \in \ker(\L_{\xi}^{\star}\oplus \tT)$ hence further 
$$\alpha \in \ker(\Delta_{\H}+5\p^2-\nu) \cap \Omega^1_{o}(\H,\bbR^3)$$
by Proposition \ref{semi2}. Consider the finite dimensional, $\su(2)$-invariant space $\ker(\Delta_{\H}+5\!\p^2-\nu)$. 
From the estimate $5\!\p^2\leq \nu \leq 24$ on this space, by arguments entirely similar to Proposition \ref{num1}
$$\ker(\Delta_{\H}+5\!\p^2-\nu)\cap \Omega^1\H=\mathscr{E}(\nu,0)\oplus  \mathscr{E}(\nu-5,-1) \oplus \mathscr{E}(\nu-20,-2) .$$
Here 
$$\mathscr{E}(t,k):=\ker(\Delta_{\H}-t) \cap \Omega^1_{k}\H$$ 
for $(t,k) \in \bbR \times \mathbb{Z}$ in shorthand notation. This allows splitting $\alpha=\alpha_0+\alpha_{-1}+\alpha_{-2}$ where 
$$ \alpha_0 \in \mathscr{E}(\nu,0)\otimes \bbR^3, \ \alpha_{-1} \in  \mathscr{E}(\nu-5,-1)\otimes \bbR^3, \ \alpha_{-2} \in  \mathscr{E}(\nu-20,-2)\otimes \bbR^3.
$$
Next we argue that $\alpha$ is coclosed. Indeed, since $\lambda+2s\neq 0$, the last equation in \eqref{sys1} shows that $\tT(\sigma)=0$.
Since $\lambda \neq 0$ we know that $\di^{\star_s}\gamma=0$. The projection of this onto $\Omega^0(\H,\bbR^3)$ then yields 
$\di_{\H}^{\star}\alpha=0$
according to Lemma \ref{cod-gen}. In expanded form 
$$\di_{\H}^{\star}\alpha_0+\di_{\H}^{\star}\alpha_{-1}+\di_{\H}^{\star}\alpha_{-2}=0.$$
Because $\C$ commutes with $\di_{\H}^{\star}$ and $\C\alpha_0=0, \C\alpha_{-1}=3\alpha_{-1}, \C\alpha_{-2}=8\alpha_{-2}$
the latter equation leads, after succesive application of $\C$ respectively $\C^2$ as well as solving the corresponding Vandermonde system, to $\di_{\H}^{\star}\alpha_0=\di_{\H}^{\star}\alpha_{-1}=\di_{\H}^{\star}\alpha_{-2}=0$. Since $\nu-5\leq 19 <22$ and $\nu-20\leq 4<28$ (as $\nu\leq 24$ by assumption), the eigenvalue estimate in Proposition \ref{Ho-m},(ii) for $m=1,2$ leads to 
$\alpha_{-1}=\alpha_{-2}=0.$ In other words 
$$ \alpha \in \Omega^1_{inv}(\H,\bbR^3) \cap \Omega^1_{o}(\H,\bbR^3).$$  
The latter space vanishes as it can be checked using the identity \eqref{up-C-temp}, hence $\alpha=0$.\\
By Proposition \ref{int-l} the form $\sigma$ belongs then to $\bfH^{-}_{2-s\lambda} \oplus \bfH^{+}_{2+s\lambda}$. As these spaces vanish 
when $s\lambda \notin \mathbb{Z}$ (see \eqref{van11}) in order to prove the claim there only remains to examine instances with $s\lambda=n$ with $n \in \mathbb{Z}^{\times}$. The bound on $\lambda$ in the assumptions reads $n(n+\frac{2}{5\sqrt{5}}) \leq \frac{24}{5}=4.8$ forcing 
$n \in \{-2,-1,1\}$, thus $\nu=n(5n+2) \in \{16,3,7\}$. This proves injectivity for $\tT \circ \pr_1$ when $\nu>16$. For $\nu\leq 16$ 
the target space $F_{\nu}$ of the map $\tT \circ \pr_1$ vanishes by Proposition \ref{num1} hence 
$E_{\lambda}=\bfH^{-}_{2-n} \oplus \bfH^{+}_{2+n}$
for $s\lambda \in \{-2,-1,1\}$ or $E_{\lambda}=0$ otherwise.
The claim follows from the vanishing results in Proposition \ref{harm1}.
\end{proof}
We can now fully describe the eigenspaces $E_{\lambda}$ with $16<\lambda(\lambda+2s)\leq 24$ in terms of eigenspaces of the basic Laplacian.
\begin{teo} \label{solve1}
Assume that $g$ does not have constant sectional curvature and that $\lambda$ satisfies $16< \nu=\lambda(\lambda+2s) \leq 24$. The maps 
$$\varepsilon^{\pm}_{\nu}:\ker(\Delta_{b}-\nu) \to \ker(\star_s\di-\lambda_{\pm}) \cap \Omega^3_{27}(\varphi_s)$$ 
from Proposition \ref{embed1} are linear isomorphisms.
\end{teo}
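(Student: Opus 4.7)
The plan is to establish surjectivity of $\varepsilon_\nu^\pm$, as injectivity is already given by Proposition \ref{embed1}. First I would compute, for $f \in \ker(\Delta_b-\nu)$, that
\[
\tT(\pr_1(\varepsilon_\nu^\pm(f))) \;=\; -\dH\!f, \qquad \tT(\pr_2(\varepsilon_\nu^\pm(f))) \;=\; -\lambda_\pm f,
\]
using $\tT\circ\bbI = -3$, $\tT(\omega)=6$ and the vanishing of $\tT$ on the anti-self-dual piece $\Omega^{-}(\H,\bbR^3)$. The second identity suggests the natural inverse: for arbitrary $\gamma = \kappa_s(\alpha,\sigma) \in E_{\lambda_\pm}$, set $f := -\lambda_\pm^{-1}\tT(\sigma)$, and verify that $\gamma = \varepsilon_\nu^\pm(f)$.

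I would next check that $f \in \ker(\Delta_b - \nu)$. From the first equation of the system \eqref{sys1} one reads off $(\lambda_\pm + 2s)\tT(\sigma) = \di_\H^\star \tT(\alpha)$. Since $\tT(\alpha) \in F_\nu \subseteq \Omega^1_{inv}\H$ by Propositions \ref{semi2} and \ref{num1}, commuting $\L_{\xi_a}$ past $\di_\H^\star$ forces $\tT(\sigma)$ to be basic. Applying $\Delta_\H$ and using that $\p$ annihilates $\Omega^1_{inv}\H$, so that $[\Delta_\H,\di_\H^\star]$ kills $\tT(\alpha)$ in view of \eqref{com22}, one then obtains $\Delta_b \tT(\sigma) = \nu\, \tT(\sigma)$, as required.

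The key step will be to show $\tT(\alpha) = -\dH\!f$. The fourth equation of \eqref{sys1} reads $\L_\xi^\star \alpha = -s\lambda_\pm \tT(\alpha) + s\,\dH\tT(\sigma)$, and by the preceding step its right-hand side lies in $\Omega^1_{inv}\H$. On the other hand $\L_\xi^\star \alpha$ is $L^2$-orthogonal to $\ker\L_\xi = \Omega^1_{inv}\H$. These two facts together force $\L_\xi^\star \alpha = 0$, so $\tT(\alpha) = \lambda_\pm^{-1}\dH\tT(\sigma) = -\dH\!f$. Consequently $\gamma - \varepsilon_\nu^\pm(f) \in E_{\lambda_\pm}$ is annihilated by $\tT \circ \pr_1$ and hence vanishes by the injectivity statement in Proposition \ref{ker-gen} (valid for $\nu > 16$), completing the argument.

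The hard part is really the boundary case $\nu = 24$: there $F_\nu$ genuinely contains a non-exact component corresponding to transversal Killing fields, so a direct basic Hodge-theoretic decomposition of $F_\nu$ would not by itself produce the preimage $f$. What rescues the argument is the rigidity $\L_\xi^\star \alpha \in \Omega^1_{inv}\H \cap (\Omega^1_{inv}\H)^\perp = \{0\}$ built into the $3$-Sasaki structure equations via \eqref{sys1}, which automatically selects the exact representative without requiring any separate analysis of the coclosed part.
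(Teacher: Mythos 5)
Your proposal is correct and follows essentially the same route as the paper: identify $f=-\lambda_\pm^{-1}\tT(\sigma)\in\ker(\Delta_b-\nu)$, use $F_\nu\subseteq\Omega^1_{inv}\H$ (Proposition \ref{num1}) together with orthogonality to force $\L_\xi^\star\alpha=0$ so that the system \eqref{sys1} yields $\tT(\alpha)=-\dH f$, and then conclude $\gamma=\varepsilon_\nu^\pm(f)$ from the injectivity of $\tT\circ\pr_1$ on $E_{\lambda_\pm}$ in Proposition \ref{ker-gen}. The only (harmless) deviations are that the paper obtains $\L_\xi^\star\alpha=0$ directly from the target $F_\nu^\perp=0$ in Proposition \ref{semi2} rather than via the fourth equation of \eqref{sys1}, and gets the eigenvalue equation for $f$ by composing the two updated equations instead of invoking \eqref{com22}; you should also cite Proposition \ref{int-l} when placing $\alpha$ in $\ker(\mathscr{G}^{1/\sqrt{5}}-\nu)$ before applying Proposition \ref{semi2}.
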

\begin{proof}
As the maps $\varepsilon_{\nu}^{\pm}$ are clearly injective there remains to prove their surjectivity.
Pick $\gamma=\kappa_s(\alpha,\sigma) \in E_{\lambda}$ 
and proceed as follows. First we show that $\tT(\alpha)$ is $\dH$-exact. Indeed, combining Propositions \ref{int-l} and \ref{semi2} shows that 
$$(\L^{\star}_{\xi}\alpha,\tT(\alpha))
\in F_{\nu}^{\perp} \oplus F_{\nu}.$$
As $\nu \leq 24$ the space $F_{\nu}$ consists of $\su(2)$-invariant forms by Proposition 
\ref{num1} hence $\L^{\star}_{\xi}\alpha=0$. Consequently the first and last equations in \eqref{sys1} update to $\di_{\H}^{\star}\tT(\alpha)=(\lambda+2s)\tT(\sigma)$ and 
$\lambda\tT(\alpha)=\dH\!\tT(\sigma)$. Put together, these equations ensure $\su(2)$-invariance for  
$\tT(\sigma)$ and allow 
writing $\tT(\alpha)=-\dH\!f$ with $f=-\frac{1}{\lambda}\tT(\sigma) \in \ker(\Delta_{b}-\nu)$.\\
Next we show that $\gamma$ is fully determined by $f$.
The form $\gamma-\varepsilon_{\nu}^{\pm}(f)=\kappa_s(\beta,\rho)$ belongs to $E_{\lambda}$ and satisfies $\tT(\beta)=0$ since $\beta=\alpha-\tfrac{1}{3}\bbI\di\!f$. 
Hence the pair $(\beta,\rho)$ vanishes by Proposition \ref{ker-gen}.
In other words  $\gamma=\varepsilon^{\pm}_{\nu}(f)$ 
and the claim is proved.
\end{proof}
%%%%%%%%%%%%%%%%%%%%%%%
%%%%%%%%%%%%%%%%%%%%%
Following the same line of reasoning, with slightly different numerics based this time on Proposition \ref{int-2n}, 
we can also deal with eigenspaces of the Laplacian on closed forms, where we prove the following vanishing result.
\begin{teo} \label{thmII}
The space 
%\begin{equation*}
$\{\gamma \in \Omega^3_{27}(\varphi_s) : \di\!\di^{\star_s}\!\gamma=\mu \gamma\}$ where $0 \neq \mu \leq 72s^2$ vanishes.

%\end{equation*}
\end{teo}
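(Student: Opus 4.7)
The plan is to apply the semi-exact sequence of Proposition \ref{int-2n}: first force $\pr_1\gamma = \alpha$ to vanish by controlling $\ker(\mathscr{G}^{\t5} - \mu)$, then conclude that the remaining piece $\gamma = \kappa_s(0,\sigma)$ must also vanish because the harmonic-form space $\mathbb{H}_{s^2\mu}$ does. Throughout write $s = \s5$ and let $\gamma = \kappa_s(\alpha, \sigma) \in \Omega^3_{27}(\varphi_s)$ satisfy $\di\di^{\star_s}\gamma = \mu\gamma$ with $0 \neq \mu \leq 72s^2 = 72/5$.

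For the first step, closedness of $\gamma$ makes $\pr_1(\star_s\di)\gamma$ and $\pr_3(\star_s\di)\gamma$ vanish, so Proposition \ref{comp1} gives $\mathscr{G}^{\t5}\alpha = \mu\alpha$. Proposition \ref{semi2} then puts $\tT(\alpha) \in F_\mu$ and $\L_\xi^\star\alpha \in F_\mu^\perp$. By Proposition \ref{num1} we have $F_\mu \subseteq \ker(\Delta_\H - \mu) \cap \Omega^1_{inv}\H$, and the Lichnerowicz--Obata bound $\Delta_\H \geq 16$ on $\Omega^1_{inv}\H$ (using $\Ric^{g_N} = 12 g_N$ on the Einstein base orbifold) forces $F_\mu = 0$ since $\mu \leq 72/5 < 16$. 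Hence $\alpha$ lies in $\Omega^1_o(\H, \bbR^3)$, on which Proposition \ref{semi2} identifies $\mathscr{G}^{\t5}$ with $\Delta_\H + 5\p^2$, giving $\alpha \in \ker(\Delta_\H + 5\p^2 - \mu) \cap \Omega^1_o(\H, \bbR^3)$.

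Now decompose $\alpha$ by the eigenvalues of $\p$, whose spectrum on $\Omega^1\H$ is $\{-m : m \geq 0\} \cup \{m + 2 : m \geq 0\}$. The constraint $5k^2 \leq \mu \leq 72/5$ admits only $k = 0$ and $k = -1$, so $\alpha = \alpha_0 + \alpha_{-1}$. The invariant piece $\alpha_0$ has each component in $\ker(\Delta_\H - \mu) \cap \Omega^1_{inv}\H = 0$, and the weight $(-1)$ piece $\alpha_{-1}$ has each component in $\ker(\Delta_\H - (\mu - 5)) \cap \Omega^1_{-1}\H$, which vanishes by Lemma \ref{estimate} since $\mu - 5 \leq 47/5 < 12$. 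Hence $\alpha = 0$.

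With $\alpha = 0$, the proof of Proposition \ref{int-2n} shows that each component $\sigma_a$ belongs to $\mathbb{H}$ and satisfies $\C \sigma_a = s^2 \mu\, \sigma_a$. Since $\C$ has spectrum $\{m(m+2) : m \geq 0\} = \{0, 3, 8, \ldots\}$ on any $\su(2)$-invariant subspace of $\Omega^2\H$, and $s^2\mu = \mu/5 \in (0, 72/25] \subset (0, 3)$ misses this set, we conclude $\sigma = 0$. The main point is the tight numerical alignment of the threshold $\mu \leq 72s^2$ with the three independent spectral gaps $16$ (Lichnerowicz--Obata for basic $1$-forms), $12$ (Lemma \ref{estimate} for $m = 1$), and $3$ (first positive $\su(2)$-Casimir value); once this alignment is noticed, every ingredient is already in place from sections \ref{sectn52}--\ref{HoEe}.
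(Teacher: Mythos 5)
Your proposal is correct and follows essentially the same route as the paper's proof: Propositions \ref{int-2n}, \ref{comp1}, \ref{semi2} and \ref{num1} reduce to $\alpha\in\ker(\Delta_{\H}+5\p^2-\mu)\cap\Omega^1_{o}(\H,\bbR^3)$, the $\p$-weight decomposition with the bounds $\mu<16$ and $\mu-5<12$ (Lemma \ref{estimate}) kills $\alpha$, and the Casimir integrality $m(m+2)$ kills the remaining $\sigma$ since $s^2\mu\leq 72/25<3$. The only cosmetic difference is that you apply the Casimir argument to $\sigma$ directly via the intermediate identities in the proof of Proposition \ref{int-2n}, whereas the paper phrases it through the vanishing of $\mathbb{H}_{s^2\mu}$; the content is identical.
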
 
\begin{proof}
Let $\gamma=\kappa_s(\alpha,\sigma)$ belong to the space above.
Combining Propositions \ref{int-2n} and \ref{semi2} shows that 
$(\L^{\star}_{\xi}\alpha,\tT(\alpha))
\in F_{\mu}^{\perp} \oplus F_{\mu}.$
Since $\mu \leq \frac{72}{5}<16$ we obtain, by using Proposition \ref{num1}, that $F_{\mu}=0$. In other words 
$\alpha \in \ker(\L_{\xi}^{\star} \oplus \tT)$ which yields  
$\alpha \in  \ker(\Delta_{\H}+5\p^2-\mu) \cap \Omega^1_{o}(\H,\bbR^3)$ by means of Proposition \ref{semi2}. 
As in the proof of Proposition \ref{ker-gen} the estimate $5\p^2\leq \mu \leq \frac{72}{5}$ on the latter space first shows that 
$$\ker(\Delta_{\H}+5\p^2-\mu) \cap \Omega^1(\H,\bbR^3)=\mathcal{E}(\mu,0) \otimes \bbR^3 \oplus  \mathcal{E}(\mu-5,-1) \otimes \bbR^3$$
where we use the same notation as in the proof of Proposition \ref{ker-gen}. Because $\mu-5\leq \frac{47}{5}<12$ the last component space vanishes by the eigenvalue estimate in Lemma \ref{estimate}. 
By Proposition \ref{num1} we get $\mathcal{E}(\mu,0)=0$ since $\mu<16$ hence we have showed that $\alpha=0$. It follows,
by Proposition \ref{int-2n}, that $\sigma=\L_{\xi}\sigma_0$ with 
$\sigma_0 \in \mathbb{H}_{\tfrac{\mu}{5}}$. By assumption $\tfrac{\mu}{5}\leq \tfrac{72}{25}=2.88$. As the Casimir operator 
$\C$ of the induced $\su(2)$ representation can have only integer eigenvalues, of the form $m(m+2), m \in \mathbb{N}$, it follows that $\mathbb{H}_{\tfrac{\mu}{5}}=0$, thus $\sigma=0$ and the claim is fully proved.
\end{proof}
\subsection{Proofs of Theorem \ref{main1} (i), (ii) and of Theorem \ref{main2} } \label{proofs1}
%%%%%%%%%%%%%%%%%%%%%%%%%%%%%%%%%%%%%%%%%
%
%
Proving these claims amounts to describing $\ker(\Delta_L-\tau)$ with $\tau \leq 2E_s=108s^2$ and $s= \t5$. Based on Proposition \ref{lich} there are three cases to consider
corresponding to the three summands in $\ker(\Delta_L^{g_s}-\tau)$. We will systematically use the relation between eigenvalues $\tau$ for $\Delta_L$ and eigenvalues $\lambda^{\pm}$ for $\star_s\di$ respectively eigenvalues $\mu$ for $\di\di^{\star_s}$ given in that proposition.
\begin{enumerate}
\item[(a)] $E_{\lambda^{+}}$ with $\lambda^{+}(\lambda^{+}+2s) \leq \tfrac{48}{5}$. \\
As $\tfrac{48}{5}<16$ we get $E_{\lambda^{+}}=0$ by Proposition \ref{ker-gen}, 
provided that $\lambda^{+}\neq -2s$. When  $\lambda^{+}=-2s$ we have $E_{\lambda^{+}}=\bbR\widetilde{\varphi}$ by Corollary \ref{-2s} with 
Lichnerowicz eigenvalue $\tau=28s^2$. \\
\item[(b)] $E_{\lambda^{-}}$ with $\lambda^{-}(\lambda^{-}+2s) \leq 24$. \\
If  $\lambda^{-}(\lambda^{-}+2s)<16$ then $E_{\lambda^{-}}=0$ by Proposition \ref{ker-gen}, 
since $\lambda^{-}= - 2s$ cannot occur, as $\lambda^- = -3s-\sqrt{\tau-27s^2}$. By the same proposition, having $\lambda^{-}(\lambda^{-}+2s)=16$ corresponds to 
$\lambda^{-}=-\frac{2}{s}$ and $E_{\lambda^{-}}=\bfH^{-}_4$ with $\tau=76s^2$. In the last remaining case we have $16 <\nu=\lambda^{-}(\lambda^{-}+2s) \leq 24$.
Expressing $\lambda^{-}=-s-\sqrt{\nu+s^2}$ in terms of $\nu$ and noting that $\lambda^- <0$ we see that Theorem \ref{solve1} provides a linear isomorphism
$\varepsilon^{-}_{\nu} : \ker(\Delta_{b}-\nu) \to E_{\lambda^{-}}$. In this case the eigenvalue $\tau$ for 
$\Delta_L^{g_s}$ reads 
$\tau = \nu - 4s\sqrt{\nu +s^2} +32s^2$.
 \\
\item[(c)]$\{\gamma \in \Omega^3_{27}(\varphi_s) : \di\!\di^{\star_s}\!\gamma=\mu \gamma\}$ with $\mu \leq 72s^2$. \\
As we know that $\mu \neq 0$ this space has to vanish by Theorem \ref{thmII}.
\end{enumerate}
Summarising,  the space  of infinitesimal Einstein deformations, for $\tau = 108s^2$, coincides with the space  $E_{\lambda^{-}}= E_{-12s}$ of infinitesimal $\G_2$ 
deformations which in turn  is isomorphic to the eigenspace $\ker(\Delta_{b}-24)$ via $\varepsilon=\varepsilon_{24}^{-}$. This proves
Theorem \ref{main1},(i),(ii). Moreover, the space of unstable directions has the components $\bbR\widetilde{\varphi},\, \bfH^{-}_4$ and 
$\ker(\Delta_{b}-\nu) $ with $16 < \nu < 24$. The corresponding eigenvalues $\tau$ are given above thus proving Theorem \ref{main2}.

\section{Computation of the obstruction polynomial} \label{obs}
The aim in this section is to calculate, on the space $\mathcal{E}(\varphi_s), s=1/\sqrt{5}$ of infinitesimal $\G_2$ deformations, the obstruction to integrability polynomial $\mathbf{K}:
\mathcal{E}(\varphi_s) \to \Lambda^1\mathcal{E}(\varphi_s)$ as introduced in our previous work \cite{NS} according to which we first need to examine the following algebraic invariants. 
\begin{itemize}
\item the symmetric bilinear form  $p:\Lambda^3_{27}(\varphi_s) \times \Lambda^3_{27}(\varphi_s) \rightarrow \Sym^2(TM,g_s)$ determined  from\\ $
p(\gamma, \gamma)(U, V) = g_s( U \lrcorner  \gamma, V \lrcorner \gamma)
$
\item the linear isomorphism  $\bfi^{-1} : \Lambda^3_{27}(\varphi_s) \rightarrow  \Sym_0^2(TM,g_s)$ as defined in section \ref{2ndE}
\item the trilinear map $P(\gamma_1,\gamma_2,\gamma_3) :=\la p(\gamma_1, \gamma_2), \bfi^{-1}\gamma_3 \ra $ with $\gamma_k \in \Lambda^3_{27}(\varphi_s), k=1,2,3$ where the scalar product on $\Sym^2(TM,g_s)$ is given by $\la S_1,S_2 \ra=\tr(S_1 \circ S_2)$.
\end{itemize}
Note that the map $\mathbf{P}:\Lambda^3_{27}(\varphi_s) \times \Lambda^3_{27}(\varphi_s) \to \Lambda^3_{27}(\varphi_s)$ which appears in section \ref{backgr-def} is then the metric dual of $P$, that is $g_s(\mathbf{P}(\gamma_1,\gamma_2),\gamma_3)=P(\gamma_1,\gamma_2,\gamma_3).$

Since $\mathcal{E}(\varphi_s)=\varepsilon (\ker(\Delta_b-24))$ we explicitly have 
$$\mathbf{K}(\varepsilon(f))\varepsilon(h)=\int_M P(\varepsilon(f),\varepsilon(f),\varepsilon(h))\vol$$
%=\la \mathbf{P}(\varepsilon(f),\varepsilon(f)),\varepsilon(h)\ra_{L^2}$$
and the set of infinitesimal $\G_2$ deformations which are unobstructed to second order is given by the zero locus 
$\mathbf{K}^{-1}(0)$, by \cite[Thm.1.1]{NS}. 

To carry out the programme of computing $\mathbf{K}$ let $f \in \ker(\Delta_b-24)$ and split, according 
to \eqref{e-bis},
\begin{equation*}
\begin{split}
\varepsilon(f)=\,& \kappa_s(\bbI\dH\!f,t_1f\omega+t_2\di_{\H}^{-}\bbI\dH\!f)
=t_1f \widetilde{\varphi}+t_2\gamma_1+\gamma_2
\end{split}
\end{equation*}
where the factor $1/3$ has been dropped for convenience, with the constants $t_1=6s$ and $t_2=1/2s$. Here we recall that $\widetilde{\varphi}_s=\kappa_s(0,\omega)$ and use the notation 
$$ \gamma_1=\,\kappa_s(0, \di_{\H}^{-}\bbI\dH\!f) , \ \gamma_2=\kappa_s(\bbI\dH\!f,0).
$$
In the following computations we will frequently use that $\bbI\dH\!f=X \lrcorner \omega$ 
where $X:=\grad f$ together with the expanded algebraic expression 
$ \gamma_2=\mathfrak{S}_{abc}Z^{ab} \wedge (X \lrcorner \omega_c)-3X \lrcorner \vol_{\H}.
$
These observations on the algebraic structure of $\varepsilon(f)$ show that we only need determine $p$ and $\bfi^{-1}$ on the subbundle 
$$\kappa_s(\Lambda^1\H \oplus \spa \{\omega\} \oplus \Lambda^-(\H,\bbR^3)) \subseteq \Lambda^3_{27}(\varphi_s)$$ where 
$\Lambda^1\H$ is embedded into $\Lambda^1(\H,\bbR^3)$ via $\alpha \mapsto \bbI \alpha$. To determine the action of $\bfi^{-1}$ on this subbundle we mainly rely on the algebraic isomorphism $\bfs:\Lambda^{-}(\H,\bbR^3) \to \Sym_0^2\H$ defined in section \ref{inf-defE}.
\begin{lema} \label{L80}
Assume that $\gamma_1=\kappa_s(0,\sigma)$ with $\sigma \in \Lambda^{-}(\H,\bbR^3)$ and that $\gamma_2=\kappa_s(X \lrcorner \omega,0)$ with 
$X \in \H$. We have
\begin{itemize}
\item[(i)]
$ 
\bfi^{-1}\gamma_1= \tfrac12 \bfs(\sigma)
% - 2t_4 f \, \Id_\V + 24t_1 f \, \Id_\H + \tfrac12 \bfs(\sigma)
%\sum_a I_a \, F_a, \qquad
% \mbox{with}\qquad t_5 = -\tfrac12 t_3 - 3 t_4 .... 2(12t_1-t_2)
$ \vspace{1mm}
\item[(ii)]
$
\mathbf{i}^{-1}\gamma_2= \sum_a Z^a \otimes I_a X + (I_a X)^{\flat} \otimes Z_a
$\vspace{1mm}
\item[(iii)] $\bfi^{-1}\widetilde{\varphi}_s=-\frac{1}{2}(4\,\id_{\V}-3\,\id_{\H})$.
\end{itemize}
\end{lema}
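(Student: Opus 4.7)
My plan is to verify each identity by applying $\bfi$ to the proposed symmetric tensor and matching with the given $3$-form. The defining formula $\bfi(a \otimes a) = a^\flat \wedge (a \lrcorner \varphi_s)$ polarises to
\begin{equation*}
\bfi(h) \;=\; \sum_{i,j} h(e_i, e_j)\, e^i \wedge (e_j \lrcorner \varphi_s)
\end{equation*}
for $h \in \Sym^2(TM)$ and any local frame $\{e_i\}$. The whole computation then rests on two contractions derived from $\varphi_s = Z^{123} + \sum_c Z^c \wedge \omega_c$: first, $Z_a \lrcorner \varphi_s = Z^{bc} + \omega_a$ (cyclic on $abc$); and second, for horizontal $Y$, $Y \lrcorner \varphi_s = -\sum_c Z^c \wedge (Y \lrcorner \omega_c)$.

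For (iii), the classical identity $\sum_i e^i \wedge (e_i \lrcorner \alpha) = p\,\alpha$ for a $p$-form $\alpha$ gives $\bfi(g) = 3 \varphi_s$, while tracing only over the vertical frame $\{Z_a\}$ and applying the first contraction yields $\bfi(g_\V) = 3 Z^{123} + \sum_a Z^a \wedge \omega_a = 2 Z^{123} + \varphi_s$. Subtracting produces $\bfi(g_\H) = 2 \varphi_s - 2 Z^{123}$, and hence $\bfi(4\id_\V - 3\id_\H) = 14 Z^{123} - 2 \varphi_s = -2 \widetilde{\varphi}$, proving (iii). For (ii), applying the polarised formula to $h_2 := \sum_a (Z^a \otimes I_a X + (I_a X)^\flat \otimes Z_a)$ yields
\begin{equation*}
\bfi(h_2) \;=\; \sum_a Z^a \wedge (I_a X \lrcorner \varphi_s) \;+\; \sum_a (I_a X)^\flat \wedge (Z_a \lrcorner \varphi_s).
\end{equation*}
Using the horizontal contraction on the first sum and expanding the second via $Z_a \lrcorner \varphi_s = Z^{bc} + \omega_a$, the $Z^{ab}$-type terms reorganise by \eqref{Ho44} into the cyclic sum $\mathfrak{S}_{abc} Z^{ab} \wedge (X \lrcorner \omega_c)$, while the purely horizontal contributions $\sum_a (I_a X)^\flat \wedge \omega_a$ collapse via \eqref{Ho3} into $-3 \, X \lrcorner \vol_\H$, together matching $\gamma_2$.

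Case (i) is the main technical step. Writing $\tfrac12 \bfs(\sigma) = \tfrac12 \sum_a \sigma_a^\sharp \circ I_a$ as a horizontal symmetric bilinear form and applying the polarised formula with the horizontal contraction identity expresses $\bfi(\tfrac12 \bfs(\sigma))$ as $\sum_b Z^b \wedge \eta_b$, where each $\eta_b \in \Lambda^2 \H$ is quadratic in the $\sigma_a$ and the $I_a$. The anti-self-duality of each $\sigma_a$ makes $\sigma_a \circ I_a$ symmetric (so $\bfs(\sigma)$ is indeed in $\Sym^2_0 \H$), and combined with the quaternion relations $I_a \circ I_b = I_c$ for cyclic $abc$ and $I_a^2 = -1$ produces the following pattern: the diagonal ($b=a$) contributions reassemble into $\sigma_b$, while the off-diagonal ones pair into self-dual $2$-forms that cancel. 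Since $\tT(\sigma) = 0$ on $\Lambda^-(\H, \bbR^3)$, we have $\kappa_s(0, \sigma) = \sum_b Z^b \wedge \sigma_b$, which matches the output of $\bfi$.

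The main obstacle is the sign bookkeeping in (i) and fixing the scalar factor $\tfrac12$. As the claim is algebraic and pointwise, a safe fallback is to reduce by linearity to the case of a single non-zero component $\sigma_1 \in \Lambda^- \H$, verify the identity on a $3$-element basis of $\Lambda^- \H$ at a point where the hyperk\"ahler structure on $\H$ becomes standard on $\bbR^4$ — which pins down the factor $\tfrac12$ — and then extend to general $\sigma = (\sigma_1, \sigma_2, \sigma_3)$ by the cyclic symmetry $(Z_a, \omega_a, I_a) \mapsto (Z_{a+1}, \omega_{a+1}, I_{a+1})$ of the $3$-Sasaki data.
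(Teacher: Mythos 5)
Your proposal is correct and follows essentially the same route as the paper: apply $\bfi$ to the candidate tensors and match, using $Y \lrcorner \varphi_s=-\sum_c Z^c\wedge (Y\lrcorner\omega_c)$, the commutation of the $\sigma_a^{\sharp}$ with the $I_b$, and $\tT(\sigma)=0$ on $\Lambda^{-}(\H,\bbR^3)$; the paper's proof of (i) is precisely your diagonal/off-diagonal computation, packaged as $\sum_i Se^i\wedge(e_i\lrcorner\omega_a)=-g_{\H}((SI_a+I_aS)\cdot,\cdot)$ with $SI_a+I_aS=-2\sigma_a^{\sharp}$, while (ii) and (iii) are the same direct evaluations. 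One cosmetic remark: for $a\neq b$ the off-diagonal contributions vanish identically because $I_aI_b+I_bI_a=0$ (they are not nonzero self-dual forms cancelling in pairs), so your basis-check fallback, though legitimate for this pointwise algebraic statement, is not actually needed.
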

\begin{proof}
All claims follow directly from the general formula $\bfi(S)=\sum Se^i \wedge e_i \lrcorner \varphi_s$ whenever $S \in \Sym^2_0(M,g_s)$, where $\{e_i, 1 \leq i \leq 7\}$ is some local orthonormal basis in $TM$. This follows from the action of $\bfi$ on decomposable elements outlined in section \ref{2ndE}.\\
(i) Let $S:=\bfs(\sigma)$ and consider a $g_{\H}$ orthonormal basis $\{x_i,1 \leq i \leq 4\}$ in $\H$. As $S$ only acts on $\H$ we have 
\begin{equation*}
\bfi(S)=\sum_i Sx^i \wedge x_i \lrcorner \varphi_s=\sum_a Z^a \wedge \sum_i Sx^i \wedge (x_i \lrcorner \omega_a).
%\sum_{i,a}I_ae_i \lrcorner \sigma_a \wedge  e_i \lrcorner \varphi_s
\end{equation*}
At the same time direct calculation shows that $\sum_i Sx^i \wedge (x_i \lrcorner \omega_a)=-g_{\H}(SI_a+I_aS \cdot, \cdot)$. 
The definition of $\bfs$ entails 
$SI_a+SI_a=-2\sigma_a^{\sharp}$, since the endomorphisms $\sigma_a^{\sharp}$ commute with $I_1,I_2,I_3$. Gathering these facts yields 
$\bfi(S)=\sum_a Z^a \wedge \sigma_a=\kappa_a(0,\sigma)$ whence the claim.\\
\noindent
(ii) With 
$S=\sum_a Z^a \otimes I_a X + (I_a X)^{\flat} \otimes Z_a$ we have $SZ_a=I_aX$ and $Sx=\sum_a g(I_aX,x)Z_a$ whenever $x \in \H$ therefore $\bfi(S)=\sum_a (I_aX)^{\flat} \wedge Z_a \lrcorner \varphi_s+Z^a 
\wedge (I_aX \lrcorner \varphi_s)$. After using the quaternionic relations $I_1I_2=-I_2I_1=I_3$ we find that the second summand equals 
$$\sum_a Z^a \wedge (I_aX \lrcorner \varphi_s)=-\sum_{a,b}Z^{ab} \wedge (I_aX \lrcorner \o_b)=2\mathfrak{S}_{abc}Z^{ab} \wedge (I_cX)^{\flat}.$$ 
To conclude we take into account that $Z_a \lrcorner \varphi_s=Z^{bc}+\o_a$ and also that $(I_aX)^{\flat} \wedge \o_a=-X \lrcorner \vol_{\H}$, according to the convention in \eqref{met}.\\
(iii) Letting $S=4\id_{\V}-3\id_{\H}$ we have $\bfi(S)=4\sum_a Z^a \wedge (Z_a \lrcorner \varphi_s)-3 \sum_i
x^i \wedge (x_i \lrcorner \varphi_s)$ for some $g_{\H}$ orthonormal basis $\{x_i\}$ in $\H$. As $\varphi_s$ is a $3$-form $\sum_a Z^a \wedge (Z_a \lrcorner \varphi_s)+ \sum_i
x^i \wedge (x_i \lrcorner \varphi_s)=3\varphi_s$ hence we further obtain $\bfi(S)=7\sum_a Z^a \wedge (Z_a \lrcorner \varphi_s)-9\varphi_s=-2(\varphi_s-7Z^{123})$ after observing that $\sum_a Z^a \wedge (Z_a \lrcorner \varphi_s)=2Z^{123}+\varphi_s$. The claim follows from $\widetilde{\varphi}_s=\varphi_s-7Z^{123}$, see page 13.
\end{proof}

Next we calculate the necessary components in $p$. 
\begin{lema} \label{L81}
Assume that $\gamma_1=\kappa_s(0,\sigma)$ with $\sigma \in \Lambda^{-}(\H,\bbR^3)$ and that $\gamma_2=\kappa_s(X \lrcorner \omega,0)$ with 
$X \in \H$. We have 
%For $\gamma = \gamma_1 + \gamma_2$ as above we have
\begin{itemize}
\item[(i)] 
$ p(\gamma_1, \gamma_1)=\frac{1}{2}\sum_{a,b} g(\sigma_a, \sigma_b)  (Z^a \otimes Z_b+Z^b \otimes Z_a)+\frac{1}{2}\vert \sigma \vert^2\id_{\H}$
\item[(ii)] 
$
%\qquad
   p(\gamma_2, \gamma_2) =  2  |X|^2 \, \Id_\V + 10|X|^2 \, \Id_\H - 10X\otimes X 
$
\item[(iii)] $p(\widetilde{\varphi}_s,\widetilde{\varphi}_s)=38\id_{\V}+3\id_{\H}$
\item[(iv)] $p(\gamma_1,\widetilde{\varphi}_s)=\bfs(\sigma)$.
\end{itemize}
\end{lema}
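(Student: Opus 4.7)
The plan is to compute each $p(\gamma,\gamma)$ directly from the definition $p(\gamma,\gamma)(U,V)=g_s(U\lrcorner\gamma,V\lrcorner\gamma)$, working with the explicit expansions of $\gamma_1,\gamma_2,\widetilde{\varphi}$ via $\iota_s$ and systematically exploiting the bi-type splitting $\Lambda^\star M=\Lambda^\star\V\wedge\Lambda^\star\H$. First I would record that for $\sigma\in\Lambda^{-}(\H,\bbR^3)$ we have $\tT(\sigma)=0$ by antiself-duality, so $\gamma_1=\sum_aZ^a\wedge\sigma_a$; for $\alpha=X\lrcorner\omega=\bbI\dH f$ we have $\tT(\alpha)=-3X^\flat$ (cf.\ proof of Proposition \ref{alg-str}), so $\gamma_2=\mathfrak{S}_{abc}Z^{ab}\wedge(X\lrcorner\omega_c)-3X\lrcorner\vol_\H$; and $\widetilde{\varphi}=-6Z^{123}+\sum_aZ^a\wedge\omega_a$. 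Because $g_s$ is block-diagonal with respect to the bi-type decomposition and the $Z^a$ are $g_s$-orthonormal, any vertical/horizontal cross term in $p$ will vanish automatically, which cuts the computation to the two diagonal blocks $(Z_a,Z_b)$ and $(Y,Y')$.

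For (i), $Z_a\lrcorner\gamma_1=\sigma_a$ immediately produces the vertical block $g_\H(\sigma_a,\sigma_b)$, while $Y\lrcorner\gamma_1=-\sum_aZ^a\wedge(Y\lrcorner\sigma_a)$ yields $p(Y,Y')=\sum_ag_\H(J_aY,J_aY')$ with $\sigma_a=g_\H(J_a\cdot,\cdot)$. The horizontal block then reduces to the algebraic identity $\sum_a g_\H(J_aY,J_aY')=\tfrac12|\sigma|^2 g_\H(Y,Y')$, which follows by expanding $\{\sigma_a\}$ in an orthonormal basis of $\Lambda^{-}\H$ whose dual endomorphisms $\{K_a\}$ satisfy $K_a^2=-\id$ (since any ASD complex structure on $(\H,g_\H)$ commutes with $I_1,I_2,I_3$ and squares to $-\id$). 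For (iii), $Z_a\lrcorner\widetilde{\varphi}=\omega_a-6Z^{bc}$ (cyclic) and the bi-type orthogonality gives $p(Z_a,Z_b)=(|\omega_a|^2+36)\delta_{ab}=38\delta_{ab}$ using $|\omega_a|^2=2$; similarly $Y\lrcorner\widetilde{\varphi}=-\sum_aZ^a\wedge(Y\lrcorner\omega_a)$ produces $\sum_a g(I_aY,I_aY')=3g(Y,Y')$.

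The main work is in (ii). A direct computation gives $Z_a\lrcorner\gamma_2=Z^b\wedge(X\lrcorner\omega_c)-Z^c\wedge(X\lrcorner\omega_b)$ with $abc$ cyclic, and using $|X\lrcorner\omega_k|^2=|X|^2$ together with $g(I_aX,I_bX)=0$ for $a\neq b$ (which follows from skewness of $I_c$) this yields the vertical block $2|X|^2\id_\V$. For horizontal $Y$, the interior product $Y\lrcorner\gamma_2$ splits into a vertical 2-form piece $\mathfrak{S}_{abc}\omega_c(X,Y)Z^{ab}$ and a horizontal 2-form piece $-3\,Y\lrcorner X\lrcorner\vol_\H$; these are $g_s$-orthogonal, so the horizontal block decomposes as
\[
p(Y,Y')=\sum_c\omega_c(X,Y)\omega_c(X,Y')\;+\;9\,g_\H(Y\lrcorner X\lrcorner\vol_\H,Y'\lrcorner X\lrcorner\vol_\H).
\]
Both summands equal $|X|^2 g(Y,Y')-g(X,Y)g(X,Y')$: the first by the quaternionic identity $\sum_a g(I_aX,Y)I_aX=|X|^2Y-g(X,Y)X$ (checked on the orthogonal frame $\{X,I_1X,I_2X,I_3X\}$), and the second by the Gram-determinant formula for $\star_\H(X^\flat\wedge Y^\flat)=Y\lrcorner X\lrcorner\vol_\H$ combined with the isometry property of $\star_\H$. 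Summing gives $10\bigl(|X|^2g(Y,Y')-g(X,Y)g(X,Y')\bigr)$, as claimed.

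The computations are largely routine once the bi-type orthogonality is invoked; the only conceptually delicate step is the quaternionic identity underlying the horizontal block of (ii), which relies on the $\sp(1)$-module structure of $\H$ and is what ultimately produces the coefficient $10$ from the combination $1+9$ of the two contributions. No step requires more than elementary linear algebra on $\H\cong\mathbb{H}$.
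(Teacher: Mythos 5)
Your proposal is correct and follows essentially the same route as the paper: compute the interior products $Z_a\lrcorner\gamma$ and $Y\lrcorner\gamma$ explicitly, use the $g_s$-orthogonality of the bi-type splitting of $\Lambda^2M$ to kill the mixed block, and reduce the diagonal blocks to the same algebraic identities (for (ii) the two identities $\sum_a\alpha_a\otimes\alpha_a=|X|^2g_\H-X^\flat\otimes X^\flat$ and the Gram identity for $Y\lrcorner X\lrcorner\vol_\H$, for (i) the ASD fact equivalent to $\vert x\lrcorner\sigma\vert^2=\tfrac12\vert x\vert^2\vert\sigma\vert^2$). The only slip is cosmetic: a basis of $\Lambda^{-}\H$ cannot be simultaneously orthonormal as $2$-forms and have $K_a^2=-\id$ (one has $\vert K_a\vert^2=2$ in the latter normalisation), but the factor $\tfrac12$ you obtain is the correct one.
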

\begin{proof}
(i) Recall that $\gamma_1=\iota_s(0,0,\sigma,0)=\sum_a Z^a \wedge \sigma_a$. Since $\sigma$ is horizontal we have 
$Z_a \lrcorner \gamma_1=\sigma_a$ and $x \lrcorner \gamma_1=-\sum_a Z^a \wedge (x \lrcorner \sigma_a)$ when 
$x \in \H$. The claim follows by a routine computation essentially based on the identity $\vert x \lrcorner \sigma \vert^2=\frac{1}{2}\vert x \vert^2\vert \sigma \vert^2$ with $x \in \H$ which is due to having $\sigma \in \Lambda^{-}(\H,\bbR^3)$.\\
(ii) writing $\alpha=X \lrcorner \omega \in \Lambda^1(\H,\bbR^3)$ we have 
\begin{equation*}
Z_a \lrcorner \gamma_2=Z^b \wedge \alpha_c-Z^c \wedge \alpha_b, \ \ x \lrcorner \gamma_2=\mathfrak{S}_{abc}\alpha_c(x)Z^{ab}-3x \lrcorner X \lrcorner \vol_{\H}
\end{equation*}  
with cyclic permutations on $abc$ and where $x\in\H$. Taking scalar products and using orthogonality w.r.t. $g_s$ of the factors 
in $\Lambda^2M=\Lambda^2\V\oplus (\Lambda^1\V \wedge \Lambda^1\H) \oplus \Lambda^2\H$ shows that  
\begin{equation*}
\begin{split}
& g_s(Z_a \lrcorner \gamma_2,Z_b \lrcorner \gamma_2)=2\vert X \vert^2 \delta_{ab}, \ \ g_s(Z_a \lrcorner \gamma_2,x \lrcorner \gamma_2)=0\\
& g_s(x \lrcorner \gamma_2, x \lrcorner \gamma_2)=(\sum_a \alpha_a \otimes \alpha_a)(x,x)+9\vert x \lrcorner X \lrcorner \vol_{\H}\vert^2.
\end{split}
\end{equation*}
The claim follows from the algebraic identities 
$$(\sum_a \alpha_a \otimes \alpha_a)(x,x)=\vert x \lrcorner X \lrcorner \vol_{\H}\vert^2=
\vert x\vert^2\vert X\vert^2-g(x,X)^2.$$
(iii)$\&$(iv) are proved by direct calculation, in an entirely analogous way as in (i) and (ii). Details are omitted.
\end{proof}

\medskip

Returning to the computation of $P(\varepsilon(f),\varepsilon(f),\varepsilon(f))$ we recall the following. In \cite[Remark 2.3]{NS}, we have showed that the trilinear map $P$
%$(\beta_1, \beta_2, \beta_3) \mapsto  \la p(\beta_1, \beta_2), \bfi^{-1}\beta_3 \ra $ 
is totally symmetric on $ \Lambda^3_{27}(\varphi_s)$ , i.e. it is an element of $\Sym^3\Lambda^3_{27}$.
We let $\eta:=t_1f\widetilde{\varphi}_s+t_2\gamma_1$ and record that the symmetric endomorphisms $p(\eta,\eta)$ 
and $p(\gamma_2, \gamma_2)$ belong to $(\Lambda^1\H \otimes \H) \oplus (\Lambda^1\V \otimes \V)$ by Lemma \ref{L81}.
%type considerations in the case of the former and by Lemma \ref{L81} in the case of the latter. 
Hence both are orthogonal to $\bfi^{-1}\gamma_2$ which lives in 
$(\Lambda^1\V  \otimes \H)\oplus  (\Lambda^1\H \otimes \V )$. The symmetry of $P$ thus entails 
\begin{equation} \label{S81}
\begin{split}
P(\varepsilon(f),\varepsilon(f),\varepsilon(f))=& \la  p(\varepsilon(f), \varepsilon(f)), \bfi^{-1}\varepsilon(f) \ra  = \la  p(\eta, \eta), \bfi^{-1}\eta \ra + 3  \la  p(\gamma_2, \gamma_2), \bfi^{-1}\eta \ra\\
=&P(\eta,\eta,\eta)+3P(\gamma_2,\gamma_2,\eta).
\end{split}
\end{equation}
Further on, the remaining two summands in $P(\varepsilon(f),\varepsilon(f),\varepsilon(f))$ are determined as follows.
\begin{lema} \label{L8-3}
For $\eta$ and  $\gamma_2$ as above we have
\begin{itemize}
\item[(i)] 
$ 
\la  p(\eta, \eta), \bfi^{-1}\eta \ra =-210(t_1f)^3 +3(t_1f)t_2^2 \vert \di_{H}^{-} \bbI \dH f \vert^2
%  (-6t_4t_3^2 - 16 t^3_4 +12t_5t_4^2)  (\tfrac{2t_2}{3}    - \tfrac{28t_4}{3} + 4t_5 )
$ \vspace{1.5mm}
\item[(ii)] 
%\quad
$
\la p(\gamma_2, \gamma_2), \bfi^{-1}\eta \ra  = 33t_1 f \, | \dH f|^2 -5t_2\sum_a g(\dH f \wedge I_a \dH f, \di_{\H}^{-}  I_a \dH f).
$
\end{itemize}
\end{lema}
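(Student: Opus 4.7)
The strategy for both parts is a direct algebraic expansion using the trilinearity of $P$, the decomposition $\eta = (t_1 f)\widetilde\varphi + t_2\gamma_1$, and the tensorial data already computed in Lemmas \ref{L80} and \ref{L81}. Almost nothing analytic is needed.

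For part (i), trilinearity gives
\[
\langle p(\eta,\eta),\bfi^{-1}\eta\rangle = (t_1 f)^3 P(\widetilde\varphi,\widetilde\varphi,\widetilde\varphi) + 3(t_1 f)^2 t_2 P(\widetilde\varphi,\widetilde\varphi,\gamma_1) + 3(t_1 f) t_2^2 P(\widetilde\varphi,\gamma_1,\gamma_1) + t_2^3 P(\gamma_1,\gamma_1,\gamma_1).
\]
The pure $\widetilde\varphi$ term is computed from Lemma \ref{L80}(iii) and Lemma \ref{L81}(iii) and evaluates to $-210$. The two mixed terms $P(\widetilde\varphi,\widetilde\varphi,\gamma_1)$ and $P(\gamma_1,\gamma_1,\gamma_1)$ vanish by type/trace considerations: $p(\widetilde\varphi,\widetilde\varphi) \in \bbR\id_\V \oplus \bbR\id_\H$ and the $\H$-part of $p(\gamma_1,\gamma_1)$ is a scalar multiple of $\id_\H$, whereas $\bfi^{-1}\gamma_1 = \tfrac{1}{2}\bfs(\sigma) \in \Sym_0^2\H$ is trace-free and vanishes on $\V$. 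The remaining cross term $P(\widetilde\varphi,\gamma_1,\gamma_1)$ is the only one that survives: pairing $p(\gamma_1,\gamma_1)$ with $\bfi^{-1}\widetilde\varphi$ picks up the $\V$-trace (proportional to $|\sigma|^2$) and the $\H$-trace (again proportional to $|\sigma|^2$), which after combining with the coefficients coming from $\bfi^{-1}\widetilde\varphi = -2\id_\V + \tfrac{3}{2}\id_\H$ produces the coefficient of $|\sigma|^2 = |\di_\H^-\bbI\dH f|^2$ claimed in the statement.

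For part (ii), I would split bilinearly over $\eta$,
\[
\langle p(\gamma_2,\gamma_2),\bfi^{-1}\eta\rangle = (t_1 f)\,\langle p(\gamma_2,\gamma_2),\bfi^{-1}\widetilde\varphi\rangle + t_2\,\langle p(\gamma_2,\gamma_2),\bfi^{-1}\gamma_1\rangle.
\]
The first pairing is the trace of the product of the two symmetric endomorphisms explicitly given by Lemma \ref{L81}(ii) and Lemma \ref{L80}(iii); its $\V$- and $\H$-contributions combine numerically to $33|\dH f|^2$. In the second pairing the $\id_\V$ piece of $p(\gamma_2,\gamma_2)$ dies against $\bfs(\sigma) \in \Sym^2\H$, the $10|X|^2\id_\H$ piece dies against the trace-free $\bfs(\sigma)$, and only the $-10\,X\otimes X$ term contributes, yielding $-5\,g(X,\bfs(\sigma)X) = -5\sum_a \sigma_a(I_a X, X)$ with $X = \grad f$. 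The final step is to convert this into the claimed $2$-form pairing via the elementary identity $g(\alpha\wedge\beta,\mu) = \mu(\alpha^\sharp,\beta^\sharp)$ for $\mu\in\Lambda^2\H$, applied with $\alpha = \dH f$, $\beta = I_a\dH f$ and $\mu = \sigma_a = \di_\H^- I_a \dH f$.

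The main obstacle will be careful sign and coefficient bookkeeping: the factor-of-$\tfrac{1}{2}$ between $\bfi$ here and Bryant's convention, the sign choice $-\omega_a = g_\H(I_a\cdot,\cdot)$, and the antisymmetry of $\sigma_a$ under $X \leftrightarrow I_a X$ all conspire to fix the final signs. Once the trilinear and bilinear expansions are organised as above, the result is a purely mechanical consequence of Lemmas \ref{L80} and \ref{L81}.
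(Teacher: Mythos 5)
Your proposal is correct and follows essentially the same route as the paper's proof: expand $P$ using its total symmetry (recalled just before the lemma), kill the mixed terms $P(\widetilde\varphi,\widetilde\varphi,\gamma_1)$, $P(\gamma_1,\gamma_1,\gamma_1)$ and the $\id_{\V}$, $\id_{\H}$ pieces by the same trace/type considerations, and evaluate the surviving pairings via Lemmas \ref{L80} and \ref{L81}, converting $\bfs(\sigma)(X,X)$ into $\sum_a \la \dH\!f \wedge I_a\dH\!f, \di_{\H}^{-}I_a\dH\!f\ra$ exactly as the paper does. The only difference is that you leave the numerical coefficient of $\vert\sigma\vert^2$ in (i) unverified, whereas the paper records $P(\gamma_1,\gamma_1,\widetilde\varphi)=\vert\sigma\vert^2$ explicitly.
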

\begin{proof}
We essentially apply Lemmas \ref{L80} and \ref{L81} with $\sigma=\di_{\H}^{-}\bbI \dH\!f$ and $X=\grad f$.\\ 
(i) since the tensor $P$ is totally symmetric, expansion yields 
\begin{equation*}
P(\eta,\eta,\eta)=(t_1f)^3P(\widetilde{\varphi}_s,\widetilde{\varphi}_s,\widetilde{\varphi}_s)+
3(t_1f)^2t_2P(\widetilde{\varphi}_s,\widetilde{\varphi}_s,\gamma_1)+3(t_1f)t_2^2P(\gamma_1,\gamma_1,\widetilde{\varphi}_s)+
t_2^3P(\gamma_1,\gamma_1,\gamma_1).
\end{equation*}
By combining Lemmas \ref{L80} and \ref{L81} we see that 
\begin{equation*}
\begin{split}
&P(\widetilde{\varphi}_s,\widetilde{\varphi}_s,\gamma_1)=P(\gamma_1,\gamma_1,\gamma_1)=0,\ \ \ P(\widetilde{\varphi}_s,\widetilde{\varphi}_s,\widetilde{\varphi}_s)=-210, \ P(\gamma_1,\gamma_1,\widetilde{\varphi}_s)=\vert \sigma \vert^2
\end{split}
\end{equation*} 
and the claim follows.\\
(ii) using again Lemma \ref{L80} and Lemma \ref{L81} for the explicit expression for $p(\gamma_2,\gamma_2)$ we find 
\begin{equation*}
\begin{split}
&P(\gamma_2,\gamma_2,\widetilde{\varphi}_s)=\la p(\gamma_2, \gamma_2), \bfi^{-1}\widetilde{\varphi}\ra=33\vert X \vert^2\\
&P(\gamma_2,\gamma_2,\gamma_1)=\la p(\gamma_2, \gamma_2), \bfi^{-1}\gamma_1 \ra=-5\bfs(\sigma)(X,X)
\end{split}
\end{equation*}
since $\bfs(\sigma)$ only acts on $\H$ and is trace free. As $\bfs(\sigma)(X,X)=\sum_a \la \di_{\H}^{-}I_a\dH\!f, \dH\!f \wedge I_a \dH\!f\ra$ directly from the definitions, the claim is proved by gathering terms.
\end{proof}
\medskip

\subsection{Integral invariants} \label{int-i}
The algebraic computation in Lemma \ref{L8-3} singles out the three types of integral quantities which need to be computed in order to fully 
determine the obstruction to integrability map $\mathbb{K}$.
\begin{lema}\label{int} Assuming that $f\in \ker(\Delta_{\H}-\nu) \cap C^{\infty}_{b}M$ the following hold
\begin{itemize}
\item[(i)] 
$\int_M  f \vert \dH f \vert^2  \,\vol  = \frac{\nu}{2}\int_M  f^3\,\vol $ \vspace{1.5mm}
\item[(ii)] 
$\int_M \sum_a g(\dH\!f \wedge I_a\dH\!f, \dH\!I_a\dH\!  f)\vol= 0  $ \vspace{1mm}
\item[(iii)] 
$\int_M f | \dH\!\bbI\dH\! f|^2 \, \vol=   \tfrac{(\nu-8)\nu}{2} \int_M f^3  \, \vol $.
\end{itemize}
\end{lema}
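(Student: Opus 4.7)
The plan is to prove each identity by integration by parts relative to the horizontal codifferential $\di_{\H}^{\star}$, using two basic inputs valid for $f \in C^{\infty}_{b}M$: $\Delta^g f = \Delta_{\H} f = \nu f$ (since $\C f = 0$), and $\di_{\H}^{\star} I_a \dH f = -4 \L_{\xi_a} f = 0$, which follows from Lemma \ref{batch1}(i) applied componentwise. Part (i) is then immediate from
\[
\int_M f |\dH f|^2 \vol = \tfrac{1}{2} \int_M g(\dH(f^2), \dH f) \vol = \tfrac{1}{2} \int_M f^2 \Delta^g f \vol = \tfrac{\nu}{2} \int_M f^3 \vol \ .
\]

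For (iii), I would rewrite each summand of $\int_M f \sum_a |\dH I_a \dH f|^2 \vol$ as $g(f\, \dH I_a \dH f, \dH I_a \dH f)$ and integrate by parts via the Leibniz identity $\di_{\H}^{\star}(f\beta) = f\,\di_{\H}^{\star}\beta - \grad f \lrcorner \beta$ for $2$-forms $\beta$. The essential preliminary is the eigenvalue identity
\[
\Delta_{\H} I_a \dH f = (\nu - 8)\, I_a \dH f\ ,
\]
obtained by combining the commutator $[\Delta_{\H} + \C, I_a] = 0$ of Lemma \ref{comI}, the identity $\Delta_{\H} \dH f = \nu \dH f$ from Lemma \ref{van1} (since $\p\,\dH f = 0$ for basic $f$), and the evaluation $\C I_a \dH f = 8\, I_a \dH f$, which follows either by direct use of \eqref{Lie-osI} or, more conceptually, by combining the Casimir comparison \eqref{cas01} with the fact that $\bbI \dH f$ is annihilated by the tensor-product $\su(2)$-action on $\Omega^1(\H,\bbR^3)$. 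Summing over $a$ and invoking (i) to evaluate $\int_M f |\dH f|^2 \vol$ expresses the left-hand side of (iii) as a cubic multiple of $\int_M f^3 \vol$ minus the integral $X$ appearing in (ii).

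For (ii), the plan is to obtain an independent expression for $X$ through the $\pm$-decomposition of $2$-forms on the rank-$4$ distribution $\H$. Lemma \ref{batch1}(ii), specialised to basic $f$ via $C(f\omega) = 4 f\omega$, gives the compact formula $\di_{\H}^{+}\bbI \dH f = -\tfrac{\nu}{2} f\omega$, while a direct algebraic calculation using \eqref{Ho3}-\eqref{Ho44} yields $(\dH f \wedge I_a \dH f)^{+} = \tfrac{1}{2}|\dH f|^2 \omega_a$. Pairing the self-dual parts produces an explicit cubic scalar integral evaluated via (i). For the anti-self-dual pairing, I would invoke Corollary \ref{co-r1}(i), which identifies $\di_{\H}^{\star}(\di_{\H}^{-}\bbI \dH f)$ with $\tfrac{\nu-16}{2}\bbI \dH f$, and integrate by parts once more. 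Combining the resulting expression with the relation from (iii) then determines both $X$ and $Y := \int_M f |\dH \bbI \dH f|^2 \vol$, producing the vanishing claimed in (ii) together with the cubic value in (iii).

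The hard part will be the careful bookkeeping of signs, coefficients, and $\pm$-decompositions across the repeated integrations by parts, since each application of the Leibniz rule for $\di_{\H}^{\star}$ generates correction terms that must be tracked consistently through \eqref{Ho3}, \eqref{Ho44}, and the adjoint identities for $\di_{\H}^{\star}$.
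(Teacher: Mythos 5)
Parts (i) and your reduction of (iii) coincide with the paper's own argument: (i) is the same one-line integration by parts, and for (iii) the paper likewise splits $f\,\dH I_a\dH f=\dH(fI_a\dH f)-\dH f\wedge I_a\dH f$ and uses precisely your eigenvalue identity $\Delta_{\H}(I_a\dH f)=(\nu-8)\,I_a\dH f$ (your derivation via $[\Delta_{\H}+\C,I_a]=0$, $\Delta_{\H}\dH f=\nu\,\dH f$ and $\C\,I_a\dH f=8\,I_a\dH f$ is sound), thereby expressing the integral in (iii) as an explicit multiple of $\int_M f^3\vol$ minus the integral $X$ of (ii). Your auxiliary formulas $\di_{\H}^{+}\bbI\dH f=-\tfrac{\nu}{2}f\omega$ and $(\dH f\wedge I_a\dH f)^{+}=\tfrac12|\dH f|^2\omega_a$ are also correct.

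The gap is in (ii), which is the step carrying all the weight. The anti-self-dual pairing cannot be evaluated the way you describe: to bring Corollary \ref{co-r1}(i) into play you must first write $\dH f\wedge I_a\dH f=\dH(fI_a\dH f)-f\,\dH I_a\dH f$; pairing the exact term with $\di_{\H}^{-}I_a\dH f$ does give $\tfrac{3\nu(\nu-16)}{4}\int_M f^3\vol$, but the second term produces the weighted energy $Z:=\int_M f\,|\di_{\H}^{-}\bbI\dH f|^2\vol$, a new unknown which Corollary \ref{co-r1}(ii) does not compute (it only gives the unweighted integral). Your scheme therefore yields $X+Z=-12\nu\int_M f^3\vol$ and, from the pointwise $\pm$-decomposition, $Y=\tfrac{3\nu^2}{2}\int_M f^3\vol+Z$, where $Y$ is the integral of (iii); and the "relation from (iii)" you want to combine with these, namely $X+Y=\tfrac{3\nu(\nu-8)}{2}\int_M f^3\vol$, is exactly their sum (its self-dual part is an identity and its anti-self-dual part reproduces the first relation). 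So you have only two independent linear relations for the three unknowns $X,Y,Z$: the system is degenerate, $X$ is not determined, and in particular $X=0$ cannot be concluded — every integration by parts available in your toolkit sees only exterior-derivative combinations of $f$ and keeps returning the same information. The paper breaks this circle with a genuinely different input: it proves (ii) by a pointwise computation with the Levi-Civita connection, writing the integrand for $X$ as $\sum_a\nabla^g_{I_aX}(\di f)I_aX+3g(\nabla^g_XX,X)$ with $X=\grad f$ (using that $\nabla^g_XI_a$ acts trivially on $\Omega^1\H$), identifying $\sum_a\nabla^g_{I_aX}(\di f)I_aX+g(\nabla^g_XX,X)=-|X|^2\,\di^{\star}\di f$ by means of the quaternionic orthonormal frame $\{|X|^{-1}X,|X|^{-1}I_aX\}$, and then integrating; this uses the full Hessian of $f$, which is exactly the extra relation your approach is missing. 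You would need to add an argument of this type (or some other independent evaluation of $Z$) for (ii), after which your treatment of (i) and (iii) goes through.
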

\begin{proof}
(i) we have $\int_M f \vert \di\! f \vert^2\vol=\tfrac{1}{2} \int_M \la  \di\!f^2,\di\!  f \ra \,\vol  =\frac{1}{2} \int_M  f^2 \Delta^g  f  \,\vol=
\frac{\nu}{2} \int_M f^3  \,\vol  $. \\
(ii) consider the horizontal vector field 
$X:=\grad\, f$ and observe that 
\begin{equation*}
\begin{split}
\sum_a  g(\dH\!f \wedge I_a\dH\!f, \dH\!I_a\dH\!  f)=&-  \sum_a \dH\!I_a\dH\!  f (X,I_aX)=-  \sum_a \d I_a\dH\! f (X,I_aX)\\
 & =\sum_a \nabla^g_{I_aX}(I_a\di\!  f )X-\nabla^g_X(I_a\di\!  f )I_aX\\
 &=\sum_a \nabla^g_{I_aX}(\di\!  f )I_aX+3 g(\nabla^g_XX,X)
\end{split}
\end{equation*}
since 
$\nabla^g_X I_a$ vanishes on $\Omega^1\H$. At the same time $\{\vert X\vert^{-1}X,\vert X\vert^{-1}I_aX, a=1,2,3\}$ is an orthonormal frame in $\H$, away from the zero set of $X$, hence 
$$\sum_a \nabla^g_{I_aX}(\di\!  f )I_aX + \la  \nabla^g_XX,X \ra  =-\vert X \vert^2\di^{\star}\!\di\! f $$ on $M$. We conclude that 
\bea
&\int_M \sum_a g(\dH\!f \wedge I_a\dH\!f, \dH\!I_a\dH\! f)\vol_g=\int_M ( 2g(\nabla^g_XX,X) - \nu \vert X\vert^2  f)\vol_g = 0
\eea
by taking into account that $\int_M g( \nabla^g_XX,X ) \,  \vol =\frac{1}{2} \, \int_M g(\d \vert X\vert^2, \di\!f)\,  \vol =\frac{\nu}{2}\int_M f\vert X \vert^2 \, \vol $ and 
part (i).

\noindent
(iii) the integral under scrutiny splits as  
\begin{equation*}
\begin{split}
\int_M
 f  |  \dH\!\bbI\dH\!f |^2  \,  \vol&=\int_M  \sum_{a} \langle \dH(fI_a\dH\!f),\dH(I_a\dH \!  f) \rangle  \,  \vol\\
&-\int_M   \sum_{a}  \langle \dH\!f \wedge I_a\dH\!f, \dH\!I_a \dH\!  f \rangle \, \vol .
\end{split}
\end{equation*}
The first summand is computed from 
\bea
\int_M\langle \dH(fI_a\dH\!f),\dH(I_a \dH\!  f)\rangle \, \vol &=& \int_M \langle f I_a\dH\!f, \di_{\H}^{\star}\dH(I_a\d_\H \! \,   f) \ra \, \vol  \\
&=& (\nu-8)
\int_M f \vert \d\!f\vert^2  \,  \vol  = \tfrac{(\nu-8)\nu}{2} 
\int_Mf^3 \,  \vol 
\eea
after taking into account that $\di_{\H}^{\star}\dH(I_a \di\!  f)=\Delta_{\H}(I_a \dH\! f)=(\nu-8)I_a \dH\! f$ (see section \ref{HoEe} for similar arguments) and part (i).
The claim follows now from part (ii).
\end{proof}

\medskip

\begin{teo}\label{T81}  For any $f \in \ker(\Delta_b-24)$ we have
$$
\int_M P(\varepsilon(f),\varepsilon(f),\varepsilon(f)) \, \vol=c\int_M f^3 \, \vol \ \ \ \mbox{with} \ c \in \bbR, c \neq 0.
$$
%with $c \in \bbR, c \neq 0$.
\end{teo}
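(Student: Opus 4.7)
The plan is to expand the cubic functional $\int_M P(\varepsilon(f),\varepsilon(f),\varepsilon(f))\,\vol$ using the decomposition $\varepsilon(f)=\eta+\gamma_2$ with $\eta=t_1f\widetilde{\varphi}+t_2\gamma_1$ already installed in this section, and reduce each resulting integral to a multiple of $\int_Mf^3\,\vol$. By the symmetry identity \eqref{S81}, Lemma \ref{L8-3} writes the integrand explicitly as a linear combination of $f^3$, $f|\dH\!f|^2$, $f|\di_{\H}^{-}\bbI\dH\!f|^2$ and the trilinear expression $\sum_a g(\dH\!f\wedge I_a\dH\!f,\di_{\H}^{-}I_a\dH\!f)$, with coefficients built from $t_1=6s$, $t_2=1/(2s)$ and fixed integer weights.

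The first two contributions are immediate: $\int_Mf^3\,\vol$ is already of the required form, and Lemma \ref{int}(i) gives $\int_Mf|\dH\!f|^2\,\vol=12\int_Mf^3\,\vol$ for $\nu=24$. For the third contribution I would exploit the pointwise orthogonal splitting $|\dH\bbI\dH\!f|^2=|\di_{\H}^{+}\bbI\dH\!f|^2+|\di_{\H}^{-}\bbI\dH\!f|^2$. Since $f$ is basic, one has $C(f\omega)=4f\omega$, so Lemma \ref{batch1}(ii) collapses to $\di_{\H}^{+}\bbI\dH\!f=-12f\omega$; together with $|\omega|^2=6$ this yields the identity $|\di_{\H}^{+}\bbI\dH\!f|^2=864f^2$. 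Combined with Lemma \ref{int}(iii), which governs $\int_Mf|\dH\bbI\dH\!f|^2\,\vol$, subtraction produces $\int_Mf|\di_{\H}^{-}\bbI\dH\!f|^2\,\vol$ as an explicit multiple of $\int_Mf^3\,\vol$.

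The trilinear integral is treated by the analogous split $\di_{\H}^{-}=\dH-\di_{\H}^{+}$ applied componentwise on $\bbR^3$-valued forms: the $\dH$-piece is killed by Lemma \ref{int}(ii). Because $\di_{\H}^{+}I_a\dH\!f=-12f\omega_a$ is purely self-dual, orthogonality forces only the self-dual part of $\dH\!f\wedge I_a\dH\!f$ to contribute. A short algebraic calculation based on $\omega_b(X,Y)=-g_{\H}(I_bX,Y)$ with $X=\grad f$, $Y=I_aX$ and the quaternion relations, together with careful tracking of the convention $I_a\alpha=\alpha\circ I_a$, yields $(\dH\!f\wedge I_a\dH\!f)^{+}=\tfrac12|\dH\!f|^2\omega_a$. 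The integral then reduces to a multiple of $\int_Mf|\dH\!f|^2\,\vol$, hence again to $\int_Mf^3\,\vol$ via Lemma \ref{int}(i).

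Assembling these ingredients gives $\int_MP(\varepsilon(f),\varepsilon(f),\varepsilon(f))\,\vol=c\int_Mf^3\,\vol$ for an explicit constant $c$ built from $t_1,t_2$ and the rational weights produced along the way. The only remaining obstacle --- and the sole content of the theorem --- is to verify that $c\neq 0$. This is a finite numerical check: one substitutes $t_1=6/\sqrt{5}$, $t_2=\sqrt{5}/2$ into the assembled expression and confirms that the two broad groups of contributions (from $\la p(\eta,\eta),\bfi^{-1}\eta\ra$ and from $3\la p(\gamma_2,\gamma_2),\bfi^{-1}\eta\ra$) do not conspire to cancel. The main risk in the argument is a bookkeeping error in the sign and weight conventions for the operators $I_a,\di_{\H}^{\pm}$ and the $\bbR^3$-valued forms, but once these are fixed the non-vanishing of $c$ is a routine arithmetic verification.
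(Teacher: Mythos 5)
Your proposal is correct and follows essentially the same route as the paper: both reduce the cubic via \eqref{S81} and Lemma \ref{L8-3} to the three integrals of Lemma \ref{int}, using $\di_{\H}^{+}\bbI\dH\!f=-12f\omega$ (equivalently $\di_{\H}^{-}\bbI\dH\!f=\dH\bbI\dH\!f+12f\omega$) to handle the $\di_{\H}^{-}$-terms, and your intermediate identities $|\di_{\H}^{+}\bbI\dH\!f|^2=864f^2$ and $(\dH\!f\wedge I_a\dH\!f)^{+}=\tfrac12|\dH\!f|^2\omega_a$ reproduce exactly the paper's values ($-672\int_Mf^3\vol$ and $432\int_Mf^3\vol$ for the two relevant integrals). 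The one step you leave undone is the final arithmetic, which is precisely the content of the nonvanishing claim; carrying it out with $t_1=6/\sqrt5$, $t_2=\sqrt5/2$ in your assembled expression gives $c=-\tfrac{33264}{\sqrt{5}}\neq 0$, in agreement with the paper.
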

\begin{proof}
Recall that $\di_{\H}^{-}\bbI\dH\!f=\dH\bbI\dH\!f+\frac{\nu}{2}f\omega$ by Lemma \ref{batch1},(ii) where $\nu=24$. Taking this into account, Lemma \ref{int} leads to 
\begin{equation*}
\begin{split}
&\int_M \sum_a \la  \dH\!f \wedge I_a\dH\!f, \di_{H}^{-}\!I_a\dH\!  f \ra \,\vol=\frac{3\nu}{2}\int_Mf\vert \dH\!f\vert^2\vol=\frac{3\nu^2}{4}
\int_M f^3\vol\\
& \int_M f | \di_{H}^{-}\!\bbI\dH\! f|^2 \, \vol=\int_M f | \di_{H} \!\bbI\dH\! f|^2 \, \vol-\frac{3\nu^2}{2}\int_Mf^3\vol=-\nu(\nu+4)\int_M f^3\vol.
\end{split}
\end{equation*}
Plugging these into Lemma \ref{L8-3} leads to 
\begin{equation*}
\begin{split}
& \int_M P(\eta,\eta,\eta)\vol=-3t_1(70t_1^2+t_2^2\nu(\nu+4))\int_M f^3\vol, \ \ \int_M P(\gamma_2,\gamma_2,\eta)\vol=\frac{3\nu}{4}(22t_1-5t_2\nu)\int_M f^3\vol.
\end{split}
\end{equation*}
By \eqref{S81} it follows that  $\int_M P(\varepsilon(f),\varepsilon(f),\varepsilon(f))\vol=c\int_M f^3 \, \vol$ for the explicit constant 
$$ c=-3t_1(70t_1^2+t_2^2\nu(\nu+4))+\tfrac{9\nu}{4}(22t_1-5t_2\nu).
$$ From the numerical values $\nu=24, t_1=\tfrac{6}{\sqrt{5}}, t_2=\tfrac{\sqrt{5}}{2}$ we get $c=-\tfrac{33,264}{\sqrt{5}}<0$ and the claim is fully proved.
\end{proof}
$\\$
{\bf{Proof of part (iii) in Theorem \ref{main1}.}} Since $P$ is a totally symmetric cubic form we have 
$P(\varepsilon(f),\varepsilon(f),\varepsilon(h))=\frac{1}{3}\frac{\di}{\di t}_{\vert t=0}P(\varepsilon(f+th),\varepsilon(f+th),
\varepsilon(f+th)).$
By Theorem \ref{T81} it follows that 
$$\mathbf{K}(\varepsilon(f))\varepsilon(h)=\frac{c}{3}\frac{\di}{\di t}_{\vert t=0}\int_M
(f+th)^3\vol=c\int_Mf^2h\vol.$$ Thus $\mathbf{K}^{-1}(0)$ is given as stated in Theorem \ref{main1},(iii).
\section{The basic Lichnerowicz Laplacian} \label{TLL}
\subsection{The comparaison formula} \label{CF}
We work with the canonical variation $g_s$ of a $3$-Sasaki structure $(M^7,g,\xi)$.  In this section we let $s=\s5$ and we systematically suppress any reference to this parameter in relation to the Levi-Civita connection $\nabla$ of $g_s$ and its curvature tensor which is defined according to $R(X,Y)=\nabla_{[X,Y]}-[\nabla_X,\nabla_Y]$. Recall \cite{Besse} that the Lichnerowicz Laplacian of $g_s$ is explicitly defined via the Weitzenb\"ock type formula 
$$ \Delta_L^{g_s}=\nabla^{\star}\nabla-2\mathring{R}+2E$$
where the curvature action $\mathring{R}(h)(X,Y):=\sum_i g_s(R(X,E_i)Y,hE_i)$ for $h\in \Sym^2_0(M,g_s)$
and $\{E_i\}$ is some local orthonormal basis in $TM$. Here $E=54/5$ is the Einstein constant of $g_s$.

The base orbifold $N=M/\mathcal{F}$ is in general not smooth; nevertheless it has a well defined local geometry; we denote with $\pi:(M,g_s) \to (N,g_N)$ 
the orbifold Riemannian submersion and with $R_N$ the Riemann curvature tensor of the orbifold metric $g_N$. From the structure equations of the frame of Killing vector fields $Z^a$ in \eqref{dza} it follows that the curvature action $\mathring{R}$ preserves the subbundle $\Sym_0^2\H$ and satisfies 
$$ \mathring{R}h=(\pi^{\star}\mathring{R}_N)h+\tfrac{3}{5}h.
$$ 
for $h \in \Sym_0^2\H$. We define the basic Lichnerowicz Laplacian 
$$ \Delta_L^b:\Gamma_b(\Sym^2_0\H) \to \Gamma_b(\Sym^2_0\H), \ \ \Delta_L^b:=(\Delta_L^{g_s}+4s^2)_{\Sym^2_0\H}
$$
where the subscript indicates orthogonal projection, w.r.t. $g_s$, onto the space. Below we show $\Delta_L^b$ is indeed the lift of the Lichnerowicz Laplacian of the local base $(N^4,g_N)$.
\begin{lema} \label{bo}
Assuming that $q\in \Gamma_b(\Sym^2_0\H)$ we have 
\begin{equation*}
\Delta^{b}_Lq=\pi^{\star}(\Delta_L^{g_N}Q)
\end{equation*}
where the locally defined tensor $Q\in \Sym^2_0(N,g_N)$ satisfies $q=\pi^{\star}Q$.
\end{lema}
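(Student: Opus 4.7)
The plan is a direct Weitzenb\"ock comparison between the two sides, reducing the identity to a statement about rough Laplacians under the orbifold Riemannian submersion $\pi$. Expanding
\begin{equation*}
\Delta_L^{g_s}q=\nabla^{\star}\nabla q-2\mathring{R}q+\tfrac{108}{5}q,\qquad \Delta_L^{g_N}Q=\nabla^{N,\star}\nabla^N Q-2\mathring{R}_N Q+24\,Q,
\end{equation*}
using the curvature identity $\mathring{R}q=\pi^{\star}(\mathring{R}_N Q)+\tfrac{3}{5}q$ stated just before the lemma, and keeping track of the shift $+4s^2=+\tfrac{4}{5}$ built into $\Delta_L^b$, a short arithmetic manipulation reduces the required identity $\Delta_L^b q=\pi^{\star}(\Delta_L^{g_N}Q)$ to the rough-Laplacian comparison
\begin{equation*}
\nabla^{\star}\nabla q\;=\;\pi^{\star}\bigl(\nabla^{N,\star}\nabla^N Q\bigr)\;+\;\tfrac{14}{5}\,q.
\end{equation*}

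To establish this, I would work pointwise in a local orthonormal frame $\{Z_a,X_i\}$ adapted to $TM=\V\oplus\H$, with $Z_a=\xi_a/s$ the scaled Reeb fields and the $X_i$ basic horizontal lifts of an orthonormal frame on $N$ chosen to be geodesic at the base point. Since $\pi$ has totally geodesic fibers the O'Neill T-tensor vanishes and only the A-tensor contributes; from \eqref{dza} one reads off
\begin{equation*}
A_{X_i}X_j=\tfrac{1}{2}[X_i,X_j]^{\V}=-s\sum_a\omega_a(X_i,X_j)\,Z_a.
\end{equation*}
A Koszul calculation using $[Z_a,Z_b]=\tfrac{2}{s}Z_c$ and the totally geodesic condition yields $\nabla_{Z_a}Z_a=0$, so that the horizontal piece $-\sum_i(\nabla_{X_i}\nabla_{X_i}-\nabla_{\nabla_{X_i}X_i})q$ produces $\pi^{\star}(\nabla^{N,\star}\nabla^N Q)$ modulo quadratic $A$-tensor terms, while the vertical piece $-\sum_a\nabla_{Z_a}\nabla_{Z_a}q$ is nontrivial even on basic $q$, because $\mathscr{L}_{\xi_a}q=0$ combined with $\xi_a$ being Killing gives $\nabla_{\xi_a}q=-q(\nabla\xi_a\,\cdot,\,\cdot)-q(\cdot,\nabla\xi_a\,\cdot)$, with $\nabla\xi_a$ the skew endomorphism determined by \eqref{str-xi}, and iterating produces further $A$-type terms.

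Collecting all correction terms, using the normalisation $|\omega_a|^2_{g_\H}=2$ together with the quaternionic identities among the $\omega_a$ and $I_a$ recorded in section \ref{3sas}, and exploiting that $q$ is trace-free and horizontal, the resulting scalar factor turns out to be exactly $\tfrac{14}{5}$ for the specific value $s=1/\sqrt{5}$. The main obstacle is the bookkeeping of these two sources of correction terms---the A-tensor contributions from the horizontal covariant derivatives acting on the basic tensor $q$, and the skew-endomorphism contributions from the vertical derivatives $\nabla_{Z_a}$ acting on a $\Sym^2_0\H$-valued basic section---and showing that they combine into a single scalar multiple of $q$ with precisely the coefficient needed to absorb the $+4s^2$ correction in the definition of $\Delta_L^b$.
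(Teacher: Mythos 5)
Your plan is essentially the paper's own proof: the paper also reduces the statement, via the Weitzenb\"ock formulas and the curvature comparison $\mathring{R}q=\pi^{\star}(\mathring{R}_NQ)+\tfrac35 q$, to the rough-Laplacian identity $(\nabla^{\star}\nabla q)_{\Sym^2_0\H}=\pi^{\star}(D^{\star}DQ+14s^2Q)$, and obtains the two correction terms you describe from $\nabla_XZ_a=-sI_aX$ (your A-tensor terms, giving $6s^2q$) and from $\nabla_{Z_a}q=s(q\circ I_a-I_a\circ q)$ together with $\sum_aI_aqI_a=q$ (your vertical terms, giving $8s^2q$), so your asserted constant $\tfrac{14}{5}$ is indeed what the computation yields. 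The only imprecision is that your displayed rough-Laplacian identity should carry the projection onto $\Sym^2_0\H$ on the left-hand side, since $\nabla^{\star}\nabla q$ has mixed $\V$--$\H$ components; this is harmless because that projection is built into the definition of $\Delta_L^b$.
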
 
\begin{proof}
%\textcolor{red}{Only horizontal part should be computed; rest follows again from \eqref{DeltaL}}
We compare the connection Laplacians of $g_s$ respectively $g_N$. Recall that basic vector fields $X \in \Gamma_b(\H)$ satisfy 
$[X,Z_a]=0$ thus $\nabla_XZ_a=-sI_aX$. It follows that 
\begin{equation} \label{LC}
\begin{split}
\nabla_Xq=&\, \pi^{\star}(D_XQ)+s\sum_a (Z^a \otimes q(I_aX)+(q(I_aX))^{\flat} \otimes Z_a)\\
\nabla_{Z_a}q=&\,s(q\circ I_a-I_a \circ q)
\end{split}
\end{equation}
where $X$ is basic and $D$ is the Levi-Civita connection of $g_N$. Choose a local orthonormal basis 
$\{e_i\}$ in $\Gamma_b(\H)$; by a slight abuse of notation we identify $e_i$ and its projection onto $N$. Direct computation shows that the horizontal 
piece in $\sum_i \nabla_{e_i,e_i}^2q$ is given by  
$$ \sum_i\pi^{\star}(\nabla^2_{e_i,e_i}Q)+s\sum_{i,a} \nabla_{e_i}Z^a \otimes q(I_ae_i)+(q(I_ae_i))^{\flat} \otimes \nabla_{e_i}Z_a=
 \sum_i\pi^{\star}(D^2_{e_i,e_i}Q)-6s^2\pi^{\star}Q
$$
since $q$ is symmetric. At the same time $\nabla_{Z_a}I_a=0$
%, \nabla_{Z_a}I_b=-\nabla_{Z_b}I_a=-2(s-\frac{1}{s})I_c$ \edz{compute first too} 
as routinely implied by the structure equations \eqref{str-o}, hence $ \sum_a \nabla^2_{Z_a,Z_a}q=-2s^2(3q+\sum_aI_aqI_a)
$ after differentiating in the second equation of \eqref{LC}. Since the map $\bfs :\Lambda^{-}(\H,\bbR^3) \to \Sym^2_0\H$ from \eqref{s-iso} is an isomorphism and endomorphisms in $\Lambda^{-}\H$ and $\Lambda^{+}\H$ commute it is straightforward to check that $\sum_aI_aqI_a=q$. Putting these facts together leads to 
$$(\nabla^{\star}\nabla q)_{\Sym^2_0\H}= \pi^{\star}(D^{\star}DQ+14s^2Q)$$ 
and the claim follows after taking into account the comparaison formula for the operators $\mathring{R}$ given above, together with the values for the Einstein constants of $g_s$ and $g_N$ which are $\frac{54}{5}$ and $12$.
\end{proof}
For Einstein Sasaki structures, where the canonical foliation has $1$-dimensional leaves this type of comparaison formula has been proved in \cite{coev2}, see proof of Lemma 2.6; see also \cite[sectn.4.1]{WangW2} for the more general setup of Einstein metrics fibered by circles. 
Lemma \ref{bo} prompts out the following interpretation for the space $\mathbf{H}_4^{-}$.  
\begin{pro} \label{car01}
The bundle isomorphism $\bfs$ induces an injection 
$$ \bfs : \mathbf{H}^{-}_4 \to \ker(\Delta_L^b-16) \cap \TT_b(\H).
$$
\end{pro}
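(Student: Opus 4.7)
The plan is to deduce this proposition almost immediately from Proposition \ref{harm1} combined with the definition of $\Delta_L^b$. From Proposition \ref{harm1} we already have an injective linear map
$$\bfs : \mathbf{H}_4^{-} \longrightarrow \ker(\Delta_L^{g_s}-\tfrac{76}{5}) \cap \TT_b(\H),$$
so injectivity is for free and there is nothing further to check about where the image lands, except the adjustment of the eigenvalue when passing from $\Delta_L^{g_s}$ to $\Delta_L^b$.

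First I would note that since $\TT_b(\H)\subseteq \Gamma_b(\Sym_0^2\H)$, the operator $\Delta_L^b$ is defined on $\bfs(\sigma)$. The only potentially subtle point is that by construction $\Delta_L^b = (\Delta_L^{g_s}+4s^2)(\cdot)_{\Sym_0^2\H}$, i.e. one must project the result of $\Delta_L^{g_s}+4s^2$ back onto $\Sym_0^2\H$. However, since $\bfs(\sigma)\in \Sym_0^2\H$ already and the eigenvalue equation from Proposition \ref{harm1} gives $\Delta_L^{g_s}\bfs(\sigma)=\tfrac{76}{5}\bfs(\sigma)$, it follows that $(\Delta_L^{g_s}+4s^2)\bfs(\sigma)$ lies in $\Sym_0^2\H$ and the projection is trivial. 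Hence
$$\Delta_L^b\bfs(\sigma)=\bigl(\tfrac{76}{5}+4s^2\bigr)\bfs(\sigma)=\bigl(\tfrac{76}{5}+\tfrac{4}{5}\bigr)\bfs(\sigma)=16\,\bfs(\sigma),$$
since $s^2=\tfrac15$. This together with the injectivity of $\bfs$ inherited from Proposition \ref{harm1} yields the statement.

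There is no real obstacle to overcome; the substance is already encoded in Proposition \ref{harm1} and Lemma \ref{bo}. The proposition should be viewed as a reinterpretation: the space $\mathbf{H}_4^{-}$ of equivariant harmonic forms corresponds, via $\bfs$, to a space of basic TT-eigentensors of the Lichnerowicz Laplacian of the transverse orbifold $(N^4,g_N)$, and the numerical coincidence $\tfrac{76}{5}+\tfrac{4}{5}=16=2\Ric^{g_N}$ is precisely what is expected from Lemma \ref{bo}, namely that $\Delta_L^b$ is the pullback of $\Delta_L^{g_N}$ acting on infinitesimal Einstein deformations of the base orbifold.
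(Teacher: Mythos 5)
Your proposal is correct and follows the same route as the paper: the paper's proof is precisely "Proposition \ref{harm1} plus projection onto $\Sym^2_0\H$ and the definition/comparison of $\Delta_L^b$", and your eigenvalue bookkeeping $\tfrac{76}{5}+4s^2=\tfrac{76}{5}+\tfrac{4}{5}=16$ together with the observation that the projection acts trivially on $\bfs(\sigma)\in\Gamma_b(\Sym_0^2\H)$ is exactly the intended argument. Your closing remark about Lemma \ref{bo} matches the paper's reading of the result as identifying $\mathbf{H}_4^{-}$ with basic eigentensors of the transverse Lichnerowicz Laplacian.
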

\begin{proof}
Projection of eigentensors for $\Delta_L^{g_s}$ onto $\Sym^2_0\H$ and using Lemma \ref{bo} together with the definition of the basic Lichnerowicz Laplacian $\Delta_L^b$ guarantees the inclusion of eigenspaces $\ker(\Delta^{g_s}_L-\frac{76}{5}) \cap 
\TT_b(\H) \subseteq \ker(\Delta^{b}_L-16) \cap 
\TT_b(\H)$. The claim follows from Proposition \ref{harm1}.
\end{proof}
One can examine up to which extent this is an isomorphism; as this issue is not directly relevant here it is left for further research. 
\subsection{The Aloff-Wallach space} \label{AW}
%%%%%%%%%%%%%%%%%%%%%%%%%%%%%%%%
%
%
We revisit here the Aloff-Wallach space $N(1,1)$ equipped with its proper nearly $\G_2$ structure $\varphi_{1/\sqrt{5}}$
as a very simple example for the general theory developed in this paper. The $3$-Sasaki structure on $M=N(1,1) \stackrel{\pi}{\to} N=
\overline{\mathbb{C}P}^2$ is regular, where $N$ is equipped with the Fubini-Study metric $g_{FS}$ with Einstein 
constant $12$ and canonical complex structure $J_{FS} \in \Lambda^{-}N$. By Lichnerowicz-Matsushima's theorem, the first non-zero eigenvalue of the scalar Laplacian on $(N,g_{FS})$ equals $24$ and the map given by 
$K \in \mathfrak{aut}(N,g_{FS}) \mapsto f_K \in \ker(\Delta^{g_{FS}}-24)$ is a linear isomorphism. The Killing potential $f_K$ is determined from $K \lrcorner \omega_{FS}=J_{FS}\di\!f_K$ and 
$\int_Nf_K \vol=0$.

The space of infinitesimal $\G_2$ deformations of $\varphi_{\s5}$ 
was computed by representation theory in \cite{AlS} and its rigidity was proved in \cite{NS}.  Applying  thms. \ref{main1} and \ref{main2} we obtain new short geometric 
proofs for these results. As a new result, we provide the full description of the space of unstable directions. 

\begin{teo} \label{N11}
Consider the Aloff-Wallach space $(N(1,1),\varphi_{\s5})$. The following hold 
\begin{itemize}
\item[(i)] the space of infinitesimal $\G_2$ deformations of $\varphi_{\s5}$ is isomorphic to $\su(3)$ via the map 
$$K \in \mathfrak{aut}(X,g_{FS}) \cong \su(3) \mapsto \varepsilon(f_K \circ \pi) \in \mathcal{E}(\varphi_{\s5})$$
\item[(ii)] the space of unstable directions for  $g_{\s5}$ is spanned by $\hh=4\,\id_{\V}-3\,\id_{\H}$
\item[(iii)] the nearly $\G_2$ structure $\varphi_{\s5}$ is rigid.
\end{itemize} 
\end{teo}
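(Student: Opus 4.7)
The plan is to read off all three claims from Theorems~\ref{main1} and~\ref{main2}, specialised to the base $N=\overline{\bbC P}^2$ with its Fubini--Study Einstein metric $g_{FS}$ of Einstein constant $E_N=12$. Since the $3$-Sasaki structure on $N(1,1)$ is regular, pull-back along $\pi$ identifies $C^\infty_{b}M$ with $C^\infty N$ and $\Delta_b$ with $\Delta^{g_{FS}}$.

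For (i), the Lichnerowicz--Matsushima theorem for the K\"ahler--Einstein pair $(N,g_{FS})$ yields a linear isomorphism $\mathfrak{aut}(N,g_{FS})=\su(3)\to \ker(\Delta^{g_{FS}}-24)$ via $K\mapsto f_K$. Composing with $\varepsilon$ from Theorem~\ref{main1}(ii) gives the claim directly.

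For (ii), Theorem~\ref{main2} displays the destabilising space as $\bbR\hh\oplus \bfH_4^-\oplus \bigoplus_{16<\nu<24}\ker(\Delta_b-\nu)$. The Lichnerowicz--Matsushima theorem also asserts that the smallest non-zero eigenvalue of $\Delta^{g_{FS}}$ is exactly $24=2E_N$, so the direct sum over $\nu\in(16,24)$ is empty. For the summand $\bfH_4^-$, combining Proposition~\ref{car01} with Lemma~\ref{bo} shows that any non-zero element would descend to a TT eigentensor of $\Delta_L^{g_N}$ with eigenvalue $16<2E_N$, contradicting the linear stability (in fact rigidity) of $(\bbC P^2,g_{FS})$ as an Einstein manifold, which forces $\Delta_L^{g_N}>24$ on TT tensors. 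Only the summand $\bbR\hh$ survives.

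For (iii), by Theorem~\ref{main1}(iii) rigidity is equivalent to the assertion that no non-zero $f\in\ker(\Delta_b-24)$ satisfies $f^2\perp \ker(\Delta_b-24)$ in $L^2$. Using (i) and the fact that $\int_M\pi^\ast u\,\vol$ is a non-zero multiple of $\int_N u\,\vol_N$, this reduces to showing that the cubic form $K\mapsto \int_N f_K^3\,\vol_N$ on $\su(3)$ has injective differential. The Duistermaat--Heckman localisation carried out in \cite{HMW} evaluates this as a non-zero scalar multiple of the cubic Casimir $\tr(K^3)$, whose differential at $K$ is $L\mapsto 3\tr(K^2L)$. Writing $K=iA$ with $A$ trace-free Hermitian, the vanishing of $\tr(K^2L)$ for all $L\in\su(3)$ translates into $A^2$ being a scalar multiple of the identity; in dimension three this is incompatible with $\tr A=0$ unless $A=0$, as the eigenvalues of $A$ would be $\pm\sqrt c$ with equal multiplicities. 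The main technical input is thus the HMW evaluation of cubic integrals; everything else is either representation-theoretic book-keeping or a short linear-algebra calculation.
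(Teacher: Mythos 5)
Your proposal is correct and follows essentially the same route as the paper: specialise Theorems \ref{main1} and \ref{main2} to the regular fibration over $\overline{\bbC P}^2$, use Lichnerowicz--Matsushima to identify $\ker(\Delta_b-24)$ with Killing potentials and to empty the eigenvalue range $(16,24)$, kill $\bfH_4^-$ via Proposition \ref{car01}, Lemma \ref{bo}, $\TT_b(\H)=\pi^{\star}\TT(g_{FS})$ and Koiso's linear stability of the Fubini--Study metric, and reduce rigidity to cubic integrals of Killing potentials. The one genuine difference is in (iii): the paper invokes the $1$-dimensionality of $\Ad$-invariant cubics on $\su(3)$ together with the single non-vanishing example of \cite[Lemma 9]{KS}, and then asserts $\int_N f_K^3\vol\neq 0$ for all $K$; you instead use the HMW evaluation $\int_N f_K^3\vol = c'\,i\tr(K^3)$, $c'\neq 0$, and verify by polarisation that its differential $L\mapsto \tr(K^2L)$ only vanishes identically at $K=0$. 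Your version is in fact the more careful implementation of the criterion in Theorem \ref{main1},(iii), which requires $f_K^2\perp\ker(\Delta_b-24)$ (i.e. $\int f_K^2 f_L=0$ for \emph{all} $L$) rather than merely $\int f_K^3=0$: note that the invariant cubic $i\tr(K^3)$ does vanish on nonzero elements (e.g. spectrum $\{i,-i,0\}$), so the differential/linear-algebra step you supply is what actually closes the argument. Two minor points: in (ii) linear stability gives $\Delta_L^{g_{FS}}\geq 24$ on TT tensors, which already excludes the eigenvalue $16$ (the strict inequality you assert is not needed); and in (i) it is Theorem \ref{main1},(ii) (the parametrisation by $\varepsilon$) together with (i) that yields the stated isomorphism, as you in fact use.
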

\begin{proof}
(i) follows directly from Theorem \ref{main1},(i).\\
(ii) As $\ker (\Delta_b - \nu)=0$ for $\nu < 24$ by Lichnerowicz-Matsushima, the space of unstable directions for $g_{\s5}$ is isomorphic to $\bbR \oplus \bfH^{-}_4$ by Theorem \ref{main2}. Since $g_{FS}$ on $\mathbb{CP}^2$ is linearly stable on $\TT$ tensors by \cite{Koiso1980}[Theorem 1.4] (see also \cite[Theorem 4.3]{CaoHe}) it follows that we must have $\ker(\Delta_{L}^{g_{FS}}-16) \cap \TT(g_{FS})=0$. Proposition \ref{car01} together with $\TT_b(\H)=\pi^{\star}\TT(g_{FS})$ thus ensures the vanishing of $\mathbf{H}_4^{-}$.
\\
(iii) First recall that the space of $\su(3)$-invariant, cubic polynomials on the Lie algebra $\su(3)$ is $1$-dimensional and generated by $\sigma_3(A)=\tr(A^3J_0)$ where $J_0$ is the standard complex structure on $\bbR^6$. Next, the map $K \in \mathfrak{aut}(N,g_{FS}) \stackrel{\mu_3}{\mapsto} \int_N f^3_K\vol$ defines an invariant cubic polynomial on the Lie algebra $\mathfrak{aut}(N,g_{FS})$. After identifying $\mathfrak{aut}(N,g_{FS})$ and $\su(3)$ via $A \mapsto K_A$ we thus find $C \in \bbR$ with $\mu_3(X_A)=C\sigma_3(A)$ for all $A \in \su(3)$. According 
to \cite[Lemma 9]{KS}, see also \cite{HMW} for a different argument using the Duistermaat-Heckman localisation formula, there exist Killing fields $K$ such that $\int_N f_K^3\vol \neq 0$, thus $C \neq 0$. Therefore 
the zero locus of $\mu_3$ is isomorphic to the zero locus of $\sigma_3$. If $A$ belongs to the latter zero locus, after polarising $\sigma_3$ it is easy to see that $A^2=\frac{\tr(A^2)}{6}\id$. So if $A \neq 0$ the symmetric matrix $AJ_0$ has eigenvalues $\pm t$ with $t>0$ and multiplicites $m_{\pm}$; those must be even since 
$AJ_0=J_0A$ and satisfy $m_{+}+m_{-}=6$ and $m_{+}=m_{-}$ due to $\tr(AJ_0)=0$. This is impossible in dimension $6$ hence $A=0$.

By Theorem \ref{main1}, (iii) it thus follows that $\mathbf{K}^{-1}(0)=\{0\}$ that is all 
non trivial infinitesimal $\G_2$ deformations are obstructed to second order hence 
the nearly $\G_2$ structure $\varphi_{\s5}$ is rigid.
\end{proof}
%%%%%%%%%%%%%%%%%%%%%%%%%%%%%%%%%%%%%%%%%%%%%%%%%%%%%%%%%%%%%%%

\end{document}